\numberwithin{equation}{section}
\newcommand{\norm}[1]{\ensuremath{\left\|#1\right\|}}
\newcommand{\abs}[1]{\ensuremath{\left|#1\right|}}
\newcommand{\Div}{\nabla\cdot\,}
\newcommand{\eps}{\ensuremath{\varepsilon}}
\newcommand{\Grad}{\ensuremath{\mathrm{\nabla}}}
\newcommand{\n}{\ensuremath{\nu}}
\newcommand{\Set}[1]{\ensuremath{\left\{#1\right\}}}
\newcommand{\R}{\ensuremath{\Bbb{R}}}
\newcommand{\U}{\ensuremath{\Bbb{U}}}
\newcommand{\bS}{\ensuremath{\Bbb{S}}}
\newcommand{\bK}{\ensuremath{\Bbb{K}}}
\newcommand{\bX}{\ensuremath{\Bbb{X}}}
\newcommand{\bB}{\ensuremath{\Bbb{B}}}
\newcommand{\B}{\ensuremath{\mathcal{B}}}
\newcommand{\Iion}{I_{\mathrm{ion}}}
\newcommand{\Om}{\ensuremath{\Omega}}
\newcommand{\Ion}{I_{\mathrm{ion}}}
\newcommand{\dx}{\ensuremath{\, dx}}
\newcommand{\dt}{\ensuremath{\, dt}}
\newcommand{\dW}{\ensuremath{\, dW}}
\newcommand{\ds}{\ensuremath{\, ds}}
\newcommand{\En}{\ensuremath{\mathbf{1}}}
\newcommand{\cI}{\ensuremath{\mathcal{I}}}
\newcommand{\cF}{\ensuremath{\mathcal{F}}}
\newcommand{\cFt}{\ensuremath{\cF_t}}
\newcommand{\tcFt}{\ensuremath{\tilde\cF_t}}
\newcommand{\cS}{\ensuremath{\mathcal{S}}}
\newcommand{\cL}{\ensuremath{\mathcal{L}}}
\newcommand{\cP}{\ensuremath{\mathcal{P}}}
\newcommand{\cB}{\ensuremath{\mathcal{B}}}
\newcommand{\E}{\ensuremath{\Bbb{E}}}
\newcommand{\cX}{\ensuremath{\mathcal{X}}}
\newcommand{\cA}{\ensuremath{\mathcal{A}}}
\newcommand{\cZ}{\ensuremath{\mathcal{Z}}}
\newcommand{\cK}{\ensuremath{\mathcal{K}}}
\newcommand{\loc}{\ensuremath{\text{loc}}}
\newcommand{\vrho}{\ensuremath{\varrho}}
\newcommand{\vphi}{\ensuremath{\varphi}}
\newcommand{\vtheta}{\ensuremath{\vartheta}}
\newcommand{\Span}{\ensuremath{\mathrm{Span}\!}}
\newcommand{\weak}{\rightharpoonup}
\newcommand{\weakstar}{\overset{\star}\rightharpoonup}
\newcommand{\tH}{\ensuremath{H_D^1}}
\newtheorem{thm}{Theorem}[section]
\newtheorem{lem}[thm]{Lemma}
\newtheorem{cor}[thm]{Corollary}
\newtheorem{rem}[thm]{Remark}
\newtheorem{defi}[thm]{Definition}
\begin{document}

\title[Stochastic bidomain model]
{Stochastically forced cardiac bidomain model}

\author[M. Bendahmane]{M. Bendahmane}
\address[Mostafa Bendahmane]
{\newline Institut de Math\'ematiques de Bordeaux UMR CNRS 525
\newline Universit\'e Victor Segalen Bordeaux 2
\newline F-33076 Bordeaux Cedex, France}
\email[]{mostafa.bendahmane@u-bordeaux.fr}

\author[K. H. Karlsen]{K. H. Karlsen}
\address[Kenneth Hvistendahl Karlsen]
{
\newline Department of mathematics
\newline University of Oslo
\newline P.O. Box 1053,  Blindern, N--0316 Oslo, Norway
} 
\email[]{kennethk@math.uio.no}

\subjclass[2010]{Primary: 60H15, 35K57; Secondary: 35M10, 35A05, 92C30}
% 60H15 Stochastic partial differential equations
% 35K57 Reaction-diffusion equations
% 35M10 Equations of mixed type
% 35A05 General existence and uniqueness theorems
% 92C30 Physiology (general)

\keywords{Stochastic partial differential equation, reaction-diffusion system, 
degenerate, weak solution, existence, uniqueness, 
bidomain model, cardiac electric field}

\thanks{This work was supported by the Research Council 
of Norway (project 250674/F20).}

\date{\today}

\begin{abstract}
The bidomain system of degenerate reaction-diffusion 
equations is a well-established spatial model of electrical activity in cardiac tissue, 
with ``reaction" linked to the cellular action potential 
and ``diffusion" representing current flow between cells.
The purpose of this paper is to introduce  a ``stochastically forced" version 
of the bidomain model that accounts for various random effects. 
We establish the existence of martingale (probabilistic weak) solutions 
to the stochastic bidomain model. The result is proved by means of an auxiliary 
nondegenerate system and the Faedo-Galerkin method. 
To prove convergence of the approximate solutions, we use the stochastic 
compactness method and Skorokhod-Jakubowski a.s.~representations. 
Finally, via a pathwise uniqueness result, we conclude that the martingale 
solutions are pathwise (i.e., probabilistic strong) solutions.
\end{abstract}

\maketitle
\tableofcontents

%%%%%%%%%%%%%%%%%%%%%%%%%%%%%%
%%%%%%%%%%%%%%%%%%%%%%%%%%%%%%
\section{Introduction}

\subsection{Background}
Hodgkin and Huxley \cite{Hodgkin:1952aa} introduced 
the first mathematical model for the propagation of electrical 
signals along nerve fibers. This model was later 
tweaked to describe assorted phenomena in biology. 
Similar to nerve cells, conduction of electrical signals in cardiac tissue rely 
on the flow of ions through so-called ion channels in the cell membrane. 
This similarity has led to a number of cardiac models based 
on the Hodgkin--Huxley formalism \cite{Clayton:2008aa,Colli-Franzone:2014aa,keenersneyd,Qu:2014aa,Rudy:2006aa,Sundnes:2006aa}.  
Among these is the \emph{bidomain model} \cite{tung}, which is regarded 
as an apt spatial model of the electrical properties of 
cardiac tissue \cite{Colli-Franzone:2014aa,Sundnes:2006aa}. 

The bidomain equations result from the principle of conservation of current 
between the intra- and extracellular domains, followed by a 
homogenization process of the cellular model defined on a periodic 
structure of cardiac tissue (see, e.g., \cite{Colli-Franzone:2014aa}). 
The bidomain model can be viewed as a PDE system, consisting of 
a degenerate parabolic (reaction-diffusion) PDE for the transmembrane potential 
and an elliptic PDE for the extracellular potential. These PDEs are supplemented 
by a nonlinear ODE system for the conduction dynamics of the ion channels.  
There are many membrane models of cardiac cells, differing in their complexity 
and in the level of detail with which they represent the 
biology (see \cite{Clayton:2008aa} for a review). 
Herein we will utilize a simple model for voltage-gated ion channels \cite{MS:Ion}.

The idiom ``bidomain''  reflects that the intra- and 
extracellular tissues are viewed as two superimposed anisotropic 
continuous media, with different longitudinal 
and transversal conductivities. If these conductivities
are equal, then we have the so-called monodomain model (elliptic PDE reduces 
to an algebraic equation). The degenerate structure of the bidomain PDE system 
is due to the anisotropy of cardiac tissue \cite{Bend-Karl:cardiac,Colli4}. 
Solutions exhibit discontinuous-like propagating 
excitation fronts. This, together with strongly varying time 
scales, makes the system difficult to solve by numerical methods.

The bidomain model is a deterministic system. 
This means that at each moment in time, the solution can 
be inferred from the prescribed data. This is at variance with 
several phenomena happening at the microscopic (cellular) 
and macroscopic (heart/torso) scales, where respectively 
channel noise and external random perturbations acting 
in the torso can play important roles. At the macroscopic level, the 
ECG signal, a coarse-grained representation of the electrical activity 
in the heart, is often contaminated by noise. One source for this noise 
is the fluctuating environment of the heart. 
In \cite{Lerma:2007aa}, the authors argue that such randomness 
cannot always be suppressed. Occasionally deterministic 
equations give qualitatively incorrect results, and it is 
important to quantify the nature of the noise and choose an appropriate model incorporating randomness.
 
At the cellular level, the membrane potential is due to disparities 
in ion concentrations (e.g., sodium, calcium, potassium) 
across the cell membrane. The ions move through the cell membrane due to
random transitions between open and close states of the ion channels. 
The dynamics of the voltage potential reflect the aggregated behaviour 
of the individual ion channels, whose conformational changes 
control the conductance of each ionic current.
The profound role of channel noise in excitable cells is summarised 
and discussed in \cite{Goldwyn:2011aa}. Faithful modeling of channel noise 
gives raise to continuous-time Markov chains with 
voltage-dependent transition probabilities. In the limit of infinitely 
many ion channels, these models lead to deterministic
Hodgkin--Huxley type equations. To capture channel noise, 
an alternative (and computationally much simpler) approach is to add 
well-placed stochastic terms to equations of the 
Hodgkin--Huxley type \cite{Goldwyn:2011aa,Laing-Lord:2010}.
Indeed, recent studies (see \cite{Goldwyn:2011aa} for a synthesis) 
indicate that this approach can give an accurate 
reproduction of channel fluctuations. For work specifically devoted to 
cardiac cells, see \cite{Dvir:2013aa,Lerma:2007aa,Qu:2014aa}.

\subsection{Deterministic bidomain equations}
Fix a final time $T>0$ and a bounded open
subset $\Omega \subset \R^3$ representing the 
heart (cf.~Section \ref{sec:model}). 
Roughly speaking, the bidomain equations result from 
applying Ohm's electrical conduction law and the 
continuity equation (conservation of electrical charge) 
to the intracellular and extracellular domains.  
Let $J_i$ and $J_e$ denote, respectively, the 
current densities in the intracellular and extracellular 
domains. Moreover, denote by $I_m$ the membrane 
current per unit volume and by $I_i,I_e$ the 
injected stimulating currents. The continuity equations are
\begin{equation}\label{eq:cont_eqs}
	\nabla \cdot J_i = -I_m + I_i, \quad 
	\nabla \cdot J_e = I_m + I_e.
\end{equation}
The negative sign in the first equation reflects that the 
current leaving the intracellular domain is positive.
We assume that the intracellular and extracellular current densities
can be written in terms of potentials $u_i,u_e$ as follows:
$$
J_i = - M_i \nabla u_i, \qquad 
J_e = - M_e \nabla u_e, 
$$
where $M_i,M_e$ are the intracellular 
and extracellular conductivity tensors. The transmembrane potential $v$ 
is defined as $v:=u_i-u_e$. Hence, the continuity 
equations \eqref{eq:cont_eqs} become
\begin{equation}\label{eq:cont_eqs2}
	-\nabla \cdot \left(M_i \nabla u_i\right) = -I_m + I_i, \quad 
	-\nabla \cdot \left(M_e\nabla u_e\right) = I_m + I_e.
\end{equation}

By adding the equations in \eqref{eq:cont_eqs2}, we obtain
\begin{equation}\label{eq:cont_eqs3}
	-\nabla \cdot \left ( (M_i+M_e)\nabla u_e\right)
	-\nabla \cdot \left (M_i \nabla v\right) 
	= I_i+I_e \quad \text{in $\Omega\times (0,T)$}.
\end{equation}
The membrane current $I_m$ splits into a capacitive 
current $I_c$, since the cell membrane acts as a capacitor,
and an ionic current, due to the flowing of ions through 
different ion channels (and also pumps/exchangers):
\begin{equation}\label{eq:mcurrent}
	I_m= \chi_m\left(I_c + \Iion\right), \qquad 
	I_c= c_m \frac{\partial v}{\partial t},\qquad
	\Iion = \Iion(v,w),
\end{equation}
where $\chi_m$ is the ratio of membrane surface area to 
tissue volume and $c_m>0$ is the (surface) capacitance 
of the membrane per unit area. The (nonlinear) function $\Iion(v,w)$ 
represents the ionic current per unit surface area, which 
depends on the transmembrane potential $v$ and a vector $w$ of ionic 
(recovery, gating, concentrations, etc.)~variables.  
A simplified model, frequently used for analysis, assumes that the 
functional form of $\Iion$ is a cubic polynomial in $v$. 
The ionic variables $w$ are governed by an ODE system,
\begin{equation}\label{eq:ODEintro}
	\frac{\partial w}{\partial t} = H(v,w) 
	\quad \text{in $\Omega\times (0,T)$},
\end{equation}
where, as alluded to earlier, various membrane models exist 
for cardiac cells, giving raise to different choices of $H$ (and $\Iion$). 
Inserting \eqref{eq:mcurrent} into \eqref{eq:cont_eqs2}, we arrive at
\begin{equation}\label{eq:veq1}
	\chi_m c_m \frac{\partial v}{\partial t}
	-\nabla \cdot \left(M_i (\nabla (v+ u_e)\right) +\chi_m \Iion(v,w)=I_i
	\quad \text{in $\Omega\times (0,T)$}.
\end{equation}
The system \eqref{eq:cont_eqs3}, \eqref{eq:ODEintro}, \eqref{eq:veq1} 
is sometimes referred to as the \textit{parabolic-elliptic form} of 
the bidomain model, as it contains a parabolic PDE \eqref{eq:veq1} for the 
transmembrane potential $v$ and an elliptic PDE \eqref{eq:cont_eqs3} 
for the extracellular potential $u_e$. The bidomain equations 
are closed by specifying initial conditions for $v,w$ 
and boundary conditions for $u_i, u_e$. Electrically isolated 
heart tissue, for example, leads to zero flux boundary conditions.   

Herein we will rely on a slightly different form of the 
bidomain model, obtained by inserting \eqref{eq:mcurrent} 
into both equations in \eqref{eq:cont_eqs2}: 
\begin{equation}\label{eq:veq2}
	\begin{split}
		& \chi_m c_m \frac{\partial v}{\partial t}
		-\nabla \cdot \left(M_i \nabla u_i\right) +\chi_m \Iion(v,w)=I_i
		\quad \text{in $\Omega\times (0,T)$},
		\\
		&\chi_m c_m \frac{\partial v}{\partial t}
		+\nabla \cdot \left(M_e \nabla u_e\right) +\chi_m \Iion(v,w)=-I_e
		\quad \text{in $\Omega\times (0,T)$}.
	\end{split}
\end{equation}
Consisting of two (degenerate) parabolic PDEs, the 
system \eqref{eq:ODEintro}, \eqref{eq:veq2} is occasionally referred to 
as the \textit{parabolic-parabolic form} of the bidomain model.  
On the subject of well-posedness, i.e., existence, 
uniqueness, and stability of properly defined solutions, we 
remark that standard theory for parabolic-elliptic 
systems does not apply naturally. 
The main reason is that the anisotropies of the intra- and 
extracellular domains differ, entailing the degenerate structure of the system. 
Moreover, a maximum principle is not available. That being the case, a number of works 
\cite{Andreianov:2010uq,Bend-Karl:cardiac,Boulakia:2008aa,Bourgault:2009aa,Colli-Franzone:2014aa,Colli4,Giga:2018aa,Kunisch:2013aa,Veneroni:2009aa} have recently 
provided well-posedness results for the bidomain model, applying differing 
solution concepts and technical frameworks.

\subsection{Stochastic model \& main results}
The purpose of the present paper is to introduce 
and analyze a bidomain model that accounts for 
random effects (noise), by way of a few well-placed stochastic terms. 
The simplest way to insert randomness is to add Gaussian 
white noise to one or more of the ionic ODEs \eqref{eq:ODEintro}, 
leading to a system of (It\^{o}) stochastic 
differential equations (SDEs):
\begin{equation}\label{eq:introSDE}
	dw  = H(v,w)\dt + \alpha \dW^w,
\end{equation}
where $W^w$ is a cylindrical Wiener process, with noise amplitude $\alpha$. 
Formally, we can think of $\alpha\, dW^w$ as 
$\sum_{k\ge1}\alpha_k \, d W_k^w(t)$, where $\{W_k^w\}_{k\ge 1}$ 
is a sequence of independent 1D Brownian 
motions and $\{\alpha_k\}_{k\ge 1}$ 
is a sequence of noise coefficients. Interpreting $w$ as gating 
variables representing the fraction of open channel 
subunits of varying types, in \cite{Goldwyn:2011aa} this type of 
noise is referred to as \textit{subunit noise}. We will allow for 
subunit noise in our model, assuming for simplicity that the ionic 
variable $w$ is a scalar and that the noise amplitude depends on 
the transmembrane potential $v$, $\alpha=\alpha(v)$ (multiplicative noise).
We will also introduce fluctuations into the bidomain system by replacing 
the PDEs \eqref{eq:veq2} with the (It\^{o}) stochastic 
partial differential equations (SPDEs)
\begin{equation}\label{eq:veq-noise}
	\begin{split}
		& \chi_m c_m dv
		-\nabla \cdot \left(M_i \nabla u_i\right)\dt +\chi_m \Iion(v,w)\dt
		=I_i\dt+\beta \dW^v
		\\
		&\chi_m c_m dv
		+\nabla \cdot \left(M_e \nabla u_e\right)\dt +\chi_m \Iion(v,w)\dt
		=-I_e\dt+\beta \dW^v,
	\end{split}
\end{equation}
where $W^v$ is a cylindrical Wiener process (independent of $W^w$), with 
noise amplitude $\beta$. Adding a stochastic term 
to the equation for the membrane potential $v$ 
is labeled \textit{current noise} in \cite{Goldwyn:2011aa}. 
Current noise represents the aggregated effect of the random 
activity of ion channels on the voltage dynamics. 
Allowing the noise amplitude in \eqref{eq:veq-noise} 
to depend on the membrane voltage $v$, we arrive 
at equations with so-called 
\textit{conductance noise} \cite{Goldwyn:2011aa}. 
The nonlinear term $\Iion(v,w)$ accounts for the total conductances of various ionic currents, and conductance noise pertains 
to adding ``white noise" to the deterministic 
values of the conductances, i.e., 
replacing $\Iion$ by $\Iion+ \hat\beta(v)\, \frac{dW_v}{dt}$, 
for some function $\hat \beta$. Herein we include this 
case by permitting $\beta$ in \eqref{eq:veq-noise} 
to depend on the voltage variable $v$, $\beta=\beta(v)$.

Our main contribution is to establish the existence of properly 
defined solutions to the SDE-SPDE system \eqref{eq:introSDE}, \eqref{eq:veq-noise}.
From the PDE perspective, we are searching for weak solutions 
in a certain Sobolev space ($H^1$). From the probabilistic point of view, we are considering martingale solutions, sometimes also referred to as weak solutions. The notions of weak \& strong probabilistic solutions have 
different meaning from weak \& strong solutions in the PDE literature. 
If the stochastic elements are fixed in advance, we speak 
of a strong (or pathwise) solution. 
The stochastic elements are collected in a stochastic basis 
$\bigl(\Omega, \cF, \Set{\cFt}_{t\in [0,T]}, P,W\bigr)$, 
where $W=(W_w,W_v)$ are cylindrical Wiener processes adapted to the 
filtration $\Set{\cFt}_{t\in [0,T]}$. 
Whenever these elements constitute a part of the unknown solution, the relevant notion is that of a martingale solution. The connection between weak and strong solutions to It\^{o} equations is exposed in the famous 
Yamada-Watanabe theorem, see, e.g., \cite{Ikeda:1981aa,Kurtz:2014aa,Prevot:2007aa}. We reserve the name \textit{weak martingale solution} for solutions that are weak in the PDE sense as well as being probabilistic weak.

We will prove that there exists a weak martingale 
solution to the stochastic bidomain system. Motivated by the 
approach in \cite{Bend-Karl:cardiac} (see also \cite{Boulakia:2008aa}) 
for the deterministic system, we use the Faedo-Galerkin 
method to construct approximate solutions, based on an 
auxiliary nondegenerate system obtained by adding 
$\eps d u_i$ and $- \eps du_e$ respectively to 
the first and second equations in \eqref{eq:veq-noise} ($\eps$ is a 
small positive parameter). The stochastic compactness method is put to 
use to conclude subsequential convergence of the approximate solutions. 

Indeed, we first apply the It\^{o} chain rule to derive some basic
a priori estimates. The combination of multiplicative noise and the 
specific structure of the system makes these estimates 
notably harder to obtain than in the deterministic case. 
The a priori estimates lead to strong compactness of 
the approximations in the $t,x$ variables (in the deterministic 
context \cite{Bend-Karl:cardiac}). In the stochastic setting, there is an 
additional (probability) variable $\omega\in D$ in which strong 
compactness is not expected. Traditionally, one handles this issue by arguing for 
weak compactness of the probability laws of the approximate 
solutions, via tightness and Prokhorov's theorem \cite{Ikeda:1981aa}. 
The ensuing step is to construct a.s.~convergent versions of 
the approximations using the Skorokhod representation theorem.
This theorem supplies new random variables on a new probability 
space, with the same laws as the original variables, converging almost surely. 
Equipped with a.s.~convergence, we are able to 
show that the limit variables constitute a weak martingale solution. 
Finally, thanks to a uniqueness result and the Gy\"ongy-Krylov 
characterization of convergence in probability \cite{Gyong-Krylov}, 
we pass \textit{\`{a} la} Yamada-Watanabe \cite{Ikeda:1981aa} 
from martingale to pathwise (probabilistic strong) solutions. 

Martingale solutions and the stochastic compactness method 
have been harnessed by many authors for 
different classes of SPDEs, see e.g.~\cite{Bensoussan:1995aa,Bessaih:1999aa,DaPrato:2014aa,Debussche:2011aa,Flandoli:2008vn,Flandoli:1995aa,Glatt-Holtz:2014aa,Hausenblas:2013aa,Hofmanova:2013aa,Mohammed:2015ab,Razafimandimby:2015aa,Sango:2010aa} for problems related to fluid mechanics.  An important step in the compactness method 
is the construction of almost surely convergent versions of processes that converge weakly. 
This construction dates back to the work of Skorokhod, for processes taking values in 
a Polish (complete separable metric) space \cite{DaPrato:2014aa,Ikeda:1981aa}. 
The classical Skorokhod theorem is befitting for the transmembrane  variable $v$, but not the 
intracellular and extracellular variables $u_i,u_e$. This fact is a manifestation of the 
degenerate structure of the bidomain system, necessitating the use of 
a Bochner-Sobolev space equipped with the weak topology. 
We refer to Jakubowski \cite{Jakubowski:1997aa} for a recent
variant of the representation theorem that applies to so-called 
quasi-Polish spaces, specifically allowing for separable Banach 
spaces equipped with the weak topology, as well as spaces of weakly continuous 
functions with values in a separable Banach space. 
We refer to \cite{Breit:2016aa,Brzezniak:2013aa,Brzezniak:2013ab,Brzezniak:2011aa,Ondrejat:2010aa,Smith:2015aa} for  works making use of 
Skorokhod-Jakubowski a.s.~representations.  

The remaining part of this paper is organized as follows: The stochastic 
bidomain model is presented in Section \ref{sec:model}. Section \ref{sec:stoch} 
outlines the underlying stochastic framework and list 
the conditions imposed on the ``stochastic" data of the model. 
Solution concepts and the accompanying main results are collected in 
Section \ref{sec:defsol}. The approximate (Faedo-Galerkin) solutions 
are constructed in Section \ref{sec:approx-sol}. In Section \ref{sec:convergence} 
we establish several a priori estimates and prove convergence of the 
approximate solutions, thereby providing an existence result for weak martingale solutions. 
A pathwise uniqueness result is established in Section \ref{sec:uniq}, which 
is then used in Section \ref{sec:pathwise} to upgrade martingale 
solutions to pathwise solutions.

%%%%%%%%%%%%%%%%%%%%%%%%%%%%%%
%%%%%%%%%%%%%%%%%%%%%%%%%%%%%%
\section{Stochastic bidomain model}\label{sec:model}
The spatial domain of the heart is given by a bounded open set 
$\Omega \subset \R^3$ with piecewise 
smooth boundary $\partial \Om$. This three-dimensional 
slice of the cardiac muscle is viewed as two superimposed
(anisotropic) continuous media, representing 
the intracellular ($i$) and extracellular ($e$) tissues. 
The tissues are connected at each point via the cell membrane. 
In our earlier outline of the (deterministic) bidomain model, 
we saw that the relevant quantities are the \textit{intracellular} and 
\textit{extracellular} potentials 
$$
u_i=u_i(x,t) \quad \text{ and }\quad u_e=u_e(x,t), 
\qquad  (x,t)\in \Om_{T}:=\Om\times(0,T),
$$ 
as well as the \textit{transmembrane  potential} $v:=u_i-u_e$ (defined in $\Om_{T}$).

The conductivities of the intracellular and extracellular 
tissues are encoded in anisotropic matrices 
$M_i=M_i(x)$ and $M_e=M_e(x)$. Cardiac tissue is composed of 
fibers, with the conductivity being higher along the fibers 
than in the cross-fibre direction. The cardiac fibers are organized 
in sheets of varying surface orientation, giving raise to three 
principal directions for conduction: parallel to the fibers, perpendicular to the fibers 
but parallel to the sheet, and perpendicular to the sheet.
When the fibers rotate from bottom to top, we have 
cardiac tissue with rotational anisotropy. 
Without rotation (axisymmetric anisotropy), the conductivity 
tensors take the form
\begin{equation}\label{eq:Mj-def}
	M_j=\sigma_j^t I +\left(\sigma_j^l-\sigma_j^t\right) aa^T,
	\quad j=i,e,
\end{equation}
where $a=a(x)$ is a unit vector giving the fiber direction and 
$\sigma_j^l =\sigma_j^l(x)$, $\sigma_j^t=\sigma_j^t(x)$ are coefficients 
describing respectively the intra- and extracellular conductivities along 
and transversal to the fibre direction. Whenever the fibers are aligned with the 
axes, $M_i$ and $M_e$ are diagonal matrices. For more 
details, see, e.g., \cite{Coli-fibers,Colli-Franzone:2014aa,Sundnes:2006aa}.
In this paper, we do not exploit structural properties of 
cardiac tissue, that is, we assume only that $M_i$ and $M_e$ 
are general (bounded, positive definitive) matrices, cf.~\eqref{matrix} below. 

The stochastic bidomain model contains two 
nonlinearly coupled SPDEs involving the potentials 
$u_i,u_e,v$. These stochastic reaction-diffusion equations are further 
coupled to a nonlinear SDE for the gating (recovery) variable $w$. 
The dynamics of $(u_i,u_e,v,w)$ is governed by the equations
\begin{align}\label{S0}
	\begin{split}
		&\chi_m c_m d v
		-\Div \bigl(M_i \Grad u_i \bigr) \dt
		+\chi_m \Ion(v,w)\dt = I_i\dt+\beta(v) \dW^v 
		\quad \text{in $\Om_{T}$},\\
		& \chi_m c_m d v
		+\Div\bigl(M_e\Grad u_e\bigr) \dt
		+\chi_m \Ion(v,w)\dt  = -I_e\dt+\beta(v) \dW^v
		\quad \text{in $\Om_{T}$},\\
		& d w = H(v,w)\dt + \alpha(v)\dW^w 
		\quad  \text{in $\Om_{T}$},
	\end{split}
\end{align}
where $c_m>0$ is the surface capacitance 
of the membrane, $\chi_m$ is the surface-to-volume ratio, and 
$I_i,I_e$ are stimulation currents. In \eqref{S0}, randomness is 
represented by cylindrical Wiener processes 
$W^v,W^w$ with nonlinear noise amplitudes $\beta,\alpha$ 
(cf.~Section \ref{sec:stoch} for details).

We impose initial conditions on the 
transmembrane potential and the gating variable:
\begin{equation}\label{S-init}
	v(0,x)=v_{0}(x),
	\qquad 
	w(0,x)=w_0(x), 
	\qquad 
	x \in \Om.
\end{equation}
The intra- and extracellular domains are often assumed to 
be electrically isolated, giving raise to zero flux (Neumann type) 
boundary conditions on the potentials $u_i, u_e$ 
\cite{Colli-Franzone:2014aa,Sundnes:2006aa}. 
From a mathematical point of view, Dirichlet and mixed 
Dirichlet-Neumann type boundary conditions are utilized  in \cite{Andreianov:2010uq} 
and \cite{Bend-Karl:cardiac}, respectively. Herein we partition the 
boundary $\partial\Om$ into regular parts $\Sigma_N$ and $\Sigma_D$ 
and impose the mixed boundary conditions ($j=i,e$)
\begin{equation}\label{S-bc}
	\begin{split}
		& \bigl(M_j(x)\Grad u_j)\cdot\n =0 
		\quad \text{on $\Sigma_{N,T}:=\Sigma_N\times(0,T)$},\\
		& u_j=0 
		\qquad \qquad \qquad \,\, \, \,\text{on $\Sigma_{D,T}:=\Sigma_D\times(0,T)$},
\end{split}\end{equation}
where $\n$ denotes the exterior unit normal to the ``Neumann 
part" $\Sigma_N$ of the boundary, which is defined a.e.~with respect to the 
two-dimensional Hausdorff measure $\mathcal{H}^2$ on $\partial\Om$.

Observe that the equations in \eqref{S0} are invariant under the 
change of $u_i$ and $u_e$ into $u_i+k,u_e+k$, for any $k\in \R$. 
Hence, unless Dirichlet conditions are imposed 
somewhere ($\Sigma_D \neq\emptyset$), the bidomain 
system determines the electrical potentials only up to an additive constant. 
To ensure a unique solution in the case $\Sigma_D:=\emptyset$ 
($\partial\Om=\Sigma_N$), we may impose the normalization condition
$$
\int_{\Om} u_e(x,t)\dx=0, \qquad t\in(0,T).
$$
To avoid making this paper too long, we assume that $\Sigma_D \neq\emptyset$. 
Moreover, we stick to homogenous boundary conditions, although we could have 
replaced the right-hand sides of \eqref{S-bc} by 
sufficiently (Sobolev) regular functions.

Regarding the ``membrane" functions $\Ion$ and $H$, we have 
in mind the fairly uncluttered FitzHugh-Nagumo model \cite{FH,Nagumo}. This 
is a simple choice for the membrane kinetics that is 
often used to avoid difficulties arising from a large number of coupling variables. 
The model is specified by
\begin{align*}
	& \Ion(v,w)=-v\left(v-a\right)(1-v) + w,
	\\ &
	H(v,w) =\epsilon (\kappa v-\gamma w),
\end{align*}
where the parameter $a$ represents the threshold for excitation, $\epsilon$ 
represents excitability, and $\kappa, \gamma, \delta$ are parameters that 
influence the overall dynamics of the system.  For background material 
on cardiac membrane models and their general mathematical structure, we refer 
to the books \cite{Colli-Franzone:2014aa,keenersneyd,Sundnes:2006aa}.

In an attempt to simplify the notation, we 
redefine $M_i, M_e$ as
$$
\frac{1}{\chi_m c_m} M_i, \quad 
\frac{1}{\chi_m c_m} M_e, \quad
$$ 
and set
$$
I := \frac{1}{c_m} \Iion, \quad \eta :=\frac{1}{\chi_m c_m} \beta.
$$
We also assume $I_i,I_e \equiv 0$, as these source terms do 
not add new difficulties. The resulting stochastic 
bidomain system becomes
\begin{equation}\label{S1}
	\left \{
	\begin{split}
		&d v -\Div\bigl(M_i \Grad u_i \bigr) \dt
		+ I(v,w)\dt = \eta(v) \dW^v \quad \text{in $\Om_{T}$},\\
		& d v +\Div\bigl(M_e\Grad u_e\bigr) \dt
		+ I(v,w)\dt  =  \eta(v) \dW^v \quad \text{in $\Om_{T}$},\\
		& d w=H(v,w)\dt +\sigma(v) \dW^w \quad  \text{in $\Om_{T}$}, 
	\end{split} \right.
\end{equation}
along with the initial and boundary conditions \eqref{S-init} and \eqref{S-bc}.  
The cylindrical Wiener processes 
$W^v,W^w$ in \eqref{S1} are defined in Section \ref{sec:stoch}.

With regard to the conductivity matrices in \eqref{S1}, we assume
the existence of positive constants $m,M$ such that for $j=i,e$,
\begin{equation}\label{matrix}
	M_j\in L^\infty, 
	\quad 
	m\abs{\xi}^2 \le \xi^\top M_j(x) \, \xi  \le M \abs{\xi}^2, 
	\quad 
	\text{$\forall \xi\in \R^3$, for a.e.~$x$.}
\end{equation}

Motivated by the discussion above on membrane models, we impose 
the following set of assumptions on the functions $I,H$ in \eqref{S1}:
\begin{itemize}
	\item Generalized FitzHugh-Nagumo model {\rm (\textbf{GFHN})}:
	$$
	I(v,w)=I_1(v)+I_2(v)w,\qquad H(v,w)=h(v)+c_{H,1} w,
	$$
	where $I_1,I_2, h\in C^1(\R)$ and for all $v\in \R$,
	\begin{align*}
		& \abs{I_1(v)}\le c_{I,1}\left(1+ \abs{v}^3\right), 
		\quad 
		I_1(v)v \ge \underline{c}_I \abs{v}^4 - c_{I,2} \abs{v}^2,
		\\ & I_2(v)=c_{I,3}+c_{I,4}v, \quad 
		\abs{h(v)}^2\le c_{H,2}\left(1 + \abs{v}^2\right),
	\end{align*}
	for some positive constants $c_{I,1},c_{I,2},c_{I,3},c_{I,4}, c_{H,1},c_{H,2}$ 
	and $\underline{c}_I>0$.\\
\end{itemize}

We end this section with a remark about 
the so-called \textit{monodomain model}.

\begin{rem}
The stochastic bidomain model simplifies 
if the anisotropy ratios $\sigma_i^l/\sigma_i^t$
and $\sigma_e^l/ \sigma_e^t$ are equal, cf.~\eqref{eq:Mj-def}.
Indeed, suppose $\sigma_i^l / \sigma_i^t=\sigma_e^l/\sigma_e^t$ 
and moreover $\sigma_i^t=\lambda \sigma_e^t$ for some 
constant $\lambda>0$ (and thus $\sigma_i^l=\lambda \sigma_e^l$). 
Then, in view of \eqref{eq:Mj-def}, it follows 
that $M_i =\lambda M_e$, and hence the first two 
equations in \eqref{S1} can be combined into a single 
equation; thereby arriving at the stochastic monodomain system
\begin{align}\label{S1-mono}
	\begin{split}
		&d v -\Div\left(M \Grad v \right) \dt
		+ I(v,w)\dt = \eta(v) \, d W^v \quad \text{in $\Om_{T}$},\\
		& d w=H(v,w)\dt +\sigma(v) d W^w \quad  \text{in $\Om_{T}$}, 
	\end{split}
\end{align}
where $M:=\frac{\lambda}{1+\lambda} M_i$. The system \eqref{S1-mono} 
is a significant simplification of the bidomain model \eqref{S1}, and even 
though the assumption of equal anisotropy ratios is very strong, the 
monodomain model is adequate in certain situations \cite{Colli3}.
\end{rem}

%%%%%%%%%%%%%%%%%%%%%%%%%%%%%%
%%%%%%%%%%%%%%%%%%%%%%%%%%%%%%
\section{Stochastic framework}\label{sec:stoch}
To define the cylindrical Wiener processes $W=W^v,W^w$ and the stochastic 
integrals appearing in \eqref{S1}, we need to briefly recall some basic concepts 
and results from stochastic analysis. For a detailed account we 
refer to \cite{DaPrato:2014aa,Prevot:2007aa} (see also \cite{Ikeda:1981aa,Karatzas}). 

We consider a complete probability space $(D,\cF, P)$, along 
with a complete right-continuous filtration $\Set{\cFt}_{t\in [0,T]}$. 
Without loss of generality, we assume that the $\sigma$-algebra 
$\cF$ is countably generated. Let $\bB$ be a separable Banach 
space (equipped with the Borel $\sigma$-algebra $\cB(\bB)$). Then a $\bB$-valued 
random variable $X$ is a measurable mapping from $(D,\cF, P)$ to 
$(\bB,\cB(\bB))$, $D\ni \omega\mapsto X(\omega)\in \bB$. 
The expectation of a random variable $X$ is denoted 
$\E[X]:=\int_{D} X\, dP$. We use the abbreviation 
a.s.~(almost surely) for $P$-almost every $\omega\in D$.
The collection of all (equivalence classes of) $\bB$-valued random variables 
is denoted by $L^1(D,\cF,P)$. Equipped with the norm
$\norm{X}_{L^1(D,\cF,P)}=\E\left[ \norm{X}_B\right]$ it becomes 
a Banach space. For $p>1$ we  define $L^p(D,\cF,P)$ similarly, with norm 
\begin{align*}
	&\norm{X}_{L^p(D,\cF,P)}
	:=\left(\E\left[ \norm{X}_B^p\right]\right)^{\frac{1}{p}} \quad  (p<\infty),
	\\ & 
	\norm{X}_{L^\infty(D,\cF,P)}:= \sup_{\omega\in D} \norm{X(\omega)}_B.
\end{align*}
In this paper we deal with time-dependent functions and processes, so 
$\bB$ is typically an ``evolutionary" (Bochner) space like 
$\bB=L^{q_t}((0,T);L^{q_x}(\Om))$, $q_t,q_x\in [1,\infty]$.

A stochastic process $X=\Set{X(t)}_{t\in [0,T]}$ is a collection 
of $\bB$-valued random variables $X(t)$.  We write $X(\omega,t)$ 
instead of $X(t)(\omega)$ for $(\omega,t)\in D\times [0,T]$. We 
say that $X$ is \textit{measurable} if the map $X:D\times [0,T]\to \bB$ 
is (jointly) measurable from $\cF \times \cB([0,T])$ to $\cB(\bB)$.
The paths $t \to X (\omega,t)$ of a (jointly) measurable 
process $X$ are automatically Borel measurable functions. 
Of course, it is the use of filtrations and adaptivity that differentiates the 
theory of stochastic processes from the one of functions on product spaces. 
A stochastic process $X$ is \textit{adapted} if $X(t)$ is $\cF_t$ measurable for 
all $t\in [0,T]$. Let $\Set{W_k}_{k=1}^\infty$ 
be a sequence of independent one-dimensional 
Brownian motions adapted to the filtration
$\Set{\cFt}_{t\in [0,T]}$. We refer to
\begin{equation}\label{eq:stochbasis}
	\cS=\left(D,\cF,\Set{\cFt}_{t\in [0,T]},P,\Set{W_k}_{k=1}^\infty\right)
\end{equation}
as a (Brownian) \textit{stochastic basis}. When a filtration is 
involved there are additional notions of measurability (predictable, optional, progressive) 
that occasionally are more convenient to work with. 
Herein we use the (stronger) notion of a predictable process, but we could 
also have used that of a progressive process.  
A \textit{predictable} process is a $\cP_T\times \cB([0,T])$ measurable map
$D\times [0,T]\to \bB$, $(\omega,t)\mapsto X(\omega,t)$, 
where $\cP_T$ is the predictable $\sigma$-algebra 
on $D\times [0, T]$ associated with $\Set{\cFt}_{t\in [0,T]}$, i.e., 
the $\sigma$-algebra generated by all left-continuous 
adapted processes; $\cP_T$ is also generated by the sets
$$
F\times (s,t], \, F\in \cF_s, \, 0\le s<t\le T, \,  \qquad F\times \Set{0},\, F\in \cF_0.
$$
A predictable process is adapted and measurable. Although the 
converse implication is not true, adaptive processes 
with regular (e.g.~continuous) paths are predictable.

Fix a separable Hilbert space $\U$, equipped with 
a complete orthonormal basis $\Set{\psi_k}_{k\ge 1}$. 
We utilize cylindrical Brownian motions $W$ evolving over $\U$. Informally, 
the are defined by $W:=\sum_{k\ge 1} W_k \psi_k$. 
We have to be a bit more precise, however, since
the series does not converge in $\U$:
$$
\E\left[\norm{\sum_{k=1}^n W_k(t) \psi_k}_{\U}^2\right]=
\sum_{k=1}^n \E\left[\left(W_k(t)\right)^2\right]=n\, t \to \infty, 
\quad \text{as $n\uparrow \infty$.}
$$
Let $\bX$ be a separable Hilbert space with inner product $(\cdot,\cdot)_{\bX}$ and 
norm $\norm{\cdot}_{\bX}$. For the bidomain model \eqref{S1}, a natural choice 
is $\bX=L^2(\Om)$. The vector space of all bounded 
linear operators from $\U$ to $\bX$ is denoted $L(\U,\bX)$. We denote by 
$L_2(\U,\bX)$ the collection of Hilbert-Schmidt operators from $\U$ to $\bX$, i.e., 
$R\in L_2(\U,\bX)\Longleftrightarrow R\in L(\U,\bX)$ and
\begin{equation}\label{def:HS-norm}
	\norm{R}_{L_2(\U,\bX)}^2:= \sum_{k\geq 1}
	\norm{R\psi_k}_{\bX}^2<\infty.
\end{equation}
We turn $L_2(\U,\bX)$ into a Hilbert space by 
supplying it with the inner product 
$$
\left ( \hat R,\tilde R \right)_{L_2(\U,\bX)}
=\sum_{k\ge 1} \left(\hat R \psi_k,\tilde R \psi_k\right)_X, 
\qquad \hat R, \tilde R\in L_2(\U,\bX). 
$$
Returning to the convergence of $W=\sum_{k\ge 1} W_k \psi_k$, it is 
always possible to construct an auxiliary Hilbert 
space $\U_0\supset \U$ for which there exists a Hilbert-Schmidt 
embedding $J:\U\to \U_0$.  Indeed, given a sequence 
$\Set{b_k}_{k\ge 1}\in \ell^2$ with $b_k\neq 0$ 
for all $k$ (e.g., $b_k=1/k$), introduce the space
\begin{equation}\label{eq:U0}
	\U_0=\left\{u=\sum_{k\geq 1} a_k \psi_k: 
	\sum_{k\geq 1}a_k^2b_k^2<\infty\right\},
\end{equation}
and equip it with the norm $\norm{u}_{\U_0}:= 
\left( \sum_{k\geq 1}a_k^2b_k^2\right)^{\frac12}$, 
$u=\sum_{k\geq 1} a_k \psi_k$. Clearly, the 
embedding $J:\U\to \U_0$, $u\mapsto u$, 
is Hilbert-Schmidt. Now, on this larger Hilbert 
space $\U_0$, the infinite series 
$\sum_{k\ge 1} W_k \psi_k$ converges; indeed, as $n\to \infty$,
$$
\E\left[\norm{J\sum_{k=1}^n W_k(t) \psi_k}_{\U_0}^2\right]=
\sum_{k=1}^n \E\left[\left(W_k(t)\right)^2\right]
\norm{J\psi_k}_{\U_0}^2 \to t\norm{J}_{L_2(\U,\U_0)}^2.
$$

To summarize, the stochastic process 
\begin{equation}\label{eq:Wdef}
	W(\omega,t,\cdot):=\sum_{k\ge 1} W_k(\omega,t) \psi_k(\cdot)
\end{equation}
is referred to as a cylindrical Brownian motion evolving over $\U$. The
right-hand side of \eqref{eq:Wdef} converges on the Hilbert space $\U_0$,
with the embedding $\U\subset \U_0$ being Hilbert-Schmidt. 
Via standard martingale arguments, $W$ is a.s.~continuous 
with values in $\U_0$, i.e., $W(\omega,\cdot,\cdot)$ belongs 
to $C([0,T];\U_0)$ for $P$-a.e.~$\omega\in D$, and 
also $L^2(D,\cF,P; C([0,T];\U_0))$. Without loss of generality, we assume 
that the filtration $\Set{\cFt}_{t\in [0,T]}$ is generated by 
$W$ and the initial condition. See \cite{DaPrato:2014aa,Prevot:2007aa} for details.

Given a cylindrical Brownian motion $W$, we can define the It\^{o} stochastic 
integral $\int G dW$ as follows \cite{DaPrato:2014aa,Prevot:2007aa}:
\begin{equation}\label{def:sint}
	\int_0^t G dW=\sum_{k=1}^\infty \int_0^t G_k \dW_k, 
	\qquad G_k := G \psi_k,
\end{equation}
provided the integrand $G$ is a predictable $X$-valued 
process satisfying 
$$
G \in L^2\left(D,\cF,P;L^2((0,T);L_2(\U,\bX))\right).
$$
The stochastic integral \eqref{def:sint} is an 
$\bX$-valued square integrable martingale, satisfying the 
Burkholder-Davis-Gundy inequality
\begin{equation}\label{eq:bdg}
	\E\left[ \sup_{t\in [0,T]} \norm{\int_0^t G dW}_{\bX}^p \right]
	\le C\, \E\left[\left(\int_0^T \norm{G}_{L_2(\U,\bX)}^2 \dt\right)^{\frac{p}{2}} \right],
\end{equation}
where $C$ is a constant depending on $p\ge 1$. 
In terms of the basis $\Set{\psi_k}_{k\ge1}$, \eqref{eq:bdg} reads
$$
\E\left[ \sup_{t\in [0,T]} 
\norm{\sum_{k\ge 1} \int_0^t G_k dW_k}_{\bX}^p \right]
\le C\, \E\left[\left(\int_0^T \sum_{k\ge 1}
\norm{G_k}_{\bX}^2 \dt\right)^{\frac{p}{2}} \right].
$$

For the bidomain model \eqref{S1}, we take $\bX=L^2(\Om)$. 
With this choice, we can give meaning to the stochastic terms
$$
\int_{\Om} \left(\, \int_0^t  \beta(v) \dW\right) \vphi \dx, 
\qquad \text{$(\beta,W)=(\eta,W^v)$ or $(\sigma,W^w)$,}
$$
appearing in the weak formulation of \eqref{S1}, with $\vphi\in L^2(\Om)$. 
Since $W=\sum_{k\ge 1} W_k \psi_k$  
is a cylindrical Brownian motion, we can write
\begin{equation}\label{eq:int-W}
	\begin{split} 
		\int_{\Om} \left(\, \int_0^t  \beta(v) \dW\right) \vphi \dx
		& =\int_{\Om} \left(\, \sum_{k\ge 1}
		\int_0^t  \beta_k(v) \dW\right) \vphi \dx
		\\ & = \sum_{k\ge 1}\int_0^t \int_{\Om} \beta_k(v) \vphi \dx \dW_k,
	\end{split}
\end{equation}
knowing that the series converges in $L^2\left(D,\cF,P;C([0,T])\right)$, where 
$\beta_k(v):=\beta(v)\psi_k$ are real-valued functions. 
Sometimes we denote the right-hand side 
by $\int_0^t  \int_{\Om}  \beta(v) \vphi \dx \dW^v$. 

We need to impose conditions on the noise 
amplitudes $\beta=\eta,\sigma$. For each $v\in L^2(\Om)$, 
we assume that $\beta(v):\U\to L^2(\Om)$ is defined by 
$$
\beta(v)\psi_k=\beta_k(v(\cdot)),  \quad k\ge 1,
$$
for some real-valued functions $\beta_k(\cdot):\R\to \R$ that satisfy 
\begin{equation}\label{eq:noise-cond}
	\begin{split}
		& \sum_{k\ge 1} \abs{\beta_k(v)}^2 \le 
		C_\beta \left(1+ \abs{v}^2\right), \qquad \forall v\in \R,
		\\ 
		&\sum_{k\ge 1} \abs{\beta_k(v_1)-\beta_k(v_2)}^2 
		\le C_\beta\abs{v_1-v_2}^2, \qquad \forall v_1, v_2\in \R,
	\end{split}
\end{equation}
for a constant $C_\beta>0$.  
Thus, $\beta$ becomes a mapping from 
$L^2(\Om)$ to $L_2\left(\U,L^2(\Om)\right)$ and
satisfies, via \eqref{def:HS-norm},
$$
\norm{\beta(v)}_{L_2\left(\U,L^2(\Om)\right)}^2
= \sum_{k\ge 1} \norm{\beta_k(v)}_{L^2(\Om)}^2
=\int_{\Om} \sum_{k\ge 1} \abs{\beta_k(v)}^2 \dx<\infty,
$$ 
and similarly for $\norm{\beta(v_1)-\beta(v_2)}_{L_2\left(\U,L^2(\Om)\right)}^2$. 
To be sure, by way of \eqref{eq:noise-cond}, we have 
\begin{equation}\label{eq:noise-cond2}
	\begin{split}
		&\norm{\beta(v)}_{L_2\left(\U,L^2(\Om)\right)}^2
		\le C_\beta \left(1 + \norm{v}_{L^2(\Om)}^2\right), 
		\quad v\in L^2(\Om),
		\\ & 
		\norm{\beta(v_1)-\beta(v_2)}_{L_2\left(\U,L^2(\Om)\right)}^2
		\le C_\beta \norm{v_1 - v_2}_{L^2(\Om)}^2, \quad
		v_1, v_2 \in L^2(\Om).
	\end{split}
\end{equation}

Let $(\beta,W)=(\eta,W^v)$ or $(\sigma,W^w)$.
Given a predictable process 
$$
v\in L^2\left(D,\cF,P;L^2((0,T);L^2(\Om))\right),
$$
the stochastic integral $\int_0^t  \beta(v) \dW^w$ 
is well-defined, taking values in $L^2(\Om)$. 
By \eqref{eq:noise-cond2},
\begin{align*}
	& \E\left[\abs{\int_{\Om} \left(\, \int_0^t \beta(v)  \dW\right) \vphi \dx}^2\right]
	\le \E\left[\norm{ \int_0^t \beta(v)  \dW}_{L^2(\Om)}^2\right]
	\norm{\vphi}_{L^2(\Om)}^2
	\\ & \quad \le C_\vphi\, \E\left[\int_0^T 
	\norm{\beta(v)}_{L_2(\U,L^2(\Om))}^2 \dt\right]
	\le C_{\vphi,\beta} \, \E\left[\int_0^T 
	\left(1+\norm{v}_{L^2(\Om)}^2\right) \dt \right]
	\\ & \quad 
	\le C_{\vphi,\beta,T} \left(1+ 
	\norm{v}_{L^2\left(D,\cF,P;L^2((0,T);L^2(\Om))\right)}^2\right)<\infty,
	\quad \vphi\in L^2(\Om),
\end{align*} 
where we have also used the 
Cauchy-Schwarz and Burkholder-Davis-Gundy inequalities. 
Hence, \eqref{eq:int-W} makes sense.

\begin{rem}
The condition \eqref{eq:noise-cond} on the noise amplitude allows 
for various additive and multiplicative noises, see 
e.g.~\cite[Example 3.2]{{Glatt-Holtz:2008aa}} for a list of representative examples.

It is possible to allow $\beta=\eta,\sigma$ to be time dependent, $\beta=\beta(t,v)$. 
In this case, $\beta(t):L^2(\Om)\to L_2\left(\U,L^2(\Om)\right)$ 
must satisfy \eqref{eq:noise-cond} for a.e.~$t \in [0,T]$, 
with a constant $C_\beta$ that is independent of $t$.
This does not entail additional effort in the proofs, but for 
simplicity of presentation we suppress the time 
dependency throughout the paper. 
\end{rem}

We conclude this section by collecting a few relevant probability tools. 
Let $\bS$ be a separable Banach (or Polish) space. We denote by $\cB(\bS)$ the family 
of the Borel subsets of $\bS$ and by $\cP(\bS)$ the family of all 
Borel probability measures on $\bS$.  Each random variable $X:D\to \bS$ induces 
a probability measure on $\bS$ via the pushforward $X_\# P:=P\circ X^{-1}$. 
A sequence of probability measures $\Set{\mu_n}_{n\ge1}$ on $(\bS, \cB(\bS))$ is 
tight if for every $\epsilon>0$ there is a compact 
set $\bK_\epsilon\subset \bS$ such that $\mu_n(\bK_\epsilon)>1-\epsilon$ for all $n\ge 1$.

We will construct weak martingale solutions by 
applying the stochastic compactness method to a sequence of approximate solutions. 
In one step of the argument, we show tightness
of the probability laws of the approximations. By the Prokhorov 
theorem, this is equivalent to exhibiting weak compactness  of the laws. 
A sequence $\Set{\mu_n}_{n\ge} \subset \cP(\bS)$ is weakly (narrowly) 
convergent to $\mu\in \cP(\bS)$ as $n\to \infty$ if
$$
\lim_{n\to \infty} \int_{\bS} f \, d\mu_n  =  \int_{\bS} f \, d\mu ,
$$
for every bounded continuous functional $f:\bS\to \R$.

\begin{thm}[Prokhorov]\label{thm:prokhorov}
A sequence $\Set{\mu_n}_{n\ge1} \subset \cP(\bS)$ 
of probability measures is tight if and only 
if $\Set{\mu_n}_{n\ge}$ is relatively weakly compact.
\end{thm}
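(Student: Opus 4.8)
The plan is to establish the two implications separately. Throughout I use that $\bS$ is a complete separable metric space, so that $\cP(\bS)$ with the topology of weak (narrow) convergence is itself metrizable (e.g.\ by the L\'evy--Prokhorov metric); hence ``relatively weakly compact'' may be read sequentially, and it suffices to extract weakly convergent subsequences.

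\emph{Tightness $\Rightarrow$ relative weak compactness.} First I would embed $\bS$ homeomorphically into the Hilbert cube $H=[0,1]^{\N}$, which is legitimate since $\bS$ is separable metrizable, and regard each $\mu_n$ as a Borel probability measure on the compact metric space $H$. Because $C(H)$ is separable, the weak topology on $\cP(H)$ is metrizable, and $\cP(H)$ is weak-$\star$ compact (Riesz representation together with Banach--Alaoglu); thus a subsequence satisfies $\mu_{n_j}\weakstar\mu$ with $\mu\in\cP(H)$. Now I bring in tightness: pick compacts $\bK_m\subset\bS$ with $\mu_n(\bK_m)>1-\tfrac1m$ for all $n$; these sets are still compact, hence closed, in $H$, so the Portmanteau theorem gives $\mu(\bK_m)\ge\limsup_j\mu_{n_j}(\bK_m)\ge1-\tfrac1m$. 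Consequently $\mu\bigl(\bigcup_m\bK_m\bigr)=1$, so $\mu$ restricts to a Borel probability measure on $\bS$. It remains to transfer the convergence back to $\bS$: for $f\in C_b(\bS)$ I extend $f|_{\bK_m}$ (Tietze) to $\tilde f\in C_b(H)$ with $\norm{\tilde f}_\infty=\norm{f}_\infty$, estimate $\abs{\int f\,d\nu-\int\tilde f\,d\nu}\le 2\norm{f}_\infty\,\nu(\bS\setminus\bK_m)$ for $\nu=\mu_{n_j}$ and $\nu=\mu$, let $j\to\infty$ using $\mu_{n_j}\weakstar\mu$ on $H$, and then $m\to\infty$. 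This yields $\mu_{n_j}\to\mu$ weakly in $\cP(\bS)$, so the original sequence is relatively weakly compact.

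\emph{Relative weak compactness $\Rightarrow$ tightness.} Here completeness is essential. Fix $\epsilon>0$. The key claim is that for every $\delta>0$ there is a finite union $U$ of open $\delta$-balls with $\mu_n(U)>1-\epsilon$ for all $n$. If this failed, writing $\bS=\bigcup_{i\ge1}B(x_i,\delta)$ by separability, I could pick for each $m$ an index $n_m$ with $\mu_{n_m}\bigl(\bigcup_{i\le m}B(x_i,\delta)\bigr)\le1-\epsilon$; extracting a weakly convergent subsequence $\mu_{n_{m_\ell}}\to\mu$ and using that $V_m:=\bigcup_{i\le m}B(x_i,\delta)$ is open with $\mu_{n_{m_\ell}}(V_m)\le1-\epsilon$ once $m_\ell\ge m$, the Portmanteau theorem would force $\mu(V_m)\le1-\epsilon$ for every $m$, contradicting $\mu(\bS)=\lim_m\mu(V_m)=1$. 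Applying the claim with $\delta=1/k$ and $\epsilon$ replaced by $\epsilon2^{-k}$ produces finite unions $U_k$ of $(1/k)$-balls with $\mu_n(U_k)>1-\epsilon2^{-k}$; the set $\bK_\epsilon:=\bigcap_{k\ge1}\overline{U_k}$ is closed and totally bounded, hence compact by completeness, and $\mu_n(\bK_\epsilon)\ge1-\sum_{k\ge1}\epsilon2^{-k}=1-\epsilon$ for all $n$, which is precisely tightness.

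\emph{Main obstacle.} The delicate implication is the first one: the weak-$\star$ limit $\mu$ produced on the compactification $H$ could a priori place mass on $H\setminus\bS$, and it is exactly tightness that rules this out via the inner approximation by the $\bK_m$. The accompanying Tietze/truncation estimate that carries weak convergence from $H$ back to $\bS$ is the other place where both separability and the compact sets $\bK_m$ are genuinely used together. The converse implication is comparatively soft, its only subtlety being the diagonal contradiction argument combined with the open-set form of Portmanteau and the use of completeness to upgrade ``closed and totally bounded'' to ``compact''.
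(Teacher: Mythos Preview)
Your proof is correct and follows the standard route to Prokhorov's theorem: compactification in the Hilbert cube for the forward direction, and the diagonal/Portmanteau contradiction for the converse. Both implications are handled carefully, including the point you flag as the main obstacle---that tightness is precisely what prevents the weak-$\star$ limit on $H$ from leaking mass outside $\bS$.

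However, there is nothing to compare against: the paper does not prove this statement. Theorem~\ref{thm:prokhorov} is quoted as a classical result from the literature (with the implicit reference to \cite{DaPrato:2014aa,Ikeda:1981aa}) and is used purely as a tool---first in the discussion preceding Lemma~\ref{lem:tight} to motivate the tightness argument, and later in the proof of that lemma to pass from tightness of $\{\cL_{W^n}\}$ and $\{\cL_{U_0^n}\}$ to weak convergence of laws. The paper's contribution lies in verifying tightness for the specific Faedo--Galerkin approximations, not in reproving Prokhorov. So your write-up, while sound, supplies a proof the paper deliberately omits.
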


Relating to convergence of the approximate solutions, it is essential that we 
secure strong compactness (a.s.~convergence) in the $\omega$ variable. 
To that end, we are in need of a Skorokhod a.s.~representation 
theorem \cite{Ikeda:1981aa}, delivering a 
new probability space and new random variables, with the same 
laws as the original ones, converging almost surely. 

As alluded to before, our path space is not a Polish space since weak 
topologies in Hilbert and Banach spaces are not metrizable. 
Therefore the original Skorokhod theorem is not applicable; 
instead we must rely on the more recent Jakubowski version \cite{Jakubowski:1997aa} 
that applies to so-called quasi-Polish spaces. ``Quasi-Polish" refers to 
spaces $\bS$ for which there exists a countable family  
\begin{equation}\label{eq:countable}
	\Set{f_\ell:\bS\to [-1,1]}_{\ell\in L}
\end{equation}
of continuous functionals that separate points (of $\bS$) \cite{Jakubowski:1997aa}. 
Quasi-Polish spaces include separable Banach spaces 
equipped with the weak topology, and also spaces of weakly 
continuous functions taking values in some separable Banach space. 
The basic assumption \eqref{eq:countable} gives raise to a mapping 
between $\bS$ and the Polish space $[-1,1]^L$,
\begin{equation}\label{eq:tildef}
	\bS\ni u \mapsto \tilde f(u) 
	= \Set{f_\ell(u)}_{\ell\in L} \in [-1,1]^L,
\end{equation}
which is one-to-one and continuous, but in general $\tilde f$ is not a homeomorphism of $\bS$ 
onto a subspace of $\bS$. However, if we restrict to a $\sigma$-compact subspace of $\bS$, 
then $\tilde f$ becomes a measurable isomorphism \cite{Jakubowski:1997aa}. 
In this paper we use the following form of the 
Skorokhod-Jakubowski theorem \cite{Jakubowski:1997aa}, taken 
from \cite{Brzezniak:2013aa,Ondrejat:2010aa} (see 
also \cite{Brzezniak:2013ab,Brzezniak:2011aa}).

\begin{thm}[Skorokhod-Jakubowski a.s.~representations 
for subsequences]\label{thm:skorokhod}
Let $\bS$ be a topological space for which there exists 
a sequence $\Set{f_\ell}_{\ell\ge 1}$ of continuous functionals
$f_\ell: \bS \to  \R$ that separate points of $\bS$. 
Denote by $\Sigma$ the $\sigma$-algebra 
generated by the maps $\Set{f_\ell}_{\ell\ge 1}$. Then

(1) every compact subset of $\bS$ is metrizable;

(2) every Borel subset of a $\sigma$-compact set in $\bS$ 
belongs to $\Sigma$;

(3) every probability measure supported by a $\sigma$-compact set in $\bS$ 
has a unique Radon extension to the Borel $\sigma$-algebra $\cB(\bS)$;

(4) if $\Set{\mu_n}_{n\ge 1}$ is a tight sequence of probability 
measures on $(\bS,\Sigma)$, then there exist a subsequence 
$\Set{n_k}_{k\ge 1}$, a probability space $(\tilde D,\tilde \cF,\tilde P)$, and  
Borel measurable $\bS$-valued random variables $\tilde X_k$, $\tilde X$, 
such that $\mu_{n_k}$ is the law of $\tilde X_k$ 
and $X_k\to X$ $\tilde P$-a.s.~(in $\bS$). 
Moreover, the law $\mu$ of $\tilde X$ is a Radon measure.
\end{thm}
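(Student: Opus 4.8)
The plan is to identify the statement with Jakubowski's almost-sure representation theorem \cite{Jakubowski:1997aa} and its restatements \cite{Brzezniak:2013aa,Ondrejat:2010aa}; the only repackaging needed is that ``tight sequence, along a subsequence'' follows from ``weakly convergent sequence'' via Prokhorov (Theorem~\ref{thm:prokhorov}), so it suffices to establish the four assertions for a tight sequence directly. The workhorse is the embedding built from the separating functionals. Composing each $f_\ell$ with a homeomorphism $\R\to(-1,1)$ (say $s\mapsto\tfrac{2}{\pi}\arctan s$) we may assume $f_\ell:\bS\to[-1,1]$; put $\bK:=[-1,1]^{\N}$, a compact metric (hence Polish) space, and define
\[
\tilde f:\bS\to\bK,\qquad \tilde f(u):=\Set{f_\ell(u)}_{\ell\ge1}.
\]
By construction $\tilde f$ is continuous (coordinate-wise) and injective (the $f_\ell$ separate points), and $\Sigma=\tilde f^{-1}\bigl(\cB(\bK)\bigr)$.

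For (1): if $K\subset\bS$ is compact, then $\tilde f|_K\colon K\to\tilde f(K)$ is a continuous bijection of a compact space onto a Hausdorff space, hence a homeomorphism; thus $K$ is metrizable. For (2) and (3): if $A=\bigcup_n K_n$ is $\sigma$-compact (with $K_n$ compact, increasing), each $\tilde f(K_n)$ is compact, hence $\tilde f(A)$ is a $\sigma$-compact --- in particular Borel --- subset of $\bK$; since $\tilde f|_{K_n}$ is a homeomorphism, the inverse of $\tilde f|_A$ agrees with the continuous map $(\tilde f|_{K_n})^{-1}$ on the countable Borel partition $\Set{\tilde f(K_n)\setminus\tilde f(K_{n-1})}$, so $\tilde f|_A$ is a Borel isomorphism of $A$ onto $\tilde f(A)$. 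Consequently any Borel $B\subseteq A$ obeys $B=(\tilde f|_A)^{-1}\bigl(\tilde f(B)\bigr)$ with $\tilde f(B)\in\cB(\bK)$, whence $B\in\Sigma$ (this is (2)); and any probability measure $\mu$ on $(\bS,\Sigma)$ carried by $A$ pushes forward to a Borel --- automatically Radon, as $\bK$ is Polish --- measure on $\bK$ supported on $\tilde f(A)$, which transports back through the Borel isomorphism to a Radon Borel measure on $\bS$ that agrees with $\mu$ on $\Sigma$; inner regularity with respect to the compacts $K_n$ forces uniqueness (this is (3)).

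For (4) I would transport the tightness to $\bK$ and invoke the classical Skorokhod theorem there. Injectivity of $\tilde f$ gives $\nu_n(\tilde f(K))=\mu_n(K)$ for $K\subset\bS$ compact, where $\nu_n:=\tilde f_\#\mu_n$; hence $\Set{\mu_n}$ tight on $(\bS,\Sigma)$ implies $\Set{\nu_n}$ tight on the Polish space $\bK$. Choosing nested compacts $K_m\subset\bS$ with $\mu_n(\bS\setminus K_m)<2^{-m}$ for all $n$, and setting $A:=\bigcup_m K_m$, we get $\nu_n(\tilde f(A))=1$ and $\nu_n(\tilde f(K_m))>1-2^{-m}$ uniformly in $n$. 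The classical Skorokhod--Dudley construction on $\bK$ then yields a subsequence $\Set{n_k}$, a probability space $(\tilde D,\tilde\cF,\tilde P)$, and Borel random variables $Y_k\to Y$ $\tilde P$-a.s.\ in $\bK$ with $\mathrm{law}(Y_k)=\nu_{n_k}$. Setting $\tilde X_k:=g(Y_k)$, $\tilde X:=g(Y)$, where $g\colon\tilde f(A)\to A$ is the Borel inverse of $\tilde f|_A$ (extended by a fixed point off $\tilde f(A)$), a short computation using injectivity and $\nu_{n_k}(\tilde f(A))=1$ shows $\mathrm{law}(\tilde X_k)=\mu_{n_k}$ on $\Sigma$, and Radonness of $\mathrm{law}(\tilde X)$ is then part (3), since $\tilde X$ is a.s.\ valued in the $\sigma$-compact set $A$.

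The crux --- and the step I expect to be the main obstacle --- is upgrading $Y_k\to Y$ in $\bK$ to $\tilde X_k\to\tilde X$ \emph{in the topology of $\bS$}: the inverse $g$ is continuous on each compact piece $\tilde f(K_m)$ (being the inverse of a homeomorphism) but need not be continuous on the whole $\sigma$-compact $\tilde f(A)$, so one cannot simply compose limits. The remedy is to carry out the $\bK$-construction so that it respects the exhaustion: splitting each $\nu_{n_k}$ along the bands $\tilde f(K_m)\setminus\tilde f(K_{m-1})$ and coupling the resulting ``levels'' of all the $Y_k$ and of $Y$ through a common randomizing variable (which is possible precisely because $\inf_k\nu_{n_k}(\tilde f(K_m))>1-2^{-m}$), one arranges that on a set $E_M$ with $\tilde P(E_M)>1-C\,2^{-M}$ the \emph{entire} sequence $\Set{Y_k}$ together with $Y$ takes values in the single compact metrizable set $\tilde f(K_M)$. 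On $E_M$ all the $\tilde X_k$ and $\tilde X$ then lie in the compact metrizable $K_M$, and any subsequential $\bS$-limit $z$ of $(\tilde X_k)$ there satisfies $f_\ell(z)=\lim_k Y_k^{(\ell)}=f_\ell(\tilde X)$ for every $\ell$, whence $z=\tilde X$; thus $\tilde X_k\to\tilde X$ on $E_M$, and a Borel--Cantelli argument over $M$ gives a.s.\ convergence on all of $\tilde D$. This refined coupling is the technical content of \cite{Jakubowski:1997aa}, and the four assertions above are its conclusions in the form recorded in \cite{Brzezniak:2013aa,Ondrejat:2010aa}.
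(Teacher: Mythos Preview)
The paper does not supply its own proof of this theorem; it merely quotes the statement from Jakubowski \cite{Jakubowski:1997aa} in the form recorded in \cite{Brzezniak:2013aa,Ondrejat:2010aa}, so there is nothing to compare against beyond the cited sources. Your sketch is a faithful and essentially correct reconstruction of Jakubowski's argument: the embedding $\tilde f:\bS\to[-1,1]^{\N}$, the Borel-isomorphism structure on $\sigma$-compact sets for (1)--(3), and---most importantly---the recognition that the nontrivial step in (4) is not the pushforward to the Polish cube but the \emph{pullback} of almost-sure convergence through the merely Borel (not continuous) inverse $g$, which you correctly resolve by the level-coupling/exhaustion device that forces the whole sequence into a fixed compact $K_M$ on sets of large probability. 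This is exactly the technical heart of \cite{Jakubowski:1997aa}, so your proposal goes beyond what the paper itself provides and matches the cited proof.
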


We will need the Gy\"ongy-Krylov characterization of convergence 
in probability \cite{Gyong-Krylov}. It will be used to upgrade weak martingale solutions 
to strong (pathwise) solutions, via a pathwise uniqueness result.

\begin{lem}[Gy\"{o}ngy-Krylov characterization]\label{lem:krylov}
Let $\bS$ be a Polish space, and let $\{X_n\}_{n\ge 1}$ 
be a sequence of $\bS$-valued random variables on a 
probability space $(D,\cF, P)$. For each $n,m\ge 1$, denote by 
$\mu_{n,m}$ the joint law of $\left(X_n,X_m\right)$, that is,
$$
\mu_{n,m}(A) 
:= P\left(\Set{\omega\in D: 
\left(X_n(\omega), X_m(\omega)\right )\in A}\right), 
\quad A\in \B(\bS\times \bS).
$$
Then $\{X_n\}_{n\ge 1}$ converges in 
probability (and $P$-a.s.~along a subsequence) 
$\Longleftrightarrow$ for any subsequence $\{\mu_{m_k,n_k}\}_{k\ge 1}$ there 
exists a further subsequence that converges weakly to some $\mu \in \cP(\bS)$ 
that is supported on the diagonal: 
$$
\mu\left(\Set{(X,Y)\in \bS \times \bS: X=Y}\right)=1.
$$
\end{lem}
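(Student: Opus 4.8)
\textit{Proof sketch.}
The plan is to prove the two implications separately: the forward direction is a soft consequence of the stability of convergence in probability under continuous maps, while the reverse direction is a short contradiction argument resting on the completeness of convergence in probability together with the portmanteau theorem. Throughout, fix a metric $d$ on $\bS$ compatible with its topology and equip $\bS\times\bS$ with the product metric; since $\bS$ is Polish, so is $\bS\times\bS$, and the diagonal $\Delta:=\Set{(x,y)\in\bS\times\bS:x=y}$ is a closed set.

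For the implication ``$\Longrightarrow$'', suppose $X_n\to X$ in probability for some $\bS$-valued random variable $X$. Then for any index sequences $\{n_k\}$, $\{m_k\}$ tending to infinity we have $(X_{n_k},X_{m_k})\to(X,X)$ in probability in $\bS\times\bS$, hence in law, so $\mu_{n_k,m_k}$ converges weakly to the law $\mu$ of $(X,X)$. Since $P\bigl((X,X)\in\Delta\bigr)=1$, this $\mu$ is carried by the diagonal. In particular each subsequence $\{\mu_{m_k,n_k}\}$ is already convergent, so the required further subsequence is the sequence itself.

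For the implication ``$\Longleftarrow$'', I would argue by contradiction, using that the space of $\bS$-valued random variables metrized by $\E[\min(d(X,Y),1)]$ is complete (because $\bS$ is a complete separable metric space); consequently $\{X_n\}$ converges in probability if and only if it is Cauchy in probability. If $\{X_n\}$ were not Cauchy in probability, there would exist $\varepsilon>0$, $\delta>0$ and index sequences $\{n_k\}$, $\{m_k\}$ with
$$
P\bigl(d(X_{n_k},X_{m_k})\ge\varepsilon\bigr)\ge\delta\qquad\text{for all }k.
$$
Applying the hypothesis to $\{\mu_{n_k,m_k}\}$ and passing to a further subsequence (not relabeled), $\mu_{n_k,m_k}$ converges weakly to a probability measure $\mu$ on $\bS\times\bS$ with $\mu(\Delta)=1$. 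The set $F:=\Set{(x,y):d(x,y)\ge\varepsilon}$ is closed and disjoint from $\Delta$, hence $\mu(F)=0$; since $F$ is closed, the portmanteau theorem yields
$$
\delta\le\limsup_{k\to\infty}P\bigl(d(X_{n_k},X_{m_k})\ge\varepsilon\bigr)
=\limsup_{k\to\infty}\mu_{n_k,m_k}(F)\le\mu(F)=0,
$$
a contradiction. Therefore $\{X_n\}$ converges in probability, and the almost sure convergence along a subsequence is the classical fact that convergence in probability on a Polish space admits an a.s.\ convergent subsequence.

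The genuinely routine ingredients here — joint convergence in probability passing to joint convergence in law, closedness of $\Delta$ and $F$, and completeness of convergence in probability — are all standard, so there is no serious obstacle; the only point one must not miss is to phrase the reverse direction through the Cauchy criterion, since attempting to exhibit a concrete limit $X$ directly is the wrong move.
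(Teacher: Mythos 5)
The paper states this lemma as a cited tool from Gy\"ongy--Krylov (1996) and does not prove it, so there is no in-paper argument to compare against; what you have written is, up to cosmetic reorganization, the standard proof given in that reference and in subsequent expositions. Your argument is correct: the forward implication is the continuity of the map $(x,y)\mapsto(x,y)$ together with ``joint convergence in probability implies joint convergence in law,'' and the reverse implication is the Cauchy-in-probability argument via the portmanteau theorem applied to the closed set $F=\{(x,y):d(x,y)\ge\varepsilon\}$, which is disjoint from the closed diagonal $\Delta$; completeness of the space of $\bS$-valued random variables under the Ky Fan metric then converts Cauchy into convergent, and the a.s.\ subsequential convergence is the usual Borel--Cantelli extraction. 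Two small points worth making explicit if you write this up: the index sequences $\{n_k\},\{m_k\}$ in the hypothesis must tend to infinity (otherwise the forward implication is false, since one could pin $m_k$ to a constant), and in the reverse direction the further subsequence you extract still inherits the lower bound $P\bigl(d(X_{n_k},X_{m_k})\ge\varepsilon\bigr)\ge\delta$, which is what licenses the final contradiction.
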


\begin{rem}\label{rem:GyKr}
As a matter of fact, we need access to the ``$\Longleftarrow$" part 
of the Gy\"{o}ngy-Krylov lemma for quasi-Polish spaces $\bS$. Suppose for any 
subsequence $\Set{\left(X_{n_k}, X_{m_k}\right)}_{k\ge 1}$ there 
exists a further subsequence 
$$
\Set{\left(X_{n_{k_j}}, X_{m_{k_j}}\right)}_{j\ge 1}
$$ 
that converges in distribution to $(X,X)$ as $j\to \infty$, for some $X\in \bS$, that is, 
the joint probability laws $\mu_{m_{k_j},n_{k_j}}$ converge weakly 
to some $\mu\in \cP(\bS\times \bS)$ that is supported on the diagonal. 
Recalling the mapping $\tilde f$ between $\bS$ and the Polish 
space $[-1,1]^L$, cf.~\eqref{eq:tildef}, and the 
continuous mapping theorem, it follows that the sequence 
$$
\Set{\left(\tilde f(X_{n_{k_j}}), \tilde f(X_{m_{k_j}})\right)}_{j\ge 1}
$$ 
converges in distribution to $\left(f(X),f(X)\right)$ as $j\to \infty$. 
In view of the Gy\"{o}ngy-Krylov lemma, this implies that the 
sequence $\Set{\tilde f(X_n)}_{n\ge 1}$ converges in 
probability and thus, along a subsequence 
$\Set{\tilde f(X_{n_j})}_{j \ge 1}$, $P$-almost surely. 
Since $\Set{f_\ell}_{\ell\ge 1}$ separate points of $\bS$, it is not difficult to see that 
this implies that $\Set{X_{n_j}}_{j \ge 1}$ converges $P$-a.s.~as well.
\end{rem}

We are going to need the following convergence result  for stochastic 
integrals due to Debussche, Glatt-Holtz, and Temam \cite{Debussche:2011aa}.

\begin{lem}[convergence of stochastic integrals]\label{lem:stoch-conv}
Fix a probability space $(D,\cF,P)$ and 
a separable Hilbert space $\bX$.  For each $n=1,2,\ldots$, 
consider a stochastic basis 
$$
\cS_n=\left(D,\cF,\Set{\cFt^n}_{t\in [0,T]},P,W^n\right)
$$
and a $\Set{\cFt^n}_{t\in [0,T]}$-predictable 
$\bX$-valued process $G^n$ satisfying 
$$
G^n\in L^2((0,T);L_2(\U,\bX)), \quad \text{$P$-almost surely}.
$$ 
Suppose there exist a stochastic basis 
$\cS=\left(D,\cF,\Set{\cFt}_{t\in [0,T]},P,W\right)$ 
and a $\Set{\cFt}_{t\in [0,T]}$-predictable $\bX$-valued process 
$G$ with $G\in L^2((0,T);L_2(\U,\bX))$ $P$-a.s., such that
\begin{equation*}%\label{eq:stoch-conv1}
	\begin{split}
		& W^n \to W \quad \text{in $C([0,T]; \U_0)$, in probability}
		\\ &
		G^n \to G \quad \text{in $L^2((0,T); L_2(\U;\bX))$, in probability}.
	\end{split}
\end{equation*} 
Then
$$
\int_0^t G^n \, dW^n \to \int_0^t G \, dW 
\quad \text{in $L^2((0,T);\bX)$, in probability}.
$$
\end{lem}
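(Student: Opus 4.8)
The plan is to prove the lemma in three moves — reduce to almost sure convergence along a subsequence, localize so that the predictable norms of $G^n$ are bounded by a deterministic constant, and approximate $G^n$ and $G$ by simple predictable processes whose stochastic integral is an explicit finite sum. Since convergence in probability is metrizable, it suffices to show that every subsequence has a further subsequence along which $\int_0^\cdot G^n\dW^n\to\int_0^\cdot G\dW$ in $L^2((0,T);\bX)$ in probability; so I fix a subsequence and, using the two hypotheses, extract a further one — not relabelled — along which $W^n\to W$ in $C([0,T];\U_0)$ and $G^n\to G$ in $L^2((0,T);L_2(\U,\bX))$ hold $P$-a.s. Then I localize: with $\theta_R\in C_c([0,\infty))$ satisfying $\En_{[0,R]}\le\theta_R\le\En_{[0,R+1]}$, put $\zeta_R^n(t):=\theta_R\bigl(\int_0^t\norm{G^n(s)}_{L_2(\U,\bX)}^2\ds\bigr)$, $G_R^n:=\zeta_R^n\,G^n$, and define $\zeta_R,G_R$ similarly for $G$. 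Each $G_R^n$ is still $\cFt^n$-predictable (a predictable process times a continuous adapted one), obeys the deterministic bound $\int_0^T\norm{G_R^n}_{L_2(\U,\bX)}^2\dt\le R+1$, and $G_R^n\to G_R$ $P$-a.s.\ in $L^2((0,T);L_2(\U,\bX))$ because $\int_0^\cdot\norm{G^n}^2\ds\to\int_0^\cdot\norm{G}^2\ds$ in $C([0,T])$. Since $\{\int_0^T\norm{G^n}^2\dt\}_n$ is tight in $\R$, for any $\delta>0$ I may choose $R$ with $\sup_n P(\int_0^T\norm{G^n}^2\dt>R)<\delta$ and $P(\int_0^T\norm{G}^2\dt>R)<\delta$; on the complementary events the It\^o integrals of $G^n$ and $G_R^n$ (resp.\ of $G$ and $G_R$) coincide by locality, so it is enough to treat $G_R^n,G_R$. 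After relabelling I therefore assume $\int_0^T\norm{G^n}_{L_2(\U,\bX)}^2\dt\le C_0$ and $\int_0^T\norm{G}_{L_2(\U,\bX)}^2\dt\le C_0$ with $C_0$ deterministic.

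Next I discretize in time. For a uniform partition $\pi:0=t_0<\cdots<t_N=T$ set $\bar G_0^n:=0$ and $\bar G_j^n:=\frac1{t_j-t_{j-1}}\int_{t_{j-1}}^{t_j}G^n(s)\ds$ for $j\ge1$ (which is $\cF^n_{t_j}$-measurable), and let $[G^n]_\pi:=\sum_{j=0}^{N-1}\bar G_j^n\,\En_{(t_j,t_{j+1}]}$; define $[G]_\pi$ analogously from $G$. The maps $G\mapsto[G]_\pi$ are contractions on $L^2((0,T);L_2(\U,\bX))$, and $[G]_\pi\to G$ there as $|\pi|\to0$. By the It\^o isometry together with the Burkholder--Davis--Gundy inequality \eqref{eq:bdg},
\[
\E\Bigl[\sup_{t\in[0,T]}\norm{\int_0^t\bigl(G^n-[G^n]_\pi\bigr)\dW^n}_{\bX}^2\Bigr]
\le C\,\E\Bigl[\int_0^T\norm{G^n-[G^n]_\pi}_{L_2(\U,\bX)}^2\dt\Bigr],
\]
and the right-hand side is small uniformly in $n$ once $|\pi|$ is small: pathwise $\norm{G^n-[G^n]_\pi}_{L^2}\le2\norm{G^n-G}_{L^2}+\norm{G-[G]_\pi}_{L^2}$, which together with the deterministic bound $C_0$, bounded convergence for $[G]_\pi\to G$, and $G^n\to G$ a.s.\ yields $\sup_n\E\norm{G^n-[G^n]_\pi}_{L^2((0,T);L_2(\U,\bX))}^2\to0$ as $|\pi|\to0$, and likewise $\E\norm{G-[G]_\pi}_{L^2}^2\to0$. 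Hence, for a suitable $\pi$, both $\int_0^\cdot(G^n-[G^n]_\pi)\dW^n$ and $\int_0^\cdot(G-[G]_\pi)\dW$ are small in $L^2((0,T);\bX)$ with probability close to $1$, uniformly in $n$ (by Chebyshev).

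For this fixed $\pi$, the definition of the stochastic integral gives the genuinely finite sum
\[
\int_0^t[G^n]_\pi\dW^n=\sum_{j}\sum_{k\ge1}\bar G_j^n\psi_k\,\bigl(W_k^n(t\wedge t_{j+1})-W_k^n(t\wedge t_j)\bigr),
\]
which I compare term by term with the same expression built from $\bar G_j$ and $W_k$. Replacing $\bar G_j^n$ by $\bar G_j$ produces, by the It\^o isometry for simple integrands, an error bounded in $L^2(D;C([0,T];\bX))$ by $C\sum_j(t_{j+1}-t_j)\norm{\bar G_j^n-\bar G_j}_{L_2(\U,\bX)}^2$, which tends to $0$ by bounded convergence since $\bar G_j^n\to\bar G_j$ a.s.\ in $L_2(\U,\bX)$ and these are bounded; in the remaining term, carrying the Brownian increments $W_k^n-W_k$, I split the sum over $k$ at a level $K$: the finite part $k\le K$ converges a.s.\ because $\sup_t\sum_k b_k^2(W_k^n-W_k)^2(t)\to0$ forces $W_k^n\to W_k$ in $C([0,T])$ for each $k$, while the tail $k>K$ is bounded in $L^2(D;C([0,T];\bX))$ by $C\sum_j(t_{j+1}-t_j)\sum_{k>K}\norm{\bar G_j\psi_k}_{\bX}^2$, a remainder of the convergent series $\norm{\bar G_j}_{L_2(\U,\bX)}^2=\sum_k\norm{\bar G_j\psi_k}_{\bX}^2$, hence small uniformly in $n$. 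This gives $\int_0^\cdot[G^n]_\pi\dW^n\to\int_0^\cdot[G]_\pi\dW$ in probability in $C([0,T];\bX)$, and hence in $L^2((0,T);\bX)$. Combining the discretization estimate with this convergence through the triangle inequality shows $\int_0^\cdot G^n\dW^n\to\int_0^\cdot G\dW$ in $L^2((0,T);\bX)$ in probability along the subsequence, and the subsequence principle yields the statement.

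The hard part will be the bookkeeping around the distinct filtrations $\{\cFt^n\}$: the approximants must remain $\cFt^n$-predictable — which is why \emph{delayed} time averages are used, so that $\bar G_j^n$ is $\cF^n_{t_j}$-measurable — and the discretization error must be made small \emph{uniformly in $n$}, which is possible only because $G\mapsto[G]_\pi$ is a contraction, so the uniform bound collapses onto the single limiting process $G$. A secondary technical point is that the increments of the cylindrical processes $W^n$ lie in $\U_0$ rather than in $\U$, so each product $\bar G_j^n\psi_k\bigl(W_k^n(t_{j+1})-W_k^n(t_j)\bigr)$ must be summed as an $L^2(D;\bX)$-convergent series rather than read off pathwise in $\omega$; this is exactly what forces the truncation over $k$ in the last step.
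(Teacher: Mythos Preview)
The paper does not prove this lemma; it is quoted from Debussche, Glatt-Holtz, and Temam \cite{Debussche:2011aa} and used as a black box. Your overall strategy --- subsequence reduction to a.s.\ convergence, localization to obtain a deterministic bound on $\int_0^T\norm{G^n}_{L_2(\U,\bX)}^2\dt$, time-discretization to simple integrands, and pathwise comparison of the resulting sums --- is the correct one and matches the argument in that reference. The localization and the discretization-in-time steps are carried out carefully.

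There is, however, a genuine gap in the final comparison, and it is exactly the filtration issue you flag in your last paragraph but do not actually resolve. When you ``replace $\bar G_j^n$ by $\bar G_j$'' while keeping the driving noise $W^n$, the It\^o isometry you invoke requires $\bar G_j^n-\bar G_j$ to be measurable with respect to a filtration for which $W^n$ is a Wiener process. But $\bar G_j$ is only $\cF_{t_j}$-measurable, not $\cF^n_{t_j}$-measurable, and nothing in the hypotheses guarantees that $W^n$ remains a Wiener process under the enlarged filtration $\sigma(\cF^n_t,\cF_t)$. The same obstruction recurs in your tail bound for the $W_k^n$ piece of $\sum_{k>K}\bar G_j\psi_k\bigl(\Delta_jW_k^n-\Delta_jW_k\bigr)$. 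The fix is to reorder the decomposition so that every use of the isometry stays within a single filtration: truncate in $k$ \emph{first}, separately on each side, applying BDG to $[G^n]_\pi(I-Q_K)$ against $W^n$ in $\{\cF^n_t\}$ and to $[G]_\pi(I-Q_K)$ against $W$ in $\{\cF_t\}$, where $Q_K$ projects $\U$ onto $\Span\{\psi_1,\dots,\psi_K\}$; both tails are small uniformly in $n$ by the localization bound and $G^n\to G$. What remains is the genuinely finite sum $\sum_{j}\sum_{k\le K}\bigl(\bar G_j^n\psi_k\,\Delta_jW_k^n-\bar G_j\psi_k\,\Delta_jW_k\bigr)$, and each term converges to zero $P$-a.s.\ (product of a convergent $\bX$-valued sequence and a convergent real sequence), so no isometry is needed there. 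A smaller imprecision: the uniform claim $\sup_n\E\norm{G^n-[G^n]_\pi}_{L^2}^2\to0$ as $|\pi|\to0$ does not follow from your triangle-inequality bound alone, since $\E\norm{G^n-G}_{L^2}^2$ is independent of $\pi$; one must treat finitely many small $n$ directly and use the bound only for $n$ large.
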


Finally, we recall the ``Kolmogorov test" for 
the existence of continuous modifications of real-valued 
stochastic processes.

\begin{thm}[Kolmogorov's continuity theorem]\label{thm:kolm-cont}
Let $X=\Set{X(t)}_{t\in [0,T]}$ be a real-valued stochastic process 
defined on a probability space $(D, \cF, P)$. Suppose 
there are constants $\kappa>1$, $\delta>0$, and $C>0$ such 
that for all $s,t \in [0,T]$,
$$
\E\left[ \abs{X(t) - X(s)}^{\kappa} \right] 
\leq C \abs{t-s}^{1+\delta}.
$$
Then there exists a continuous modification of $X$. The paths of $X$ 
are $\gamma$-H\"older continuous for every 
$\gamma \in \left[0, \frac{\delta}{\kappa}\right)$.
\end{thm}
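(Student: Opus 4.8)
The plan is to run the classical dyadic chaining argument. Let $D_n=\Set{k2^{-n}T:0\le k\le 2^n}$ and $D=\bigcup_{n\ge 1}D_n$, a countable dense subset of $[0,T]$, and fix $\gamma\in(0,\delta/\kappa)$ (the endpoint $\gamma=0$ being trivial once continuity is established). First I would combine Markov's inequality with the hypothesis to get, for every $\lambda>0$,
\[
P\bigl(\abs{X(t)-X(s)}>\lambda\bigr)\le \lambda^{-\kappa}\,C\,\abs{t-s}^{1+\delta}.
\]
Applying this to consecutive points of $D_n$ with $\lambda=2^{-n\gamma}$ and summing the resulting $2^n$ estimates yields
\[
P\Bigl(\max_{0\le k<2^n}\abs{X((k+1)2^{-n}T)-X(k2^{-n}T)}>2^{-n\gamma}\Bigr)
\le C\,T^{1+\delta}\,2^{-n(\delta-\gamma\kappa)}.
\]
Since $\delta-\gamma\kappa>0$, the right-hand side is summable in $n$, so by the Borel--Cantelli lemma there is an event $D^\star$ with $P(D^\star)=1$ such that for each $\omega\in D^\star$ there is $N(\omega)$ with $\max_{0\le k<2^n}\abs{X((k+1)2^{-n}T,\omega)-X(k2^{-n}T,\omega)}\le 2^{-n\gamma}$ for all $n\ge N(\omega)$.

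The chaining step is the technical heart. Working on $D^\star$, for $s,t\in D$ with $0<t-s\le 2^{-N(\omega)}T$ choose $m\ge N(\omega)$ with $2^{-(m+1)}T<t-s\le 2^{-m}T$, express $s$ and $t$ via their binary expansions, and telescope along dyadic points of increasing level; a standard induction gives
\[
\abs{X(t,\omega)-X(s,\omega)}\le 2\sum_{n\ge m}2^{-n\gamma}=\frac{2}{1-2^{-\gamma}}\,2^{-m\gamma}\le K(\omega)\,\abs{t-s}^{\gamma},
\]
with $K(\omega)$ depending only on $\gamma$, $T$ and $N(\omega)$; the general case $t-s>2^{-N(\omega)}T$ follows by splitting $[s,t]$ into finitely many pieces of length $\le 2^{-N(\omega)}T$. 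Thus, for $P$-a.e.~$\omega$, the map $D\ni t\mapsto X(t,\omega)$ is $\gamma$-H\"older, hence uniformly continuous on the dense set $D$, and so extends uniquely to a continuous function $\widetilde X(\cdot,\omega)$ on $[0,T]$ satisfying the same H\"older bound; on the null set we set $\widetilde X\equiv 0$. Adaptedness of $\widetilde X(t)$ holds since it is an a.s.~limit of the measurable variables $X(t_j)$ along dyadic $t_j\to t$.

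Finally, to see that $\widetilde X$ is a modification, fix $t\in[0,T]$ and dyadic $t_j\to t$: by construction $\widetilde X(t_j)=X(t_j)\to\widetilde X(t)$ a.s., while the hypothesis gives $\E[\abs{X(t_j)-X(t)}^{\kappa}]\le C\abs{t_j-t}^{1+\delta}\to 0$, so $X(t_j)\to X(t)$ in $L^{\kappa}$ and hence in probability; passing to a subsequence converging a.s.~forces $\widetilde X(t)=X(t)$ a.s. The $\gamma$-H\"older continuity of the paths of $\widetilde X$ for every $\gamma\in[0,\delta/\kappa)$ was already recorded in the chaining step. The main obstacle is exactly that chaining estimate: one must telescope correctly between two arbitrary dyadic points of possibly very different resolution levels and verify that the summed level-wise increments produce the claimed $\abs{t-s}^{\gamma}$ bound with a single path-dependent constant.
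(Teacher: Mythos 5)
The paper states this theorem without proof; it is recalled as a classical fact (used later in Subsection \ref{subsec:conclude} to establish weak time continuity of $\tilde v$). So there is no internal proof to compare against. Your argument is the standard dyadic chaining proof, and it is essentially correct: the Markov bound, the union bound over the $2^n$ increments at level $n$ with threshold $2^{-n\gamma}$, the exponent bookkeeping giving $2^{-n(\delta-\gamma\kappa)}$, Borel--Cantelli, the telescoping/chaining estimate, the extension by uniform continuity from the dyadics, and the identification of the extension as a modification via $L^\kappa$ (hence in-probability) convergence are all sound.

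Two small housekeeping points, neither a real gap. First, you fix one $\gamma\in(0,\delta/\kappa)$ and build the modification for that $\gamma$; to conclude the paths are $\gamma$-H\"older for \emph{every} $\gamma\in[0,\delta/\kappa)$ you should take a countable sequence $\gamma_m\uparrow\delta/\kappa$ and intersect the corresponding full-measure events (the modification itself is the same --- the continuous extension of $X$ from the dyadics --- only the exceptional null set depends on $\gamma$, and a countable union of null sets is null). Second, your constant $K(\omega)$ can be made independent of $\omega$ once $m\ge N(\omega)$, since $t-s>2^{-(m+1)}T$ gives $2^{-m\gamma}\le 2^{\gamma}T^{-\gamma}\abs{t-s}^{\gamma}$; the $\omega$-dependence then lives only in handling the finitely many pairs with $t-s>2^{-N(\omega)}T$, which you do. These are cosmetic; your overall structure is the standard and correct one.
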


%%%%%%%%%%%%%%%%%%%%%%%%%%%%%%
%%%%%%%%%%%%%%%%%%%%%%%%%%%%%%
\section{Notion of solution and main results}\label{sec:defsol}
Depending on the (probabilistic) notion of solution, the initial data \eqref{S-init} 
are imposed differently. For pathwise (probabilistic strong) solutions, we 
prescribe the initial data as random variables 
$v_0,w_0\in L^2(D,\cF,P;L^2(\Om))$.
For martingale (or probabilistic weak) solutions, of which 
the stochastic basis is an unknown component, we prescribe the initial data 
in terms of probability measures $\mu_{v_0},\mu_{w_0}$ on $L^2(\Om)$. 
The measures $\mu_{v_0}$ and $\mu_{w_0}$ should be viewed as 
``initial laws" in the sense that the laws of $v(0)$, $w(0)$ 
are required to coincide with $\mu_{v_0},\mu_{w_0}$, respectively. 

Sometimes we need to assume the existence of 
a number $q_0>\frac92$ such that 
\begin{equation}\label{eq:moment-est}
	\int_{L^2(\Om)} \norm{v}^{q_0}_{L^2(\Om)}\, d\mu_{v_0}(v)<\infty, 
	\quad
	\int_{L^2(\Om)} \norm{w}^{q_0}_{L^2(\Om)}\, d\mu_{w_0}(w)<\infty.
\end{equation}
As a matter of fact, we mostly need \eqref{eq:moment-est} with 
$q_0>2$. One exception occurs in Subsection \ref{subsec:conclude}, where 
we use $q_0>\frac92$ to conclude that the transmembrane 
potential $v$ is a.s.~weakly time continuous, 
cf.~part \eqref{eq:mart-adapt} in the definition below (for 
$w$ this holds with just $q_0>2$).

Let us define precisely what is meant by a solution to 
the stochastic bidomain model. For this, we use the space
$$
\tH(\Om) := 
\,
\text{closure of the set}
\,
\Set{v \in C^\infty(\R^3),\, v \big|_{\Sigma_D}= 0}
\, 
\text{in the $H^1(\Om)$ norm}.
$$
We denote by $(\tH(\Om))^*$ the dual of $\tH(\Om)$, which is 
equipped with the norm
\begin{equation}\label{eq:def-dual-norm}
	\norm{u^*}_{(\tH(\Om))^*}=
	\underset{\norm{\phi}_{\tH(\Om)}\le 1}{\sup_{\phi\in \tH(\Om)}}
	\left \langle u^*,\phi\right\rangle_{(\tH(\Om))^*,\tH(\Om)}.
\end{equation}

\begin{defi}[weak martingale solution] \label{def:martingale-sol}
Let $\mu_{v_0}$ and $\mu_{w_0}$ be probability measures 
on $L^2(\Om)$. A weak martingale solution 
of the stochastic bidomain system \eqref{S1}, with initial-boundary 
data \eqref{S-init}-\eqref{S-bc}, is a collection 
$$
\bigl(\cS,u_i,u_e,v,w\bigr)
$$ 
satisfying the following conditions:
\begin{enumerate}
	\item\label{eq:mart-sto-basis}
	$\cS=\left(D,\cF,\Set{\cFt}_{t\in [0,T]},P,\Set{W_k^v}_{k=1}^\infty,
	\Set{W_k^w}_{k=1}^\infty\right)$ is a stochastic basis;

	\item \label{eq:mart-wiener}
	$W^v:=\sum_{k\ge 1} W^v_k e_k$ and 
	$W^w:=\sum_{k\ge 1} W^w_k e_k$ are 
	two independent \\ cylindrical Brownian motions, adapted 
	to the filtration $\Set{\cFt}_{t\in [0,T]}$;
	
	\item\label{eq:mart-uiue-reg}
	For $P$-a.e.~$\omega\in D$,
	$u_i(\omega),u_e(\omega)\in L^2((0,T);\tH(\Om))$;
	 
	\item\label{eq:mart-vreg}
	For $P$-a.e.~$\omega\in D$, 
	$v(\omega)\in L^2((0,T);\tH(\Om))\cap L^4(\Om_T)$. Moreover, $v=u_i-u_e$;
	
	\item\label{eq:mart-adapt}
	$v,w:D\times [0,T]\to L^2(\Om)$ are 
	$\Set{\cFt}_{t\in [0,T]}$-adapted processes, 
	$\Set{\cFt}_{t\in [0,T]}$-predictable in $(\tH(\Om))^*$, such 
	that for $P$-a.e.~$\omega\in D$,
	$$
	v(\omega),w(\omega)\in 
	L^\infty((0,T);L^2(\Om))\cap C([0,T];(\tH(\Om))^*);
	$$
	
	\item\label{eq:mart-data}
	The laws of $v_0:=v(0)$ and $w_0:=w(0)$ are 
	respectively $\mu_{v_0}$ and $\mu_{w_0}$:
	$$
	P\circ v_0^{-1}=\mu_{v_0}, \qquad
	P\circ w_0^{-1}=\mu_{w_0};
	$$
	
	\item\label{eq:mart-weakform} 
	The following identities hold $P$-almost surely, for any $t \in [0,T]$: 
	\begin{equation}\label{eq:weakform}
		\begin{split}
			& \int_{\Om} v(t) \vphi_i \dx 
			+ \int_0^t \int_{\Om} 
			\Bigl( M_i\Grad u_i \cdot \Grad \vphi_i + I(v,w) \vphi_i \Bigr) \dx\ds
			\\ & \qquad\qquad \qquad \qquad 
			= \int_{\Om} v_0 \, \vphi_i \dx 
			+  \int_0^t \int_{\Om} \eta(v) \vphi_i \dx  \dW^v (s), 
			\\ & 
			\int_{\Om} v(t)  \vphi_e \dx
			+ \int_0^t \int_{\Om} 
			\Bigl (- M_e\Grad u_e\cdot \Grad \vphi_e
			+ I(v,w) \vphi_e \Bigr) \dx\ds 
			\\ & \qquad\qquad \qquad \qquad
			=\int_{\Om} v_0 \vphi_e \dx
			 + \int_0^t \int_{\Om} \eta(v)  \vphi_e \dx \dW^v(s),
			 \\ &
			 \int_{\Om} w(t) \vphi\dx =\int_{\Om} w_0 \vphi \dx
			 +\int_0^t \int_{\Om} H(v,w)\vphi \dx \ds
			 \\ & \qquad\qquad \qquad \qquad\qquad \qquad
			 + \int_0^t \int_{\Om}  \sigma(v) \vphi \dx \dW^w(s),
		\end{split}
	\end{equation}
	for all $\vphi_i,\vphi_e \in \tH(\Om)$ and $\vphi\in L^2(\Om)$. 
\end{enumerate} 
 \end{defi} 
 
 \begin{rem}
In view of the regularity conditions imposed in Definition \ref{def:martingale-sol}, it 
is easily verified that the deterministic integrals in \eqref{eq:weakform} are well-defined. 
The stochastic integrals are well-defined as well; they have been 
given special attention in Section \ref{sec:stoch}, see \eqref{eq:int-W}.
\end{rem}

\begin{rem}\label{rem:weak-L2-cont}
We denote by $C\left([0,T];L^2(\Om)\mathrm{-weak}\right)$
the space of weakly continuous $L^2(\Om)$ functions . 
According to \cite[Lemma 1.4]{Temam:1977aa}, part \eqref{eq:mart-adapt} 
of Definition \ref{def:martingale-sol} implies that 
$$
v(\omega,\cdot,\cdot),w(\omega,\cdot,\cdot)
\in C\left([0,T];L^2(\Om)\mathrm{-weak}\right),
\quad \text{for $P$-a.e.~$\omega\in D$}.
$$ 
\end{rem}

Our main existence result is contained in 
\begin{thm}[existence of weak martingale solution]\label{thm:martingale}
Suppose conditions {\rm (\textbf{GFHN})}, \eqref{matrix}, 
and \eqref{eq:noise-cond} hold. Let $\mu_{v_0}$, $\mu_{w_0}$ 
be probability measures satisfying the moment 
estimates \eqref{eq:moment-est} (with $v_0\sim\mu_{v_0}$, $w_0\sim \mu_{w_0}$).
Then the stochastic bidomain model \eqref{S1}, \eqref{S-init}, \eqref{S-bc} 
possesses a weak martingale solution in the 
sense of Definition \ref{def:martingale-sol}.
\end{thm}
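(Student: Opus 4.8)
\section*{Proof proposal}

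The plan is to follow the roadmap announced in the introduction: regularize, Galerkin-approximate, extract a priori bounds via It\^o's formula, pass to the limit by stochastic compactness plus Skorokhod--Jakubowski, and identify the limit as a weak martingale solution. First I would fix $\eps>0$ and introduce the nondegenerate auxiliary system obtained by adding $\eps\, du_i$ to the first equation of \eqref{S1} and $-\eps\, du_e$ to the second; this makes both equations genuinely parabolic in $u_i$ and $u_e$ separately, so that a standard Faedo--Galerkin scheme applies. Choose a basis $\{e_k\}$ of $\tH(\Om)$ (e.g.\ eigenfunctions of $-\Delta$ with the mixed boundary conditions) that is orthonormal in $L^2(\Om)$, project the regularized system onto $\Span\{e_1,\dots,e_m\}$, and solve the resulting finite-dimensional system of It\^o SDEs; the (\textbf{GFHN}) growth bounds on $I,H$ together with the Lipschitz/growth bounds \eqref{eq:noise-cond} on $\eta,\sigma$ give local existence and, via the a priori estimate below, global existence of the Galerkin solutions $(u_i^{m,\eps},u_e^{m,\eps},v^{m,\eps},w^{m,\eps})$ on a fixed stochastic basis.

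Next I would derive the a priori estimates. Applying the It\^o formula to $\tfrac12\norm{v^{m,\eps}}_{L^2(\Om)}^2$ (using the two equations to isolate $M_i\Grad u_i$ and $M_e\Grad u_e$ separately, then subtracting/adding appropriately so that $v=u_i-u_e$ is respected on the finite-dimensional space), the diffusion terms produce $\int_0^t\!\int_\Om (M_i\Grad u_i\cdot\Grad u_i+M_e\Grad u_e\cdot\Grad u_e)$, which by \eqref{matrix} controls $\norm{\Grad u_i}_{L^2}^2+\norm{\Grad u_e}_{L^2}^2$; the reaction term $\int I_1(v)v$ yields the coercive contribution $\underline c_I\norm{v}_{L^4(\Om)}^4$ up to lower-order terms via the (\textbf{GFHN}) sign condition; the It\^o correction term is bounded by $C_\beta(1+\norm{v}_{L^2}^2)$ using \eqref{eq:noise-cond2}; the cross term $I_2(v)w=(c_{I,3}+c_{I,4}v)w$ and the $H$-equation are handled by also running It\^o on $\tfrac12\norm{w^{m,\eps}}_{L^2}^2$ and absorbing with Young's inequality and the $\abs{h(v)}^2\le c_{H,2}(1+\abs{v}^2)$ bound. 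Taking suprema in $t$, using Burkholder--Davis--Gundy on the martingale terms and the moment hypothesis \eqref{eq:moment-est} on the initial laws, and then Gronwall, I obtain bounds, uniform in $m$ and $\eps$, of the form
$$
\E\Bigl[\sup_{t\in[0,T]}\bigl(\norm{v^{m,\eps}(t)}_{L^2(\Om)}^{q}+\norm{w^{m,\eps}(t)}_{L^2(\Om)}^{q}\bigr)\Bigr]
+\E\Bigl[\bigl(\textstyle\int_0^T \norm{u_i^{m,\eps}}_{\tH}^2+\norm{u_e^{m,\eps}}_{\tH}^2+\norm{v^{m,\eps}}_{L^4(\Om)}^4\,\ds\bigr)^{q/2}\Bigr]\le C,
$$
for $q$ up to $q_0$, plus an $\eps$-weighted bound $\eps\,\E\int_0^T(\norm{u_i^{m,\eps}}_{\tH}^2+\norm{u_e^{m,\eps}}_{\tH}^2)\ds\le C$. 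A fractional-in-time (Aubin--Lions--Simon type, or Kolmogorov-continuity in the dual space $(\tH(\Om))^*$) estimate on $v^{m,\eps},w^{m,\eps}$ comes from reading off the time increments directly from the equations and using BDG on the stochastic part.

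From the uniform bounds I would build the family of laws of $(u_i^{m,\eps},u_e^{m,\eps},v^{m,\eps},w^{m,\eps},W^v,W^w)$ on the path space
$$
\cX=L^2((0,T);\tH(\Om))\text{-weak}\,\times\,L^2((0,T);\tH(\Om))\text{-weak}\,\times\,\bigl(L^2(\Om_T)\cap C([0,T];(\tH(\Om))^*)\bigr)\,\times\,C([0,T];(\tH(\Om))^*)\,\times\,C([0,T];\U_0)^2,
$$
and show tightness: the $u$-components are tight in the weak topology of the Bochner--Sobolev space by the uniform $L^2_t\tH$ bound (balls are weakly compact), the $v,w$-components are tight in $C([0,T];(\tH(\Om))^*)$ by the time-increment estimate plus compactness of $L^2(\Om)\hookrightarrow (\tH(\Om))^*$ via Aubin--Lions, and tightness in $L^2(\Om_T)$ for $v$ likewise; the Wiener processes have fixed laws. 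Since $\cX$ is only quasi-Polish (weak topologies are not metrizable), I invoke the Skorokhod--Jakubowski Theorem~\ref{thm:skorokhod} along a diagonal subsequence $\eps\to0$, $m\to\infty$: on a new probability space $(\tilde D,\tilde\cF,\tilde P)$ I get variables $(\tilde u_i^n,\tilde u_e^n,\tilde v^n,\tilde w^n,\tilde W^{v,n},\tilde W^{w,n})$ with the same laws, converging $\tilde P$-a.s.\ in $\cX$ to a limit $(\tilde u_i,\tilde u_e,\tilde v,\tilde w,\tilde W^v,\tilde W^w)$. Equality of laws transfers all the a priori bounds; a.s.\ convergence in $C([0,T];(\tH)^*)$ plus boundedness in $L^\infty_tL^2_x$ gives weak continuity of $\tilde v,\tilde w$ into $L^2(\Om)$ (Remark~\ref{rem:weak-L2-cont}), and the $q_0>\tfrac92$ moment bound upgrades this to the adaptedness/predictability claims in part \eqref{eq:mart-adapt}; the relation $\tilde v=\tilde u_i-\tilde u_e$ passes to the limit since it is linear and closed under weak convergence. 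To identify the equations I would pass to the limit in the finite-dimensional weak formulation tested against a fixed $e_k$: the linear diffusion terms pass by weak $L^2_t\tH$ convergence; the nonlinear terms $I_1(\tilde v^n),I_2(\tilde v^n)\tilde w^n,h(\tilde v^n)$ pass using strong $L^2(\Om_T)$ (hence, up to subsequence, a.e.) convergence of $\tilde v^n$ together with the uniform $L^4$ bound and Vitali's theorem to upgrade to $L^{4/3}$/$L^2$ convergence; the It\^o terms pass by Lemma~\ref{lem:stoch-conv} once I check $\eta(\tilde v^n)\to\eta(\tilde v)$ and $\sigma(\tilde v^n)\to\sigma(\tilde v)$ in $L^2((0,T);L_2(\U,L^2(\Om)))$ in probability, which follows from the Lipschitz bound \eqref{eq:noise-cond2} and the strong convergence of $\tilde v^n$; the $\eps$-terms vanish because $\eps_n\norm{\tilde u_j^n}_{L^2_t\tH}^2\to0$ in expectation. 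A standard martingale-representation argument (or direct verification via the quadratic-variation identity) confirms that $\tilde W^v,\tilde W^w$ are cylindrical Wiener processes w.r.t.\ the filtration generated by the limit data, completing the construction of the weak martingale solution.

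The main obstacle, as the authors flag, is the a priori estimate: because the bidomain system is degenerate and has no maximum principle, one cannot test a single parabolic equation; one must combine the two $v$-equations so as to extract $\norm{\Grad u_i}^2+\norm{\Grad u_e}^2$ while only having control on $v=u_i-u_e$, and simultaneously balance the quartic reaction coercivity $\underline c_I\norm{v}_{L^4}^4$ against the bilinear $I_2(v)w$ coupling and the $L^2$-growth of the multiplicative It\^o correction — all uniformly in both the Galerkin index $m$ and the regularization $\eps$, and at the level of high moments ($q_0>\tfrac92$) needed for the time-regularity of $v$. Getting the stochastic terms (BDG applied before Gronwall, with the right powers) to close this chain of inequalities is the delicate point; everything downstream (tightness, Skorokhod, passage to the limit) is by now fairly standard once these bounds are in hand.
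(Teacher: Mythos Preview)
Your proposal is correct and follows essentially the same route as the paper: nondegenerate regularization, Faedo--Galerkin, It\^o-based energy/moment estimates, temporal translation bounds, tightness plus Skorokhod--Jakubowski, and passage to the limit via Lemma~\ref{lem:stoch-conv}. The only implementation differences are that the paper ties $\eps_n=1/n$ to the Galerkin index from the outset (avoiding a separate diagonal), derives the key energy identity by applying the stochastic product rule to the combination $u_i(v+\eps_n u_i)-u_e(v-\eps_n u_e)=v^2+\eps_n u_i^2+\eps_n u_e^2$ (since the individual $du_i,du_e$ equations blow up as $\eps_n\to0$, one cannot run It\^o on $\tfrac12\norm{v}^2$ alone), and uses a slightly different path space with $C([0,T];(\tH)^*)$ regularity established \emph{a posteriori} via Kolmogorov's criterion rather than built into the tightness argument.
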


The proof of Theorem \ref{thm:martingale} is divided into a series of steps. 
We construct approximate solutions in Section \ref{sec:approx-sol}, which are 
shown to converge in Section \ref{sec:convergence}.
The convergence proof relies on several uniform a priori estimates 
that are established in Subsections \ref{sec:apriori} and \ref{sec:translation-est}. 
We use these estimates in Subsection \ref{subsec:tight} to conclude that the laws of 
the approximate solutions are tight and that the approximations 
(along a subsequence) converge to a limit. The limit is shown to be 
a weak martingale solution in 
Subsections \ref{subsec:limit} and  \ref{subsec:conclude}.

If the stochastic basis $\cS$ in Definition \ref{def:martingale-sol} is 
fixed in advance (not part of the solution), we speak of a weak solution or 
weak pathwise solution. A weak solution is thus weak in the PDE sense and 
strong in the probabilistic sense. In this case, we prescribe the 
initial data $v_0,w_0$ as random variables relative to $\cS$.

\begin{defi}[weak solution]\label{def:pathwise-sol} 
Let 
$$
\cS=\left(D,\cF,\Set{\cFt}_{t\in [0,T]},P,\Set{W_k}_{k=1}^\infty\right) 
$$
be a fixed stochastic basis and assume that
$$
\text{$v_0,w_0$ are $\cF_0$-measurable, 
with $v_0,w_0 \in L^2(D,\cF,P;L^2(\Om))$.}
$$
A weak solution of the stochastic bidomain 
system \eqref{S1}, with initial-boundary 
data \eqref{S-init}-\eqref{S-bc}, is a collection $U=\bigl(u_i,u_e,v,w\bigr)$ 
satisfying conditions \eqref{eq:mart-uiue-reg}, \eqref{eq:mart-vreg}, 
\eqref{eq:mart-adapt}, and \eqref{eq:mart-weakform} 
in Definition \ref{def:martingale-sol} (relative to $\cS$).
\end{defi}
 
Weak solutions  are said to be unique if, given 
any pair of such solutions $\hat U, \tilde U$ for which 
$\hat U$ and $\tilde U$ coincide a.s.~at $t = 0$,
\begin{equation}\label{eq:path-uniq}
	P\Bigl(\Set{\hat U(t)=\tilde U(t) \, \forall t\in[0,T]} \Bigr)=1.
\end{equation}
We establish pathwise uniqueness by demonstrating 
that $v(t),w(t)$ depend continuously on the 
initial data $v_0,w_0$ in $L^2(D,\cF,P;L^2(\Om))$. 
Moreover, using the Poincar\'e inequality, we conclude as well the pathwise 
uniqueness of $u_i,u_e$.

As alluded to earlier, we use this to ``upgrade" martingale solutions 
to weak (pathwise) solutions, thereby delivering

\begin{thm}[existence and uniqueness of weak solution]\label{thm:pathwise}
Suppose conditions {\rm (\textbf{GFHN})}, \eqref{matrix}, 
and \eqref{eq:noise-cond} hold. Then the stochastic 
bidomain model \eqref{S1}, \eqref{S-init}, \eqref{S-bc} possesses 
a unique weak solution in the sense 
of Definition \ref{def:pathwise-sol}, provided 
the initial data satisfy 
$$
v_0,w_0\in L^{q_0}(D,\cF,P;L^2(\Om)), 
\qquad q_0>9/2.
$$
\end{thm}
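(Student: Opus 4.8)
The plan is to combine the existence of weak martingale solutions (Theorem~\ref{thm:martingale}) with a pathwise uniqueness result and the Gy\"ongy--Krylov characterization of convergence in probability (Lemma~\ref{lem:krylov}, in the quasi-Polish form of Remark~\ref{rem:GyKr}), following the Yamada--Watanabe principle. First I would establish pathwise uniqueness: given two weak solutions $\hat U=(\hat u_i,\hat u_e,\hat v,\hat w)$ and $\tilde U=(\tilde u_i,\tilde u_e,\tilde v,\tilde w)$ relative to the \emph{same} stochastic basis $\cS$, coinciding a.s.\ at $t=0$, apply the It\^o formula to $\norm{\hat v-\tilde v}_{L^2(\Om)}^2$ and $\norm{\hat w-\tilde w}_{L^2(\Om)}^2$. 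The key algebraic point is that, although $\hat v-\tilde v=(\hat u_i-\tilde u_i)-(\hat u_e-\tilde u_e)$ cannot be tested directly against a single diffusion operator, subtracting the two diffusion equations in \eqref{S1} and testing the $i$-equation with $\hat u_i-\tilde u_i$ and the $e$-equation with $\hat u_e-\tilde u_e$ produces the coercive contributions $\int M_i|\Grad(\hat u_i-\tilde u_i)|^2$ and $\int M_e|\Grad(\hat u_e-\tilde u_e)|^2$ via \eqref{matrix}; the cubic reaction $I_1$ is absorbed through its one-sided bound in \textbf{(GFHN)}, the bilinear term $I_2(v)w$ and $h$ through the $L^4(\Om_T)$ and $L^\infty((0,T);L^2(\Om))$ a priori bounds, and the noise difference through the Lipschitz condition in \eqref{eq:noise-cond}. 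After taking expectations, controlling the martingale terms with Burkholder--Davis--Gundy, and a Gronwall argument, one gets $\E\sup_{t\in[0,T]}\bigl(\norm{\hat v-\tilde v}_{L^2(\Om)}^2+\norm{\hat w-\tilde w}_{L^2(\Om)}^2\bigr)=0$, and the Poincar\'e inequality upgrades this to uniqueness of $u_i,u_e$, hence \eqref{eq:path-uniq}. (This is carried out in Section~\ref{sec:uniq}.)

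For existence, I would revisit the Faedo--Galerkin approximations $U_n=(u_{i,n},u_{e,n},v_n,w_n)$ of Section~\ref{sec:approx-sol}, now constructed on the \emph{fixed} basis $\cS$ with the prescribed $\cF_0$-measurable data $v_0,w_0$ (projected onto the Galerkin space). The uniform a priori estimates of Section~\ref{sec:convergence} --- which use the moment bound \eqref{eq:moment-est} with $q_0>9/2$, needed for the weak time-continuity of $v$ --- remain valid. Rather than extracting a single weakly convergent subsequence (which would land on a new probability space), I would apply the Gy\"ongy--Krylov scheme: for each $n,m$ consider the joint law $\mu_{n,m}$ of $(U_n,U_m)$ on $\cX\times\cX$, where $\cX$ is the quasi-Polish path space already used in the martingale existence proof. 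Tightness of $\{\mu_{n,m}\}$ on $\cX\times\cX$ follows from tightness of the individual marginals. Along any subsequence, the Skorokhod--Jakubowski theorem (Theorem~\ref{thm:skorokhod}) furnishes a.s.-convergent copies on a new space, and the identification argument from the proof of Theorem~\ref{thm:martingale}, applied to each coordinate, shows that both limit processes are weak martingale solutions driven by the \emph{same} limiting Wiener process with the \emph{same} initial law.

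By the pathwise uniqueness established above (applied on the new space), the two limit processes coincide a.s., so every subsequential weak limit of $\{\mu_{n,m}\}$ is concentrated on the diagonal of $\cX\times\cX$. The quasi-Polish Gy\"ongy--Krylov characterization (Remark~\ref{rem:GyKr}) then yields that $\{U_n\}$ converges in probability on the original space $(D,\cF,P)$, hence $P$-a.s.\ along a subsequence, to some $U=(u_i,u_e,v,w)$. Passing to the limit in the Galerkin identities exactly as in the martingale case --- using Lemma~\ref{lem:stoch-conv} for the stochastic integrals (here with $W^n\equiv W$ fixed) and the strong $(t,x)$-compactness of $\{v_n\}$ for the nonlinearity $I(v_n,w_n)$ --- shows that $U$ is a weak solution relative to $\cS$ in the sense of Definition~\ref{def:pathwise-sol}, with the prescribed initial data inherited from $U_n(0)\to(v_0,w_0)$. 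Uniqueness is exactly \eqref{eq:path-uniq}.

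I expect the main obstacle to be the pathwise uniqueness estimate of the first paragraph: because the intra- and extracellular conductivities differ, the difference equation lacks a single coercive elliptic operator acting on $\hat v-\tilde v$, so one must carefully split into the two potential variables and balance the resulting terms, while simultaneously taming the non-Lipschitz cubic term in $I_1$ and the product $I_2(v)w$ using only the integrability furnished by the a priori estimates; the stochastic terms add the usual requirement to keep all estimates at the level of expectations, with Burkholder--Davis--Gundy in place of pointwise-in-$\omega$ bounds.
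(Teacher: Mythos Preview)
Your proposal is correct and follows essentially the same Yamada--Watanabe/Gy\"ongy--Krylov route as the paper: pathwise uniqueness via the It\^o formula for the $L^2$ norms of the differences (Section~\ref{sec:uniq}), tightness of the joint laws of pairs of Galerkin approximations, Skorokhod--Jakubowski representations, identification of both limits as martingale solutions sharing the same Wiener process and initial data, and then Lemma~\ref{lem:krylov}/Remark~\ref{rem:GyKr} to bring convergence back to the original basis. Two minor remarks: in the paper the extended path space carries a \emph{single} copy of $\cX_W$ (not two), precisely so that both limit solutions are driven by the same $\tilde W$; and the nonlinear difference term in the uniqueness proof is handled via the one-sided Lipschitz estimate $w\,(H(\bar v,\bar w)-H(\hat v,\hat w))-v\,(I(\bar v,\bar w)-I(\hat v,\hat w))\le C(\|v\|_{L^2}^2+\|w\|_{L^2}^2)$ coming purely from \textbf{(GFHN)} (cf.~\eqref{eq:HminusI} and \cite{Bourgault:2009aa}), so no $L^4$ a priori bounds are actually needed there.
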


Regarding the proof of Theorem \ref{thm:pathwise}, we divide it into two steps. 
A pathwise uniqueness result is established in Section \ref{sec:uniq} by exhibiting 
an $L^2$ stability estimate for the difference between  two solutions. 
We use this result in Section \ref{sec:pathwise} to upgrade martingale 
solutions to pathwise solutions.

%%%%%%%%%%%%%%%%%%%%%%%%%%%%%%
%%%%%%%%%%%%%%%%%%%%%%%%%%%%%%
\section{Construction of approximate solutions}\label{sec:approx-sol}
In this section we define the Faedo-Galerkin approximations. 
They are based on a \textit{nondegenerate} system introduced below.
In upcoming sections we use these approximations to construct 
weak martingale solutions to the stochastic bidomain model. 

We begin by fixing a stochastic basis 
\begin{equation}\label{eq:fixedS}
	\cS=\left( D, \cF, \Set{\cFt}_{t\in [0,T]}, P, 
	\Set{W_k^v}_{k=1}^\infty, \Set{W_k^w}_{k=1}^\infty\right),
\end{equation}
and $\cF_0$-measurable initial data $v_0,w_0 \in L^2(D;L^2(\Om))$ 
with respective laws $\mu_{v_0},\mu_{w_0}$ on $L^2(\Om)$.
For each fixed $\eps>0$, the nondegenerate system reads
\begin{align}\label{eq:nondegen}
	\begin{split}
		&d v + \eps d u_i -\Div\bigl(M_i \Grad u_i \bigr) \dt
		+ I(v,w)\dt = \eta(v) \, d W^v \quad \text{in $\Om_T$},\\
		& d v - \eps du_e +\Div\bigl(M_e\Grad u_e\bigr) \dt
		+ I(v,w)\dt  =  \eta(v)\, d W^v \quad \text{in $\Om_T$},\\
		& d w=H(v,w)\dt +\sigma(v) d W^w \quad  \text{in $\Om_T$}, 
	\end{split}
\end{align}
with boundary conditions \eqref{S-bc}. 
Regarding \eqref{eq:nondegen}, we must provide initial data 
for $u_i$, $u_e$ (not $v=u_i-u_e$ as in the original problem). 
For that reason, we decompose (arbitrarily) the initial 
condition $v_0$ in \eqref{S-init} as $v_0 = u_{i,0}-u_{e,0}$, for some 
$\cF_0$-measurable random variables $u_{i,0}$ and $u_{e,0}$, 
\begin{equation}\label{eq:uie-init-ass}
	u_{i,0},u_{e,0}\in L^2\left(D,\cF,P;L^2(\Om)\right), 
\end{equation}
such that the law of $u_{i,0}-u_{e,0}$ coincides with $\mu_{v_0}$. 
We replace \eqref{S-init} by
\begin{equation}\label{S-init-new}
	u_j(0,x)=u_{j,0}(x) \quad (j=i,e),
	\qquad 
	w(0,x)=w_0(x), 
	\qquad 
	x \in \Om.
\end{equation}
In some situations, we make use of the strengthened assumption 
\begin{equation}\label{eq:uie-init-ass-q0}
	u_{i,0},u_{e,0},w_0 \in L^{q_0}\left(D,\cF,P;L^2(\Om)\right),
	\quad \text{with $q_0$ defined in \eqref{eq:moment-est}.}
\end{equation}

\begin{rem}
Modulo some obvious changes, the definitions of weak martingale 
and weak (pathwise) solutions to the nondegenerate 
system \eqref{eq:nondegen}-\eqref{S-init-new}-\eqref{S-bc} 
are basically the same as those for the original system.
\end{rem}

To construct and justify the validity of the Faedo-Galerkin approximations, we employ
a classical Hilbert basis, which is orthonormal in $L^2$ and orthogonal in $H^1_D$. 
We refer for example to \cite[Thm.~7.7, p.~87]{Showalter} (see  also \cite{Renardy}) for 
the standard construction of such bases.  We operate with the same basis 
$\Set{e_l}_{l=1}^n$ for all the unknowns $u_i,u_e,v,w$.

We look for a solution to the problem arising 
as the projection of \eqref{eq:nondegen}, \eqref{S-init}, \eqref{S-bc} 
onto the finite dimensional subspace $\bX_n:=\Span\Set{e_l}_{l=1}^n$. 
The (finite dimensional) approximate solutions take the form
\begin{equation}\label{eq:approxsol}
	\begin{split}
		&u_j^n:[0,T]\to \bX_n, \quad
		 u_j^n(t)=\sum_{l=1}^n c_{j,l}^n(t)e_l
		\quad (j=i,e),
		\\ &
		v^n:[0,T]\to \bX_n, \quad
		v^n(t)=\sum_{l=1}^n c_l^n (t)e_l, 
		\quad c_l^n(t)=c_{i,l}^n(t)-c_{e,l}^n(t),
		\\ &
		w^n:[0,T]\to \bX_n, \quad
		w^n(t)=\sum_{l=1}^n a_l^n(t)e_{l}.
	\end{split}
\end{equation}
We pick the coefficients 
\begin{equation}\label{eq:SDE-coeff}
	c_j^n=\Set{c_{j,l}^n}_{l=1}^n \,\, (j=i,e), 
	\quad
	a^n = \Set{a_l^n}_{l=1}^n,
\end{equation}
which are finite dimensional stochastic processes 
relative to \eqref{eq:fixedS}, such 
that ($\ell=1,\ldots, n$)
\begin{equation}
	\label{S1-Galerkin-new}
	\begin{split}
		& \left(dv^n, e_\ell\right)_{L^2(\Om)}
		+\eps_n \left( du_i^n ,e_\ell\right)_{L^2(\Om)}		
		\\ & \qquad
		+ \left( M_i \Grad u_i^n, \Grad e_\ell\right)_{L^2(\Om)} \dt
		+\left(I (v^n,w^n), e_\ell \right)_{L^2(\Om)}\dt
		\\ & \qquad \qquad 
		= \sum_{k=1}^n\left( \eta^n_k(v^n), e_\ell \right)_{L^2(\Om)} 
		\dW^v_k(t), \\
		&  \left(dv^n, e_\ell\right)_{L^2(\Om)}
		- \eps_n \left(du_e^n, e_\ell\right)_{L^2(\Om)}
		\\ & \qquad
		-\left(M_e\Grad u_e^n, \Grad e_\ell\right)_{L^2(\Om)}\dt
		+\left(I(v^n,w^n), e_\ell \right)_{L^2(\Om)} \dt
		\\ &\qquad  \qquad 
		=  \sum_{k=1}^n\left( \eta^n_k(v^n), e_\ell \right)_{L^2(\Om)} 
		\dW^v_k(t), \\
		& \left(dw^n,e_\ell\right)_{L^2(\Om)} 
		= \left( H(v^n,w^n), e_\ell\right)_{L^2(\Om)} \dt
		\\ & \qquad\qquad\qquad\qquad
		+ \sum_{k=1}^n \left (\sigma^n_k(v^n), e_\ell \right)_{L^2(\Om)} 
		\dW^w_k(t),
	\end{split}	
\end{equation}
where $\eps$ in \eqref{eq:nondegen} is taken as
\begin{equation}\label{eq:def-eps}
	\eps=\eps_n:=\frac{1}{n}, \qquad n\ge 1.
\end{equation}

We need to comment on the finite dimensional approximations 
of the stochastic terms utilized in \eqref{S1-Galerkin-new}.
With $(\beta,W)$ denoting $(\eta,W^v)$ 
or $(\sigma,W^w)$, recall that $\beta$ maps 
from $L^2 \left((0,T);L^2(\Om)\right)$ to 
$L^2\left((0,T);L_2\left(\U,L^2(\Om)\right)\right)$,
where $\U$ is equipped with the orthonormal basis 
$\Set{\psi_k}_{k\ge 1}$ (cf.~Section \ref{sec:stoch}). 
Employing the decomposition 
$$
\beta_k(v)=\beta(v)\psi_k,
\qquad
\beta_k(v) 
=\sum_{l\ge 1} \left( \beta_k(v),e_l\right)_{L^2(\Om)}e_l,
$$
we can write
\begin{align*}
	\beta(v) \dW & = \sum_{k\ge 1} \beta_k(v) \dW_k
	\\ &
	= \sum_{k,l\ge 1} \beta_{k,l}(v) e_l \dW_k, 
	\quad
	\beta_{k,l}(v)= \left( \beta_k(v), e_l\right)_{L^2(\Om)}.
\end{align*}
In \eqref{S1-Galerkin-new}, we utilize the 
finite dimensional approximation
\begin{equation}\label{eq:Wn}
	\beta^n(v) \dW^n := \sum_{k,l=1}^n \beta_{k,l}(v) e_l \dW_k
	=\sum_{k=1}^n \beta_k^n(v)\dW_k,
\end{equation}
with $\beta^n$ and $W^n$ then defined by
$$
\beta_k^n(v)=\beta^n(v)\psi_k,
\quad
\beta_k^n(v) = \sum_{l=1}^n \beta_{k,l}(v) e_l, 
\quad 
W^n=\sum_{k=1}^n W_k \psi_k,
$$
where $(\beta^n,W^n)$ denotes $(\eta^n,W^{v,n})$ or $(\sigma^n,W^{w,n})$; 
$W^n$ converges in $C([0,T];\U_0)$ 
for $P$-a.e.~$\omega\in D$ and (by a martingale 
inequality) in $L^2\left(D,\cF,P; C([0,T];\U_0)\right)$.

The initial conditions are 
\begin{equation}\label{Galerkin-initdata}
	\begin{split}
		& u_j^n(0) = u_{j,0}^n :=\sum_{l=1}^n c_{j,l}^n(0)e_l, 
		\quad c_{j,l}^n(0) := \left(u_{j,0}^n,e_l\right)_{L^2(\Om)}, \quad j=i,e,\\
		& v^n(0)=v_0^n :=u_{i,0}^n-u_{e,0}^n, \\
		& w^n(0) = w_0^n :=\sum_{l=1}^n a_l^n(0) e_l, \quad
		a_l^n(0) := \left(w_{0},e_l\right)_{L^2(\Om)}.
	\end{split}
\end{equation}
In \eqref{Galerkin-initdata}, consider for example 
$u_{j,0}^n$. Since $u_{j,0}\in L^2\left(D,\cF,P;L^2(\Om)\right)$, we have 
(by standard properties of finite-dimensional 
projections, cf.~\eqref{eq:proj-prop0}, \eqref{eq:proj-prop2} below)
\begin{equation*}%\label{eq:as-conv-init}
	\text{$u_{j,0}^n \to u_{j,0}$ in $L^2(\Om)$, $P$-a.s., as $n\to \infty$,}
\end{equation*}
and
$$
\norm{u_{j,0}^n}_{L^2(\Om)}^2 \le C \norm{u_{j,0}}_{L^2(\Om)}^2.
$$
On this account, the dominated convergence theorem implies
\begin{equation}\label{eq:L2-conv-init}
	\text{$u_{j,0}^n \to u_{j,0}$ in $L^2\left(D,\cF,P;L^2(\Om)\right)$, as $n\to \infty$.}
\end{equation}
Similarly, $w_0^n\to w_0,v_0^n\to v_0$ 
in $L^2(\Om)$, $P$-a.s., and thus in $L^2_\omega(L^2_x)$.

For the basis $\Set{e_l}_{l=1}^\infty$, we introduce the projection 
operators (see e.g.~\cite[page 1636]{Brzezniak:2013aa})
\begin{equation}\label{eq:project-op}
	\begin{split}
		&\Pi_n:(\tH(\Om))^*\to \Span\Set{e_l}_{j=1}^{\infty},
		\\ & 
		\Pi_n u^* := \sum_{l=1}^n 
		\left \langle u^*,e_l \right\rangle_{(\tH(\Om))^*,\tH(\Om)}e_l.
	\end{split}
\end{equation}
The restriction of $\Pi_n$ to $L^2(\Om)$ is also denoted by $\Pi_n$:
\begin{align*}
	&\Pi_n: L^2(\Om)\to \Span\Set{e_l}_{j=1}^{\infty},
	\quad
	\Pi_n u := \sum_{l=1}^n 
	\left( u,e_l\right)_{L^2(\Om)} e_l,
\end{align*}
i.e., $\Pi_n$ is the orthogonal projection from $L^2(\Om)$
to  $\Span\Set{e_l}_{j=1}^{\infty}$. We have
\begin{equation}\label{eq:proj-prop0}
	\norm{\Pi_n u}_{L^2(\Om)}
	\le \norm{u}_{L^2(\Om)}, 
	\qquad u\in L^2(\Om).
\end{equation}
Note that we have the following equality for any
$u^*\in (\tH(\Om))^*$ and $u\in \tH(\Om)$:
\begin{equation}\begin{split}\label{eq:proj-prop1}
	&\left(\Pi_n u^*, u\right)_{L^2(\Om)}
	=\left \langle u^*, \Pi_n u
	\right\rangle_{(\tH(\Om))^*,\tH(\Om)}.
\end{split}
\end{equation}
Furthermore, as $n\to \infty$,
\begin{equation}\label{eq:proj-prop2}
	\norm{\Pi_n u -u}_{\tH(\Om)}\to 0, 
	\quad u\in \tH(\Om).
\end{equation}

Using the projection operator \eqref{eq:project-op}, we may 
write \eqref{S1-Galerkin-new} in integrated form 
equivalently as equalities between 
$(\tH(\Om))^*$ valued random variables:
\begin{equation}\label{eq:approx-eqn-integrated}
	\begin{split}
		& v^n(t) + \eps_n u_i^n(t) = v^n_0 + \eps_n u_{i,0}^n 
		+ \int_0^t \Pi_n\left[\Div\bigl(M_i \Grad u_i^n \bigr) 
		- I(v^n,w^n) \right]\ds 
		\\ & \qquad\qquad\qquad\qquad \qquad
		+ \int_0^t \eta^n(v^n) \dW^{v,n}(s)
		\quad \text{in $(\tH(\Om))^*$},
		\\ & v^n(t) - \eps_n u_e^n(t)= v^n_0 - \eps_n u_{e,0}^n
		+\int_0^t \Pi_n\left[-\Div\bigl(M_e \Grad u_e^n \bigr) 
		- I(v^n,w^n) \right]\ds 
		\\ & \qquad\qquad\qquad\qquad \qquad
		+ \int_0^t \eta^n(v^n) \dW^{v,n}(s)
		\quad \text{in $(\tH(\Om))^*$},
		\\ & w^n(t) = w^n_0 + \int_0^t \Pi_n \left(H(v^n,w^n)\right) \ds
		\\ & \qquad\qquad\qquad\qquad \qquad
		+ \int_0^t \sigma^n(v^n)\dW^{w,n}(s)
		\quad \text{in $(\tH(\Om))^*$},
	\end{split}
\end{equation}
where $v^n_0 = u_{i,0}^n-u_{e,0}^n$ and $u_{i,0}^n=\Pi_n u_{i,0}$, 
$u_{e,0}^n=\Pi_n u_{e,0}$, $w^n_0 = \Pi_n w_0$. 

In coming sections we investigate the 
convergence properties of the sequences 
$\Set{u_j^n}_{n\ge 1}$ ($j=i,e$), $\Set{v^n}_{n\ge 1}$, 
$\Set{w^n}_{n\ge 1}$ defined by \eqref{eq:approx-eqn-integrated}. 
Meanwhile, we must verify the existence of a (pathwise) solution to the 
finite dimensional system \eqref{S1-Galerkin-new}.

\begin{lem}\label{lem:fg-solutions}
For each fixed $n\ge1$, the Faedo-Galerkin equations 
\eqref{eq:approxsol}, \eqref{S1-Galerkin-new}, and \eqref{Galerkin-initdata} 
possess a unique global adapted solution $(u_i^n(t), u_e^n(t), v^n(t), w^n(t))$ on $[0,T]$.
Besides, $u_i^n, u_e^n, v^n, w^n$ belong to $C([0,T];\bX_n)$,
and $v^n=u_i^n-u_e^n$.
\end{lem}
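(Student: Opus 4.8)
The plan is to rewrite the Galerkin system \eqref{S1-Galerkin-new} as a finite-dimensional It\^o SDE for the coefficient vector $X^n:=(c_i^n,c_e^n,a^n)\in\R^{3n}$, apply the classical existence and uniqueness theory for SDEs with locally Lipschitz coefficients to obtain a unique maximal local (pathwise, relative to the fixed stochastic basis \eqref{eq:fixedS}) solution, and then rule out explosion by an energy estimate, thereby promoting the local solution to a global one on $[0,T]$.

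\textbf{Reduction to an SDE.} Because $\Set{e_l}_{l=1}^n$ is orthonormal in $L^2(\Om)$, we have $(du_j^n,e_\ell)_{L^2(\Om)}=dc_{j,\ell}^n$; substituting $v^n=u_i^n-u_e^n$, i.e.\ $dv^n=du_i^n-du_e^n$, the first two equations in \eqref{S1-Galerkin-new} become, for each $\ell\in\Set{1,\dots,n}$, the $2\times2$ linear system
\[
\begin{pmatrix} 1+\eps_n & -1 \\ 1 & -(1+\eps_n)\end{pmatrix}
\begin{pmatrix} dc_{i,\ell}^n \\ dc_{e,\ell}^n\end{pmatrix}
=\begin{pmatrix} F_\ell \\ G_\ell\end{pmatrix},
\]
where $F_\ell,G_\ell$ collect the drift and diffusion terms on the right-hand sides of the first two lines of \eqref{S1-Galerkin-new}. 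The coefficient matrix has determinant $-\eps_n(2+\eps_n)\neq0$; inverting it and adjoining the (already explicit) third equation for $a^n=\Set{a_\ell^n}_{l=1}^n$ produces a closed system $dX^n=b_n(X^n)\dt+\Sigma_n(X^n)\,d\mathbf W$ driven by the finitely many independent Brownian motions $\mathbf W=(W_1^v,\dots,W_n^v,W_1^w,\dots,W_n^w)$, with $X^n(0)$ given by \eqref{Galerkin-initdata}.

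\textbf{Local solvability.} On $\bX_n$ all norms are equivalent; the maps $c\mapsto(M_j\Grad u_j^n,\Grad e_\ell)_{L^2(\Om)}$ are linear, and since $I_1,I_2,h\in C^1(\R)$ have polynomial growth by {\rm (\textbf{GFHN})}, the maps $c\mapsto(I(v^n,w^n),e_\ell)_{L^2(\Om)}$ and $c\mapsto(H(v^n,w^n),e_\ell)_{L^2(\Om)}$ are locally Lipschitz (using the Sobolev embedding $\tH(\Om)\hookrightarrow L^6(\Om)$ and the cubic bound on $I_1$); hence $b_n$ is locally Lipschitz. Since $\eta^n,\sigma^n$ are $L^2$-orthogonal truncations of $\eta,\sigma$, the estimates \eqref{eq:noise-cond2} are inherited, so $\Sigma_n$ is globally Lipschitz with linear growth. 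By the standard SDE theory \cite{Ikeda:1981aa,Karatzas,Prevot:2007aa}, there is a unique adapted solution with a.s.\ continuous paths on a maximal interval $[0,\tau_n)$, where $\tau_n=\lim_{R\to\infty}\tau_n^R$ and $\tau_n^R:=\inf\Set{t\ge0:\abs{X^n(t)}\ge R}\wedge T$.

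\textbf{Non-explosion, hence globality.} Apply the It\^o formula to
\[
\mathcal E^n(t):=\tfrac12\norm{v^n(t)}_{L^2(\Om)}^2+\tfrac{\eps_n}{2}\Bigl(\norm{u_i^n(t)}_{L^2(\Om)}^2+\norm{u_e^n(t)}_{L^2(\Om)}^2\Bigr)+\tfrac12\norm{w^n(t)}_{L^2(\Om)}^2,
\]
which amounts to multiplying the first equation in \eqref{S1-Galerkin-new} by $c_{i,\ell}^n$, the second by $-c_{e,\ell}^n$, the third by $a_\ell^n$, summing over $\ell$, and adding (the left-hand side reconstitutes $d\mathcal E^n$ up to It\^o corrections, thanks to $u_i^n-u_e^n=v^n$). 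The elliptic terms $(M_j\Grad u_j^n,\Grad u_j^n)_{L^2(\Om)}$ are nonnegative by the coercivity in \eqref{matrix}; the reaction term is controlled via $I_1(v)v\ge\underline{c}_I\abs{v}^4-c_{I,2}\abs{v}^2$ and Young's inequality (applied to $c_{I,4}v^2w$ and to $h(v)w$), so that the $\abs{v}^4$ contribution is absorbed and the remainder is bounded by $C(1+\mathcal E^n)$; the It\^o corrections and the quadratic variation of the stochastic term are controlled by \eqref{eq:noise-cond2}, and the resulting martingale term by the Burkholder--Davis--Gundy inequality \eqref{eq:bdg}. Grönwall's inequality then gives $\E\bigl[\sup_{t\in[0,T\wedge\tau_n^R]}\mathcal E^n(t)\bigr]\le C\bigl(1+\E[\mathcal E^n(0)]\bigr)$ with $C$ independent of $R$; since $\mathcal E^n\ge\min\bigl(\tfrac{\eps_n}{2},\tfrac12\bigr)\abs{X^n}^2$ and $\mathcal E^n(0)<\infty$ a.s.\ by \eqref{Galerkin-initdata}, \eqref{eq:uie-init-ass}, Chebyshev's inequality yields $P(\tau_n^R\le T)\to0$ as $R\to\infty$, whence $\tau_n=T$ a.s. Continuity of $t\mapsto X^n(t)$ then gives $u_i^n,u_e^n,v^n,w^n\in C([0,T];\bX_n)$, while $v^n=u_i^n-u_e^n$ holds by construction. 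I expect this non-explosion estimate to be the main obstacle: the drift grows cubically through $I_1$, so no linear-growth criterion applies, and one must exploit both the sign structure of the bidomain coupling (hence the multipliers $(u_i^n,-u_e^n)$ rather than a componentwise test) and the one-sided dissipativity built into {\rm (\textbf{GFHN})}.
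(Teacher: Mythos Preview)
Your argument is correct, but it follows a different route from the paper. You proceed by \emph{local Lipschitz drift $+$ globally Lipschitz diffusion $\Rightarrow$ unique maximal local solution}, then rule out explosion by the energy identity for $\mathcal E^n$, using the multipliers $(u_i^n,-u_e^n,w^n)$ and the dissipativity built into {\rm (\textbf{GFHN})}. The paper instead rewrites the Galerkin SDE in terms of the rescaled vector $C^n=(c^n,\sqrt{\eps_n}\,c_i^n,\sqrt{\eps_n}\,c_e^n,a^n)$ and verifies directly the \emph{local weak monotonicity} and \emph{weak coercivity} hypotheses of \cite[Thm.~3.1.1]{Prevot:2007aa}, obtaining a global solution in one stroke without stopping times. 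The key algebraic point in the paper's computation is that, after the cross-terms cancel, the ionic contribution to $(F(C_1)-F(C_2))\cdot(C_1-C_2)$ reduces exactly to $-\int_\Om\bigl(I(v_1,w_1)-I(v_2,w_2)\bigr)(v_1-v_2)\,dx$, which combined with the $H$-term is one-sided Lipschitz by {\rm (\textbf{GFHN})}; this is the same structural observation that drives your non-explosion estimate, just packaged as a monotonicity condition rather than an a priori bound. Your approach has the virtue that the energy inequality you derive is precisely the one the paper establishes later in Lemma~\ref{lem:apriori-est}, so nothing is wasted; the paper's approach is terser once one accepts the monotone-SDE framework, and yields uniqueness immediately from the one-sided Lipschitz property rather than from a separate stability argument.
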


\begin{proof}
Using the orthonormality of the basis, \eqref{S1-Galerkin-new} 
becomes the SDE system ($\ell=1,\ldots,n$)
\begin{equation}\label{S1-Galerkin-bis}
	\begin{split}
		d \left(c_\ell^n+\eps_n  c_{i,\ell}^n\right) 
		& = A_{i,\ell} \dt + \Gamma_\ell \dW^{v,n}, \\
		d \left( c_\ell^n-\eps_n c_{e,\ell}^n\right) 
		& =A_{e,\ell} \dt+\Gamma_\ell \dW^{v,n},\\
		d a_\ell^n & = A_{H,\ell} \dt
		+\zeta_\ell \dW^{w,n},
	\end{split}
\end{equation}
for the coefficients $c_j^n=c_j^n(t)$ $(j=i,e)$ and $a^n=a^n(t)$, 
cf.~\eqref{eq:SDE-coeff}, where
\begin{align*}
	& A_{i,\ell}=
	-\int_{\Om} M_i \Grad u_i^n\cdot \Grad e_\ell \dx
	-  \int_{\Om} I(v^n,w^n) e_\ell \dx,
	\\ 
	& A_{e,\ell}=
	\int_{\Om} M_e\Grad u_e^n\cdot \Grad e_\ell\dx
	-\int_{\Om} I(v^n,w^n)e_\ell \dx,
	\\
	& A_{H,\ell}=
	\int_{\Om}H(v^n,w^n) e_\ell \dx,
	\\ 
	& \Gamma_\ell = \Set{\Gamma_{\ell,k}}_{k=1}^n,
	\; \Gamma_{\ell,k}=\int_{\Om} \eta^n_k(v^n) \, e_\ell  \dx,
	\;  \Gamma_\ell \dW^{v,n} 
	=\sum_{k=1}^n \Gamma_{\ell,k}  \dW^v_k,
	\\ 
	& \zeta_\ell = \Set{\zeta_{\ell,k}}_{k=1}^n,
	\; \zeta_{\ell,k}=\int_{\Om} \sigma^n_k(v^n) \, e_\ell  \dx,
	\;  \zeta_\ell \dW^{v,n} 
	=\sum_{k=1}^n \zeta_{\ell,k}  \dW^v_k.
\end{align*}

Adding the first and second equations 
in \eqref{S1-Galerkin-bis} yields ($\ell=1,\dots,n$)
\begin{equation}\label{S1-Galerkin-bis:1}
	\begin{split}
		d c_\ell^n 
		& = \frac{1}{2+\eps_n}\left [A_{i,\ell} + A_{e,\ell}\right] \dt 
		+ \frac{2}{2+\eps_n} \Gamma_\ell \dW^{v,n},
		\\ & =: F_{ie,\ell} \dt+2 G_\ell\dW^{v,n},
	\end{split}
\end{equation}
and plugging \eqref{S1-Galerkin-bis:1} into \eqref{S1-Galerkin-bis} 
we arrive at ($\ell=1,\dots,n$)
\begin{equation}\label{S1-Galerkin-bis:2}
	\begin{split}
		d\left(\sqrt{\eps_n} c_{i,\ell}^n\right) 
		& = \left[\frac{1+\eps_n}{\sqrt{\eps_n}(2+\eps_n)}A_{i,\ell} 
		-\frac{1}{\sqrt{\eps_n}(2+\eps_n)} A_{e,\ell} \right] \dt 
		\\ & \qquad\qquad
		+\frac{\sqrt{\eps_n}}{2+\eps_n}\Gamma_\ell \dW^{v,n}
		=: F_{i,\ell} \dt+\sqrt{\eps_n}G_\ell\dW^{v,n},
		\\
		d\left(\sqrt{\eps_n} c_{e,\ell}^n\right) 
		& = \left[ \frac{1}{\sqrt{\eps_n}(2+\eps_n)} A_{i,\ell} 
		- \frac{1+\eps_n}{\sqrt{\eps_n}(2+\eps_n)} A_{e,\ell}\right] \dt
		\\ & \qquad\qquad 
		-\frac{\sqrt{\eps_n}}{2+\eps_n}\Gamma_\ell \dW^{v,n}
		=: F_{e,\ell} \dt-\sqrt{\eps_n}G_\ell\dW^{v,n},
		\\ 
		d a_\ell^n & = A_{H,\ell} \dt+\zeta_\ell \dW^{w,n}.
	\end{split}
\end{equation}

Let 
$$
C^n=C^n(t)=\Set{c^n(t),\sqrt{\eps_n}c_i^n(t),\sqrt{\eps_n}c_e^n(t),a^n(t)}
$$
be the vector containing all the unknowns in \eqref{S1-Galerkin-bis:1} 
and \eqref{S1-Galerkin-bis:2}. For technical reasons, related 
to \eqref{Carath-F-1} and \eqref{Carath-F-2} below, we 
write the left-hand sides of the first two equations 
in \eqref{S1-Galerkin-bis:2} in terms of the $\eps_n$ scaled 
quantities $\sqrt{\eps_n}c_i^n,\sqrt{\eps_n}c_e^n$. Moreover, 
we view the right-hand sides of all the equations 
as functions of $C^n$ (involving the $\eps_n$ scaled quantities),
which can always be done since $\eps_n>0$ is a fixed number.  
As a result, the constants below may depend on $1/\eps_n$.
Let
$$
F(C^n)=
\Set{
\Set{F_{ie,\ell}(C^n)}_{\ell=1}^n,
\Set{F_{i,\ell}(C^n)}_{\ell=1}^n,
\Set{F_{e,\ell}(C^n)}_{\ell=1}^n,
\Set{A_{H,\ell}(C^n)}_{\ell=1}^n
}
$$
be the vector containing all the drift terms, and 
$$
G(C^n)= 
\Set{
\Set{2G_\ell}_{\ell=1}^n,
\Set{\sqrt{\eps_n}G_\ell}_{\ell=1}^n,
\Set{-\sqrt{\eps_n}G_\ell}_{\ell=1}^n,
\Set{\zeta_\ell}_{\ell=1}^n
}, 
$$
be the collection of noise coefficients. The vector 
$\Set{W^{v,n},W^{v,n},W^{v,n},W^{w,n}}$ is denoted by $W^n$ .
Then \eqref{S1-Galerkin-bis:1} and \eqref{S1-Galerkin-bis:2} take 
the compact form
\begin{equation}\label{eq:SDE}
	dC^n(t) = F(C^n(t))\dt + G(C^n(t)) \dW^n(t), \quad C^n(0)=C_0^n,
\end{equation}
where $C_0^n=\Set{c^n(0),\sqrt{\eps_n}c_i^n(0),\sqrt{\eps_n}c_e^n(0),a^n(0)}$, 
cf.~\eqref{Galerkin-initdata}. 

If $F,G$ are globally Lipschitz continuous, classical SDE 
theory \cite{Karatzas,Prevot:2007aa,Skorokhod} provides the 
existence and uniqueness of a pathwise solution.  
However, due to the nonlinear nature of the ionic models, cf.~(\textbf{GFHN}), the 
global Lipschitz condition does not hold for the SDE system \eqref{eq:SDE}. 
As a replacement, we consider the following two conditions:
\begin{itemize}
	\item (local weak monotonicity) $\forall C_1, C_2 \in \R^{4n}$, 
	$\abs{C_1},\abs{C_2}\leq r$, for any $r>0$,
	\begin{equation}\label{Carath-F-1}
		2 \left(F(C_1)-F(C_2)\right) \cdot \left(C_1-C_2\right)
		+\abs{G(C_1)-G(C_2)}^2 \leq K_r \abs{C_1-C_2}^2,
	\end{equation}
	for some $r$-dependent positive constant $K_r$.

	\item (weak coercivity) $\forall C\in \R^{4n}$, there exists 
	a constant $K>0$ such that
	\begin{equation}\label{Carath-F-2}
		2 F(C) \cdot C+\abs{G(C)}^2 \leq K\left(1+\abs{C}^2\right).
	\end{equation}
\end{itemize}

Below we verify that the coefficients $F, G$ in \eqref{eq:SDE} 
satisfy these two conditions globally (i.e., \eqref{Carath-F-1} holds 
independent of $r$). Then, in view of Theorem 3.1.1 in 
\cite{Prevot:2007aa}, there exists a unique global adapted 
solution to \eqref{eq:SDE}.

Let us verify the weak monotonicity condition. To this end, set
\begin{align*}
	&u_j^n:=u_{j,1}^n-u_{j,2}^n \,\, (j=i,e),\quad
	v_k^n:=u_{i,k}^n-u_{e,k}^n \,\, (k=1,2),  \\
	& v^n:=v_1^n-v_2^n, \quad w^n:=w_1^n-w_2^n,
\end{align*}
where $\left( u_{i,1}^n, u_{e,1}^n,w_1^n\right)$ and 
$\left( u_{i,2}^n, u_{e,2}^n,w_2^n\right)$ 
are arbitrary functions of the form of \eqref{eq:approxsol}, with 
corresponding time coefficients $\left( c_{i,1}^n, c_{e,1}^n,a_1^n\right)$ 
and $\left( c_{i,2}^n, c_{e,2}^n,a_2^n\right)$, respectively. 
Moreover, set $c_1^n:= c_{i,1}^n-c_{e,1}^n$, 
$c_2^n:=  c_{i,1}^n-c_{e,1}^n$, and 
$C_k^n:=\Set{c_k^n,\sqrt{\eps_n}c_{i,k}^n,\sqrt{\eps_n}c_{e,k}^n,a_k^n}$ 
for $k=1,2$. 

We wish to show that 
$$
\cI_F:=\left( F(C_1^n)-F(C_2^n) \right) 
\cdot \left( C_1^n-C_2^n \right)\le K_F \abs{C_1^n-C_2^n}^2,
$$ 
i.e.,  that $F$ is globally one-sided Lipschitz. This requires 
comparing the ``$dt$-terms" in 
\eqref{S1-Galerkin-bis:1} and \eqref{S1-Galerkin-bis:2} 
corresponding to the vectors $C_1^n$ and $C_2^n$, 
resulting in three different types of terms, linked 
to the $M_j$ (diffusion) part, the $I$ (ionic) part, and the $H$ 
(gating) part of the equations, that is, 
$\cI_F = \cI_F^M + \cI_F^I+ \cI_F^H$. 
First,
\begin{align*}
	\cI_F^I & =
	\frac{-2}{2+\eps_n}
	\sum_{l=1}^n
	\int_{\Om} \left( I(v_1^n,w_1^n) -I(v_2^n,w_2^n)\right)e_l\dx 
	\left(c_{1,l}^n-c_{2,l}^n\right)
	\\ & \qquad 
	+ \frac{ -(1+\eps_n) + 1}{\sqrt{\eps_n} (2+\eps_n)}\sum_{l=1}^n 
	\int_{\Om} \left(I(v_1^n,w_1^n)-I(v_2^n,w_2^n)\right)e_l\dx 
	\\ & \qquad\qquad\qquad\qquad\qquad\qquad\quad
	\times 
	\left (\sqrt{\eps_n}c_{i,1,l}^n-\sqrt{\eps_n}c_{i,2,l}^n \right)
	\\ &\qquad 
	+ \frac{-1 + (1+\eps_n)}{\sqrt{\eps_n}(2+\eps_n)}
	\sum_{l=1}^n
	\int_{\Om}  \left(I(v_1^n,w_1^n)-I(v_2^n,w_2^n)\right)e_l\dx
	\\ & \qquad\qquad\qquad\qquad\qquad\qquad\quad
	\times \left(\sqrt{\eps_n}c_{e,1,l}(t)-\sqrt{\eps_n}c_{e,2,l}(t)\right) 
	\\ &
	=-\frac{2}{2+\eps_n}
	\int_{\Om} \left(I(v_1^n,w_1^n)-I(v_2^n,w_2^n)\right)v^n\dx
	\\ &\qquad 
	- \frac{\eps_n}{2+\eps_n}
	\int_{\Om} \left(I(v_1^n,w_1^n)-I(v_2^n,w_2^n)\right)u_i^n\dx
	\\ &\qquad 
	+ \frac{\eps_n}{2+\eps_n}
	\int_{\Om} \left(I(v_1^n,w_1^n)-I(v_2^n,w_2^n)\right)u_e^n\dx
	\\ &
	= - \int_{\Om} \left(I(v_1^n,w_1^n)-I(v_2^n,w_2^n)\right)v^n\dx.
\end{align*}
Similarly, 
\begin{align*}
	\cI_F^H & =\sum_{l=1}^n 
	\int_{\Om} \left( H(v_1^n,w_1^n)-H(v_2^n,w_2^n) \right) e_l\dx
	\left(a_{1,l}-a_{2,l}\right)
	\\ & =\int_{\Om} \left(H(v_1^n,w_1^n)-H(v_2^n,w_2^n)\right)w^n\dx ,
\end{align*}
and therefore $\cI_F^I + \cI_F^H$ becomes 
\begin{equation}\label{eq:HminusI}
	\begin{split}
		&\int_{\Om} \Bigl( \left ( H(v_1^n,w_1^n)-H(v_2^n,w_2^n)\right)w^n
		-\left(I(v_1^n,w_1^n)-I(v_2^n,w_2^n)\right)v^n \Bigr)\dx
		\\ & \qquad 
		\le \tilde K_F^{H,I} \left(\norm{v_1^n-v_2^n}_{L^2(\Om)}^2
		+ \norm{w_1^n-w_2^n}_{L^2(\Om)}^2\right)
		\le K_F^{H,I} \abs{C_1^n-C_2^n}^2,
	\end{split}		
\end{equation}
for some constants $\tilde K_F^{H,I},K_F^{H,I}$. The first inequality 
in \eqref{eq:HminusI} follows as in \cite[page 479]{Bourgault:2009aa} from 
the structural condition {\rm (\textbf{GFHN})}. Finally, we find that
\begin{align*}
	\cI_F^M & =
	\frac{1}{2+\eps_n}
	\int_{\Om} \left(-M_i \Grad U_i^n+ M_e\Grad U_e^n\right)
	\cdot \Grad V^n\dx
	\\ & \quad 
	+ \frac{1}{2+\eps_n}  
	\int_{\Om} \left(-(1+\eps_n) M_i \Grad U_i^n
	- M_e\Grad U_e^n\right)\cdot \Grad U_i^n\dx
	\\ &\quad 
	+ \frac{1}{2+\eps_n}
	\int_{\Om} \left(-M_i \Grad U_i^n-(1+\eps_n) M_e\Grad U_e^n\right)
	\cdot \Grad U_e^n\dx.
\end{align*}
Adding the integrands gives
\begin{align*}
	&\left(-M_i \Grad U_i^n+ M_e\Grad U_e^n\right)\cdot \Grad V^n
	+\left(-(1+\eps_n) M_i \Grad U_i^n
	- M_e\Grad U_e^n\right)\cdot \Grad U_i^n
	\\ & \qquad  
	+\left(-M_i \Grad U_i^n-(1+\eps_n) M_e\Grad U_e^n\right)
	\cdot \Grad U_e^n
	\\ & = -\left(2+\eps_n\right) M_i \Grad U_i^n\cdot \Grad U_i^n 
	-\left(2+\eps_n\right) M_e \Grad U_e^n\cdot \Grad U_e^n,
\end{align*}
and thus, cf.~\eqref{matrix},
$\cI_F^M  = - \sum_{j=i,e} M_j \Grad U_j^n\cdot \Grad U_j^n \le 0$.
Hence, $F$ is globally one-sided Lipschitz.  In view 
of \eqref{eq:noise-cond}, it follows easily that $G$ is globally Lipschitz: 
$$
\abs{G(C_1^n)-G(C_2^n)} \leq K_G \abs{C_1^n-C_2^n},
$$ 
for some constant $K_G$ (depending on $n$). 
Summarizing, condition \eqref{Carath-F-1} holds.

In much the same way, again using assumptions (\textbf{GFHN}) 
and \eqref{eq:noise-cond}, we deduce that 
$$
F(C_1^n) \cdot C_1^n \leq K_F \left(1+\abs{C_1^n}^2\right),
\qquad
\abs{G(C_1^n)}^2 \leq K_G \left(1+\abs{C_1^n}^2\right),
$$
for some constants $K_F, K_G$; that is to say, 
condition \eqref{Carath-F-2} holds.
\end{proof}

%%%%%%%%%%%%%%%%%%%%%%%%%%%%%%
%%%%%%%%%%%%%%%%%%%%%%%%%%%%%%
\section{Convergence of approximate solutions}\label{sec:convergence}

\subsection{Basic apriori estimates}\label{sec:apriori}

To establish convergence of the Faedo-Galerkin 
approximations, we must supply a series of apriori 
estimates that are independent of the parameter 
$n$ (cf.~Lemma \ref{lem:apriori-est} below).
At an informal level, assuming that the relevant functions are 
sufficiently regular, these estimates are obtained by considering
\begin{equation}\label{eq:vpmu}
	\begin{split}
		& d \left(v + \eps_n u_i\right) = 
		\left[\Div\bigl(M_i \Grad u_i \bigr) - I(v,w) \right]\dt + \eta(v) \dW^v 
		\\ 
		& d \left(v - \eps_n u_e\right) = 
		\left[-\Div\bigl(M_e \Grad u_e \bigr) - I(v,w) \right]\dt + \eta(v)\dW^v,
	\end{split}
\end{equation}
where $\eps_n$ is defined in \eqref{eq:def-eps}, multiplying the first 
equation by $u_i$, the second equation by $-u_e$, and summing the 
resulting equations. For the moment, let us assume 
that the noise $W^v$ is one-dimensional and $\eta(v)$ 
is a scalar function. To proceed we use the stochastic (It\^{o}) product rule. 
Hence, we need access to the equation for $du_i$, which turns out to be
\begin{align*}
	du_i  
	& =\left[ \frac{1+\eps_n}{\eps_n(2+\eps_n)}\Div\bigl(M_i \Grad u_i \bigr)
	+\frac{1}{\eps_n(2+\eps_n)}\Div\bigl(M_e \Grad u_e \bigr)
	-  \frac{1}{2+\eps_n} I(v,w) \right]\dt
	\\ & \qquad \qquad +  \frac{1}{2+\eps_n}\eta(v) \dW^v.
\end{align*}
Note that this equation ``blows up" as $\eps_n\to 0$ (the same 
is true for the $du_e$ equation below). The stochastic product rule gives
\begin{equation}\label{eq:product-ui}
	\begin{split}
		d\left(u_i \left(v  + \eps_n u_i\right) \right)
		& = u_i \, d \left(v + \eps_n u_i\right) 
		+ d u_i \left(v+\eps_n u_i\right) + \frac{1}{2+\eps_n}\eta(v)^2 \dt
		\\ & = \frac{1}{2+\eps_n}\eta(v)^2 \dt
		+\left[u_i \, \Div\bigl(M_i \Grad u_i\bigr) - u_i \, I(v,w) \right]\dt 
		\\ & \qquad  
		+ u_i \,\eta(v) \dW^v
		+\Biggl[\cdots \Biggr]_i\dt 
		+  \frac{1}{2+\eps_n}\left(v+\eps_n u_i\right)\eta(v) \dW^v,
	\end{split}
\end{equation}
where
\begin{align*}
	\Biggl[\cdots \Biggr]_i\dt & =
	\Biggl[ \frac{1+\eps_n}{\eps_n(2+\eps_n)}\left(v+\eps_n u_i\right)
	\Div\bigl(M_i \Grad u_i \bigr)
	\\ & \qquad \qquad 
	+\frac{1}{\eps_n(2+\eps_n)}\left(v+\eps_n u_i\right)\Div\bigl(M_e \Grad u_e \bigr)
	\\ & \qquad \qquad\qquad 
	-  \frac{1}{2+\eps_n} \left(v+\eps_n u_i\right) I(v,w) \Biggr]\dt.
\end{align*}
Similar computations, this time involving the equation
\begin{align*}
	du_e& =\left[ \frac{1}{\eps_n(2+\eps_n)}\Div\bigl(M_i \Grad u_i \bigr)
	+\frac{1+\eps_n}{\eps_n(2+\eps_n)}\Div\bigl(M_e \Grad u_e \bigr)
	+  \frac{1}{2+\eps_n} I(v,w) \right]\dt
	\\ & \qquad \qquad -  \frac{1}{2+\eps_n}\eta(v) \dW^v,
\end{align*}
yield
\begin{equation}\label{eq:product-ue}
	\begin{split}
		d\left(-u_e \left(v  - \eps_n u_e\right) \right)
		& = -u_e \, d \left(v - \eps_n u_e\right) 
		- d u_e \left(v-\eps_n u_e\right) + \frac{1}{2+\eps_n}\eta(v)^2 \dt
		\\ & = \frac{1}{2+\eps_n}\eta(v)^2 \dt
		+ \left[u_e\, \Div\bigl(M_e \Grad u_e \bigr) + u_eI(v,w) \right]\dt 
		\\ & \qquad 
		- u_e\,\eta(v) \dW^v
		+\Biggl[\cdots \Biggr]_e\dt
		+\frac{1}{2+\eps_n}\left(v-\eps_n u_e\right) \eta(v) \dW^v,
	\end{split}
\end{equation}
where
\begin{align*}
	\Biggl[\cdots \Biggr]_e\dt
	& =\Biggl[- \frac{1}{\eps_n(2+\eps_n)}
	\left(v-\eps_n u_e\right)\Div\bigl(M_i \Grad u_i \bigr)
	\\ & \qquad \qquad 
	-\frac{1+\eps_n}{\eps_n(2+\eps_n)}\left(v-\eps_n u_e\right)
	\Div\bigl(M_e \Grad u_e \bigr)
	\\ & \qquad \qquad \qquad
	- \frac{1}{2+\eps_n}\left(v-\eps_n u_e\right) I(v,w) \Biggr]\dt.
\end{align*}
After some computations we find that
\begin{align*}
	\Biggl[\cdots \Biggr]_i\dt +\Biggl[\cdots \Biggr]_e\dt
	= \Biggl[ 2u_i\, \Div\bigl(M_i \Grad u_i \bigr)
	+2u_e\, \Div\bigl(M_e \Grad u_e \bigr)
	- 2v\, I(v,w) \Biggr]\dt
\end{align*}
and
\begin{align*}
	& u_i \, \eta(v) \dW^v+ \frac{1}{2+\eps_n}\left(v+\eps_n u_i\right)\eta(v) \dW^v
	\\ & \quad \quad
	- u_e\, \eta(v) \dW^v + \frac{1}{2+\eps_n}\left(v-\eps_n u_e\right) \eta(v) \dW^v
	= 2v \, \eta(v) \dW^v.
\end{align*}
Whence, adding \eqref{eq:product-ui} and \eqref{eq:product-ue},
\begin{align*}
	&d\left(v^2  + \eps_n u_i^2 + \eps_n u_e^2 \right)
	= d\left(u_i \left(v  + \eps_n u_i\right) \right)
	+d\left(-u_e \left(v  - \eps_n u_e\right) \right)
	\\ & \qquad 
	=  \Biggl [\frac{2}{2+\eps_n}\eta(v)^2 
	+2 u_i \, \Div\bigl(M_i \Grad u_i\bigr)
	\\ & \qquad \qquad \qquad\qquad
	+ 2 u_e\, \Div\bigl(M_e \Grad u_e \bigr) 
	-2 v\, I(v,w) \Biggr]\dt +2 v\, \eta(v) \dW^v.
\end{align*}
Adding to this the equation for $d w^2$, resulting 
from \eqref{eq:nondegen} and It\^{o}'s formula, the estimates in 
Lemma \ref{lem:apriori-est} below appear once we 
integrate in $x$ and $t$, make use of spatial integration by parts, the 
boundary conditions \eqref{S-bc}, and properties of the nonlinear 
functions $I,H$ implying \eqref{eq:nonlinear-est} below. 
Arguing at the level of finite dimensional approximations, we 
now convert the computations outlined above into a rigorous proof.

\begin{lem}\label{lem:apriori-est}
Suppose conditions {\rm (\textbf{GFHN})}, \eqref{matrix}, 
\eqref{eq:noise-cond}, and \eqref{eq:uie-init-ass} hold. Let 
$$
u_i^n(t),u_e^n(t),v^n(t),w^n(t), \quad t\in [0,T],
$$
satisfy \eqref{S1-Galerkin-new}, \eqref{eq:def-eps}, 
\eqref{eq:Wn}, \eqref{Galerkin-initdata}. 
There is a constant $C>0$, independent of $n$, such that
\begin{align}
	\label{Gal:est1}
	& \E \left[ \norm{v^n(t)}_{L^2(\Om)}^2\right]
	+\E \left[ \norm{w^n(t)}_{L^2(\Om)}^2\right]
	\\ & \qquad\qquad\qquad\quad\,
	+ \sum_{j=i,e} \E \left[ \norm{\sqrt{\eps_n}u_j^n(t)}_{L^2(\Om)}^2 \right]
	\le C, \qquad \forall t\in [0,T]; \notag
	\\
	\label{Gal:est2}
	& \sum_{j=i,e} \E \left[\int_0^T \int_{\Om}\abs{\Grad u_j^n}^2 \dx \dt \right]
		+ \E \left[\int_0^T \int_{\Om} \abs{v^n}^4 \dx\dt \right] \le C;
	\\
	\label{Gal:est2-new}
	& \sum_{j=i,e} \E \left[ \int_0^T \int_{\Om} \abs{u_j^n}^2 \dx \dt \right]
	\le C;
	\\
	\label{Gal:est3}
	& \E \left[ \sup_{t\in [0,T]}\norm{v^n(t)}_{L^2(\Om)}^2\right]
	+\E \left[ \sup_{t\in [0,T]} \norm{w^n(t)}_{L^2(\Om)}^2\right]
	\\ & \qquad\qquad\qquad\qquad\qquad\,
	+ \sum_{j=i,e} \E \left[ \sup_{t\in [0,T]} 
	\norm{\sqrt{\eps_n}u_j^n(t)}_{L^2(\Om)}^2 \right]
	\le C. \notag
\end{align}
\end{lem}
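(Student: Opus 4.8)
The plan is to turn the formal It\^o computation preceding the statement into a rigorous argument at the level of the finite-dimensional coefficient processes supplied by Lemma~\ref{lem:fg-solutions}. Since a priori we only know that those processes have continuous (hence bounded) paths on $[0,T]$, we localise: let
$$\tau_R^n:=\inf\Bigl\{t\in[0,T]:\norm{v^n(t)}_{L^2(\Om)}+\norm{w^n(t)}_{L^2(\Om)}+\sum_{j=i,e}\norm{\sqrt{\eps_n}\,u_j^n(t)}_{L^2(\Om)}\ge R\Bigr\},$$
derive each estimate on $[0,t\wedge\tau_R^n]$ (where the stopped stochastic integrals are martingales), and pass $R\uparrow\infty$ at the end via Fatou and monotone convergence; we suppress this localisation in the notation. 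Writing
$$\Phi^n:=\norm{v^n}_{L^2(\Om)}^2+\eps_n\norm{u_i^n}_{L^2(\Om)}^2+\eps_n\norm{u_e^n}_{L^2(\Om)}^2+\norm{w^n}_{L^2(\Om)}^2$$
and using $v^n=u_i^n-u_e^n$ to recast it as $\Phi^n=(u_i^n,v^n+\eps_n u_i^n)_{L^2(\Om)}-(u_e^n,v^n-\eps_n u_e^n)_{L^2(\Om)}+\norm{w^n}_{L^2(\Om)}^2$, the product-rule manipulation sketched before the statement (applied to \eqref{S1-Galerkin-new} and augmented by It\^o's formula for $d\norm{w^n}_{L^2(\Om)}^2$) yields the energy identity
\begin{align*}
	d\Phi^n&=\Bigl[-2(M_i\Grad u_i^n,\Grad u_i^n)_{L^2(\Om)}-2(M_e\Grad u_e^n,\Grad u_e^n)_{L^2(\Om)}\\
	&\qquad-2(I(v^n,w^n),v^n)_{L^2(\Om)}+2(H(v^n,w^n),w^n)_{L^2(\Om)}\\
	&\qquad+\frac{2}{2+\eps_n}\norm{\eta^n(v^n)}_{L_2(\U,L^2(\Om))}^2+\norm{\sigma^n(v^n)}_{L_2(\U,L^2(\Om))}^2\Bigr]\dt\\
	&\qquad+2\int_{\Om}\eta^n(v^n)\,v^n\dx\,\dW^{v,n}+2\int_{\Om}\sigma^n(v^n)\,w^n\dx\,\dW^{w,n}.
\end{align*}
The crucial feature, already apparent in that computation, is that the cross terms produced by the $\eps_n\,du_j$ contributions cancel exactly the part of the It\^o correction that would blow up as $\eps_n\to0$, leaving only the harmless factor $\frac{2}{2+\eps_n}\le1$ in front of $\norm{\eta^n(v^n)}_{L_2(\U,L^2(\Om))}^2$; for the cylindrical noise the same bookkeeping holds mode by mode.

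Next I feed three structural bounds into this identity. From \eqref{matrix}, $-2(M_j\Grad u_j^n,\Grad u_j^n)_{L^2(\Om)}\le-2m\norm{\Grad u_j^n}_{L^2(\Om)}^2$ for $j=i,e$. Since $\eta_k^n(v^n)=\Pi_n\eta_k(v^n)$ (and likewise for $\sigma$), one has $\norm{\eta^n(v^n)}_{L_2(\U,L^2(\Om))}^2+\norm{\sigma^n(v^n)}_{L_2(\U,L^2(\Om))}^2\le\norm{\eta(v^n)}_{L_2(\U,L^2(\Om))}^2+\norm{\sigma(v^n)}_{L_2(\U,L^2(\Om))}^2\le C\bigl(1+\norm{v^n}_{L^2(\Om)}^2\bigr)$ by \eqref{eq:noise-cond2}. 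Finally, by {\rm(\textbf{GFHN})} --- using $I_1(v)v\ge\underline c_I\abs{v}^4-c_{I,2}\abs{v}^2$, the affine form of $I_2$, the sublinear growth of $h$, and Young's inequality to absorb the cubic cross term $I_2(v)vw\sim v^2w$ into a fraction of $\abs{v}^4$, exactly as on \cite[page~479]{Bourgault:2009aa} --- there are constants $\underline c_I'\in(0,\underline c_I]$ and $C>0$ with
$$-2(I(v^n,w^n),v^n)_{L^2(\Om)}+2(H(v^n,w^n),w^n)_{L^2(\Om)}\le-\underline c_I'\,\norm{v^n}_{L^4(\Om)}^4+C\bigl(1+\norm{v^n}_{L^2(\Om)}^2+\norm{w^n}_{L^2(\Om)}^2\bigr).$$
Substituting these into the energy identity, keeping the ``gain'' terms $2m\sum_j\norm{\Grad u_j^n}_{L^2(\Om)}^2$ and $\underline c_I'\norm{v^n}_{L^4(\Om)}^4$ on the left, then integrating in time and taking expectations (the martingale part has zero mean) gives
$$\E[\Phi^n(t)]+2m\sum_{j=i,e}\E\!\int_0^t\!\norm{\Grad u_j^n}_{L^2(\Om)}^2\ds+\underline c_I'\,\E\!\int_0^t\!\norm{v^n}_{L^4(\Om)}^4\ds\le\E[\Phi^n(0)]+C\!\int_0^t\!\bigl(1+\E[\Phi^n(s)]\bigr)\ds.$$

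By \eqref{eq:uie-init-ass}, the definitions \eqref{Galerkin-initdata}, the bound $\norm{\Pi_n\cdot}_{L^2(\Om)}\le\norm{\cdot}_{L^2(\Om)}$, and $w_0\in L^2(D;L^2(\Om))$, we have $\E[\Phi^n(0)]\le C$ uniformly in $n$, so Grönwall's inequality yields $\sup_{t\in[0,T]}\E[\Phi^n(t)]\le C$; that is \eqref{Gal:est1}. Inserting this back into the displayed inequality at $t=T$ bounds the two gain terms, giving \eqref{Gal:est2}, and \eqref{Gal:est2-new} then follows from \eqref{Gal:est2} and the Poincar\'e inequality $\norm{u}_{L^2(\Om)}\le C_P\norm{\Grad u}_{L^2(\Om)}$ on $\tH(\Om)$ (available since $\Sigma_D\neq\emptyset$). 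For \eqref{Gal:est3} one returns to the integrated identity, takes $\sup_{t\in[0,T]}$ and then $\E$: the drift contribution is $\le C\,\E\int_0^T(1+\Phi^n)\ds\le C$ by \eqref{Gal:est1}, while for the stochastic integrals the Burkholder-Davis-Gundy inequality \eqref{eq:bdg} with $p=1$, Cauchy-Schwarz, the noise bound and Young's inequality give
$$\E\Bigl[\sup_{t\in[0,T]}\abs{\int_0^t2\int_{\Om}\eta^n(v^n)\,v^n\dx\,\dW^{v,n}}\Bigr]\le C\,\E\Bigl[\Bigl(\int_0^T\norm{v^n}_{L^2(\Om)}^2\,\norm{\eta^n(v^n)}_{L_2(\U,L^2(\Om))}^2\ds\Bigr)^{1/2}\Bigr]\le\frac14\E\bigl[\sup_{t\in[0,T]}\Phi^n(t)\bigr]+C,$$
and similarly for the $W^{w,n}$ integral; absorbing $\frac12\E[\sup_{t}\Phi^n(t)]$ on the left gives \eqref{Gal:est3}. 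The only genuinely delicate step is the energy identity of the first paragraph --- one must follow the It\^o corrections through the $\eps_n$-coupled system precisely enough to see the cancellation that keeps the correction uniformly bounded in $\eps_n$; everything afterwards is Grönwall, Burkholder-Davis-Gundy, Poincar\'e, and the structural inequalities of {\rm(\textbf{GFHN})}.
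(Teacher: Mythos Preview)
Your proof is correct and follows essentially the same approach as the paper: the identical energy identity for $\Phi^n$ via the product-rule decomposition $\abs{v^n}^2+\eps_n\abs{u_i^n}^2+\eps_n\abs{u_e^n}^2=u_i^n(v^n+\eps_n u_i^n)-u_e^n(v^n-\eps_n u_e^n)$, the same structural bound on $wH-vI$ from {\rm(\textbf{GFHN})}, Gr\"onwall for \eqref{Gal:est1}--\eqref{Gal:est2}, Poincar\'e for \eqref{Gal:est2-new}, and the Burkholder--Davis--Gundy argument for \eqref{Gal:est3}. Your explicit localisation via $\tau_R^n$ is a welcome extra care that the paper leaves implicit.
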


\begin{proof}
Motivated by the discussion above, we wish to compute  
\begin{equation}
	\begin{split}\label{eq:L2est-tmp}
		& d \int_{\Om} \left(v^n\right)^2+\eps_n \left(u_i^n\right)^2
		+\eps_n \left(u_e^n\right)^2 \dx
		\\ & \qquad 
		 = d \int_{\Om}  u_i^n \left(v^n+\eps_n u_i^n \right)\dx
		+
		d\int_{\Om}  -u_e^n \left(v^n -\eps_n u_e^n\right)\dx
		\\ & \qquad  = 
		\sum_{\ell=1}^n d\left(c_{i,\ell}^n \left(c_\ell^n + \eps_n c_{i,\ell}^n \right)\right)
		+ \sum_{\ell=1}^n d\left(-c_{e,\ell}^n\left(c_\ell^n-\eps_n c_{e,\ell}^n \right)\right),
	\end{split}
\end{equation}
where we have used \eqref{eq:approxsol} and the orthonormality of the basis. 

First, in view of \eqref{S1-Galerkin-bis} and \eqref{S1-Galerkin-bis:2}, the 
stochastic product rule implies ($\ell=1,\ldots,n$)
\begin{equation}\label{eq:product-cil}
	\begin{split}
		& d\left(c_{i,\ell}^n(c_\ell^n  + \eps_n c_{i,\ell}^n) \right)
		\\& \quad = \left( c_{i,\ell}^n d \left(c_{\ell}^n + \eps_n c_{i,\ell}^n\right) \right)
		+ \left(d c_{i,\ell}^n \left(c_{\ell}^n+\eps_n c_{i,\ell}^n\right)\right) 
		\\ & \qquad \qquad
		+ \frac{1}{2+\eps_n}\sum^n_{k=1}
		\left(\int_{\Om} \eta_k^n(v^n)e_l\dx \right)^2 \dt
		\\ & \quad = 
		\frac{1}{2+\eps_n}\sum^n_{k=1}
		\left(\int_{\Om} \eta_k^n(v^n)e_l\dx \right)^2 \dt
		\\ & \qquad\qquad
		+\int_{\Om} \left( M_i \Grad u^n_i\cdot \Grad e_\ell
		-I(v^n,w^n) e_\ell \right) \dx \,c_{i,\ell}^n \,\dt 
		\\ &  \qquad \qquad 
		+ \sum^n_{k=1}\int_{\Om}  \eta^n_k(v^n) e_\ell \dx \, c_{i,\ell}^n \dW^{v,n}
		+\Biggl[\cdots \Biggr]_i\dt 
		\\ & \qquad \qquad 
		+  \frac{1}{2+\eps_n}\sum^n_{k=1}\int_{\Om}\eta^n_k(v^n) e_\ell \dx 
		\, \left(c_{\ell}^n+\eps_n c_{i,\ell}^n\right) \dW^{v,n},
	\end{split}
\end{equation}
where
\begin{align*}
	\Biggl[\cdots \Biggr]_i\dt & =
	\Biggl[ \frac{1+\eps_n}{\eps_n(2+\eps_n)}
	\int_{\Om} M_i \Grad u^n_i \cdot  \Grad e_\ell \dx
	\left(c_{\ell}^n+\eps_n c_{i,\ell}^n\right)
	\\ & \qquad \quad 
	+\frac{1}{\eps_n(2+\eps_n)} 
	\int_{\Om} M_e \Grad u^n_e \cdot \Grad e_\ell \dx
	\left(c_{\ell}^n+\eps_n c_{i,\ell}^n\right)
	\\ & \qquad \quad \quad 
	-\frac{1}{2+\eps_n} 
	\int_{\Om} I(v^n,w^n) e_\ell \dx
	\left(c_{\ell}^n+\eps_n c_{i,\ell}^n\right) \Biggr]\dt.
\end{align*}

Similar computations give ($\ell=1,\ldots,n$)
\begin{equation}\label{eq:product-cel}
	\begin{split}
		& d\left(-c_{e,\ell}^n, (c_{\ell}^n  + \eps_n c_{e,\ell}^n) \right)
		\\& \quad  = \left(-c_{e,\ell}^n d \left(c_{\ell}^n - \eps_n c_{\ell}^n\right) \right) 
		- \left(d c_{e,\ell}^n \left(c_{\ell}^n-\eps_n c_{e,\ell}^n\right) \right) 
		\\ & \qquad \qquad
		+\frac{1}{2+\eps_n}\sum^n_{k=1}
		\left(\int_{\Om} \eta_k^n(v^n)e_l\dx \right)^2 \dt
		\\ & \quad = \frac{1}{2+\eps_n}\sum^n_{k=1}
		\left(\int_{\Om} \eta_k^n(v^n)e_l\dx \right)^2 \dt
		\\ & \qquad\qquad
		+  \int_{\Om} (M_e \Grad u^n_e \cdot \Grad e_\ell 
		+ I(v^n,w^n)e_\ell)\dx \,c_{e,\ell}^n\dt 
		\\ & \qquad \qquad
		- \sum^n_{k=1}\int_{\Om} \eta^n_k(v^n) e_l\dx\, c_{e,\ell} \dW^{v,n}
		+\Biggl[\cdots \Biggr]_e\dt
		\\ & \qquad \qquad 
		+\frac{1}{2+\eps_n}\sum^n_{k=1}\int_{\Om}  
		\eta^n_k(v^n)e_\ell \dx 
		\left(c_{\ell}^n -\eps_n c_{e,\ell}^n \right) \dW^{v,n},
	\end{split}
\end{equation}
where
\begin{align*}
	\Biggl[\cdots \Biggr]_e\dt
	& =\Biggl[- \frac{1}{\eps_n(2+\eps_n)}
	\int_{\Om}M_i \Grad u^n_i \cdot \Grad e_\ell \dx
	\left(c_{\ell}^n-\eps_n c_{e,\ell}^n \right)
	\\ & \qquad \qquad 
	-\frac{1+\eps_n}{\eps_n(2+\eps_n)}
	\int_{\Om} M_e \Grad u^n_e \cdot \Grad e_\ell 
	\left(c_{\ell}^n-\eps_n c_{e,\ell}^n\right)\dx
	\\ & \qquad \qquad \qquad 
	- \frac{1}{2+\eps_n} \int_{\Om} I(v^n,w^n) e_\ell \dx
	\left(c_{\ell}^n-\eps_n c_{e,\ell}^n\right)\Biggr]\dt.
\end{align*}

Combining \eqref{eq:L2est-tmp}, \eqref{eq:product-cil}, \eqref{eq:product-cel} 
we arrive eventually at
\begin{equation}\label{eq:v-product}
	\begin{split}
		& d \int_{\Om}  \abs{v^n}^2 
		+\eps_n \abs{u_i^n}^2 + \eps_n \abs{u_e^n}^2 \dx
		\\ & \; = 
		\Biggl[ 
		- 2\int_{\Om}  M_i \Grad u_i^n \cdot \Grad u_i^n \dx 
		- 2\int_{\Om}  M_e \Grad u_e^n\cdot \Grad u_e^n \dx 
		- 2\int_{\Om} v^n I(v^n,w^n) \dx
		\\ &\qquad\quad  
		+ \frac{2}{2+\eps_n}
		\sum_{k,l=1}^n \left(\int_{\Om} \eta_k^n(v^n)e_l\dx \right)^2 \dt 
		\Biggr] \dt+ 2\int_{\Om} v^n \eta^n(v^n) \dx \dW^{v,n}.
	\end{split}
\end{equation}
Similarly, in view of \eqref{eq:approxsol} and \eqref{S1-Galerkin-bis:2}, 
It\^{o}'s lemma gives
\begin{equation}\label{eq:w-ito}
	\begin{split}
		d\int_{\Om} \abs{w^n}^2 \dx
		& =\left[\,  2\int_{\Om} w^n H(v^n,w^n) \dx 
		+ \sum_{k,l=1}^n \left(\int_{\Om} \sigma_k^n(v^n)e_l\dx \right)^2 \right]\dt
		\\ & \qquad \qquad
		+2\int_{\Om} w^n \sigma^n(v^n) \dW^{w,n}.
      \end{split}
\end{equation}
After integration in time, adding \eqref{eq:v-product} 
and \eqref{eq:w-ito} delivers 
\begin{equation}\label{eq:v+w-1}
	\begin{split}
		& \frac12\norm{v^n(t)}_{L^2(\Om)}^2
		+\sum_{j=i,e}\frac12\norm{\sqrt{\eps_n}u_j^n(t)}_{L^2(\Om)}^2
		+\frac12\norm{w^n(t)}_{L^2(\Om)}^2.
		\\ &  \qquad \quad
		+\sum_{j=i,e} \int_0^t \int_{\Om}  M_j\Grad u_j^n
		\cdot \Grad u_j^n \dx \ds 
		\\ & 
		= \frac12\norm{v^n(0)}_{L^2(\Om)}^2
		+\sum_{j=i,e}\frac12\norm{\sqrt{\eps_n}u_j^n(0)}_{L^2(\Om)}^2
		+\frac12\norm{w^n(0)}_{L^2(\Om)}^2
		\\ & \qquad\quad
		+\int_0^t\int_{\Om}  \bigl( w^n H(v^n,w^n) - v^n I(v^n,w^n)\bigr) \dx \ds 
		\\ & \qquad \quad
		+\frac{1}{2+\eps_n}
		\sum_{k,l=1}^n 
		\int_0^t\left(\int_{\Om} \eta_k^n(v^n)e_l\dx \right)^2\ds
		\\ & \qquad \quad
		+\frac{1}{2}
		\sum_{k,l=1}^n 
		\int_0^t\left(\int_{\Om} \sigma_k^n(v^n)e_l\dx \right)^2\ds
		\\
		&  \qquad \quad
		+\int_0^t\int_{\Om} v^n \eta^n(v^n) \dx \dW^{v,n}(s)
		\\ &  \qquad \quad
		+\int_0^t\int_{\Om} w^n \sigma^n(v^n) \dx \dW^{w,n}(s),
      \end{split}
\end{equation}
for any $t\in [0,T]$. By (\textbf{GFHN}) and 
repeated applications of Cauchy's inequality,
\begin{equation}\label{eq:nonlinear-est}
	w H(v,w) - vI(v,w) \le -C_1 \abs{v}^4 
	+C_2\left(\abs{v}^2 + \abs{w}^2\right) + C_3,
\end{equation}
for some constants $C_1>0$ and $C_2,C_3\ge 0$. Recalling that 
$\Set{e_l}_{l\ge 1}$ is a basis for $L^2(\Om)$,
\begin{equation}\label{eq:qvar-est}
	\begin{split}
		& \sum_{k,l=1}^n 
		\int_0^t\left(\int_{\Om} \eta_k^n(v^n)e_l\dx \right)^2\ds
		+\sum_{k,l=1}^n 
		\int_0^t\left(\int_{\Om} \sigma_k^n(v^n)e_l\dx \right)^2\ds 
		\\ & \quad \le
		\int_0^t\int_{\Om}\sum_{k=1}^n\abs{\eta_k(v^n)}^2\dx \ds
		+\int_0^t\int_{\Om} \sum_{k=1}^n\abs{\sigma_k(v^n)}^2\dx \ds
		\\ &\quad \quad 
		\overset{\eqref{eq:noise-cond}}{\le}  
		C_4\left(\int_0^t\int_{\Om}  \abs{v^n}^2 \dx \ds + t \abs{\Om}\right),
	\end{split}	
\end{equation}
for some constant $C_4>0$. Using \eqref{eq:nonlinear-est}, 
\eqref{eq:qvar-est}, and \eqref{matrix} in \eqref{eq:v+w-1}, we obtain 
\begin{equation}\label{eq:v+w-2}
	\begin{split}
		& \frac12\norm{v^n(t)}_{L^2(\Om)}^2
		+\sum_{j=i,e}\frac12\norm{\sqrt{\eps_n}u_j^n(t)}_{L^2(\Om)}^2
		+\frac12\norm{w^n(t)}_{L^2(\Om)}^2.
		\\ & \quad
		+ m \sum_{j=i,e} \int_0^t \int_{\Om}  \abs{\Grad u_j^n}^2 \dx \ds 
		+C_1 \int_0^t \int_{\Om}  \abs{v}^4 \dx \ds 
		\\ &
		\le \frac12\norm{v^n(0)}_{L^2(\Om)}^2
		+\sum_{j=i,e}\frac12\norm{\sqrt{\eps_n}u_j^n(0)}_{L^2(\Om)}^2
		+\frac12\norm{w^n(0)}_{L^2(\Om)}^2
		\\ & \quad
		+(C_3+C_4) t \abs{\Om}
		\\ & \quad 
		+(C_2+C_4)\int_0^t \norm{v^n(s)}_{L^2(\Om)}^2 \ds 
		+ C_2\int_0^t \norm{w^n(s)}_{L^2(\Om)}^2 \ds
		\\ &  \quad
		+\int_0^t\int_{\Om} v^n \eta^n(v^n) \dx \dW^{v,n}(s)
		+\int_0^t\int_{\Om} w^n \sigma^n(v^n) \dx \dW^{w,n}(s).
      \end{split}
\end{equation}
Since $\E\left[\int_0^T\abs{f(t)}^2 \dt\right] <\infty$ 
for $f=\int_{\Om} v^n \eta^n(v^n)\dx$ and $f=\int_{\Om} w^n \sigma^n(v^n)\dx$, the 
martingale property of stochastic integrals ensures that
the expected value of each of the last two terms in \eqref{eq:v+w-2} is zero. 
Hence, taking the expectation in \eqref{eq:v+w-2}, keeping in 
mind \eqref{eq:uie-init-ass} and using Gr{\"o}nwall's inequality, we conclude that 
\eqref{Gal:est1} and \eqref{Gal:est2} hold. 

The refinement of \eqref{Gal:est1} into \eqref{Gal:est3} comes 
from a martingale inequality. Indeed, taking the $\sup$ over $[0,T]$ and 
subsequently applying $\E[\cdot]$ in \eqref{eq:v+w-2}, it follows that
\begin{equation}\label{Esup1}
	\begin{split}
		&\E\left[ \sup_{t\in [0,T]}\norm{v^n(t)}_{L^2(\Om)}^2\right]
		+\sum_{j=i,e}\E\left[\sup_{t\in [0,T]} 
		\norm{\sqrt{\eps_n}u_j^n(t)}_{L^2(\Om)}^2\right]
		\\ & \qquad  
		+\E\left[\sup_{t\in [0,T]} \norm{w^n(t)}_{L^2(\Om)}^2\right] 
		\le C_5\left(1 + \Gamma_\eta+\Gamma_\sigma\right),
	\end{split}
\end{equation}
where $C_5$ is a constant independent of $n$ and 
\begin{align*}
	\Gamma_\eta & := \E\left[\, \sup_{t\in [0,T]} \abs{\int_0^t\int_{\Om} 
	v^n \eta^n(v^n) \dx \dW^{v,n}(s)}\, \right], 
	\\ 
	\Gamma_\sigma & := \E\left[\, \sup_{t\in [0,T]} \abs{\int_0^t\int_{\Om} 
	w^n \sigma^n(v^n) \dx \dW^{w,n}(s)} \, \right].
\end{align*}
To arrive at \eqref{Esup1} we have used \eqref{eq:uie-init-ass}, \eqref{Gal:est1}.

We use the Burkholder-Davis-Gundy inequality to handle the last two terms. 
To be more precise, using \eqref{eq:bdg}, the Cauchy-Schwarz inequality, 
the assumption \eqref{eq:noise-cond} on $\eta$, Cauchy's 
inequality ``with $\delta$", and \eqref{Gal:est1}, we obtain
\begin{equation}\label{Esup2}
	\begin{split}
		\Gamma_\eta & \le C_6 \E \left[ \left(\int_0^T \sum_{k=1}^n 
		\abs{\int_{\Om} v^n \eta^n_k(v^n) \dx}^2 \dt \right)^{\frac12}\right]
		\\ &  \le   C_6 \E \left[ \left(\int_0^T  
		\left(\int_{\Om} \abs{v^n}^2\dx\right) 
		\left(\sum_{k=1}^n\int_{\Om} \abs{\eta^n_k(v^n)}^2 
		\dx \right)\dt\right)^{\frac12} \right]
		\\ &  \le C_6 \E \left[ \left(\sup_{t\in [0,T]}\int_{\Om} \abs{v^n}^2\dx\right)^{\frac12}
		\left( \int_0^T\sum_{k=1}^n\int_{\Om} \abs{\eta^n_k(v^n)}^2 \dx\dt \right)^{\frac12}\right]
		\\ &  \le \delta \E \left[\sup_{t\in [0,T]}\int_{\Om} \abs{v^n}^2\dx\right]
		+C_7(\delta)\E\left[ \int_0^T\sum_{k=1}^n\int_{\Om} \abs{\eta^n_k(v^n)}^2 \dx\dt\right]
		\\ &  \le \delta \E \left[\sup_{t\in [0,T]}\int_{\Om} \abs{v^n}^2\dx\right]
		+C_8\E\left[ \int_0^T\int_{\Om} \abs{v^n}^2 \dx\dt + T \abs{\Om}\right]
		\\ &  \le \delta \E\left[ \sup_{t\in [0,T]} \norm{v^n(t)}_{L^2(\Om)}^2\right]+C_9,
	\end{split}
\end{equation}
for any $\delta>0$. Similarly, using 
\eqref{eq:noise-cond} and \eqref{Gal:est1},
\begin{equation}\label{Esup3}
	\Gamma_\sigma \le \delta \E\left[\sup_{t\in [0,T]} 
	\norm{w^n(t)}_{L^2(\Om)}^2\right] + C_{10}.
\end{equation}
Combining \eqref{Esup1}, \eqref{Esup2} and \eqref{Esup3}, 
with $\delta>0$ small, the desired estimate \eqref{Gal:est3} follows. 

Finally, let us prove \eqref{Gal:est2-new}. By the Poincar\'e inequality, there 
is a constant $C_{11} > 0$, depending on $\Om$ but 
not $n, \omega$ and $t$, such that for each fixed $(\omega,t)\in D\times [0,T]$,
$$
\norm{u_e^n(\omega,t,\cdot)}_{L^2(\Om)}^2
\le C_{11} \norm{\nabla u_e^n(\omega,t,\cdot)}_{L^2(\Om)}^2.
$$
Hence, by \eqref{Gal:est2},
\begin{equation}\label{Gal:est2-new-tmp}
	\E \left[ \int_0^T \norm{u_e^n(\omega,t,\cdot)}_{L^2(\Om)}^2\dt\right]
	\le C_{12}.
\end{equation}
Since $v^n$ ($=u_i^n-u_e^n)$ complies with \eqref{Gal:est1}, it follows 
that also $u_i^n$ satisfies \eqref{Gal:est2-new-tmp}.
\end{proof}

In view of the $n$-independent estimates in Lemma \ref{lem:apriori-est}, 
passing if necessary to a proper subsequence, we can assume that the 
following (weak) convergences hold as $n\to\infty$:
\begin{equation}\label{eq:weakconv1}
	\left \{
	\begin{split}
		& u^n_j \weak u_j  \quad  \text{in} \quad 
		L^2\left(D,\cF,P;L^2((0,T);\tH(\Om))\right), \, j=i,e,
		\\&
		\eps_n u^n_j \to 0  \quad  \text{in} \quad 
		L^2\left(D,\cF,P;L^2((0,T);L^2(\Om))\right), \,j=i,e,
		\\& 
		v^n \weak v \quad  \text{in} \quad 
		L^2\left(D,\cF,P;L^2((0,T);\tH(\Om))\right),
		\\ & 
		v^n \weakstar v \quad  \text{in} \quad 
		L^2\left(D,\cF,P;L^\infty((0,T);L^2(\Om))\right),
		\\ & 
		v^n \weak v \quad  \text{in} \quad 
		L^4\left(D,\cF,P;L^4(\Om_T))\right),
		\\ & 
		w^n \weakstar w \quad  \text{in} \quad 
		L^2\left(D,\cF,P;L^\infty((0,T);L^2(\Om))\right).
	\end{split}\right.
\end{equation}

The next result, a consequence of Lemma \ref{lem:apriori-est} and 
a martingale inequality, supplies high-order moment estimates, useful when
converting a.s.~convergence into $L^2$ convergence.

\begin{cor}\label{cor:Lq0-est}
In addition to the assumptions in Lemma \ref{lem:apriori-est}, 
suppose \eqref{eq:uie-init-ass-q0} holds with 
$q_0$ defined in \eqref{eq:moment-est}. 
There exists a constant $C>0$, independent of $n$, such that
\begin{equation}\label{eq:Lq0-est}
	\begin{split}
		&\E\left[ \, \sup_{0\le t\le T}\norm{v^n(t)}_{L^2(\Om)}^{q_0}\, \right] 
		+\sum_{j=i,e}\E\left[ \, \sup_{0\le t\le T}
		\norm{\sqrt{\eps_n}u_i^n(t)}_{L^2(\Om)}^{q_0}\, \right]
		\\ &\quad \qquad
		+\E\left[ \,\sup_{0\le t\le T} \norm{w^n(t)}_{L^2(\Om)}^{q_0}\, \right]
		\le C.
	\end{split}
\end{equation}
Moreover, 
\begin{equation*}%\label{eq:Lq0-est2}
	\sum_{j=i,e}
	\E\left[ \norm{\Grad u_j^n}_{L^2((0,T)\times \Om)}^{q_0}\right] 
	+\E\left[ \norm{v^n}_{L^4((0,T)\times \Om)}^{2q_0}\right]  \le C.
\end{equation*}
\end{cor}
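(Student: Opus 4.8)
The plan is to revisit the pathwise energy inequality \eqref{eq:v+w-2} derived in the proof of Lemma~\ref{lem:apriori-est}, raise it to the power $p:=q_0/2>1$, then take the supremum in time followed by the expectation, and finally close the resulting estimate by means of the Burkholder--Davis--Gundy inequality \eqref{eq:bdg} and Gr\"onwall's lemma. Throughout, constants may depend on $p$, $T$, $\Om$ and the structural constants, but not on $n$.

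Introduce the shorthand
\[
	\Psi^n(t):=\tfrac12\norm{v^n(t)}_{L^2(\Om)}^2
	+\sum_{j=i,e}\tfrac12\norm{\sqrt{\eps_n}u_j^n(t)}_{L^2(\Om)}^2
	+\tfrac12\norm{w^n(t)}_{L^2(\Om)}^2 ,
\]
and recall that the two spatial integral terms on the left-hand side of \eqref{eq:v+w-2} are nonnegative and nondecreasing in $t$. Discarding them, and writing $M_\eta(t)$, $M_\sigma(t)$ for the two stochastic integrals on the right of \eqref{eq:v+w-2}, one obtains for every $t\in[0,T]$ the pathwise bound
\[
	\sup_{0\le s\le t}\Psi^n(s)
	\le \Psi^n(0)+C\Bigl(1+\int_0^t\sup_{0\le r\le s}\Psi^n(r)\ds\Bigr)
	+\sup_{0\le s\le t}\abs{M_\eta(s)}+\sup_{0\le s\le t}\abs{M_\sigma(s)},
\]
where we used $\norm{v^n(s)}_{L^2(\Om)}^2+\norm{w^n(s)}_{L^2(\Om)}^2\le 2\Psi^n(s)$. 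Raising this inequality to the power $p$ (using $(\sum_i a_i)^p\le C_p\sum_i a_i^p$ and Jensen's inequality on the time integral) and taking $\E[\cdot]$, the term $\E[\Psi^n(0)^p]$ is bounded uniformly in $n$ by assumption \eqref{eq:uie-init-ass-q0} together with $\norm{\Pi_n(\cdot)}_{L^2(\Om)}\le\norm{\cdot}_{L^2(\Om)}$, cf.~\eqref{eq:proj-prop0}.

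The heart of the argument is the control of $\E[\sup_{0\le s\le t}\abs{M_\eta(s)}^p]$ (and, analogously, of $M_\sigma$). Applying \eqref{eq:bdg} with exponent $p$, then Cauchy--Schwarz in $x$ and the linear-growth bound \eqref{eq:noise-cond} exactly as in \eqref{eq:qvar-est}, one gets $\sum_{k\ge1}\bigl|\int_\Om v^n\eta^n_k(v^n)\dx\bigr|^2\le C\norm{v^n}_{L^2(\Om)}^2\bigl(1+\norm{v^n}_{L^2(\Om)}^2\bigr)$; hence, after Young's inequality and one more use of Jensen,
\[
	\E\Bigl[\sup_{0\le s\le t}\abs{M_\eta(s)}^p\Bigr]
	\le \delta\,\E\Bigl[\bigl(\sup_{0\le s\le t}\Psi^n(s)\bigr)^p\Bigr]
	+C_\delta\Bigl(1+\int_0^t\E\Bigl[\bigl(\sup_{0\le r\le s}\Psi^n(r)\bigr)^p\Bigr]\ds\Bigr),
\]
for any $\delta>0$. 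Inserting this estimate (and its twin for $M_\sigma$, which also uses $\norm{w^n}_{L^2(\Om)}^2\le 2\Psi^n$) into the $p$-th power of the displayed pathwise bound, choosing $\delta$ small enough to absorb the $\E[(\sup_{0\le s\le t}\Psi^n(s))^p]$ term into the left-hand side, and invoking Gr\"onwall's lemma in the variable $t\mapsto\E[(\sup_{0\le s\le t}\Psi^n(s))^p]$ yields $\E[(\sup_{0\le t\le T}\Psi^n(t))^p]\le C$ uniformly in $n$, which is the first assertion (recall $q_0=2p$). The a priori finiteness of $t\mapsto\E[(\sup_{0\le s\le t}\Psi^n(s))^p]$ needed to run Gr\"onwall is guaranteed, for each fixed $n$, by the finite-dimensional structure of \eqref{eq:SDE} (locally Lipschitz, linearly bounded coefficients; see Lemma~\ref{lem:fg-solutions}), or alternatively by first localizing with the stopping times $\tau_R=\inf\Set{t:\Psi^n(t)\ge R}$ and letting $R\uparrow\infty$.

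For the remaining bound one returns to \eqref{eq:v+w-2} without discarding the two spatial integral terms on the left: these are dominated by the very same right-hand side, so raising to the power $p$, taking $\E[\cdot]$, and using the uniform bound just obtained together with the same BDG estimate for the martingale terms gives
\[
	\E\Bigl[\Bigl(\sum_{j=i,e}\int_0^T\int_\Om\abs{\Grad u_j^n}^2\dx\dt\Bigr)^{p}\Bigr]
	+\E\Bigl[\Bigl(\int_0^T\int_\Om\abs{v^n}^4\dx\dt\Bigr)^{p}\Bigr]\le C,
\]
which is precisely $\sum_{j=i,e}\E[\norm{\Grad u_j^n}_{L^2((0,T)\times\Om)}^{q_0}]+\E[\norm{v^n}_{L^4((0,T)\times\Om)}^{2q_0}]\le C$. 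I expect the main technical obstacle to be this martingale estimate: one must arrange the powers of $\Psi^n$ so that Young's inequality produces precisely an absorbable term $\delta\,\E[(\sup_{0\le s\le t}\Psi^n(s))^p]$, everything else reducing to a Gr\"onwall-type time integral --- and this is exactly where the Cauchy--Schwarz splitting of the stochastic integrand and the sublinear growth \eqref{eq:noise-cond} of the noise amplitudes enter in an essential way.
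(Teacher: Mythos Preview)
Your proposal is correct and follows essentially the same approach as the paper: start from the pathwise energy inequality \eqref{eq:v+w-2}, take the supremum in time and raise to the power $q_0/2$, control the martingale terms via the Burkholder--Davis--Gundy inequality combined with Cauchy--Schwarz, \eqref{eq:noise-cond}, and Young's inequality to produce an absorbable $\delta$-term, then close with Gr\"onwall. Your remark on localization/a~priori finiteness is a nice addition that the paper leaves implicit.
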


\begin{proof}
In view of \eqref{eq:v+w-2}, we have the following 
estimate for any $(\omega,t)\in D \times [0,T]$: 
\begin{align*}
	& \sup_{0\le \tau\le t}\norm{v^n(\tau)}_{L^2(\Om)}^2 
	+ \sum_{j=i,e}\sup_{0\le \tau\le t}\norm{\sqrt{\eps_n}u_j^n(\tau)}_{L^2(\Om)}^2
	\\ &\quad \qquad\quad
	+ \sup_{0\le\tau\le t} \norm{w^n(\tau)}_{L^2(\Om)}^2
	\\ & \quad \le
	\norm{v^n(0)}_{L^2(\Om)}^2 
	+ \sum_{j=i,e} \norm{\sqrt{\eps_n}u_j^n(0)}_{L^2(\Om)}^2
	+ \norm{w^n(0)}_{L^2(\Om)}^2
	\\ & \quad \qquad \quad
	+ C_1(1+t) + C_1\int_0^t \norm{v^n(s)}_{L^2(\Om)}^2 \ds
	+ C_1\int_0^t \norm{w^n(s)}_{L^2(\Om)}^2 \ds
	\\ & \quad \qquad\quad
	+C_1\sup_{0\le \tau\le t}\abs{\int_0^{\tau}\int_{\Om} 
	v^n \eta^n(v^n) \dx \dW^{v,n}(s)}
	\\ &  \quad\qquad\quad
	+C_1 \sup_{0\le \tau\le t}\abs{\int_0^{\tau}
	\int_{\Om} w^n \sigma^n(v^n) \dx \dW^{w,n}(s)},
\end{align*}
for some constant $C_1$ independent of $n$. 

We raise both sides of this inequality to the 
power $q_0/2$, take the expectation, and apply 
several elementary inequalities, eventually arriving at
\begin{equation}\label{eq:pmoment}
	\begin{split}
		&\E\left[ \,\sup_{0\le \tau\le t}
		\norm{v^n(\tau)}_{L^2(\Om)}^{q_0}\right] 
		+\sum_{j=i,e}\E\left[ \, \sup_{0\le \tau\le t}
		\norm{\sqrt{\eps_n}u_i^n(\tau)}_{L^2(\Om)}^{q_0}\right]
		\\ &\quad \qquad
		+ \E\left[ \,\sup_{0\le\tau\le t} 
		\norm{w^n(\tau)}_{L^2(\Om)}^{q_0}\right]
		\\ & \quad \le 
		C_2\E\left[ \norm{v^n(0)}_{L^2(\Om)}^{q_0}\right] 
		+C_2\sum_{j=i,e}\E\left[ 
		\norm{\sqrt{\eps_n}u_i^n(0)}_{L^2(\Om)}^{q_0}\right]
		\\ & \quad \qquad
		+ C_2\E\left[\norm{w^n(0)}_{L^2(\Om)}^{q_0}\right]
		+C_2 \left (1+t\right )^{\frac{q_0}{2}}
		\\ & \quad \qquad
		+ C_2\int_0^t \norm{v^n(s)}_{L^2(\Om)}^{q_0} \ds
		+C_2\int_0^t \norm{w^n(s)}_{L^2(\Om)}^{q_0} \ds
		+ \Gamma_\eta+\Gamma_\sigma,
	\end{split}
\end{equation}
where
\begin{align*}
	\Gamma_\eta & := \E\left[\, \sup_{0\le \tau\le t}\abs{\int_0^{\tau}
	\int_{\Om} v^n \eta^n(v^n) \dx \dW^{v,n}(s)}^{\frac{q_0}{2}}\,\right],
	\\ 
	\Gamma_\sigma & := \E\left[\, \sup_{0\le \tau\le t}\abs{\int_0^{\tau}
	\int_{\Om} w^n \sigma^n(v^n) \dx \dW^{w,n}(s)}^{\frac{q_0}{2}}\, \right].
\end{align*}

Arguing as in \eqref{Esup2}, using a martingale inequality (and some
elementary inequalities),
\begin{equation}\label{Esup-vq0}
	\begin{split}
		\Gamma_\eta & \le C_3 \E \left[ \left(\int_0^t \sum_{k=1}^n 
		\abs{\int_{\Om} v^n \eta^n_k(v^n) \dx}^2 \ds\right)^{\frac{q_0}{4}} \right]
		\\ & \le C_3 \E \left[\left( \int_0^t  
		\left(\int_{\Om} \abs{v^n}^2\dx\right) 
		\left(\sum_{k=1}^n\int_{\Om} \abs{\eta^n_k(v^n)}^2 \dx 
		\right)\ds\right)^{\frac{q_0}{4}} \right] \\ & \le C_3 \E \left[
		\left(\sup_{\tau\in [0,t]}\int_{\Om} \abs{v^n}^2\dx\right)^{\frac{q_0}{4}}
		\left( \int_0^t\sum_{k=1}^n\int_{\Om} 
		\abs{\eta^n_k(v^n)}^2 \dx\ds \right)^{\frac{q_0}{4}}\right]
		\\ & \le \delta
		\E \left[\left(\sup_{\tau\in [0,t]}\int_{\Om} 
		\abs{v^n}^2\dx\right)^{\frac{q_0}{2}}\right]
		\\ & \qquad\qquad
		+C_4(\delta)\E\left[\left( \int_0^t\sum_{k=1}^n\int_{\Om} 
		\abs{\eta^n_k(v^n)}^2 \dx\ds \right)^{\frac{q_0}{2}}\right]
		\\ & \le \delta 
		\E \left[ \sup_{\tau\in [0,t]} \norm{v^n(\tau)}_{L^2(\Om)}^{q_0}\right]
		+C_5 \E \left[\int_0^t \norm{v^n(s)}_{L^2(\Om)}^{q_0}\ds \right]+C_6,
	\end{split}
\end{equation}
for any $\delta>0$, where we have used that
\begin{align*}
	\E\left[\left( \int_0^t\sum_{k=1}^n\int_{\Om} 
	\abs{\eta^n_k(v^n)}^2 \dx\ds \right)^{\frac{q_0}{2}}\right]
	&\overset{\eqref{eq:noise-cond}}{\le} 
	\hat C_4\E\left[ \left(\int_0^t\int_{\Om} \abs{v^n}^2 \dx\ds 
	+ t\abs{\Om} \right)^{\frac{q_0}{2}}\right]
	\\ & \le \hat C_5
	\E \left[\int_0^t \int_{\Om} \abs{v^n}^{q_0}\dx\ds \right] + \hat C_6.
\end{align*}
Similarly, relying on \eqref{eq:noise-cond},
\begin{equation}\label{Esup-wq0}
	\begin{split}
		\Gamma_\sigma &\le \delta 
		\E \left[ \sup_{\tau\in [0,t]} \norm{w^n(\tau)}_{L^2(\Om)}^{q_0}\dx \, \right]
		+C_7 \E \left[\int_0^t \norm{v^n}_{L^2(\Om)}^{q_0}\ds \right] + C_8.
	\end{split}
\end{equation}

With $\delta$ chosen small, combining \eqref{Esup-vq0} 
and \eqref{Esup-wq0} in \eqref{eq:pmoment} gives
\begin{equation}\label{eq:pmoment2}
	\begin{split}
		&\E\left[ \,\sup_{0\le \tau\le t}\norm{v^n(\tau)}_{L^2(\Om)}^{q_0} \, \right] 
		+\sum_{j=i,e}\E\left[ \, \sup_{0\le \tau\le t}
		\norm{\sqrt{\eps_n}u_i^n(\tau)}_{L^2(\Om)}^{q_0} \, \right]
		\\ &\quad \qquad
		+\E\left[ \,\sup_{0\le\tau\le t} \norm{w^n(\tau)}_{L^2(\Om)}^{q_0} \, \right]
		\\ & \quad \le 
		C_9\E\left[ \norm{v^n(0)}_{L^2(\Om)}^{q_0}\right] 
		+C_9\sum_{j=i,e}\E\left[ \norm{\sqrt{\eps_n}u_i^n(0)}_{L^2(\Om)}^{q_0}\right]
		\\ & \quad \qquad
		+ C_9\E\left[\norm{w^n(0)}_{L^2(\Om)}^{q_0}\right] +C_9
		+C_9\int_0^t \E \left[\norm{v^n(s)}_{L^2(\Om)}^{q_0} \ds\right],
	\end{split}
\end{equation}
for some constant $C_9>0$ independent of $n$. Set
\begin{align*}
	\Gamma(t) & :=
	\E\left[ \,\sup_{0\le \tau\le t}\norm{v^n(\tau)}_{L^2(\Om)}^{q_0}\, \right] 
	+\sum_{j=i,e}\E\left[ \, \sup_{0\le \tau\le t}
	\norm{\sqrt{\eps_n}u_i^n(\tau)}_{L^2(\Om)}^{q_0}\, \right]
	\\ &\quad \qquad
	+\E\left[ \,\sup_{0\le\tau\le t} \norm{w^n(\tau)}_{L^2(\Om)}^{q_0}\, \right],
\end{align*}
and note that \eqref{eq:pmoment2} reads
$$
\Gamma(t) \le C_9 \Gamma(0)+C_9+C_9\int_0^t \Gamma(s)\ds , \qquad t\in [0,T].
$$
Now an application of Gr\"onwall's inequality 
yields the desired result \eqref{eq:Lq0-est}.

Finally, we can use \eqref{eq:v+w-2}, \eqref{Esup-vq0}, 
\eqref{Esup-wq0}, and \eqref{eq:Lq0-est} to conclude that
$$
\sum_{j=i,e}
\E\left[ \abs{\int_0^t \int_{\Om}  \abs{\Grad u_i^n}^2 \dx \ds}^{\frac{q_0}{2}}\right] 
+ \E\left[\abs{\int_0^t \int_{\Om}  \abs{v^n}^4 \dx \ds}^{\frac{q_0}{2}}\right] 
\le C_{10},
$$
and \eqref{eq:Lq0-est} follows.
\end{proof}

%%%%%%%%%%%%%%%%%%%%%%%%%%%%%%
%%%%%%%%%%%%%%%%%%%%%%%%%%%%%%
\subsection{Temporal translation estimates}\label{sec:translation-est}
To secure strong $L^2_{t,x}$ compactness of the 
Faedo-Galerkin solutions, via a standard Aubin-Lions-Simon 
compactness lemma, we need to come up with $n$-independent 
temporal translation estimates. 

\begin{lem}\label{Space-Time-translate}
Suppose conditions {\rm (\textbf{GFHN})}, \eqref{matrix}, 
\eqref{eq:noise-cond}, and \eqref{eq:uie-init-ass} hold. 
Let 
$$
u_i^n(t),u_e^n(t),v^n(t),w^n(t),  \quad t\in [0,T],
$$
satisfy \eqref{S1-Galerkin-new}, \eqref{eq:def-eps}, 
\eqref{eq:Wn}, \eqref{Galerkin-initdata}.
With $u^n=v^n$ or $w^n$, there is a 
constant $C>0$, independent of $n$, such that
\begin{equation}\label{Time-translate}
	\E \left[\, \sup_{0\le \tau\le \delta} \int_0^{T-\tau}\int_{\Om} 
	\abs{u^n(t+\tau,x)-u^n(t,x)}^2 \dx \dt \, \right] 
	\le C \delta^{\frac14}, 
\end{equation}
for any sufficiently small $\delta>0$.
\end{lem}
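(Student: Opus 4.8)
The plan is to work from the integrated Faedo--Galerkin equations \eqref{eq:approx-eqn-integrated} and treat the cases $u^n=w^n$ and $u^n=v^n$ slightly differently. For $w^n$ everything is direct: subtracting the last equation in \eqref{eq:approx-eqn-integrated} at the times $t+\tau$ and $t$ gives $w^n(t+\tau)-w^n(t)=\int_t^{t+\tau}\Pi_n H(v^n,w^n)\ds+\int_t^{t+\tau}\sigma^n(v^n)\dW^{w,n}$, and by $(\textbf{GFHN})$ one has $H(v^n,w^n)\in L^2(\Om_T)$ with $\E\bigl[\|H(v^n,w^n)\|_{L^2(\Om_T)}^2\bigr]\le C$ from Lemma~\ref{lem:apriori-est}. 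For $v^n$ one cannot use the two $v^n$-equations of \eqref{eq:approx-eqn-integrated} separately, since the terms $\eps_n u_i^n,\eps_n u_e^n$ are only $O(\sqrt{\eps_n})$ in $L^2$ and hence \emph{not} $O(\delta^{1/4})$ uniformly in $n$. Instead I would add the first two equations in \eqref{eq:approx-eqn-integrated} and use $u_i^n-u_e^n=v^n$ to obtain the ``clean'' representation
\begin{equation*}
	v^n(t)=v_0^n+\frac{1}{2+\eps_n}\int_0^t\Pi_n\bigl[\Div(M_i\Grad u_i^n)-\Div(M_e\Grad u_e^n)-2I(v^n,w^n)\bigr]\ds+\frac{2}{2+\eps_n}\int_0^t\eta^n(v^n)\dW^{v,n},
\end{equation*}
which is free of the $\eps_n u_j^n$ terms. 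Writing $v^n(t+\tau)-v^n(t)=D^n(t,\tau)+S^n(t,\tau)$ (drift part $D^n$, stochastic part $S^n$) reduces the problem to estimating these two increments.

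For the drift part the point is that $D^n(t,\tau)\in\bX_n$; testing against $\phi_t:=v^n(t+\tau)-v^n(t)\in\bX_n$ and using $\Pi_n\phi_t=\phi_t$,
\begin{equation*}
	\|\phi_t\|_{L^2(\Om)}^2=\langle D^n(t,\tau),\phi_t\rangle+(S^n(t,\tau),\phi_t)_{L^2(\Om)}\le|\langle D^n(t,\tau),\phi_t\rangle|+\tfrac12\|S^n(t,\tau)\|_{L^2(\Om)}^2+\tfrac12\|\phi_t\|_{L^2(\Om)}^2,
\end{equation*}
so that $\|\phi_t\|_{L^2(\Om)}^2\le 2|\langle D^n(t,\tau),\phi_t\rangle|+\|S^n(t,\tau)\|_{L^2(\Om)}^2$. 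In $\langle D^n(t,\tau),\phi_t\rangle$ I would pair the elliptic terms $\Div(M_j\Grad u_j^n)$ with $\phi_t$ in the $\tH$--$(\tH)^*$ duality (producing the coefficient $\|u_j^n(s)\|_{\tH}\in L^2_s$, whose second moment is bounded by \eqref{Gal:est2}), and pair the \emph{whole} ionic term $I(v^n,w^n)$ with $\phi_t$ in the $L^4$--$L^{4/3}$ duality, using $\|I(v^n(s),w^n(s))\|_{L^{4/3}(\Om)}\le C\bigl(1+\|v^n(s)\|_{L^4(\Om)}^3+(1+\|v^n(s)\|_{L^4(\Om)})\|w^n(s)\|_{L^2(\Om)}\bigr)$. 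Integrating over $t\in(0,T-\tau)$, applying Cauchy--Schwarz in $t$ (exponents $2,2$) to the elliptic contribution and H\"older in $t$ (exponents $4,4/3$) to the ionic one, together with Fubini of the type $\int_0^{T-\tau}\int_t^{t+\tau}f(s)\ds\dt\le\tau\int_0^Tf(s)\ds$, yields for both contributions a bound of the form $C\tau\,\Xi^n$ with $\Xi^n$ a product of a~priori norms. Taking $\sup_{0\le\tau\le\delta}$ and then $\E$, and using H\"older in $\omega$ together with $\E[\|v^n\|_{L^4(\Om_T)}^4],\E[\|u_j^n\|_{L^2_t\tH}^2],\E[\|w^n\|_{L^\infty_tL^2_x}^2],\E[\|v^n\|_{L^2_t\tH}^2]\le C$, this contributes $\le C\delta$.

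For the stochastic increment, set $N^n(t):=\int_0^t\beta^n(v^n)\dW^n$ with $(\beta^n,W^n)=(\eta^n,W^{v,n})$ or $(\sigma^n,W^{w,n})$, an $L^2(\Om)$-valued martingale. Since $\Phi^n(t):=\sup_{0\le\sigma\le\delta}\|N^n(t+\sigma)-N^n(t)\|_{L^2(\Om)}^2\ge\|N^n(t+\tau)-N^n(t)\|_{L^2(\Om)}^2$ for every $\tau\le\delta$, one gets the $\tau$-uniform bound $\sup_{0\le\tau\le\delta}\int_0^{T-\tau}\|N^n(t+\tau)-N^n(t)\|_{L^2(\Om)}^2\dt\le\int_0^T\Phi^n(t)\dt$; then, by \eqref{eq:bdg}, \eqref{eq:noise-cond} and \eqref{Gal:est3}, $\E[\Phi^n(t)]\le C\,\E\bigl[\int_t^{t+\delta}\|\beta^n(v^n(s))\|_{L_2(\U,L^2(\Om))}^2\ds\bigr]\le C\delta\,\E[1+\sup_s\|v^n(s)\|_{L^2(\Om)}^2]\le C\delta$, whence $\E[\int_0^T\Phi^n(t)\dt]\le CT\delta$. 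The same argument with $v^n$ replaced by $w^n$ covers the $w^n$-equation, whose drift part is handled directly in $L^2(\Om)$ exactly as above (it is even simpler, since $H(v^n,w^n)\in L^2(\Om_T)$). Collecting the three pieces proves \eqref{Time-translate}; in fact one obtains the sharper bound $C\delta$, but the stated $\delta^{1/4}$ is all that is needed for the subsequent Aubin--Lions--Simon argument.

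The main obstacle is the low integrability in $\omega$: under hypothesis \eqref{eq:uie-init-ass}, Lemma~\ref{lem:apriori-est} supplies only \emph{second}-order moments of the energy norms $\|u_j^n\|_{L^2_t\tH}$, $\|v^n\|_{L^\infty_tL^2_x}$, $\|w^n\|_{L^\infty_tL^2_x}$ and a \emph{fourth}-order moment of $\|v^n\|_{L^4(\Om_T)}$ (Corollary~\ref{cor:Lq0-est} is unavailable here). This rules out the more standard route of bounding $\E[\|v^n(t+\tau)-v^n(t)\|_{(\tH)^*}^2]$ pointwise in $t$ and interpolating $\|\cdot\|_{L^2}^2\le\|\cdot\|_{(\tH)^*}\|\cdot\|_{\tH}$, because the cubic nonlinearity $I_1(v^n)$ would then force sixth-order powers of $\|v^n\|_{L^4}$. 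The remedy — and the one delicate design choice — is the split pairing of $D^n(t,\tau)$ against the increment $\phi_t$ itself: elliptic terms in $\tH$--$(\tH)^*$ and the ionic term in $L^4$--$L^{4/3}$, so that the H\"older exponents in both $t$ and $\omega$ are dictated by, and exactly consume, the available a~priori moments. The self-referential appearance of $\phi_t$ on both sides is harmless, being absorbed by Young's inequality as above.
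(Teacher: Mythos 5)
Your argument is correct, but it follows a genuinely different route from the paper's. The paper works with the \emph{unadded} pair of $v^n$-equations and exploits the algebraic identity
\begin{equation*}
\abs{v^n(t{+}\tau)-v^n(t)}^2+\eps_n\sum_{j=i,e}\abs{u_j^n(t{+}\tau)-u_j^n(t)}^2
= a\,(c+\eps_n a)-b\,(c-\eps_n b),
\end{equation*}
with $a=u_i^n(t{+}\tau)-u_i^n(t)$, $b=u_e^n(t{+}\tau)-u_e^n(t)$, $c=a-b=v^n(t{+}\tau)-v^n(t)$; the factors $c\pm\eps_n a$, $c\pm\eps_n b$ are then read off from the integrated equations. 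This makes the elliptic term $\Div(M_j\Grad u_j^n)$ appear paired against $\Grad\bigl(u_j^n(t{+}\tau)-u_j^n(t)\bigr)$ --- same index $j$ on both factors --- while the ionic and stochastic parts pair with the $v^n$-increment. You instead eliminate the $\eps_n u_j^n$ terms by \emph{adding} the two equations, obtaining a single clean identity for $v^n$, and then test against $\phi_t=v^n(t{+}\tau)-v^n(t)$; your elliptic contribution therefore involves the cross pairings $\Grad u_j^n(s)\cdot\Grad\phi_t$ with $\Grad\phi_t=\Grad(u_i^n-u_e^n)(t{+}\tau)-\Grad(u_i^n-u_e^n)(t)$. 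Both pairings are controlled by \eqref{Gal:est2}, so this makes no difference in the end. Three small remarks. First, your Young-absorption of $(S^n,\phi_t)_{L^2}$ and the $\int_0^T\Phi^n(t)\dt$-device for the stochastic increment is somewhat sharper than the paper's Cauchy--Schwarz treatment of $\Gamma_4,\Gamma_5$ (which only yields $\delta^{1/2}$); and, as you note, the overall rate $C\delta$ that you obtain is in fact achievable, the $\delta^{1/4}$ in \eqref{Time-translate} being merely what the paper states (and all that is needed for \eqref{eq:r-nu}). Second, your observation that splitting the two $v^n$-equations individually would leave the untamed $\eps_n\bigl(u_j^n(t{+}\tau)-u_j^n(t)\bigr)$ term is exactly the obstruction the paper circumvents, only by a different device: there the $\eps_n$-weighted increments become a \emph{nonnegative} extra term on the left, whereas you eliminate them outright. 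Third, your diagnosis that the standard $(\tH)^*$-interpolation route would demand sixth-order moments of $\norm{v^n}_{L^4}$ (unavailable under \eqref{eq:uie-init-ass} alone) correctly identifies why both the paper's proof and yours pair the ionic term against $\phi_t$ in $L^{4/3}$--$L^4$ duality rather than in $(\tH)^*$--$\tH$ duality.
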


\begin{proof}
We assume that $v^n, u_i^n, u_e^n, w^n$ and 
$\eta^n,\sigma^n$ have been extended by zero 
outside the time interval $[0, T]$. Recalling 
\eqref{eq:approxsol} (i.e., $v^n=u_i^n-u_e^n$), it follows that
\begin{align*}
	\Gamma_{ie}(t) & := \int_{\Om} \abs{v^n(t+\tau,x)-v^n(t,x)}^2 \dx
	\\ & \qquad\quad + \eps_n \sum_{j=i,e} \int_{\Om} 
	\abs{u_j^n(t+\tau,x)-u_j^n(t,x)}^2 \dx\\
	& = \int_{\Om} \left(u_i^n(t+\tau,x)-u_i^n(t,x)\right) 
	\left( \, \int_t^{t+ \tau} d \left(v^n (s,x)+\eps_n u_i(s,x)\right) \right)\dx	
	\\ & \qquad \quad
	- \int_{\Om} \left(u_e^n(t+\tau,x)-u_e^n(t,x)\right) 
	\left(\, \int_t^{t+ \tau} d \left(v^n (s,x)-\eps_n u_e(s,x)\right) \right)\dx	.
\end{align*}
In view of \eqref{S1-Galerkin-bis}, see also \eqref{eq:approx-eqn-integrated},
\begin{align*}
	\Gamma_{ie}(t) 
	& =-\sum_{j=i,e} \int_{\Om} \left( \, \int_t^{t+ \tau}
	M_j(x) \Grad u_j^n(s,x)\ds \right)\cdot 
	\Grad \left(u_j^n(t+\tau,x)-u_j^n(t,x)\right) \dx
	\\ &\qquad \quad 
	- \int_{\Om} \left( \, \int_t^{t+ \tau} I\left(v^n(s,x),w^n(s,x)\right)\ds\right)
	\left(v^n(t+\tau,x)-v^n(t,x)\right)\dx 
	\\ & \qquad \quad
	+\int_{\Om} \left(\, \int_t^{t+ \tau} 
	\eta^n(v^n(s,x)) \dW^{v,n}(s)\right)\left(v^n(t+\tau,x)-v^n(t,x)\right) \dx.
\end{align*}
Similarly, using the equation for $w^n$, cf.~\eqref{S1-Galerkin-bis} 
and also \eqref{eq:nondegen},
\begin{align*}
	 \Gamma_w(t) & :=\int_{\Om} \abs{w^n(t+\tau,x)-w^n(t,x)}^2 \dx
	 \\ & 
	 = \int_{\Om}\left(\, \int_t^{t+ \tau} H\left(v^n(s,x),w^n(s,x)\right)\ds\right) 
	 \left(w^n(t+\tau,x)-w^n(t,x)\right) \dx
	 \\ & \qquad 
	 + \int_{\Om}\left(\, \int_t^{t+ \tau}\sigma^n(v^n(s,x))\dW^{v,n}(s)\right)
	 \left(w^n(t+\tau,x)-w^n(t,x)\right)\dx.
\end{align*}
Integrating over $t\in (0,T-\tau)$ and summing the resulting equations gives
\begin{equation}\label{eq:Iiew}
	\int_0^{T-\tau} \Gamma_{ie}(t) \dt + \int_0^{T-\tau} \Gamma_{w}(t) \dt 
	= \Gamma_1+\Gamma_2+\Gamma_3+\Gamma_4+\Gamma_5,
\end{equation}
where
\begin{align*}
	\Gamma_1 & := -\sum_{j=i,e}\int_0^{T-\tau}
	\int_{\Om}\left(\, \int_{t}^{t+ \tau} 
	M_j(x) \Grad u_j^n(s,x)\ds\right) 
	\\ & \qquad\qquad\qquad\qquad\qquad\qquad 
	\cdot \Grad \left(u_j^n(t+\tau,x)-u_j^n(t,x)\right)\dx\dt
	\\ \Gamma_2 & := -\int_0^{T-\tau}\int_{\Om}
	\left(\, \int_{t}^{t+ \tau} I\left(v^n(s,x),w^n(s,x)\right)\ds\right) 
	\\ & \qquad\qquad\qquad\qquad\qquad\qquad 
	\times \left(v^n(t+\tau,x)-v^n(t,x)\right) \dx\dt
	\\ \Gamma_3 & := \int_0^{T-\tau}\int_{\Om}
	\left(\, \int_{t}^{t+ \tau}H\left(v^n(s,x),w^n(s,x)\right)\ds \right)
	\\ & \qquad\qquad\qquad\qquad\qquad\qquad 
	\times \left(w^n(t+\tau,x)-w^n(t,x)\right) \dx\dt
	\\  \Gamma_4 & := \int_0^{T-\tau}\int_{\Om}
	\left(\, \int_t^{t+ \tau} \eta^n(v^n(s,x))\dW^{v,n}(s)\right)
	\\ & \qquad\qquad\qquad\qquad\qquad\qquad 
	\times \left(v^n(t+\tau,x)-v^n(t,x)\right) \dx\dt
	\\ \Gamma_5& :=\int_0^{T-\tau} \int_{\Om}
	\left(\, \int_t^{t+ \tau} \sigma^n(v^n(s,x)) \dW^{v,n}(s)\right)
	\\ & \qquad\qquad\qquad\qquad\qquad\qquad 
	\times \left(w^n(t+\tau,x)-w^n(t,x)\right)\dx \dt.
\end{align*}
We examine these six terms separately. 
For the $\Gamma_1$ term, noting that
$$
\abs{\int_{t}^{t+ \tau} 
M_j(x) \Grad u_j^n(s,x)\ds}^2\le M\tau \int_{t}^{t+ \tau} 
\abs{\Grad u_j^n(s,x)}^2\ds,
$$
thanks to \eqref{matrix}, we obtain
\begin{align*}
	\abs{ \Gamma_1} & \le \sqrt{M \tau} \sum_{j=i,e} 
	\left ( \int_0^{T-\tau}  
	\int_t^{t+ \tau} \int_{\Om} 
	\abs{\Grad u_j^n(s,x)}^2\dx \ds \dt \right)^{\frac{1}{2}}
	\\ & \quad \quad\quad\quad
	\times \left ( \int_0^{T-\tau} 
	\int_{\Om} \abs{\Grad \left(u_j^n(t+\tau,x)-u_j^n(t,x)\right)}^2 
	\dx \dt\right)^{\frac{1}{2}},
\end{align*}
using Cauchy-Schwarz's inequality. 
Hence, by Young's inequality and \eqref{Gal:est2},
\begin{equation}\label{est-I1}
	\E\left[\sup_{0\le \tau\le \delta}\abs{ \Gamma_1}\right] 
	\le C_1\sqrt{\delta}, 
\end{equation}
for some constant $C_1>0$ independent of $n$. 

Next, take notice of the bound
\begin{equation}\label{eq:I-bound}
	\begin{split}
		&\abs{\int_{t}^{t+ \tau} I\left(v^n(s,x),w^n(s,x)\right)\ds}^{\frac43} 
		\le \tau^{\frac13} \int_{t}^{t+ \tau}
		\abs{I\left(v^n(s,x),w^n(s,x)\right)}^{\frac43}\ds
		\\ & \qquad\quad 
		\le  C_2 \tau^{\frac13} \int_{t}^{t+\tau} 
		\left(1 +\abs{v(s,x)}^4+\abs{w(s,x)}^2\right)\ds,
	\end{split}
\end{equation}
where we have used the inequality
\begin{equation}\label{eq:I43-est}
	\abs{I(v,w)}^{\frac43} \le C_2 \left(1 +\abs{v}^4+\abs{w}^2\right), 
\end{equation}
resulting from {\rm (\textbf{GFHN})} and Young's inequality. 
Due to \eqref{eq:I-bound}, \eqref{Gal:est1} and \eqref{Gal:est2},
\begin{align*}
	\abs{ \Gamma_2}
	& \le C_3 \tau^{\frac14}
	\left ( \int_0^{T-\tau}  
	\int_t^{t+ \tau} \int_{\Om} 
	\left(1 +\abs{v(s,x)}^4+\abs{w(s,x)}^2\right)
	\dx \ds \dt \right)^{\frac{3}{4}}
	\\ & \quad \quad\quad\quad
	\times \left ( \int_0^{T-\tau} 
	\int_{\Om} \abs{v^n(t+\tau,x)-v^n(t,x)}^4 
	\dx \dt \right)^{\frac{1}{4}},
\end{align*}
and for this reason, in view of Young's inequality 
and \eqref{Gal:est2},
\begin{equation}\label{est-I3}
	\E\left[\sup_{0\le \tau\le \delta}\abs{ \Gamma_2}\right] 
	\le C_4\delta^{\frac14}.
\end{equation}

Similarly, since $\abs{H(v,w)}^2 \le C_5 
\left(1 +\abs{v}^4+\abs{w}^2\right)$, cf.~{\rm (\textbf{GFHN})}, we obtain
\begin{equation}\label{est-I4}
	\E\left[\sup_{0\le \tau\le \delta}\abs{ \Gamma_3}\right] 
	\le C_6 \delta^{\frac12}.
\end{equation}

Finally, we treat the stochastic terms. 
By the Cauchy-Schwarz inequality,
\begin{align*}
	\abs{ \Gamma_4}
	& \le \left (\int_0^T \int_{\Om} 
	\sup_{0\le \tau\le \delta}
	\abs{\int_t^{t+ \tau} \eta^n(v^n(s,x)) \dW^{v,n}(s)}^2
	\dx \dt \right)^{\frac{1}{2}}
	\\ &  \quad\quad\quad
	\times 
	\left ( \int_0^T \sup_{0\le \tau\le \delta}
	\int_{\Om} \abs{v^n(t+\tau,x)-v^n(t,x)}^2 
	\dx \dt \right)^{\frac{1}{2}}.
\end{align*}
Applying $\E[\cdot]$ along with the Cauchy-Schwarz 
inequality, we gather the estimate
\begin{equation}\label{est-I5}
	\begin{split}
		\E\left[\sup_{0\le \tau\le \delta}\abs{ \Gamma_4} \right]
		&\le  \left(\E\left [\int_0^T  
		\int_{\Om}
		\sup_{0\le \tau\le \delta}
		\abs{\int_t^{t+ \tau} \eta^n(v^n(s,x)) \dW^{v,n}(s)}^2
		\dx \dt \right]\right)^{\frac{1}{2}}
		\\ &  \quad
		\times \left (\E \left[\sup_{0\le \tau\le \delta} 
		\int_0^{T-\tau} \int_{\Om} \abs{v^n(t+\tau,x)-v^n(t,x)}^2 
		\dx \dt \right] \right)^{\frac{1}{2}}
		\\ & \le C_7 \left ( \E \left[\,  \int_0^T
		\int_t^{t+ \delta} \sum_{k=1}^n\int_{\Om} 
		\abs{\eta^n_k(v^n(s,x))}^2 \dx \ds \dt \right] \right)^{\frac{1}{2}}
		\\ & \le 
		C_8 \left ( \E \left[\,  \int_0^T\int_t^{t+ \delta}\int_{\Om} 
		\left(1+\abs{v^n(s,x)}^2\right) \dx \ds \dt \right] \right)^{\frac{1}{2}}
		\le  C_9 \delta^{\frac12},
	\end{split}
\end{equation}
where we have also used the Burkholder-Davis-Gundy 
inequality \eqref{eq:bdg} and \eqref{eq:noise-cond}, \eqref{Gal:est3}.

Similarly, 
\begin{equation}\label{est-I6}
	\E\left[\sup_{0\le \tau\le \delta}\abs{ \Gamma_5} \right]
	\le C_{10} \delta^{\frac12}.
\end{equation}

Collecting the previous estimates \eqref{est-I1}, \eqref{est-I3},  \eqref{est-I4}, 
\eqref{est-I5}, and \eqref{est-I6} we readily conclude from \eqref{eq:Iiew} that 
the time translation estimate \eqref{Time-translate} holds.
\end{proof}

\begin{rem}
The proof of \eqref{Time-translate}, cf.~estimate \eqref{est-I4}, 
reveals that the amount of time continuity of $w^n$ is actually 
better than stated; it is of order $\delta^{\frac12}$.
\end{rem}

%%%%%%%%%%%%%%%%%%%%%%%%%%%%%%
%%%%%%%%%%%%%%%%%%%%%%%%%%%%%%
\subsection{Tightness and a.s.~representations}\label{subsec:tight}
To justify passing to the limit in the nonlinear terms in \eqref{eq:nondegen}, we must 
show that $\Set{v^n}_{n\ge1}$ converges strongly, thereby 
upgrading the weak $L^2$ convergence in \eqref{eq:weakconv1}.  
Strong $(t,x)$ convergence is a result of the spatial $\tH$ 
bound \eqref{Gal:est2} and the time translation estimate \eqref{Time-translate}. 

On the other hand, to secure strong (a.s.)~convergence in the probability 
variable $\omega\in D$ we must invoke some nontrivial results of 
Skorokhod, linked to tightness of probability measures 
and a.s.~representations of random variables.  
Actually, there is a complicating factor at play here, namely that the
sequences $\Set{u_i^n}_{n\ge1}$, $\Set{u_e^n}_{n\ge1}$ only converge 
weakly in $(t,x)$ because of the degenerate structure of the bidomain model. 
As a result, we must turn to the Skorokhod-Jakubowski 
representation theorem \cite{Jakubowski:1997aa}, 
which applies to separable Banach spaces equipped with the weak topology and 
other so-called quasi-Polish spaces. At variance with the original Skorokhod 
representations on Polish spaces, the flexibility of the Jakubowski version 
comes at the expense of having to pass to a subsequence (which may be satisfactory 
in many situations). We refer to \cite{Breit:2016aa,Brzezniak:2013aa,Brzezniak:2013ab,Brzezniak:2011aa,Ondrejat:2010aa,Smith:2015aa} for works 
making use of Skorokhod-Jakubowski a.s.~representations.

Following \cite{Bensoussan:1995aa,Mohammed:2015ab} (for example), the 
aim is to establish tightness of the probability measures (laws) generated 
by the Faedo-Galerkin solutions $\Set{\left(U^n,W^n,U_0^n\right)}_{n\ge 1}$, where
\begin{equation}\label{eq:def-U-W-U0}
	U^n=u_i^n,u_e^n,v^n,w^n, \quad
	W^n = W^{v,n},W^{w,n}, \quad
	U_0^n = u_{i,0}^n,u_{e,0}^n,v_0^n,w_0^n.
\end{equation}
Accordingly, we choose the following path space for these measures:
\begin{align*}
	\cX & :=\Bigl [\left(L^2((0,T);\tH(\Om_T))\text{--weak}\right)^2
	\times L^2(\Om_T)
	\times L^2((0,T);(\tH(\Om_T))^*)\Bigr]
	\\ & \qquad\quad
	\times\Bigl [\left(C([0,T]; \U_0)\right)^2\Bigr]
	\times \Bigl [\left(L^2(\Om)\right)^4\Bigr],
	\\ & =: \cX_U\times \cX_W\times \cX_{U_0},
\end{align*}
where $\U_0$ is defined in \eqref{eq:U0} and the tag ``--weak" 
signifies that the space is equipped with the weak topology. 
The $\sigma$-algebra of Borel subsets of $\cX$ 
is denoted by $\cB(\cX)$. We introduce the 
$\left(\cX,\cB(\cX)\right)$-valued measurable mapping $\Phi_n$ 
defined on $\left(D,\cF,P\right)$ by
$$
\Phi_n(\omega)
= \left(U^n(\omega),W^n, U_0^n(\omega)\right).
$$
On $\left(\cX,\cB(\cX)\right)$, we define the 
probability measure (law of of $\Phi_n$)
\begin{equation}\label{eq:Ln-law}
	\cL_n(\cA)=P\left(\Phi_n^{-1}(\cA)\right), 
	\qquad \cA\in \cB(\cX).
\end{equation}
We denote by $\cL_{u_i^n}, \cL_{u_e^n}$ the respective laws of 
$u_i^n, u_e^n$ on $L^2((0,T);\tH(\Om_T))\text{--weak}$, 
with similar notations for the laws of $v^n$ on $L^2(\Om_T)$, 
$w^n$ on $L^2((0,T);(\tH(\Om))^*)$, $W^{v,n},W^{w,n}$ 
on $C([0,T]; \U_0)$, and $u_{i,0}^n,u_{e,0}^n,v_0^n,w_0^n$ 
on $L^2(\Om)$. Hence,
$$
\cL_n = \cL_{u_i^n}\times \cL_{u_e^n}\times \cL_{v^n}\times \cL_{w^n}
\times  \cL_{u_{i,0}^n}\times \cL_{u_{e,0}^n}
\times\cL_{v_0^n}\times\cL_{w_0^n}.
$$

Inspired by \cite{Bensoussan:1995aa}, for any two sequences of 
positive numbers $r_m,\nu_m$ tending to 
zero as $m\to \infty$, we introduce the set
\begin{align*}
	\cZ^v_{r_m,\nu_m}
	:=\Biggl\{ & u \in L^\infty\left((0,T);L^2(\Om)\right)\cap
	L^2((0,T);\tH(\Om)):
	\\ & \qquad
	\sup_{m\ge 1} \frac{1}{\nu_m}\sup_{0\le \tau\le r_m} 
	\norm{u(\cdot+\tau)-u}_{L^2((0,T-\tau);L^2(\Om))}<\infty \Biggr\}.
\end{align*}
Then $\cZ^v_{r_m,\nu_m}$ is a Banach 
space under the natural norm
\begin{align*}
	\norm{u}_{\cZ^v_{r_m,\nu_m}}
	:= &  \norm{u}_{L^\infty\left((0,T);L^2(\Om)\right)}
	+\norm{u}_{L^2((0,T);\tH(\Om))}
	\\ & \qquad 
	+ \sup_{m\ge 1} \frac{1}{\nu_m}\sup_{0\le \tau\le r_m} 
	\norm{u(\cdot+\tau)-u}_{L^2((0,T-\tau);L^2(\Om))}.
\end{align*}
Moreover, $\cZ^v_{r_m,\nu_m}$  is compactly 
embedded in $L^2(\Om_T)$, which is a 
consequence of an Aubin-Lions-Simon lemma. 
Suppose $X_1\subset X_0$ are two Banach spaces, where 
$X_1$ is compactly embedded in $X_0$. 
Let $\cZ\subset L^p((0,T);X_0)$, where $1\le p\le \infty$. 
Simon \cite{Simon:1987vn} provides several results 
ensuring the compactness of $\cZ$ in $L^p((0,T);X_0)$ (and in 
$C([0,T];X_0)$ if $p=\infty$). For example, by assuming that $\cZ$ 
is bounded in $L^1_{\loc}((0,T);X_1)$ and 
$\norm{u(\cdot+\tau)-u}_{L^p((0,T-\tau);X_0)}\to 0$ as $\tau\to 0$, 
uniformly for $u\in \cZ$ \cite[Theorem 3]{Simon:1987vn}. Another 
result \cite[Theorem 5]{Simon:1987vn} concerns functions taking values in 
a third Banach space $X_{-1}$, where $X_1\subset X_0\subset X_{-1}$ 
and $X_1$ is still compactly embedded in $X_0$.
Compactness of $\cZ$ in $L^p((0,T);X_0)$ follows once we know that 
$\cZ$ is bounded in $L^p((0,T);X_1)$ and 
$\norm{u(\cdot+\tau)-u}_{L^p((0,T-\tau);X_{-1})}\to 0$ as $\tau\to 0$, 
uniformly for $u\in \cZ$. 

The space $\cZ^v_{r_m,\nu_m}$ is relevant 
for $v^n$, while for $w^n$ we utilize
\begin{align*}
	\cZ^w_{r_m,\nu_m}
	:=\Biggl\{ & u \in L^\infty\left((0,T);L^2(\Om)\right):
	\\ & \qquad
	\sup_{m\ge 1} \frac{1}{\nu_m}\sup_{0\le \tau \le r_m} 
	\norm{u(\cdot+\tau)-u}_{L^2((0,T-\tau);(\tH(\Om))^*)}<\infty \Biggr\},
\end{align*}
with a corresponding natural norm $\norm{u}_{\cZ^{w}_{r_m,\nu_m}}$. 
Besides, $\cZ^v_{r_m,\nu_m}$  is compactly 
embedded in $L^2((0,T);(\tH(\Om))^*)$.

\begin{lem}[tightness of laws \eqref{eq:Ln-law} for the 
Faedo-Galerkin approximations]\label{lem:tight}
Equipped with the estimates in Lemmas \ref{lem:apriori-est} 
and \ref{Space-Time-translate}, the laws 
$\Set{\cL_n}_{\ge1}$ is tight on $\left(\cX,\cB(\cX)\right)$.
\end{lem}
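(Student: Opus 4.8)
The plan is to use that $\cL_n$ is the product of the laws of the components $u_i^n,u_e^n,v^n,w^n,W^{v,n},W^{w,n}$ and $u_{i,0}^n,u_{e,0}^n,v_0^n,w_0^n$ of $\Phi_n$, so that it suffices to prove tightness of each of these finitely many marginal families on the corresponding factor of $\cX$ (a product of compact sets, one carrying most of each marginal's mass uniformly in $n$, then yields the required compact subset of $\cX$). I would establish the marginal tightness component by component.

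For the weakly topologized components $u_i^n,u_e^n$, estimates \eqref{Gal:est2} and \eqref{Gal:est2-new} give $\sup_n\E\!\left[\norm{u_j^n}_{L^2((0,T);\tH(\Om))}^2\right]<\infty$ for $j=i,e$. Since $L^2((0,T);\tH(\Om))$ is a separable Hilbert space, its closed balls are compact and metrizable for the weak topology, and Chebyshev's inequality bounds the mass of $\cL_{u_j^n}$ outside a large ball uniformly in $n$; this gives tightness. For the Wiener components, I would invoke that $W^{v,n}\to W^v$ and $W^{w,n}\to W^w$ in the Polish space $C([0,T];\U_0)$ $P$-a.s.\ (cf.\ Sections \ref{sec:stoch} and \ref{sec:approx-sol}), hence in law, so the corresponding sequences of laws are relatively weakly compact and therefore tight by Prokhorov's theorem (Theorem \ref{thm:prokhorov}). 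The same reasoning covers the initial-data components: by \eqref{eq:L2-conv-init} and the accompanying convergences, $u_{i,0}^n,u_{e,0}^n,v_0^n,w_0^n$ converge in $L^2(D,\cF,P;L^2(\Om))$, hence in law on the Polish space $L^2(\Om)$.

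The crux is the strong-topology component $v^n$ on $L^2(\Om_T)$. Here I would fix sequences $r_m,\nu_m\downarrow0$ and use the Banach space $\cZ^v_{r_m,\nu_m}$, which is compactly embedded in $L^2(\Om_T)$ by the Aubin--Lions--Simon lemma, so that the $L^2(\Om_T)$-closure of any $\cZ^v_{r_m,\nu_m}$-ball is a compact subset of $L^2(\Om_T)$. It then suffices to show that $P\!\left(\norm{v^n}_{\cZ^v_{r_m,\nu_m}}>R\right)\to0$ as $R\to\infty$, uniformly in $n$. The $L^\infty((0,T);L^2(\Om))$ and $L^2((0,T);\tH(\Om))$ parts of the $\cZ^v_{r_m,\nu_m}$-norm are handled by Chebyshev's inequality applied to \eqref{Gal:est3} and \eqref{Gal:est2}. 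For the weighted translation seminorm the key input is Lemma \ref{Space-Time-translate}, giving $\E\!\left[\sup_{0\le\tau\le\delta}\norm{v^n(\cdot+\tau)-v^n}_{L^2((0,T-\tau);L^2(\Om))}^2\right]\le C\delta^{1/4}$ uniformly in $n$; choosing, for instance, $r_m=2^{-m}$ and $\nu_m=m\,r_m^{1/16}$ makes $\sum_m r_m^{1/4}/\nu_m^2<\infty$, so Chebyshev at each level $m$ followed by summation bounds $P\!\left(\sup_{m\ge1}\nu_m^{-1}\sup_{0\le\tau\le r_m}\norm{v^n(\cdot+\tau)-v^n}_{L^2((0,T-\tau);L^2(\Om))}>R\right)$ by $C/R^2$, uniformly in $n$. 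I expect this to be the main obstacle: turning the expectation-level translation estimate with its fractional $\delta^{1/4}$ rate into a uniform-in-$n$ ``a fixed compact set carries probability at least $1-\varepsilon$'' statement, where the delicate point is the summability-driven choice of $r_m,\nu_m$.

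For $w^n$ the scheme is identical, now with $\cZ^w_{r_m,\nu_m}$ (compactly embedded in $L^2((0,T);(\tH(\Om))^*)$), the $L^\infty((0,T);L^2(\Om))$ bound from \eqref{Gal:est3}, and the translation estimate \eqref{Time-translate} with $u^n=w^n$, which a fortiori holds with the $(\tH(\Om))^*$-norm in place of the $L^2(\Om)$-norm because $L^2(\Om)\hookrightarrow(\tH(\Om))^*$ continuously. Assembling the tightness of all the marginal families as in the first paragraph then gives tightness of $\Set{\cL_n}_{n\ge1}$ on $(\cX,\cB(\cX))$.
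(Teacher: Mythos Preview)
Your proposal is correct and follows essentially the same approach as the paper: tightness of each marginal via Chebyshev combined with the uniform bounds \eqref{Gal:est2}, \eqref{Gal:est3}, the translation estimate \eqref{Time-translate} and the compact embedding of $\cZ^v_{r_m,\nu_m}$ (resp.\ $\cZ^w_{r_m,\nu_m}$), together with Prokhorov for the Wiener and initial-data components. The only cosmetic differences are that the paper applies Chebyshev with exponent $1$ (so the summability condition reads $\sum_m r_m^{1/8}/\nu_m<\infty$) rather than exponent $2$, and it does not commit to explicit sequences $r_m,\nu_m$; also note that $\cL_n$ is not literally the product of the marginal laws (the components are not independent), but your parenthetical union-bound argument is exactly what is needed and is what the paper uses as well.
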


\begin{proof}
Given any $\delta>0$, we need to produce compact sets
\begin{align*}
	& \cK_{0,\delta}\subset L^2((0,T);\tH(\Om))\text{--weak},
	\\ & \cK_{1,\delta}\subset L^2(\Om_T), \quad
	\cK_{2,\delta} \subset L^2((0,T);(\tH(\Om))^*),
	\\ &
	\cK_{3,\delta} \subset C([0,T];\U_0),
	\quad
	\cK_{4,\delta} \subset L^2(\Om),
\end{align*}
such that, with 
$\cK_{\delta}=\left(\cK_{0,\delta}\right)^2
\times \cK_{1,\delta}\times \cK_{2,\delta}\times \left(\cK_{3,\delta}\right)^2
\times \left(\cK_{4,\delta}\right)^4$,
$$
\cL_n\left(\cK_{\delta}\right)
=P\left(\Set{\omega\in D:\Phi_n(\omega) 
\in \cK_{\delta}} \right)> 1-\delta.
$$
This inequality follows if we can show that
\begin{align}
	& \cL_{u^n}\left(\cK_{0,\delta}^c\right)
	=P\left(\Set{\omega\in D: u^n(\omega)\notin \cK_{0,\delta}} \right)
	\le \frac{\delta}{10},
	\quad u^n=u_i^n, u_e^n,
	\label{eq:law0}
	\\ & 
	\cL_{v^n}\left(\cK_{1,\delta}^c\right)
	=P\left(\Set{\omega\in D: v^n(\omega)\notin \cK_{1,\delta}} \right)
	\le \frac{\delta}{10},
	\label{eq:law1}
	\\ &
	\cL_{w^n}\left(\cK_{2,\delta}^c\right)=
	P\left(\Set{\omega\in D: w^n(\omega)\notin \cK_{2,\delta}} \right) 
	\le \frac{\delta}{10}, 
	\label{eq:law2}
	\\ &
	\cL_{W^n}\left(\cK_{3,\delta}^c\right)
	=P\left(\Set{\omega\in D: W^n(\omega)\notin \cK_{3,\delta}} \right) 
	\le \frac{\delta}{10}, 
	\quad W^n=W^{v,n},W^{w,n},
	\label{eq:law3}
	\\ &
	\cL_{u_0^n}\left(\cK_{4,\delta}^c\right)
	=P\left(\Set{\omega\in D: U_0^n(\omega)\notin \cK_{4,\delta}} \right) 
	\le \frac{\delta}{10}, 
	\quad U_0^n=u_{i,0}^n,u_{e,0}^n,v_0^n,w_0^n.
	\label{eq:law4}
\end{align}

By weak compactness of bounded sets in $L^2((0,T);\tH(\Om))$, the set
$$
\cK^{0,\delta} := \Set{u: \norm{u}_{L^2((0,T);\tH(\Om))}\le R_{0,\delta}},
$$
is a compact subset of $L^2((0,T);\tH(\Om))\text{--weak}$, where 
$R_{0,\delta}>0$ is to be determined later. 
Recalling the Chebyshev inequality for a 
nonnegative random variable $\xi$,
\begin{equation}\label{eq:Chebyshev}
	P\left(\Set{\omega\in D: \xi(\omega)\ge R} \right)
	\le \frac{E\left[\xi^k\right]}{R^k},
	\qquad R,k>0,
\end{equation}
it follows that
\begin{align*}
	P\left(\Set{\omega\in D: u^n(\omega)\notin\cK^{0,\delta}} \right)
	& = P\left(\Set{\omega\in D: 
	\norm{u^n(\omega)}_{L^2((0,T);\tH(\Om))}>R_{0,\delta}}\right)
	\\ & \le \frac{1}{R_{0,\delta}}
	\E\left[\norm{u^n(\omega)}_{L^2((0,T);\tH(\Om))}\right]
	\le \frac{C}{R_{0,\delta}}.
\end{align*}
To derive the last inequality we used the Cauchy-Schwarz 
inequality and then \eqref{Gal:est2}. Clearly, we can 
choose $R_{0,\delta}>0$ such that \eqref{eq:law0} holds.

We fix two sequences $\Set{r_m}_{m=1}^\infty$, 
$\Set{\nu_m}_{m=1}^\infty$ of positive numbers 
numbers tending to zero as $m\to \infty$ (independently of $n$), such that
\begin{equation}\label{eq:r-nu}
	\sum_{m=1}^\infty r_m^{\frac18}/\nu_m<\infty,
\end{equation}
and define
$$
\cK^{1,\delta} := 
\Set{u: \norm{u}_{\cZ^v_{r_m,\nu_m}}\le R_{1,\delta}},
$$
for a number $R_{1,\delta}>0$ that will be determined later. 
Evidently, in view of an Aubin-Lions-Simon 
lemma, $\cK^{1,\delta}$ is a compact subset of $L^2(\Om_T)$. 
We have
\begin{align*}
	&P\left(\Set{\omega\in D: v^n(\omega)\notin\cK^{1,\delta}} \right)
	\\ & \quad
	\le P\left(\Set{\omega\in D: 
	\norm{v^n(\omega)}_{L^\infty\left((0,T);L^2(\Om)\right)}>R_{1,\delta}}\right)
	\\ & \quad\qquad 
	+P\left(\Set{\omega\in D: 
	\norm{v^n(\omega)}_{L^2((0,T);\tH(\Om))}>R_{1,\delta}}\right)
	\\ & \quad\qquad +P\left(\Set{\omega\in D: 
	\sup_{0\le \tau \le r_m} 
	\norm{v^n(\cdot+\tau)-v^n}_{L^2((0,T-\tau);L^2(\Om))}
	>R_{1,\delta} \, \nu_m}\right)
	\\ & \quad 
	 =:P_{1,1}+P_{1,2}+P_{1,3} \quad \text{(for any $m\ge 1$)}.
\end{align*}
Again by the Chebyshev inequality \eqref{eq:Chebyshev}, we infer that
\begin{align*}
	P_{1,1} & \le \frac{1}{R_{1,\delta}}
	\E \left[\norm{v^n(\omega)}_{L^\infty\left((0,T);L^2(\Om)\right)}\right]
	\le \frac{C}{R_{1,\delta}}, 
	\\ P_{1,2} & \le \frac{1}{R_{1,\delta}}
	\E\left[\norm{v^n(\omega)}_{L^2((0,T);\tH(\Om))}\right]
	\le \frac{C}{R_{1,\delta}},
	\\ P_{1,3} & \le \sum_{m=1}^\infty \frac{1}{R_{1,\delta} \, \nu_m}
	\E\left[\, \sup_{0\le \tau \le r_m} 
	\norm{v^n(\cdot+\tau)-v^n}_{L^2((0,T-\tau);L^2(\Om))}\right]
	\\ & \le \frac{C}{R_{1,\delta}}\sum_{m=1}^\infty \frac{r_m^{\frac18}}{\nu_m},
\end{align*}
where we have used \eqref{Gal:est2}, \eqref{Gal:est3}, and \eqref{Time-translate}. 
On the grounds of this and \eqref{eq:r-nu}, we can choose $R_\delta$ 
such that \eqref{eq:law1} holds.

Similarly, with sequences $\Set{r_m}_{m=1}^\infty$, 
$\Set{\nu_m}_{m=1}^\infty$ as above, define
$$ 
\cK^{2,\delta} := \Set{u: \norm{u}_{\cZ^w_{r_m,\nu_m}}\le R_{2,\delta}},
$$
for a number $R_{2,\delta}>0$ to be determined later.  By an 
Aubin-Lions-Simon lemma, $\cK^{2,\delta}$ is 
a compact subset of $L^2((0,T);(\tH(\Om))^*)$. We have
\begin{align*}
	&P\left(\Set{\omega\in D: w^n(\omega)\notin\cK^{2,\delta}} \right)
	\\ & \quad
	\le P\left(\Set{\omega\in D: 
	\norm{w^n(\omega)}_{L^\infty\left((0,T);L^2(\Om)\right)}>R_{2,\delta}}\right)
	\\ & \qquad\quad 
	+P\left(\Set{\omega\in D: 
	\sup_{0\le \tau \le r_m} 
	\norm{w^n(\cdot+\tau)-w^n}_{L^2((0,T-\tau);(\tH(\Om))^*)}
	>R_{2,\delta} \, \nu_m}\right)
	\\ & \quad 
	 =:P_{2,1}+P_{2,2}
	 \quad \text{(for any $m\ge 1$)},
\end{align*}
where, using \eqref{eq:Chebyshev} and \eqref{Gal:est3} as before,
\begin{align*}
	P_{2,1} & \le \frac{1}{R_\delta}
	\E \left[\norm{w^n(\omega)}_{L^\infty\left((0,T);L^2(\Om)\right)}\right]
	\le \frac{C}{R_{2,\delta}}, 
\end{align*}
and, via \eqref{Time-translate} and \eqref{eq:r-nu},
$$
P_{2,2} \le \sum_{m=1}^\infty \frac{1}{R_{2,\delta} \, \nu_m}
\E\left[\, \sup_{0\le \tau \le r_m} 
\norm{w^n(\cdot+\tau)-w^n}_{L^2((0,T-\tau);(\tH(\Om))^*)}\right]
\le \frac{C}{R_{2,\delta}}.
$$
Consequently, we can choose $R_{2,\delta}$ such 
that \eqref{eq:law2} holds.

Recall that the finite dimensional approximations 
$W^n=W^{v,n},W^{w,n}$, cf.~\eqref{eq:Wn}, are 
$P$-a.s.~convergent in $C([0,T];\U_0)$ as $n\to \infty$, and 
hence the laws $\cL_{W^n}$ converge weakly. Thanks to 
Theorem \ref{thm:prokhorov}, this entails the 
tightness of $\Set{\cL_{W^n}}_{n\ge 1}$, i.e., 
for any $\delta>0$, there exists a compact 
set $\cK_{3,\delta}$ in $C([0,T];\U_0)$ such 
that \eqref{eq:law3} holds. Similarly, as the finite dimensional 
approximations $u_{i,0}^n, u_{e,0}^n, v_0^n, w_0^n$, 
cf.~\eqref{Galerkin-initdata}, are $P$-a.s.~convergent in 
$L^2(\Om)$, the laws $\cL_{U_0^n}$ converge 
weakly ($\cL_{v_0^n}\weak \mu_{v_0}$, $\cL_{w_0^n}\weak \mu_{w_0}$). 
Hence, \eqref{eq:law4} follows. 
\end{proof}

\begin{lem}[Skorokhod-Jakubowski a.s.~representations]\label{lem:as-represent}
Passing to a subsequence (not relabeled), there exist a 
new probability space $(\tilde{D},\tilde{\cF}, \tilde{P})$ 
and new random variables
$$
\left(\tilde U^n, \tilde W^n, \tilde U_0^n\right),
\quad 
\left (\tilde U, \tilde W, \tilde U_0\right) ,
$$
where
\begin{equation}\label{eq:def-tvar}
	\begin{split}
		& \tilde U^n=\tilde u_i^n, \tilde u_e^n,\tilde v^n,\tilde w^n, \quad
		\tilde  W^n = \tilde W^{v,n},\tilde W^{w,n}, \quad
		\tilde U_0^n = \tilde u_{i,0}^n,\tilde u_{e,0}^n,\tilde v_0^n,\tilde w_0^n,
		\\ & 
		\tilde U=\tilde u_i, \tilde u_e,\tilde v,\tilde w, \quad
		\tilde  W = \tilde W^v,\tilde W^w, \quad
		\tilde U_0 = \tilde u_{i,0},\tilde u_{e,0},\tilde v_0,\tilde w_0,
	\end{split}
\end{equation}
with respective (joint) laws $\cL_n$ and $\cL$, such that the following 
strong convergences hold $\tilde P$-a.s.~as $n\to \infty$:
\begin{equation}\label{eq:strong-conv1}
	\begin{split}
		& \tilde v^n \to \tilde v  \quad  
		\text{in $L^2((0,T);L^2(\Om))$},
		\\ &  
		\tilde w^n \to \tilde w \quad  
		\text{in $L^2((0,T);(\tH(\Om))^*)$},
		\\ &
		\tilde W^{v,n} \to \tilde W^v, 
		\, 
		\tilde W^{w,n} \to \tilde W^w \quad  
		\text{in $C([0,T]; \U_0)$,}
		\\ & 
		\tilde u_{i,0}^n \to \tilde u_{i,0}, \,
		\tilde u_{e,0}^n\to \tilde u_{e,0}, \,
		\tilde v_0^n\to \tilde v_0, \, 
		\tilde w_0^n \to \tilde w_0
		\quad  \text{in $L^2(\Om)$.} 
	\end{split}
\end{equation}
Moreover, the following weak convergences hold 
$\tilde P$-a.s~as $n\to \infty$:
\begin{equation}\label{eq:weak-conv1}
	\tilde u^n_i \weak \tilde u_i, \,
	\tilde u^n_e \weak \tilde u_e  \quad  
	\text{in $L^2((0,T);\tH(\Om))$.}
\end{equation}
\end{lem}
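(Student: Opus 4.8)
The plan is to apply the Skorokhod--Jakubowski a.s.~representation theorem (Theorem \ref{thm:skorokhod}) to the tight family of laws $\Set{\cL_n}_{n\ge 1}$ supplied by Lemma \ref{lem:tight}. The one structural point to settle first is that the path space $\cX$ is quasi-Polish, i.e., that it carries a countable family of continuous functionals $\cX\to\R$ separating points. The factors $L^2(\Om_T)$, $L^2((0,T);(\tH(\Om))^*)$, $C([0,T];\U_0)$, and $L^2(\Om)$ are Polish, hence quasi-Polish: for a Polish factor, $u\mapsto d(u,z_k)$ for a countable dense set $\Set{z_k}$ gives such a family. For a factor of the form ``$\bB$--weak'' with $\bB$ a separable Banach space, one picks a countable set $\Set{b_k^*}_{k\ge 1}\subset \bB^*$ that is total over $\bB$ (which exists by separability of $\bB$ together with the Hahn--Banach theorem); each $b\mapsto \langle b_k^*,b\rangle$ is weakly continuous, and the $\Set{b_k^*}_{k\ge 1}$ separate points of $\bB$. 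Composing all these functionals with the continuous coordinate projections of $\cX$ onto its factors and relabeling yields a single sequence $\Set{f_\ell}_{\ell\ge 1}$ of continuous functionals on $\cX$ that separate points, so $\cX$ falls within the scope of Theorem \ref{thm:skorokhod}.

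By Lemma \ref{lem:tight} the laws $\Set{\cL_n}_{n\ge 1}$ are tight on $\cX$; in particular, for each $\delta>0$ they put mass at least $1-\delta$ on a compact, hence $\sigma$-compact, subset of $\cX$, so by parts (2)--(3) of Theorem \ref{thm:skorokhod} each $\cL_n$ admits a Radon extension to $\cB(\cX)$. Invoking part (4) of Theorem \ref{thm:skorokhod}, we would extract a subsequence (not relabeled) and produce a probability space $(\tilde D,\tilde\cF,\tilde P)$ carrying Borel measurable $\cX$-valued random variables $(\tilde U^n,\tilde W^n,\tilde U_0^n)$ and $(\tilde U,\tilde W,\tilde U_0)$, with laws $\cL_n$ and $\cL$ respectively, such that
$$
\bigl(\tilde U^n,\tilde W^n,\tilde U_0^n\bigr)\to \bigl(\tilde U,\tilde W,\tilde U_0\bigr)
\qquad \text{$\tilde P$-a.s.~in $\cX$, as $n\to\infty$.}
$$
Since $\cX$ carries the product topology, a.s.~convergence in $\cX$ is equivalent to a.s.~convergence in each coordinate for its own topology. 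Reading off the coordinates with the notation \eqref{eq:def-tvar} then gives at once the strong convergences \eqref{eq:strong-conv1} ($\tilde v^n\to\tilde v$ in $L^2(\Om_T)$, $\tilde w^n\to\tilde w$ in $L^2((0,T);(\tH(\Om))^*)$, $\tilde W^{v,n}\to\tilde W^v$ and $\tilde W^{w,n}\to\tilde W^w$ in $C([0,T];\U_0)$, and $\tilde u_{i,0}^n,\tilde u_{e,0}^n,\tilde v_0^n,\tilde w_0^n$ converging in $L^2(\Om)$), as well as the weak convergences \eqref{eq:weak-conv1} of $\tilde u_i^n,\tilde u_e^n$ in $L^2((0,T);\tH(\Om))$, the latter precisely because the first two factors of $\cX$ carry the weak topology.

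I do not expect a genuine obstacle here: the analytic content is entirely encapsulated in the $n$-independent bounds of Lemma \ref{lem:apriori-est} and the temporal translation estimate of Lemma \ref{Space-Time-translate}, which through Lemma \ref{lem:tight} deliver tightness, after which Theorem \ref{thm:skorokhod} does the rest. The only place warranting a little care is the verification that the mixed product space $\cX$---two of whose factors carry a non-metrizable weak topology---is quasi-Polish, and that those weak-topology factors contribute genuinely continuous, point-separating functionals, so that Theorem \ref{thm:skorokhod} is legitimately applicable; everything else is a direct transcription.
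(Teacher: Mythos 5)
Your proof is correct and follows essentially the same route as the paper's: establish tightness via Lemma \ref{lem:tight}, verify that $\cX$ is a quasi-Polish space, and invoke the Skorokhod--Jakubowski representation theorem (Theorem \ref{thm:skorokhod}), reading off the coordinate-wise a.s.~convergences. The only difference is that the paper delegates the quasi-Polish verification for the weak-topology factors to a citation (Brze\'zniak--Motyl) in a subsequent remark, whereas you spell out the Hahn--Banach argument directly; this is a welcome bit of extra detail but does not change the proof's structure.
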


\begin{proof}
Thanks to the Skorokhod-Jakubowski representation 
theorem (Theorem \ref{thm:skorokhod}), there exist
a new probability space $(\tilde{D},\tilde{\cF}, \tilde{P})$ 
and new $\cX$-valued random variables
\begin{equation}\label{def:tPhi}
	\begin{split}
		&\tilde \Phi_n = \left(\tilde u_i^n,\tilde u_e^n, \tilde v^n, 
		\tilde w^n,\tilde W^{v,n}, \tilde W^{w,n}, 
		\tilde u_{i,0}^n, \tilde u_{e,0}^n,\tilde v_0^n, \tilde w_0^n\right),
		\\ &
		\tilde \Phi = \left(\tilde u_i,\tilde u_e, \tilde v,\tilde w,
		\tilde W^v, \tilde W^w, 
		\tilde u_{i,0}, \tilde u_{e,0},\tilde v_0,\tilde w_0\right) 
	\end{split}
\end{equation}
on $(\tilde{D},\tilde{\cF}, \tilde{P})$, such that the law of 
$\tilde \Phi_n$ is $\cL_n$ and as $n\to \infty$,
\begin{equation}\label{eq:tPhi-conv}
	\tilde \Phi_n\to \tilde \Phi \quad  
	\text{$\tilde P$-almost surely (in $\cX$).}
\end{equation}
To be more accurate, the Skorokhod-Jakubowski theorem implies 
\eqref{def:tPhi}, \eqref{eq:tPhi-conv} along a subsequence, but (as usual) we do 
not to relabel the involved variables. 
Inasmuch as \eqref{eq:tPhi-conv} is a repackaging of 
\eqref{eq:strong-conv1}, \eqref{eq:weak-conv1}, 
this concludes the  proof.
\end{proof}

\begin{rem}
As mentioned before, since our path space $\cX$ is not 
a Polish space, we use Skorokhod-Jakubowski a.s.~representations 
\cite{Jakubowski:1997aa} instead of the classical Skorokhod 
theorem \cite{DaPrato:2014aa,Ikeda:1981aa}. 
For a proof that $L^2((0,T);\tH(\Om_T))\text{\rm --weak}$ (and thus $\cX$) is 
covered by the Skorokhod-Jakubowski theorem, see for 
example \cite[page 1645]{Brzezniak:2013aa}. 
\end{rem}

\begin{lem}[a priori estimates]
The a priori estimates in Lemma \ref{lem:apriori-est} 
continue to hold for the new random variables $\tilde u_i^n,\tilde u_e^n, 
\tilde v^n, \tilde w^n$ on $(\tilde{D},\tilde{\cF}, \tilde{P})$, that is,
\begin{equation}\label{eq:apriori-est-tilde}
	\left \{
	\begin{split}
		& \norm{\tilde u^n_j}_{L^2\left(\tilde D,\tilde \cF,\tilde P;L^2((0,T);\tH(\Om))\right)}
		\le C, \quad j=i,e,
		\\ &
		\norm{\sqrt{\eps_n} \tilde u^n_j}_{L^2
		\left(\tilde D,\tilde \cF,\tilde P;L^\infty((0,T);L^2(\Om))\right)}
		\le C, \quad j=i,e, 
		\\& 
		\norm{\tilde v^n}_{L^2\left(\tilde D,\tilde \cF, \tilde P;L^2((0,T);\tH(\Om))\right)}
		\le C,
		\\ & 
		\norm{\tilde v^n}_{L^2\left(\tilde D,\tilde \cF, \tilde P;L^\infty((0,T);L^2(\Om))\right)}
		\le C,
		\\ & 
		\norm{\tilde v^n}_{L^4\left(\tilde D,\tilde \cF,\tilde P;L^4(\Om_T)\right)}
		\le C,
		\\ & 
		\norm{\tilde w^n}_{L^2\left(\tilde D,\tilde \cF,\tilde P;L^\infty((0,T);L^2(\Om))\right)}
		\le C,
	\end{split}\right.
\end{equation}
for some $n$-independent constant $C>0$. The same applies 
to the estimates in Corollary \ref{cor:Lq0-est}, provided 
\eqref{eq:uie-init-ass-q0} holds. Namely,
\begin{align}
		& \norm{\left(\sqrt{\eps_n} \tilde u^n_i,
		\sqrt{\eps_n} \tilde u^n_e,\tilde v^n,\tilde w^n\right)}_{L^{q_0}
		\left(\tilde D,\tilde \cF,\tilde P;L^\infty((0,T);L^2(\Om))\right)}\le C,
		\label{eq:apriori-est-q0-tilde}
		\\  & \norm{\left(\Grad \tilde u_i^n,\Grad \tilde u_e^n
		\right)}_{L^{q_0}\left(\tilde D,\tilde \cF,\tilde P;L^2((0,T)\times \Om)\right)},
		\, 
		\norm{\tilde v^n}_{L^{2q_0}\left(\tilde D,\tilde \cF,\tilde P;
		L^4((0,T)\times \Om)\right)}\le C.\label{eq:apriori-est-q0-tilde2}
\end{align}
\end{lem}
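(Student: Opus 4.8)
The plan is to leverage the fact that, for each $n$ in the (already fixed) subsequence, the Skorokhod--Jakubowski construction in Lemma \ref{lem:as-represent} yields a random variable $\tilde\Phi_n$ on $(\tilde D,\tilde\cF,\tilde P)$ whose law on $\cX$ equals $\cL_n$, the law of $\Phi_n$ on $(D,\cF,P)$. Taking marginals, $\tilde u_j^n$ has the same law as $u_j^n$ on $L^2((0,T);\tH(\Om))$--weak, $\tilde v^n$ the same law as $v^n$ on $L^2(\Om_T)$, $\tilde w^n$ the same law as $w^n$ on $L^2((0,T);(\tH(\Om))^*)$, and likewise for the Wiener processes and the initial data. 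Every quantity bounded in Lemma \ref{lem:apriori-est} and Corollary \ref{cor:Lq0-est} has the form $\E\bigl[\Psi(\Phi_n)^{q}\bigr]$ for a nonnegative functional $\Psi$ acting on one of these component spaces; by the change-of-variables formula for pushforward measures, $\tilde\E[\Psi(\tilde\Phi_n)^{q}]=\int_\cX \Psi^{q}\,d\cL_n=\E[\Psi(\Phi_n)^{q}]$ as soon as $\Psi$ is Borel measurable. Hence each estimate transfers verbatim, with literally the same constant $C$, once measurability of the relevant $\Psi$ is checked.

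So the only genuine point is Borel measurability of the functionals $u\mapsto\norm{u}_{L^2((0,T);\tH(\Om))}$, $u\mapsto\norm{u}_{L^\infty((0,T);L^2(\Om))}$ (set $=+\infty$ off $L^\infty_tL^2_x$), $v\mapsto\norm{v}_{L^4(\Om_T)}$ (set $=+\infty$ off $L^4$), $v\mapsto\norm{\Grad v}_{L^2(\Om_T)}$, and their powers. I would verify that each is lower semicontinuous on the ambient metric space ($L^2(\Om_T)$, or $L^2((0,T);\tH(\Om))$ with its norm): if $v_k\to v$ strongly, pass to an a.e.\ convergent subsequence and use Fatou for the $L^p$-type functionals, while for $L^2((0,T);\tH(\Om))$ one uses that a norm-bounded subsequence converges weakly to $v$ and that the norm is weakly lower semicontinuous. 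Lower semicontinuity gives Borel measurability; and since for a separable Banach space the Borel $\sigma$-algebras of the norm and of the weak topology coincide, the same functionals are Borel on $L^2((0,T);\tH(\Om))$--weak as well. Two small remarks complete the loop: the embedding $L^2((0,T);\tH(\Om))\hookrightarrow L^2(\Om_T)$ is continuous, so a functional Borel on $L^2(\Om_T)$ restricts to a Borel functional on $L^2((0,T);\tH(\Om))$, which covers the $\sqrt{\eps_n}\tilde u_j^n$ terms; and because $v^n,w^n$ are weakly time continuous (Remark \ref{rem:weak-L2-cont}), for these processes the essential supremum over $[0,T]$ equals the pointwise supremum, so the $L^\infty_tL^2_x$ functional really reproduces the $\sup_{t\in[0,T]}$ quantities of \eqref{Gal:est3} and \eqref{eq:Lq0-est}.

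With measurability in hand, the proof is a bookkeeping pass: applying the transfer identity to $\Psi=\norm{\cdot}_{L^2((0,T);\tH(\Om))}$ with $q=2$ (via \eqref{Gal:est2}) yields the first and third lines of \eqref{eq:apriori-est-tilde}; to $\Psi=\norm{\cdot}_{L^\infty((0,T);L^2(\Om))}$ with $q=2$ (via \eqref{Gal:est1}, \eqref{Gal:est3}), the second, fourth and sixth lines, after pulling the fixed scalar $\sqrt{\eps_n}$ out of the norm; to $\Psi=\norm{\cdot}_{L^4(\Om_T)}$ with $q=4$ (via \eqref{Gal:est2}), the fifth line; and to the same functionals together with $\Psi=\norm{\Grad\cdot}_{L^2(\Om_T)}$, now with $q=q_0$ or $q=2q_0$ (via Corollary \ref{cor:Lq0-est} under \eqref{eq:uie-init-ass-q0}), the bounds \eqref{eq:apriori-est-q0-tilde} and \eqref{eq:apriori-est-q0-tilde2}. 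Finiteness of these norms for $\tilde P$-a.e.\ realization follows automatically from finiteness of the corresponding expectations, so the new variables inherit the required integrability and path regularity. The main obstacle, such as it is, is exactly the lower-semicontinuity/measurability step above: the controlled norms are not continuous for the topology carried by $\cX$ (which records only strong $L^2(\Om_T)$ convergence of $v^n$ and weak $L^2((0,T);\tH(\Om))$ convergence of $u_j^n$), so one must establish that they are at least Borel before equality of laws can be invoked.
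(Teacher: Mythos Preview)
Your proposal is correct and follows essentially the same approach as the paper: use equality of laws of $\Phi_n$ and $\tilde\Phi_n$ together with Borel measurability of the norm functionals to transfer each moment bound from $(D,\cF,P)$ to $(\tilde D,\tilde\cF,\tilde P)$. The paper's proof is much terser---it illustrates the idea with one representative functional and then asserts that ``by equality of the laws, all the estimates\ldots hold for the corresponding `tilde' functions''---whereas you have been more careful about the measurability step (lower semicontinuity on the ambient path-space components, coincidence of weak and norm Borel $\sigma$-algebras on separable Banach spaces, and the continuous embedding $L^2((0,T);\tH(\Om))\hookrightarrow L^2(\Om_T)$), which is exactly the point the paper glosses over.
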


\begin{proof}
Since the laws of $v^n$ and $\tilde v^n$ coincide 
and $\abs{\cdot}^2:=\norm{\cdot}_{L^\infty((0,T);L^2(\Om))}^2$ 
is bounded continuous on $B:=L^\infty((0,T);L^2(\Om))$ 
(so $\abs{\cdot}^2$ is measurable and $B$ is a Borel set in $\cX$), 
\begin{align*}
	\tilde \E \left[ \norm{\tilde v^n(t)}_{L^\infty((0,T);L^2(\Om))}^2\right]
	& =\int_B \abs{v}^2 \,d \cL_{\tilde v^n}(v)
	=\int_B \abs{v}^2 \,d \cL_{v^n}(v)
	\\ & 
	=\E \left[ \norm{v^n(t)}_{L^\infty((0,T);L^2(\Om))}^2\right]
	\overset{\eqref{Gal:est3}}{\le} C,
\end{align*}
where $\tilde \E[\cdot]$ is the expectation operator 
with respect to $(\tilde P,\tilde D)$; hence the fourth 
estimate in \eqref{eq:apriori-est-tilde} holds. As a matter of fact, by equality 
of the laws, all the estimates in Lemma \ref{lem:apriori-est} and 
Corollary \ref{cor:Lq0-est} hold for the corresponding ``tilde" functions 
defined on $(\tilde{D},\tilde{\cF}, \tilde{P})$. 
\end{proof}

Let us introduce the following stochastic basis 
linked to $\tilde \Phi_n$, cf.~\eqref{def:tPhi}:
\begin{equation}\label{eq:stoch-basis-n}
	\begin{split}
		& \tilde \cS_n=
		\left(\tilde D,\tilde \cF, \Set{\tilde \cF_t^n}_{t\in [0,T]},\tilde P,
		\tilde W^{v,n},\tilde W^{w,n}\right),
		\\ & 
		\tcFt^n = \sigma\left(\sigma \bigl(\tilde \Phi_n\big |_{[0,t]}\bigr)
		\bigcup  \bigl\{N \in \tilde \cF: \tilde P (N)=0\bigr\}\right), \qquad t\in [0,T];
	\end{split}
\end{equation} 
thus $\Set{\tcFt^n}_{n\ge 1}$ is the smallest 
filtration making all the relevant processes \eqref{def:tPhi} adapted.  By equality of the laws 
and \cite{DaPrato:2014aa}, $\tilde W^{v,n}$ and $\tilde W^{w,n}$ are 
cylindrical Brownian motions, i.e., there exist sequences 
$\Set{\tilde W_k^{v,n}}_{k\ge 1}$ and $\Set{\tilde W_k^{w,n}}_{k\ge 1}$ of 
mutually independent real-valued Wiener processes adapted to 
$\Set{\tcFt^n}_{t\in [0,T]}$ such that 
$$
\tilde W^{v,n}=\sum_{k\ge 1}\tilde W_k^{v,n} \psi_k, 
\qquad
\tilde W^{w,n}=\sum_{k\ge 1}\tilde W_k^{w,n} \psi_k,
$$
where $\Set{\psi_k}_{k\ge 1}$ is the basis of $\U$ 
and each series converges in $\U_0\supset \U$ (cf.~Section \ref{sec:stoch}). 
Below we need the $n$-truncated sums
\begin{equation}\label{eq:truncated-sums}
	\tilde W^{v,(n)} = \sum_{k=1}^n\tilde W_k^{v,n} \psi_k,
	\qquad  \tilde W^{w,(n)} = \sum_{k=1}^n\tilde W_k^{w,n} \psi_k,
\end{equation}
which converge respectively to $\tilde W^v$, $\tilde W^w$ in $C([0,T];\U_0)$, 
$\tilde P$-a.s., cf.~\eqref{eq:strong-conv1}.

We must show that the Faedo-Galerkin equations 
hold on the new probability space $(\tilde{D},\tilde{\cF}, \tilde{P})$. 
To do that, we use an argument of Bensoussan \cite{Bensoussan:1995aa}, 
developed originally for the stochastic Navier-Stokes equations. 
For other possible methods leading to the construction of martingale 
solutions, see for example \cite[Chap.~8]{DaPrato:2014aa} and \cite{Ondrejat:2010aa}.

\begin{lem}[Faedo-Galerkin equations]\label{lem:approx-eqn-tilde}
Relative to the stochastic basis $\tilde \cS_n$ in \eqref{eq:stoch-basis-n}, 
the functions $\tilde U^n$, $\tilde W^n$, $\tilde U_0^n$ defined 
in \eqref{eq:def-tvar} satisfy the following equations $\tilde P$-a.s.:
\begin{equation}\label{eq: approx-eqn-tilde}
	\begin{split}
		& \tilde v^n(t) + \eps_n \tilde  u_i^n(t) 
		= \tilde v^n_0 + \eps_n \tilde u_{i,0}^n 
		+ \int_0^t \Pi_n\left[\Div\bigl(M_i \Grad \tilde u_i^n \bigr) 
		- I(\tilde v^n,\tilde w^n) \right]\ds 
		\\ & \qquad\qquad\qquad\qquad \quad\,
		+ \int_0^t \eta^n(\tilde v^n) \, d\tilde W^{v,(n)}(s)
		\quad \text{in $(\tH(\Om))^*$},
		\\ & \tilde v^n(t) - \eps_n \tilde u_e^n(t)
		= \tilde v^n_0 - \eps_n \tilde u_{e,0}^n
		+\int_0^t \Pi_n\left[-\Div\bigl(M_e \Grad \tilde u_e^n \bigr) 
		- I(\tilde v^n, \tilde w^n) \right]\ds 
		\\ & \qquad\qquad\qquad\qquad \quad\,
		+ \int_0^t \eta^n(\tilde v^n) \, d\tilde W^{v,(n)}(s)
		\quad \text{in $(\tH(\Om))^*$},
		\\ & \tilde w^n(t) = \tilde w^n_0 
		+ \int_0^t H( \Pi_n \tilde v^n, \Pi^w_n\tilde w^n)\ds
		\\ & \qquad\qquad\qquad\qquad \quad\,
		+ \int_0^t \sigma^n(\tilde v^n)\, d\tilde W^{w,(n)}(s)
		\quad \text{in $(\tH(\Om))^*$},
	\end{split}
\end{equation}
for each $t\in [0,T]$, where $\eps_n$ is specified in \eqref{eq:def-eps}
and $\tilde W^{v,(n)}$, $\tilde W^{w,(n)}$ are defined in \eqref{eq:truncated-sums}.
Moreover, 
\begin{equation}\label{eq:tvn=tuin-tuen}
	\tilde v^n = \tilde u_i^n - \tilde u_e^n,
	\quad \text{$d\tilde P\times dt\times dx$ 
	a.e.~in $\tilde D\times (0,T)\times \Om$},
\end{equation}
and (by construction) $\tilde U^n$, $\tilde W^n$ are 
continuous, adapted (and thus  predictable) processes. 
Finally, the laws of $\tilde v_0^n$ and $\tilde w_0^n$ coincide with 
the laws of $\Pi_n  v_0$ and $\Pi_n  w_0$, respectively, 
where $v_0\sim\mu_{v_0}$, $w_0\sim\mu_{w_0}$ 
(see Definition \ref{def:martingale-sol}). 
\end{lem}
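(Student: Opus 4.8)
The plan is to carry out the functional (``martingale-solution transfer'') argument of Bensoussan \cite{Bensoussan:1995aa}. For each fixed $n$ and $t\in[0,T]$, I introduce three $(\tH(\Om))^*$-valued functionals $\Lambda_n^{i}(t),\Lambda_n^{e}(t),\Lambda_n^{w}(t)$ on the path space $\cX$, defined so that $\Lambda_n^{i}(t)\circ\Phi$ is exactly the difference between the left- and right-hand sides of the first identity in \eqref{eq: approx-eqn-tilde} evaluated along a generic path $\Phi\in\cX$ (the drift part being $\int_0^t\Pi_n[\Div(M_i\Grad u_i)-I(v,w)]\ds$ and the stochastic part $\int_0^t\eta^n(v)\,dW^{v,(n)}(s)$), and similarly $\Lambda_n^{e},\Lambda_n^{w}$. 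Because the Faedo--Galerkin approximations satisfy \eqref{eq:approx-eqn-integrated}, these functionals vanish $P$-a.s.\ along $\Phi_n$; the aim is to show they vanish $\tilde P$-a.s.\ along $\tilde\Phi_n$. The drift functionals are Borel on $\cX$ — indeed continuous after restricting to the $\sigma$-compact subsets of $\cX$ carrying the uniform bounds of Lemma~\ref{lem:apriori-est}, using that $u\mapsto\Pi_n\Div(M_j\Grad u)$ is a bounded linear (hence weakly continuous) map $\tH(\Om)\to\bX_n\subset(\tH(\Om))^*$ and that $I,H$ are Carathéodory nonlinearities controlled by the $L^4$/$L^\infty_tL^2_x$ estimates — so the only delicate ingredient is the Itô term, which is not a continuous functional of the pair $(v,W^{v,n})$.

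To handle it, I first replace each stochastic integral by its Riemann-sum approximation over the dyadic partition of mesh $2^{-m}$, obtaining functionals $\Lambda_n^{i,(m)}(t)$, etc., that \emph{are} Borel measurable on $\cX$. Equality of the laws $\cL_n$ of $\Phi_n$ and $\tilde\Phi_n$ then yields
$$
\tilde\E\bigl[\norm{\Lambda_n^{i,(m)}(t)\circ\tilde\Phi_n}_{(\tH(\Om))^*}^2\bigr]
=\E\bigl[\norm{\Lambda_n^{i,(m)}(t)\circ\Phi_n}_{(\tH(\Om))^*}^2\bigr],
$$
and likewise for the $e$- and $w$-components. Letting $m\to\infty$: on the right-hand side the Riemann sums converge in $L^2$ to the Itô integral (using predictability of $v^n,w^n$ and the a priori bounds), so it tends to $\E[\norm{\Lambda_n^{i}(t)\circ\Phi_n}_{(\tH(\Om))^*}^2]=0$ by \eqref{eq:approx-eqn-integrated}. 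On the left-hand side I use that, by equality of laws together with \cite{DaPrato:2014aa}, $\tilde W^{v,n},\tilde W^{w,n}$ are cylindrical Brownian motions with respect to the filtration $\{\tcFt^n\}$ of \eqref{eq:stoch-basis-n}, that $\tilde v^n,\tilde w^n$ are $\{\tcFt^n\}$-adapted, and that $\tilde\E[\int_0^T\norm{\eta^n(\tilde v^n)}_{L_2(\U,L^2(\Om))}^2\dt]<\infty$ by the transferred bounds \eqref{eq:apriori-est-tilde}; hence the Riemann sums on the new space converge in $L^2$ to $\int_0^t\eta^n(\tilde v^n)\,d\tilde W^{v,(n)}(s)$, so the left-hand side tends to $\tilde\E[\norm{\Lambda_n^{i}(t)\circ\tilde\Phi_n}_{(\tH(\Om))^*}^2]$. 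Therefore this quantity is zero, i.e.\ $\Lambda_n^{i}(t)\circ\tilde\Phi_n=0$ $\tilde P$-a.s.\ for each $t$, and similarly for $\Lambda_n^{e},\Lambda_n^{w}$; since all terms have $\tilde P$-a.s.\ continuous paths in $(\tH(\Om))^*$, the exceptional null set can be chosen independently of $t$, which gives \eqref{eq: approx-eqn-tilde}.

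The remaining assertions are then immediate. The relation \eqref{eq:tvn=tuin-tuen} holds because $\{(u_i,u_e,v):\ v=u_i-u_e\ \text{a.e.\ in }\Om_T\}$ is a Borel subset of $\cX_U$ on which $\Phi_n$ is concentrated, hence so is $\tilde\Phi_n$; equivalently $\tilde\E\norm{\tilde v^n-\tilde u_i^n+\tilde u_e^n}_{L^2(\Om_T)}^2=\E\norm{v^n-u_i^n+u_e^n}_{L^2(\Om_T)}^2=0$. Adaptedness and predictability of $\tilde U^n,\tilde W^n$ with respect to $\{\tcFt^n\}$ hold by the very definition of that filtration in \eqref{eq:stoch-basis-n}, while $\tilde P$-a.s.\ path continuity transfers from $U^n\in C([0,T];\bX_n)$, $W^n\in C([0,T];\U_0)$ by equality of laws. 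Finally, since $v_0^n=\Pi_n u_{i,0}-\Pi_n u_{e,0}=\Pi_n v_0$ with $v_0\sim\mu_{v_0}$ and $w_0^n=\Pi_n w_0$ with $w_0\sim\mu_{w_0}$, equality of laws shows $\tilde v_0^n$ and $\tilde w_0^n$ are distributed as $\Pi_n v_0$ and $\Pi_n w_0$. The main obstacle is the stochastic-integral step above: one must be certain that enlarging the natural filtration to $\{\tcFt^n\}$ — which now also ``sees'' $\tilde v^n,\tilde w^n$ — does not destroy the Brownian property of $\tilde W^{v,n},\tilde W^{w,n}$, so that the Riemann-sum-to-Itô passage is legitimate on $(\tilde D,\tilde\cF,\tilde P)$; this is precisely where the equality-of-laws argument of \cite{Bensoussan:1995aa} does the real work.
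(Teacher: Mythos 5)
Your proposal follows the same Bensoussan-type transfer argument as the paper (define a defect functional that vanishes $P$-a.s., transfer vanishing to the new space via equality of laws, after approximating the stochastic integral by path-space Borel functionals), but you use a different approximation device for the key technical step: Riemann sums over dyadic partitions, whereas the paper time-mollifies the integrand $\eta_k^n(v^n)$ to $\eta_k^{n,\nu}=\eta_k^n(v^n)\star_{(t)}\varrho_\nu$ and then integrates by parts, $\int_0^t\eta_k^{n,\nu}\,dW_k=\eta_k^{n,\nu}(t)W_k(t)-\int_0^tW_k(s)\partial_s\eta_k^{n,\nu}\,ds$, obtaining a \emph{continuous} functional of $(v^n,W^{v,n})\in L^2(\Om_T)\times C([0,T];\U_0)$. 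Both ideas are standard; there is also a small cosmetic difference in that you skip the paper's bounded ``$I_n/(1+I_n)$'' device, which is indeed dispensable for nonnegative Borel integrands.

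However, the central claim of your argument — that the Riemann-sum functionals $\Lambda_n^{i,(m)}(t)$ ``are Borel measurable on $\cX$'' — is precisely where a genuine gap remains. A Riemann sum of the form $\sum_j\eta_k^n(v^n(t_j))\bigl(W_k(t_{j+1})-W_k(t_j)\bigr)$ involves the pointwise evaluations $v^n(t_j)$, and point evaluation is \emph{not} a Borel functional on the path space $L^2(\Om_T)$ (nor does restriction to $\sigma$-compact sets help, since bounded $L^2$-balls contain discontinuous functions). The transferred variable $\tilde v^n$ is only defined as an equivalence class, so $\tilde v^n(t_j)$ is not a priori meaningful. Making this rigorous requires either (i) replacing the left-endpoint sums by adapted averaged sums such as $\sum_j\Bigl(\frac{1}{t_j-t_{j-1}}\int_{t_{j-1}}^{t_j}\eta_k^n(v^n)\,ds\Bigr)(W_k(t_{j+1})-W_k(t_j))$, which are manifestly Borel on $L^2(\Om_T)$ and still converge in $L^2$ to the It\^o integral, or (ii) a Luzin--Souslin argument to the effect that $\iota(C([0,T];\bX_n))$ is a Borel subset of $L^2(\Om_T)$ of full $\cL_n$-measure on which $\iota^{-1}$, and hence $u\mapsto u(t_j)$, is Borel; the same argument is needed to back up your assertion that ``path continuity transfers by equality of laws.'' The paper's mollification approach sidesteps all of this because $\eta_k^{n,\nu}(t)=\int\varrho_\nu(t-s)\eta_k^n(v^n(s))\,ds$ is a continuous operation on the $L^2$ class. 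So your route is viable, but you must either switch to averaged Riemann sums or spell out the Luzin--Souslin step before the equality-of-laws transfer is justified.
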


\begin{proof}
We establish the first equation in \eqref{eq: approx-eqn-tilde}, with 
the remaining ones following along the same lines. 
In accordance with Lemma \ref{lem:fg-solutions} and 
\eqref{eq:def-U-W-U0}, recall that $(U^n,W^n,U_0^n)$
is the continuous adapted solution to the Faedo-Galerkin 
equations \eqref{eq:approx-eqn-integrated} relative to $\cS$, cf.~\eqref{eq:fixedS}. 

Let us introduce the $(\tH(\Om))^*$ valued 
stochastic processes
\begin{align*}
	\cI_n(\omega,t) & := \left(v^n(t)-v^n_0\right) + \eps_n \left(u_i^n(t) -u_{i,0}^n\right)
	\\ & \qquad
	- \int_0^t \Pi_n\left[\Div\bigl(M_i \Grad u_i^n \bigr)
	- I(v^n,w^n) \right]\ds 
	- \int_0^t \eta^n(v^n) \dW^{v,n}(s),
	\\  \tilde \cI_n(\omega,t) & := \left(\tilde v^n(t)-\tilde v^n_0\right) 
	+ \eps_n \left(\tilde u_i^n(t) - \tilde u_{i,0}^n\right),
	\\ & \qquad
	- \int_0^t \Pi_n\left[\Div\bigl(M_i \Grad \tilde u_i^n \bigr)
	- I(\tilde v^n,\tilde w^n) \right]\ds 
	- \int_0^t \eta^n(\tilde v^n) \, d\tilde W^{v,(n)}(s),
\end{align*}
and the real-valued random variables, cf.~\eqref{eq:def-dual-norm},
$$
I_n(\omega)  := \norm{\cI_n}_{L^2((0,T);(\tH(\Om))^*)}^2, 
\quad 
\tilde I_n(\omega) := \norm{\tilde \cI_n}_{L^2((0,T);(\tH(\Om))^*)}^2.
$$

Note that $I_n = 0$ $P$-a.s.~and so $\E[I_n] = 0$. If we could write 
$I_n=L_n(\Phi_n)$ for a (deterministic) bounded continuous 
functional $L_n(\cdot)$ on $\cX$, cf.~\eqref{def:tPhi}, then by 
equality of the laws, also $\tilde \E[\tilde I_n] = 0$ and the result would follow. 
However, this is not directly achievable since the stochastic integral 
is not a deterministic function of $W^{v,n}$. Hence, certain modifications are needed to 
produce a workable proof \cite{Bensoussan:1995aa}. 
First of all, we do not consider $I_n$ but rather 
the bounded map $I_n/(1+I_n)$. Noting that $\E[I_n] = 0$ implies
\begin{equation}\label{eq:In-zero}
	\E \left[ \frac{I_n}{1+I_n}\right]=0,
\end{equation}
the goal is to show that
\begin{equation}\label{eq:tIn-zero}
	\tilde\E \left[ \frac{\tilde I_n}{1+\tilde I_n} \right]=0,
\end{equation}
from which the first equation in \eqref{eq: approx-eqn-tilde} follows.

Recall that, cf.~\eqref{eq:Wn},
$$
\int_0^t \eta^n(v^n) \dW^{v,n}(s)
= \sum_{k=1}^n \int_0^t \eta_k^n(v^n) \dW_k^v(s).
$$
Let $\vrho_\nu(t)$  be a standard mollifier and define (for $k=1,\ldots,n$)
$$
\eta_k^{n,\nu}:= (\eta_k^n(v^n)) \underset{(t)}{\star} 
\vrho_\nu, \qquad \nu>0.
$$
By properties of mollifiers, 
$$
\norm{\eta_k^{n,\nu}}_{L^2\left(D,\cF,P;L^2((0,T);L^2(\Om))\right)}
\le \norm{\eta_k^n(v^n)}_{L^2\left(D,\cF,P;L^2((0,T);L^2(\Om))\right)}
$$
and
\begin{equation}\label{eq:etak-reg-conv}
	\eta_k^{n,\nu} \to \eta_k^n \quad 
	\text{in $L^2\left(D,\cF,P;L^2((0,T);L^2(\Om))\right)$ as $\nu\to 0$.} 
\end{equation}
We define $\tilde \eta_k^{n,\nu}$ similarly (with 
$v^n$ replaced by $\tilde v^n$). 

An ``integration by parts" reveals that
$$
\int_0^t  \eta_k^{n,\nu} \dW_k^v(s)
=\left( \eta_k^{n,\nu}\right)(t) \, W_k^v(t)-
\int_0^t W_k^v(s) \frac{\partial}{\partial s} 
\left(\eta_k^{n,\nu}\right) \ds,
$$
i.e., thanks to the regularization of $\eta^n_k(v^n)$ in the $t$ variable, 
$\int_0^t \eta_k^{n,\nu} \dW_k^v(s)$ can be viewed 
as a (deterministic) functional of $W_k^v$.

Denote by $I_n^\nu$, $\tilde I_n^\nu$ the random variables corresponding 
to $I_n$, $\tilde I_n$ with $\eta^n_k(v^n)$, $\eta^n_k(\tilde v^n)$ 
replaced by $\eta^{n,\nu}_k$, $\tilde \eta^{n,\nu}_k$, respectively, 
and note that now 
$$
\frac{I_n^\nu}{1+I_n^\nu} = L_{n,\nu}(\Phi_n), 
\qquad 
\frac{\tilde I_n^\nu}{1+\tilde I_n^\nu}
= L_{n,\nu}(\tilde \Phi_n),
$$
for some bounded continuous functional $L_{n,\nu}(\cdot)$ 
on $\cX$. By equality of the laws,
\begin{equation}\label{eq:equal-laws-nu}
	\begin{split}
		\tilde\E \left[ \frac{\tilde I_n^\nu}{1+\tilde I_n^\nu} \right]
		=\int_{\cX} L_{n,\nu}(\Phi) \,d \tilde \cL_n(\Phi)
		&=\int_{\cX} L_{n,\nu}(\Phi) \,d \cL_n(\Phi)
		=\E \left[ \frac{I_n^\nu}{1+I_n^\nu}\right].
	\end{split}
\end{equation}
One can check that
\begin{equation}\label{eq:In-Innu-conv}
	\begin{split}
		&\E \left[ \abs{\frac{ I_n}{1+I_n} - \frac{I_n^\nu}{1+I_n^\nu}} \right]
		\le \E \left[ \abs{I_n- I_n^\nu} \right]
		\\ & \qquad \le C \left(\E \left [\int_0^T \sum_{k=1}^n 
		\norm{\eta_k^n(v^n)-\eta_k^{n,\nu}}_{L^2(\Om)}^2\dt \right]\right)^{\frac12}
		\overset{\eqref{eq:etak-reg-conv}}{\longrightarrow} 0 \quad \text{as $\nu\to 0$,}
	\end{split}
\end{equation}
and similarly
\begin{equation}\label{eq:tIn-tInnu-conv}
	\begin{split}
		\tilde \E \left[ \abs{\frac{\tilde I_n}{1+\tilde I_n}
		-\frac{\tilde I_n^\nu}{1+ \tilde I_n^\nu}} \right]
		\! \le \! C \left(\E \left [\int_0^T \sum_{k=1}^n 
		\norm{\eta_k^n(\tilde v^n)
		-\tilde \eta_k^{n,\nu}}_{L^2(\Om)}^2\dt \right]\right)^{\frac12} 
		\overset{\nu\downarrow 0}{\to} 0.
	\end{split}
\end{equation}
Combining \eqref{eq:equal-laws-nu}, \eqref{eq:In-Innu-conv}, 
\eqref{eq:tIn-tInnu-conv}, \eqref{eq:In-zero} we arrive at \eqref{eq:tIn-zero}.

Finally, let us prove \eqref{eq:tvn=tuin-tuen}. 
By construction, $v^n=u_i^n-u_e^n$ and so
$$
\norm{v^n - 
(u_i^n-u_e^n)}_{L^2\left(D,\cF,P;L^2((0,T);L^2(\Om))\right)}=0.
$$
For $\Phi\in \cX$, define
$$
L(\Phi) =
\frac{\norm{v - (u_i-u_e)}_{L^2((0,T);L^2(\Om))}^2}{1
+\norm{v - (u_i-u_e)}_{L^2((0,T);L^2(\Om))}^2}.
$$
Since $L(\cdot)$ is a bounded continuous functional on $\cX$ 
and the laws $\cL_n$, $\tilde \cL_n$ are equal,
$$
\tilde \E \left[ L(\tilde \Phi_n)\right]
=\E\left[ L(\Phi_n)\right]
\le \norm{v^n - 
(u_i^n-u_e^n)}_{L^2\left(D,\cF,P;L^2((0,T);L^2(\Om))\right)}^2=0,
$$
i.e., $L(\tilde \Phi_n)=0$ $\tilde P$-a.s.~and 
thus, via \eqref{eq:apriori-est-tilde},
$$
\norm{\tilde v^n - (\tilde u_i^n- \tilde u_e^n)}_{
L^2\left(\tilde D,\tilde \cF,\tilde P;L^2((0,T);L^2(\Om))\right)}=0.
$$
This concludes the proof of the lemma.
\end{proof}

\subsection{Passing to the limit}\label{subsec:limit}
We begin by turning the probability space $(\tilde{D},\tilde{\cF}, \tilde{P})$, 
cf.~\eqref{def:tPhi} and \eqref{eq:tPhi-conv}, into a stochastic basis, 
\begin{equation}\label{eq:stoch-basis-new}
	\tilde \cS=\left(\tilde D,\tilde \cF,\Set{\tcFt}_{t\in [0,T]},\tilde P,
	\tilde W^v, \tilde W^w\right),
\end{equation}
by supplying the natural filtration $\Set{\tcFt}_{t\in [0,T]}$, i.e., the 
smallest filtration with respect to which all the relevant processes are adapted, viz.
\begin{equation}\label{eq:filtration-new}
	\tcFt= \sigma\left(\sigma \bigl(\tilde \Phi\big |_{[0,t]}\bigr)
	\bigcup  \bigl\{N \in \tilde \cF: \tilde P (N)=0\bigr\}\right), 
	\quad t\in [0,T].
\end{equation}

Lemma \ref{lem:approx-eqn-tilde} shows that $\tilde U^n, \tilde W^n, \tilde U_0^n$ 
satisfy the Faedo-Galerkin equations \eqref{eq:approx-eqn-integrated}; 
hence, they are worthy of being referred to as ``approximations".  
The next two lemmas summarize the relevant convergence properties 
satisfied by these approximations.

\begin{lem}[weak convergence]\label{lem:tlimits}
There exist functions $\tilde u_i, \tilde u_e, \tilde v, \tilde w$, with
\begin{align*}
	& \tilde u_i, \tilde u_e, \tilde v \in 
	L^2\left(\tilde D,\tilde \cF,\tilde P;L^2((0,T);\tH(\Om))\right),
	\, \text{$\, \tilde v = \tilde u_i - \tilde u_e$,}
	\\ & \tilde v, \, \tilde w \in 
	L^2\left(\tilde D,\tilde \cF,\tilde P;L^\infty((0,T);L^2(\Om))\right),
	\, \tilde v \in L^4\left(\tilde D,\tilde \cF,\tilde P;L^4(\Om_T)\right), 
\end{align*}
such that as $n\to \infty$, passing to a subsequence if necessary,
\begin{equation}\label{eq:tilde-weakconv}
	\left \{
	\begin{split}
		& \tilde u^n_i \weak \tilde u_i, \, 
		\tilde u^n_e \weak \tilde u_e  \quad  
		\text{in $L^2\left(\tilde D,\tilde \cF,\tilde P;L^2((0,T);\tH(\Om))\right)$},
		\\&
		\eps_n \tilde u^n_i \to 0, \, 
		\eps_n \tilde u^n_e \to 0  \quad 
		\text{in $L^2\left(\tilde D,\tilde \cF,\tilde P;L^2((0,T);L^2(\Om))\right)$},
		\\& 
		\tilde v^n \weak \tilde v \quad  
		\text{in $L^2\left(\tilde D,\tilde \cF,\tilde P;L^2((0,T);\tH(\Om))\right)$},
		\\ & 
		\tilde v^n \weakstar \tilde v \quad  
		\text{in $L^2\left(\tilde D,\tilde \cF,\tilde P;L^\infty((0,T);L^2(\Om))\right)$},
		\\ & 
		\tilde v^n \weak \tilde v \quad  
		\text{in $L^4\left(\tilde D,\tilde \cF,\tilde P;L^4(\Om_T)\right)$},
		\\ & 
		\tilde w^n \weakstar \tilde w \quad  
		\text{in $L^2\left(\tilde D,\tilde \cF,\tilde P;L^\infty((0,T);L^2(\Om))\right)$}.
	\end{split}\right.
\end{equation}
\end{lem}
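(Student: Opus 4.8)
The plan is to combine the uniform a priori bounds that were transferred to the new probability space in \eqref{eq:apriori-est-tilde} with the $\tilde P$-a.s.\ convergences supplied by the Skorokhod--Jakubowski representation in Lemma \ref{lem:as-represent}, using reflexivity to extract weak limits and then identifying those limits with the almost sure ones. First I would observe that, by \eqref{eq:apriori-est-tilde}, the sequences $\tilde u_i^n,\tilde u_e^n,\tilde v^n$ are bounded in the reflexive Bochner space $L^2(\tilde D,\tilde\cF,\tilde P;L^2((0,T);\tH(\Om)))$, that $\tilde v^n$ is in addition bounded in $L^2(\tilde D,\tilde\cF,\tilde P;L^\infty((0,T);L^2(\Om)))$ (the dual of the separable space $L^2(\tilde D;L^1((0,T);L^2(\Om)))$) and in $L^4(\tilde D,\tilde\cF,\tilde P;L^4(\Om_T))$, and that $\tilde w^n$ is bounded in $L^2(\tilde D,\tilde\cF,\tilde P;L^\infty((0,T);L^2(\Om)))$. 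By the Banach--Alaoglu theorem (weak compactness in reflexive spaces, weak-$\star$ compactness in duals of separable spaces) there is a single subsequence along which all six convergences displayed in \eqref{eq:tilde-weakconv} hold, a priori to candidate limits $\tilde u_i,\tilde u_e,\tilde v,\tilde w$, with possibly several distinct candidates for $\tilde v$ coming from the different topologies.

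The heart of the argument is to check that these weak and weak-$\star$ limits coincide and agree with the a.s.\ limits already produced in Lemma \ref{lem:as-represent}. Indeed, \eqref{eq:strong-conv1} gives $\tilde v^n\to\tilde v$ strongly in $L^2((0,T);L^2(\Om))$ and $\tilde w^n\to\tilde w$ strongly in $L^2((0,T);(\tH(\Om))^*)$, $\tilde P$-a.s., while \eqref{eq:weak-conv1} gives $\tilde u_i^n\weak\tilde u_i$, $\tilde u_e^n\weak\tilde u_e$ in $L^2((0,T);\tH(\Om))$, $\tilde P$-a.s.; uniqueness of weak limits then forces the various Bochner-space limits extracted above to equal these same $\tilde u_i,\tilde u_e,\tilde v,\tilde w$. (Upgrading the pathwise weak convergences to weak convergence in the Bochner spaces is where the $q_0$-moment bounds \eqref{eq:apriori-est-q0-tilde}, \eqref{eq:apriori-est-q0-tilde2} enter, through a Vitali/uniform-integrability argument; this is the one point that needs a little care, though it is routine given the $n$-uniform estimates.) Membership of the limits in the spaces listed in the statement follows from weak lower semicontinuity of the norms applied to \eqref{eq:apriori-est-tilde}. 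The identity $\tilde v=\tilde u_i-\tilde u_e$ is obtained by passing to the limit in \eqref{eq:tvn=tuin-tuen}: since $\tilde v^n\to\tilde v$ strongly and $\tilde u_i^n-\tilde u_e^n\weak\tilde u_i-\tilde u_e$ weakly in $L^2(\tilde D,\tilde\cF,\tilde P;L^2((0,T);L^2(\Om)))$, the relation survives in the limit.

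Finally, the strong convergence $\eps_n\tilde u_j^n\to 0$ in $L^2(\tilde D,\tilde\cF,\tilde P;L^2((0,T);L^2(\Om)))$ for $j=i,e$ is immediate: writing $\eps_n\tilde u_j^n=\sqrt{\eps_n}\,(\sqrt{\eps_n}\tilde u_j^n)$ and using that $\sqrt{\eps_n}\tilde u_j^n$ is bounded in $L^2(\tilde D,\tilde\cF,\tilde P;L^\infty((0,T);L^2(\Om)))\subset L^2(\tilde D,\tilde\cF,\tilde P;L^2((0,T);L^2(\Om)))$ by \eqref{eq:apriori-est-tilde}, while $\sqrt{\eps_n}=n^{-1/2}\to 0$ by \eqref{eq:def-eps}. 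The expected main obstacle is thus not the compactness itself but the bookkeeping needed to reconcile the weak-in-$\omega$ limits with the pathwise limits from Skorokhod; once that is dispatched, every claim in \eqref{eq:tilde-weakconv} and in the regularity list of the statement follows.
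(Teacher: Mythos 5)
Your argument is correct and follows essentially the same route as the paper's (very terse) proof: uniform bounds in the relevant reflexive or dual-of-separable Bochner spaces plus Banach–Alaoglu give subsequential weak/weak-$\star$ limits, these are identified with the Skorokhod–Jakubowski a.s.\ limits from Lemma \ref{lem:as-represent}, the relation $\tilde v=\tilde u_i-\tilde u_e$ survives the limit in \eqref{eq:tvn=tuin-tuen}, and the $\eps_n\tilde u_j^n\to 0$ claim is immediate from the scaled bound. Your parenthetical appeal to the $q_0$-moment bounds for the identification step is slight overkill — the $L^2_\omega$ bound already gives the uniform integrability needed to pass to the limit under $\tilde\E[\cdot]$ against bounded test functionals — but it does no harm.
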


\begin{proof}
The claims in \eqref{eq:tilde-weakconv} follow from the 
estimates in \eqref{eq:weakconv1} and the sequential Banach-Alaoglu theorem. 
The relation 
$$
\tilde v_i = \tilde u_i - \tilde u_e, 
\quad \text{$d\tilde P\times dt\times dx$ 
a.e.~in $\tilde D\times (0,T)\times \Om$},
$$
is a consequence of \eqref{eq:tvn=tuin-tuen} and the weak 
convergences in $L^2_{\omega,t,x}$ of $\tilde v^n, \tilde u_i^n, \tilde u_e^n$.
The limit functions $\tilde u_i, \tilde u_e, \tilde v, \tilde w$ 
are easily identified with the a.s.~representations 
in Lemma \ref{lem:as-represent}.
\end{proof}

As a result of \eqref{eq:apriori-est-q0-tilde}, we 
can upgrade a.s.~to $L^2$ convergence.

\begin{lem}[strong convergence]
As $n\to \infty$, passing to a subsequence if necessary, the following 
strong convergences hold:
\begin{equation}\label{eq:strong-conv2}
	\begin{split}
		& \tilde v^n \to \tilde v \quad  
		\text{in $L^2\left(\tilde D,\tilde \cF,\tilde P;L^2((0,T);L^2(\Om))\right)$},
		\\ &  
		\tilde w^n \to \tilde w \quad  
		\text{in $L^2\left(\tilde D,\tilde \cF, \tilde P;L^2((0,T);(\tH(\Om))^*)\right)$},
		\\ &
		\tilde W^{v,n} \to \tilde W^v, \, \tilde W^{w,n} \to \tilde W^w 
		\quad  \text{in $L^2\left(\tilde D,\tilde \cF, \tilde P; C([0,T]; \U_0)\right)$.}
		\\ & 
		\tilde u_{i,0}^n \to \tilde u_{i,0}, \,
		\tilde u_{e,0}^n\to \tilde u_{e,0}, \, 
		\tilde v_0^n \to \tilde v_0,\, 
		\tilde w_0^n \to \tilde w_0
		\quad  \text{in $L^2\left(\tilde D,\tilde \cF, \tilde P;L^2(\Om)\right)$.}
	\end{split}
\end{equation}

\end{lem}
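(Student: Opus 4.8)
The plan is to obtain each of the four lines of \eqref{eq:strong-conv2} from the matching $\tilde P$-a.s.\ convergence already delivered by the Skorokhod--Jakubowski representation (Lemma \ref{lem:as-represent}, equations \eqref{eq:strong-conv1}), upgraded to $L^2(\tilde D;\cdot)$-convergence via a uniform integrability argument, i.e.\ the Vitali convergence theorem. The abstract fact I would use is: if $\Set{X_n}_{n\ge1}$ are $B$-valued random variables on $(\tilde D,\tilde \cF,\tilde P)$ with $X_n\to X$ in $B$ $\tilde P$-a.s.\ and $\sup_n\tilde\E\bigl[\norm{X_n}_B^r\bigr]<\infty$ for some $r>2$, then $X_n\to X$ in $L^2(\tilde D;B)$; indeed $\norm{X_n-X}_B^2\to0$ $\tilde P$-a.s., while the sequence $\Set{\norm{X_n-X}_B^2}_{n\ge1}$ is bounded in $L^{r/2}(\tilde D)$ with $r/2>1$, hence uniformly integrable. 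The role of the higher-moment bounds \eqref{eq:apriori-est-q0-tilde}, \eqref{eq:apriori-est-q0-tilde2} (valid under the strengthened initial-data assumption \eqref{eq:uie-init-ass-q0}) is precisely to supply this super-quadratic control, with $r=q_0>9/2>2$.

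First I would treat the transmembrane potential. By \eqref{eq:strong-conv1}, $\tilde v^n\to\tilde v$ in $B=L^2((0,T);L^2(\Om))$ for $\tilde P$-a.e.\ $\omega$, while $\norm{\tilde v^n}_{L^2((0,T);L^2(\Om))}\le\sqrt{T}\,\norm{\tilde v^n}_{L^\infty((0,T);L^2(\Om))}$ together with \eqref{eq:apriori-est-q0-tilde} gives $\sup_n\tilde\E\bigl[\norm{\tilde v^n}_{L^2((0,T);L^2(\Om))}^{q_0}\bigr]<\infty$; the Vitali argument above then yields the first line of \eqref{eq:strong-conv2}. For $\tilde w^n$ the reasoning is identical with $B=L^2((0,T);(\tH(\Om))^*)$: the a.s.\ convergence is the second line of \eqref{eq:strong-conv1}, and since $L^2(\Om)\hookrightarrow(\tH(\Om))^*$ continuously one has $\norm{\tilde w^n}_{L^2((0,T);(\tH(\Om))^*)}\le C\sqrt{T}\,\norm{\tilde w^n}_{L^\infty((0,T);L^2(\Om))}$, again bounded in $L^{q_0}(\tilde D)$ by \eqref{eq:apriori-est-q0-tilde}. (Note the limits $\tilde v,\tilde w$ here are the same objects as in Lemmas \ref{lem:as-represent} and \ref{lem:tlimits}, so no re-identification is needed.)

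For the Wiener processes, \eqref{eq:strong-conv1} gives $\tilde W^{v,n}\to\tilde W^v$ and $\tilde W^{w,n}\to\tilde W^w$ in $C([0,T];\U_0)$ $\tilde P$-a.s., and here no extra hypothesis on the data is required: since the law of $\tilde W^{v,n}$ equals that of $W^{v,n}$ (cf.\ Lemma \ref{lem:approx-eqn-tilde}), Doob's maximal inequality applied componentwise together with $\Set{b_k}_{k\ge1}\in\ell^2$ (cf.\ \eqref{eq:U0}) shows that $\norm{\tilde W^{v,n}}_{C([0,T];\U_0)}$ is bounded in $L^p(\tilde D)$ for every $p<\infty$, uniformly in $n$; Vitali then gives the third line of \eqref{eq:strong-conv2}, and likewise for $\tilde W^{w,n}$. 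The convergence of the initial data in $L^2(\tilde D;L^2(\Om))$ is obtained the same way: a.s.\ convergence in $L^2(\Om)$ is the last line of \eqref{eq:strong-conv1}, and the uniform $L^{q_0}(\tilde D;L^2(\Om))$-bound is inherited by equality of laws from \eqref{eq:L2-conv-init}--\eqref{eq:uie-init-ass-q0} (for instance $\norm{\tilde u_{i,0}^n}_{L^2(\Om)}$ has the law of $\norm{\Pi_n u_{i,0}}_{L^2(\Om)}\le\norm{u_{i,0}}_{L^2(\Om)}$). The only delicate point in the whole argument is exactly the uniform integrability: the plain $L^2$-estimates of Lemma \ref{lem:apriori-est} would only bound the squared norms in $L^1(\tilde D)$, which does not suffice, so one genuinely needs Corollary \ref{cor:Lq0-est} in its ``tilde'' form \eqref{eq:apriori-est-q0-tilde}--\eqref{eq:apriori-est-q0-tilde2} and hence the hypothesis $q_0>9/2$.
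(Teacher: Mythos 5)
Your proof is correct and follows essentially the same route as the paper: a.s.\ convergence from the Skorokhod--Jakubowski representation, the high-order moment bounds \eqref{eq:apriori-est-q0-tilde} (with the Wiener and initial-data bounds obtained via equality of laws and a martingale inequality/projection contractivity), and Vitali's convergence theorem. The only cosmetic remark is that this particular lemma needs only $q_0>2$, as the paper notes, rather than the full $q_0>9/2$ that the paper assumes for later use.
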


\begin{proof}
The proof merges the a.s.~convergences in \eqref{eq:strong-conv1}, 
the high-order moment estimates in \eqref{eq:apriori-est-q0-tilde}, 
and Vitali's convergence theorem. To justify the first claim 
in \eqref{eq:strong-conv2}, for example, we consider the estimate
$$
\tilde \E \left[ \norm{\tilde v^n(t)}_{L^\infty((0,T);L^2(\Om))}^{q_0}\right]
\le C, \qquad q_0>2,
$$ 
see \eqref{eq:apriori-est-q0-tilde}. 
From this we infer the equi-integrability (w.r.t.~$\tilde P$) of
$$
\Set{\norm{\tilde v^n(t)}_{L^2((0,T);L^2(\Om))}^2}_{n\ge1}.
$$
Accordingly, the first claim in \eqref{eq:strong-conv2} follows from 
the $\tilde P$-a.s.~convergence in \eqref{eq:strong-conv1} and 
Vitali's convergence theorem, with the remaining 
claims following along similar lines. Regarding the third claim, note also that 
for $\tilde W^n= \tilde W^{v,n}$ or $\tilde W^{w,n}$,
\begin{equation}\label{eq:W-bound}
	\tilde \E \left [ \norm{\tilde W^n}_{C([0,T]; \U_0)}^q\right] 
	= \E \left [ \norm{W^n}_{C([0,T]; \U_0)}^q\right] \le C_T, 
	\qquad \forall q\in [1,\infty),
\end{equation}
which follows from equality of the laws and a martingale inequality.
\end{proof}

For each $n\ge1$, $\tilde W^{v,n}$ and $\tilde W^{w,n}$
are (independent) cylindrical Wiener processes with 
respect to the stochastic basis $\tilde \cS_n$, see \eqref{eq:stoch-basis-n}.  
Since $\tilde W^{v,n}\to \tilde W^v$, 
$\tilde W^{w,n} \to \tilde W^w$ in the sense of \eqref{eq:strong-conv1} 
or \eqref{eq:strong-conv2}, it is more or less obvious 
that also the limit processes $\tilde W^v$, $\tilde W^w$ 
are cylindrical Wiener processes. Indeed, we have

\begin{lem}\label{lem:wiener-tilde}
The a.s.~representations $\tilde W = \tilde W^v,\tilde W^w$ from 
Lemma \ref{lem:as-represent} are (independent) cylindrical 
Wiener processes with respect to sequences 
$$
\Set{\tilde W_k^v}_{k\ge 1}, \quad \Set{\tilde W_k^w}_{k\ge 1}
$$ 
of mutually independent real-valued Wiener processes adapted 
to the natural filtration $\Set{\tilde \cFt}_{t\in [0,T]}$, 
cf.~\eqref{eq:stoch-basis-new} and \eqref{eq:filtration-new}, such that
$$
\tilde W^v=\sum_{k\ge 1}\tilde W_k^v \psi_k, 
\quad 
\tilde W^v=\sum_{k\ge 1}\tilde W_k^w \psi_k.
$$
\end{lem}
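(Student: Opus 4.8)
\emph{Overall approach.} The plan is to deduce the statement from L\'evy's martingale characterization of Brownian motion, transferring the relevant martingale and quadratic--/cross-variation identities from the approximating stochastic bases $\tilde\cS_n$ of \eqref{eq:stoch-basis-n} (where they hold by equality of laws with the genuine, mutually independent Wiener processes $W_k^v,W_k^w$ on the original basis $\cS$, cf.~\eqref{eq:fixedS}) to the limiting basis $\tilde\cS$ of \eqref{eq:stoch-basis-new}. First I would extract the coordinate processes: since $\tilde W^v\in C([0,T];\U_0)$ with $\U_0$ as in \eqref{eq:U0}, the $k$-th coordinate is recovered by a bounded linear functional of $\tilde W^v(t)$, so one may set $\tilde W_k^v(t):=b_k^{-2}(\tilde W^v(t),\psi_k)_{\U_0}$ and likewise $\tilde W_k^w$; from $\tilde W^{v,n}\to\tilde W^v$ in $C([0,T];\U_0)$ (see \eqref{eq:strong-conv1}) it follows that $\tilde W_k^{v,n}\to\tilde W_k^v$ uniformly on $[0,T]$, $\tilde P$-a.s., for every $k$, and similarly for the $w$-components. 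By the definition \eqref{eq:filtration-new} of the natural filtration, each $\tilde W_k^v,\tilde W_k^w$ is $\Set{\tcFt}$-adapted with continuous paths.

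\emph{Key step: the martingale identities.} It suffices to check that for all $j,k\ge1$ and $0\le s\le t\le T$ the processes $\tilde W_k^v$, $t\mapsto\tilde W_j^v(t)\tilde W_k^v(t)-\delta_{jk}t$, $t\mapsto\tilde W_j^w(t)\tilde W_k^w(t)-\delta_{jk}t$, and $t\mapsto\tilde W_j^v(t)\tilde W_k^w(t)$ are $\Set{\tcFt}$-martingales. The corresponding facts hold on each $\tilde\cS_n$, because $\tilde W^{v,n},\tilde W^{w,n}$ are cylindrical Wiener processes relative to $\tilde\cS_n$. Fixing $s$ and a bounded continuous functional $\gamma$ of the path segment $\tilde\Phi^n\big|_{[0,s]}$ that is $\tilde\cF_s^n$-measurable, equality of the joint laws $\cL_n$ gives
$$
\tilde\E\Bigl[\bigl(\tilde W_k^{v,n}(t)-\tilde W_k^{v,n}(s)\bigr)\,\gamma\bigl(\tilde\Phi^n\big|_{[0,s]}\bigr)\Bigr]
=\E\Bigl[\bigl(W_k^{v}(t)-W_k^{v}(s)\bigr)\,\gamma\bigl(\Phi^n\big|_{[0,s]}\bigr)\Bigr]=0,
$$
the right-hand side vanishing since $\Phi^n$ is adapted to the filtration of $\cS$ and $W_k^v$ is a Wiener process there; the same argument, using $\E[(W_k^v(t))^2-(W_k^v(s))^2-(t-s)\mid\cF_s]=0$ and $\E[W_j^v(t)W_k^w(t)-W_j^v(s)W_k^w(s)\mid\cF_s]=0$, covers the three variation identities. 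Letting $n\to\infty$, the $\tilde P$-a.s.~convergences from Lemma \ref{lem:as-represent} together with the uniform moment bound \eqref{eq:W-bound} (which furnishes uniform integrability of $\tilde W_k^{v,n}(t)$, $(\tilde W_k^{v,n}(t))^2$ and $\tilde W_j^{v,n}(t)\tilde W_k^{w,n}(t)$) yield the same identities for the limit variables tested against $\gamma(\tilde\Phi\big|_{[0,s]})$; since such functionals generate $\tilde\cF_s$, the asserted martingale properties follow.

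\emph{Conclusion.} Applying L\'evy's characterization to an arbitrary finite sub-collection of $\Set{\tilde W_k^v}_{k\ge1}\cup\Set{\tilde W_k^w}_{k\ge1}$ shows that each such sub-collection is a standard multidimensional $\Set{\tcFt}$-Brownian motion; hence all of the $\tilde W_k^v,\tilde W_k^w$ are mutually independent real-valued Wiener processes adapted to $\Set{\tcFt}$, and in particular the families $\Set{\tilde W_k^v}$ and $\Set{\tilde W_k^w}$ are independent of one another. Since $\sum_{k\ge1}\tilde W_k^v\psi_k$ converges in $\U_0$ to $\tilde W^v$ (the partial sums converging $\tilde P$-a.s.~by \eqref{eq:strong-conv1}) and likewise for $\tilde W^w$, the processes $\tilde W^v,\tilde W^w$ are the claimed independent cylindrical Wiener processes over $\U$, with respect to the stated coordinate sequences.

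\emph{Main obstacle.} The delicate point is not L\'evy's theorem but verifying the martingale identities with respect to the \emph{full} natural filtration $\Set{\tcFt}$ of \eqref{eq:filtration-new}, which is generated by \emph{all} of the processes in $\tilde\Phi$, including the degenerate components $\tilde u_i^n,\tilde u_e^n$ carried in $L^2((0,T);\tH(\Om))$ with the weak topology. Accordingly the test functionals $\gamma$ must be chosen as bounded continuous functions of finitely many of the separating functionals of the quasi-Polish space $\cX$, evaluated on the restricted paths (for the $\tilde u$-components one works through the weaker space $C([0,s];(\tH(\Om))^*)$, into which the weak-$L^2$ processes embed), so that $\gamma(\tilde\Phi^n|_{[0,s]})\to\gamma(\tilde\Phi|_{[0,s]})$ under the a.s.~convergence in $\cX$ while remaining $\tilde\cF_s^n$- (resp.~$\tilde\cF_s$-) measurable; this is the standard, but somewhat technical, device accompanying Skorokhod--Jakubowski representations (cf.~\cite{Brzezniak:2013aa,Ondrejat:2010aa}). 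A secondary care-point is the limit passage in the quadratic-variation terms, which is precisely where \eqref{eq:W-bound} is invoked.
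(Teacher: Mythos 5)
Your proposal is correct and follows essentially the same route as the paper's proof: on each basis $\tilde\cS_n$ the martingale (and variation) identities hold by equality of the laws $\cL_n$ with those of the genuine Wiener processes $W_k^v,W_k^w$ on $\cS$, and one passes to the limit using the $\tilde P$-a.s.~convergence from Lemma~\ref{lem:as-represent} together with the uniform moment bound \eqref{eq:W-bound} via Vitali's theorem, then invokes the martingale (L\'evy-type) characterization. The paper's recorded argument only illustrates the martingale property of $\tilde W$ itself and refers to the cited sources for the variation identities, which you spell out in full; the one small slip is in your obstacle paragraph, since $L^2((0,s);\tH(\Om))$-weak does \emph{not} embed into $C([0,s];(\tH(\Om))^*)$ --- but this is unnecessary for the argument, as the restriction map $\Phi\mapsto\Phi|_{[0,s]}$ is already continuous for the weak $L^2$ topology, which is all that is required of the test functionals $\gamma$.
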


\begin{proof}
The proof is standard, see e.g.~\cite[Lemma 9.9]{Ondrejat:2010aa}
or \cite[Proposition 4.8]{Debussche:2016aa}. 
To be more precise, according to the martingale characterization 
theorem \cite[Theorem 4.6]{DaPrato:2014aa}, 
we must show that $\tilde W^v, W^w$ are $\{\tilde \cF_t\}$-martingales. 
Recall that an integrable, adapted process 
$\Set{M(t)}_{t\in [0,T]}$ on $(D,\cF,\{\cF_t\},P)$ is called a martingale if 
$$
\E\left[ M(t) \big| \cF_s\right] = M(s), \quad \text{$P$-a.s.},
$$
for all $t,s\in [0,T]$ with $s\le t$. This requirement is equivalent to
$$
\int_D \En_A \Bigl(M(t)-M(s)\Bigr) \, dP=0, \quad 
A\in \cF_s, \, s,t\in [0,T],\, s\le t. 
$$
Consequently, to conclude the proof of the lemma, we need to verify that
$$
\tilde \E \left[ \En_A  \left( \tilde W(t)-\tilde W(s) \right)\right]
=0, \qquad \tilde W=\tilde W^v, \tilde W^w,
$$
for all $A\in \tilde \cF_s$, $s,t\in [0,T]$, $s\le t$. With $\tilde \Phi$ defined 
in \eqref{def:tPhi}, it is sufficient to show that
$$
\tilde \E \left[ L_s(\tilde \Phi)  \left( \tilde W(t)-\tilde W(s) \right)\right]=0,
\qquad \tilde W=\tilde W^v, \tilde W^w,
$$ 
for all bounded continuous functionals $L_s(\Phi)$ on $\cX$ depending 
only on the values of $\Phi$ restricted to $[0,s]$. Since the laws of 
$\Phi_n$ and $\tilde \Phi_n$ coincide, cf.~\eqref{def:tPhi}, 
\begin{equation}\label{eq:mart-tmp1}
	\tilde \E \left[ L_s(\tilde \Phi_n)  \left( \tilde W^n(t)-\tilde W^n(s) \right)\right]
	= \E \left[ L_s(\Phi_n)  \left( W^n(t)- W^n(s) \right)\right]=0,
\end{equation}
where the last equality is a result of the $\{ \cF_t^n\}$-martingale 
property of $W^n=W^{v,n},W^{w,n}$. 
By \eqref{eq:strong-conv1}, \eqref{eq:W-bound}, 
and Vitali's convergence theorem, we can pass to 
the limit in \eqref{eq:mart-tmp1} as $n\to \infty$. 
This concludes the proof of the lemma.
\end{proof}

Given the above convergences,  the final step is to 
pass to the limit in the Faedo-Galerkin equations. 
The next lemma shows that the Skorokhod-Jakubowski representations 
satisfy the weak form \eqref{eq:weakform} of the stochastic bidomain system.

\begin{lem}[limit equations]\label{eq:limit-eqs-tilde}
Let $\tilde U, \tilde W, \tilde v_0, \tilde w_0$ be the a.s.~representations 
constructed in Lemma \ref{lem:as-represent}, and $\tilde \cS$ 
the accompanying stochastic basis defined in 
\eqref{eq:stoch-basis-new}, \eqref{eq:filtration-new}, so that 
$\tilde v, \tilde w, \tilde W^v, \tilde W^w$ become $\{\tcFt\}$-adapted processes. 
Then the following equations hold $\tilde P$-a.s., for a.e.~$t\in [0,T]$:
\begin{equation}\label{eq:weakform-tilde}
	\begin{split}
		& \int_{\Om} \tilde v(t) \vphi_i \dx 
		+ \int_0^t \int_{\Om} 
		\Bigl( M_i\Grad \tilde u_i \cdot \Grad \vphi_i 
		+ I(\tilde v,\tilde w) \vphi_i \Bigr) \dx\ds
			\\ & \qquad\quad
		= \int_{\Om} \tilde v_0 \, \vphi_i \dx 
		+ \int_0^t \int_{\Om} \eta(\tilde v) \vphi_i \dx\, d\tilde W^v (s), 
		\\ & 
		\int_{\Om} \tilde v(t)  \vphi_e \dx
		+ \int_0^t \int_{\Om} 
		\Bigl (- M_e\Grad \tilde u_e\cdot \Grad \vphi_e
		+ I(\tilde v,\tilde w) \vphi_e \Bigr) \dx\ds 
			\\ & \qquad \quad 
		=\int_{\Om} \tilde v_0 \vphi_e \dx
		+ \int_0^t \int_{\Om} \eta(\tilde v)  \vphi_e \dx \, d\tilde W^v(s),
		\\ &
		\int_{\Om} \tilde w(t) \vphi\dx 
		=\int_{\Om} \tilde w_0 \vphi \dx
		+\int_0^t \int_{\Om} H(\tilde v,\tilde w)\vphi \dx \ds
		\\ & \qquad\qquad\qquad\qquad
		+ \int_0^t \int_{\Om} \sigma(\tilde v) \vphi \dx \, d\tilde W^w(s),
		\end{split}
	\end{equation}
for all $\vphi_i,\vphi_e \in \tH(\Om)$ and $\vphi\in L^2(\Om)$. 
The laws of $\tilde v(0)=\tilde v_0$ and $\tilde w(0)=\tilde w_0$ 
are $\mu_{v_0}$ and $\mu_{w_0}$, respectively.
\end{lem}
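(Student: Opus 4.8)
The plan is to pass to the limit $n\to\infty$ in the Faedo--Galerkin identities of Lemma~\ref{lem:approx-eqn-tilde}. Fix a basis function $e_\ell$; since $\Pi_n e_\ell=e_\ell$ once $n\ge\ell$, testing \eqref{eq: approx-eqn-tilde} against $e_\ell$ and using \eqref{eq:proj-prop1} converts the three $(\tH(\Om))^*$-valued identities into scalar ones, holding $\tilde P$-a.s.\ for every $t\in[0,T]$, in which the diffusion contributions appear as $\mp\int_0^t\int_\Om M_j\Grad\tilde u_j^n\cdot\Grad e_\ell\dx\ds$ and the stochastic ones as $\int_0^t\int_\Om\eta^n(\tilde v^n)e_\ell\dx\,d\tilde W^{v,(n)}(s)$, and so on. Writing $X_\ell^n(t)$ for the (identically vanishing) difference of the two sides of such an identity, I would prove $X_\ell^n\to X_\ell$ in $L^1(\tilde D;L^1(0,T))$, where $X_\ell$ is the analogous difference built from the limits $\tilde u_i,\tilde u_e,\tilde v,\tilde w,\tilde W^v,\tilde W^w$; then $X_\ell=0$ for a.e.\ $t$, $\tilde P$-a.s., and a density and linearity argument extends the identities from $\Span\Set{e_\ell}$ to all $\vphi_i,\vphi_e\in\tH(\Om)$ and $\vphi\in L^2(\Om)$ (each term being continuous in the test function, the stochastic one mapping $L^2(\Om)$ continuously into $L^2(\tilde D)$ by the It\^o isometry and \eqref{eq:noise-cond}). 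The claim on the initial laws follows from $\tilde v_0^n\to\tilde v_0$, $\tilde w_0^n\to\tilde w_0$ a.s.\ in $L^2(\Om)$ together with the fact, recorded in Lemma~\ref{lem:approx-eqn-tilde}, that $\tilde v_0^n,\tilde w_0^n$ have the same laws as $\Pi_n v_0\to v_0\sim\mu_{v_0}$, $\Pi_n w_0\to w_0\sim\mu_{w_0}$; adaptedness of $\tilde v,\tilde w,\tilde W^v,\tilde W^w$ to $\Set{\tcFt}_{t\in[0,T]}$ is built into the definition \eqref{eq:filtration-new}.

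The linear and data terms are routine. The $\eps_n\tilde u_j^n(t)$ and $\eps_n\tilde u_{j,0}^n$ contributions vanish because $\eps_n\tilde u_j^n\to0$ in $L^2(\tilde D;L^2(\Om_T))$ and $\eps_n\tilde u_{j,0}^n\to0$ in $L^2(\tilde D;L^2(\Om))$, cf.~\eqref{eq:tilde-weakconv}, \eqref{eq:strong-conv2}; the terms $\int_\Om\tilde v^n(t)e_\ell\dx$ and $\int_0^t\int_\Om M_j\Grad\tilde u_j^n\cdot\Grad e_\ell\dx\ds$ pass to the limit, after pairing with an arbitrary $\theta\in L^2(0,T)$ in the $t$-variable, by the convergences $\tilde v^n\weakstar\tilde v$ in $L^2(\tilde D;L^\infty(0,T;L^2(\Om)))$ and $\Grad\tilde u_j^n\weak\Grad\tilde u_j$ in $L^2(\tilde D;L^2(\Om_T))$ of \eqref{eq:tilde-weakconv}, together with $M_j\Grad e_\ell\in L^2(\Om)$; the initial-data terms converge by \eqref{eq:strong-conv2}. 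That $\tilde v=\tilde u_i-\tilde u_e$ in the limit is \eqref{eq:tvn=tuin-tuen} combined with the weak $L^2_{\omega,t,x}$ convergences.

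The core difficulty is the nonlinear reaction terms $I(\tilde v^n,\tilde w^n)=I_1(\tilde v^n)+I_2(\tilde v^n)\tilde w^n$ and $H(\tilde v^n,\tilde w^n)=h(\tilde v^n)+c_{H,1}\tilde w^n$. Starting from the $\tilde P$-a.s.\ convergence $\tilde v^n\to\tilde v$ in $L^2(\Om_T)$ (Lemma~\ref{lem:as-represent}) and the uniform bound in $L^{2q_0}(\tilde D;L^4(\Om_T))$ of \eqref{eq:apriori-est-q0-tilde2}, interpolation and Vitali's theorem give $\tilde v^n\to\tilde v$ in $L^2(\tilde D;L^p(\Om_T))$ for every $p<4$, hence, by the growth bounds in {\rm(\textbf{GFHN})}, convergence of $I_1(\tilde v^n)$ and $h(\tilde v^n)$ in $L^1(\tilde D;L^1(\Om_T))$; since $e_\ell\in\tH(\Om)\hookrightarrow L^6(\Om)$, H\"older's inequality then yields convergence of $\int_0^t\int_\Om I_1(\tilde v^n)e_\ell\dx\ds$ and the like in $L^1(\tilde D;L^1(0,T))$. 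For the bilinear piece $(c_{I,3}+c_{I,4}\tilde v^n)\tilde w^n$, one observes that $\tilde v^n e_\ell\to\tilde v e_\ell$ strongly in $L^2(\tilde D;L^2(\Om_T))$ (H\"older, using $\tilde v^n\to\tilde v$ in $L^2(\tilde D;L^3(\Om_T))$ and $e_\ell\in L^6(\Om)$) while $\tilde w^n\weak\tilde w$ in $L^2(\tilde D;L^2(\Om_T))$ (from the weak-$\star$ convergence in $L^2(\tilde D;L^\infty(0,T;L^2(\Om)))$ over the finite-measure set), so that the pairing of a strongly and a weakly convergent sequence passes to the limit; the $c_{I,3}\tilde w^n e_\ell$ and $c_{H,1}\tilde w^n\vphi$ terms converge directly by this weak $L^2_{\omega,t,x}$-convergence against the fixed test functions. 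This tracking of Lebesgue exponents --- made possible only by the a priori $L^4_{t,x}$ bound on the transmembrane potential --- is the delicate point of the whole argument.

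For the stochastic terms I would apply Lemma~\ref{lem:stoch-conv} with $\bX=L^2(\Om)$ and the stochastic bases $\tilde\cS_n$ of \eqref{eq:stoch-basis-n} converging to $\tilde\cS$ of \eqref{eq:stoch-basis-new}: it suffices to check $\tilde W^{v,(n)}\to\tilde W^v$, $\tilde W^{w,(n)}\to\tilde W^w$ in $C([0,T];\U_0)$ in probability --- immediate from \eqref{eq:strong-conv2} and \eqref{eq:truncated-sums} --- and $\eta^n(\tilde v^n)\to\eta(\tilde v)$, $\sigma^n(\tilde v^n)\to\sigma(\tilde v)$ in $L^2((0,T);L_2(\U,L^2(\Om)))$ in probability. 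For the latter, split $\eta^n(\tilde v^n)-\eta(\tilde v)$ into $\bigl(\Pi_n\eta_k(\tilde v^n)-\Pi_n\eta_k(\tilde v)\bigr)_{k}$, whose norm in $L^2((0,T);L_2(\U,L^2(\Om)))$ is $\le C\norm{\tilde v^n-\tilde v}_{L^2(\Om_T)}\to0$ $\tilde P$-a.s.\ by the contraction property of $\Pi_n$ and \eqref{eq:noise-cond}, and $\bigl(\Pi_n\eta_k(\tilde v)-\eta_k(\tilde v)\bigr)_{k}$, which tends to $0$ by dominated convergence over $(t,k)$ with dominating function $\sum_{k\ge1}\abs{\eta_k(\tilde v)}^2\le C_\eta(1+\abs{\tilde v}^2)\in L^1(\Om_T)$ $\tilde P$-a.s. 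Lemma~\ref{lem:stoch-conv} then gives $\int_0^t\eta^n(\tilde v^n)\,d\tilde W^{v,(n)}\to\int_0^t\eta(\tilde v)\,d\tilde W^v$ in $L^2((0,T);L^2(\Om))$ in probability, and pairing with $e_\ell$ and recalling \eqref{eq:int-W} identifies the limit as $\int_0^t\int_\Om\eta(\tilde v)e_\ell\dx\,d\tilde W^v$; the $\sigma$-term is handled identically. Passing to a further subsequence upgrades all these in-probability convergences to $\tilde P$-a.s.\ ones, and assembling the limits of the individual terms yields \eqref{eq:weakform-tilde}.
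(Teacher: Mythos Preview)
Your proposal is correct and follows essentially the same route as the paper: weak convergence for the linear and $\eps_n$-terms, Vitali-based strong convergence of $\tilde v^n$ in $L^p_{\omega,t,x}$ ($p<4$) to handle $I_1(\tilde v^n)$, a strong--weak pairing for the bilinear piece $I_2(\tilde v^n)\tilde w^n$, and Lemma~\ref{lem:stoch-conv} together with the splitting $\eta^n(\tilde v^n)-\eta(\tilde v)=\Pi_n\bigl(\eta(\tilde v^n)-\eta(\tilde v)\bigr)+(\Pi_n-\mathrm{Id})\eta(\tilde v)$ for the stochastic integrals.

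The one organisational difference is that you test against a fixed basis function $e_\ell$ (so $\Pi_n e_\ell=e_\ell$ for $n\ge\ell$) and then extend by density, whereas the paper tests directly against an arbitrary $\vphi_i\in\tH(\Om)$ and absorbs the projector into the test function via $\Pi_n\vphi_i\to\vphi_i$ in $\tH(\Om)$; correspondingly, the paper pairs each term against a bounded $(\omega,t)$-indicator $\En_Z$ to combine the various modes of convergence, and it explicitly upgrades the in-probability convergence of the stochastic integrals to $L^2_{\omega,t,x}$ (via BDG and a higher-moment bound) before testing. Your subsequence-to-a.s.\ manoeuvre accomplishes the same thing, but to make the ``assembling the limits'' step airtight you should note (as the paper does) that the stochastic term is uniformly integrable in $\omega$, so that a.s.\ convergence can be paired against the $L^\infty_{\omega,t}$ test functions used for the weakly convergent terms.
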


\begin{proof}
We establish the first equation in \eqref{eq:weakform-tilde}. The  
remaining equations are treated in the same way. 
Let $Z\subset \tilde D\times [0,T]$ be a measurable set, 
and denote by 
\begin{equation}\label{eq:Zdef}
	\En_Z(\omega,t)\in L^\infty \left(\tilde D\times [0,T]; \tilde dP\times dt \right)
\end{equation}
the characteristic function of $Z$. Our aim is to show 
\begin{equation}\label{eq:limit-weak-form-tilde-tmp0}
	\begin{split}
		&\E \left[\int_0^T \En_Z(\omega,t) 
		\left(\, \int_{\Om} \tilde v(t) \vphi_i \dx\right) \dt \right]
		\\ & \quad +\E \left[\int_0^T \En_Z(\omega,t) 
		\left(\, \int_0^t  \int_{\Om} M_i\Grad \tilde u_i
		\cdot \Grad \vphi_i \dx\ds\right) \dt\right]
		\\ & \quad + \E \left[\int_0^T \En_Z(\omega,t)  
		\left (\, \int_0^t \int_{\Om} I(\tilde v,\tilde w) \vphi_i \dx\ds\right)\dt \right]
		\\ & \quad\quad 
		=\E \left[\int_0^T \En_Z(\omega,t) \left(\, \int_{\Om} 
		\tilde v_0 \vphi_i \dx\right) \dt \right]
		\\ & \quad\qquad\quad + \E \left[\int_0^T \En_Z(\omega,t) 
		\left(\, \int_0^t \int_{\Om} \eta(\tilde v) \vphi_i \dx\, d\tilde W^v (s)\right)\dt \right].
	\end{split}
\end{equation}
Then, since $Z$ is an arbitrary measurable set and 
the simple functions are dense in $L^2$, we  conclude that 
the first equation in \eqref{eq:weakform-tilde} holds 
for $d\tilde P\times dt$ almost every $(\omega,t)\in \tilde D \times [0,T]$ and 
any $\vphi_i \in \tH(\Om)$.

Fix $\vphi_i \in \tH(\Om)$, and note that \eqref{eq: approx-eqn-tilde} implies
\begin{equation}\label{eq:weakform-tilde-tmp1}
	\begin{split}
		& \int_{\Om} \tilde v^n(t) \vphi_i \dx
		+ \int_{\Om} \eps_n \tilde u_i^n(t) \vphi_i \dx 
		\\ & 
		\quad 
		+\int_0^t \int_{\Om} M_i\Grad \tilde u_i^n
		\cdot \Grad \Pi_n \vphi_i \dx\ds
		+ \int_0^t \int_{\Om}
		I(\tilde v^n,\tilde w^n) \Pi_n \vphi_i \dx\ds
		\\ & \quad\quad
		= \int_{\Om} \tilde v_0^n \, \vphi_i \dx 
		+\int_{\Om} \eps_n \tilde u_{i,0}^n \, \vphi_i \dx 
		+ \int_0^t \int_{\Om} \eta^n(\tilde v^n) \vphi_i 
		\dx\, d\tilde W^{v,(n)} (s),
	\end{split}
\end{equation}
using \eqref{eq:proj-prop1}. We multiply \eqref{eq:weakform-tilde-tmp1} 
by $\En_Z(\omega,t)$, cf.~\eqref{eq:Zdef}, 
integrate over $(\omega,t)$,  and then attempt to pass to the limit 
$n\to\infty$ in each term separately. 

We will make repeated use of the following simple fact: If 
$X_n\weak X$ in $L^p(\tilde D\times (0,T))$, for $p\in [1,\infty)$, then 
$\int_0^t X_n \ds \weak \int_0^t X\ds$ in $L^p(\tilde D\times (0,T))$ as well.

First, since
\begin{equation}\label{eq.test-fun}
	\En_Z(\omega,t) \vphi_i(x)\in
	 L^2\left(\tilde D,\tilde \cF,\tilde P;L^2((0,T);L^2(\Om))\right), 
\end{equation}
the weak convergence in $L^2_{\omega,t,x}$ 
of $\tilde v^n$, cf.~\eqref{eq:tilde-weakconv}, implies
$$
\tilde \E \left[\int_0^T \En_Z(\omega,t) 
\left(\, \int_{\Om}  \tilde v^n(t)\vphi_i \dx\right) \dt \right]
\to \tilde \E \left[\int_0^T \En_Z(\omega,t) 
\left(\, \int_{\Om}  \tilde v(t)\vphi_i \dx\right) \dt \right],
$$
as $n\to \infty$. Similarly, 
$$
\tilde \E \left[\int_0^T \En_Z(\omega,t) 
\left(\, \int_{\Om} \eps_n u_i^n \vphi_i \dx\right) \dt \right]\to 0, 
\quad \text{as $n\to \infty$.}
$$

The initial data terms on the right-hand side 
of \eqref{eq:weakform-tilde-tmp1} can be treated in the 
same way, using \eqref{eq:strong-conv2}. Recall also 
that the laws of $\tilde v_0^n$, $\tilde w_0^n$ coincide with 
the laws of $\Pi_n  v_0$, $\Pi_n  w_0$, respectively, 
and that $v_0\sim\mu_{v_0}$, $w_0\sim\mu_{w_0}$. 
Since $\Pi_n  v_0\to v_0$, $\Pi_n  w_0\to w_0$
in $L^2_{\omega,x}$ as $n\to \infty$, cf.~\eqref{eq:L2-conv-init} 
or  \eqref{eq:proj-prop2}, we conclude 
that $\tilde v(0)=\tilde v_0\sim \mu_{v_0}$, 
$\tilde w(0)=\tilde w_0\sim \mu_{w_0}$.

Next, note that $\Grad \Pi_n \vphi_i\to \Grad \vphi_i$ in 
$L^2(\Om)$ as $n\to \infty$, cf.~\eqref{eq:proj-prop2}. 
By weak convergence in $L^2_{\omega,t,x}$ of 
$\tilde \Grad u_i^n$, cf.~\eqref{eq:tilde-weakconv}, 
and \eqref{eq.test-fun}, it follows that
\begin{align*}
	&\tilde \E \left[\int_0^T \En_Z(\omega,t) 
	\left(\, \int_0^t \int_{\Om} M_i\Grad \tilde u_i^n
	\cdot \Grad \Pi_n \vphi_i \dx\ds \right)\dt \right]
	\\ & \qquad
	\to \tilde \E \left[\int_0^T \En_Z(\omega,t) 
	\left(\,\int_0^t \int_{\Om} M_i\Grad \tilde u_i
	\cdot \Grad \vphi_i \dx\ds\right)\dt \right]
	\to 0 \quad \text{as $n\to \infty$}. 
\end{align*}

To demonstrate convergence of the stochastic integral
$$
\int_0^t \int_{\Om} \eta^n(\tilde v^n) 
\vphi_i \dx\, d\tilde W^{v,(n)} (s)
= \int_{\Om} \left(\, \int_0^t \eta^n(\tilde v^n) 
\, d\tilde W^{v,(n)} (s)\right) \vphi_i \dx,
$$
we will use Lemma \ref{lem:stoch-conv} to infer that
\begin{equation}\label{eq:stoch-conv-tmp1}
	\int_0^t \eta^n(\tilde v^n) \, d\tilde W^{v,(n)} (s) \to 
	\int_0^t \eta(\tilde v) \, d\tilde W^v (s)
	\quad \text{in $L^2((0,T);L^2(\Om))$},
\end{equation}
in probability, as $n\to \infty$. 
Since $\tilde W^{v,(n)} \to \tilde W^v$ in $C([0,T];\U_0)$, 
$\tilde P$-a.s.~(and thus in probability), cf.~\eqref{eq:strong-conv1}, 
it remains to prove that 
\begin{equation}\label{eq:etan-conv}
	\eta^n(\tilde v^n) \to \eta(\tilde v) 
	\quad 
	\text{in $L^2\left((0,T); L_2(\U;L^2(\Om))\right)$, 
	$\tilde P$-almost surely.}
\end{equation}
Before we continue, recall that
$$
\int_0^t \eta^n(\tilde v^n)  \, d\tilde W^{v,(n)}
=\sum_{k=1}^n\int_0^t \eta^n_k(\tilde v^n)  \, d\tilde W^{v,n}_k,
$$
where $\eta_k^n(\tilde v^n)=\eta^n(\tilde v^n)
\psi_k \in L^2(\Om)$, $\Set{\psi_k}_{k\ge 1}$ is 
an orthonormal basis of $\U$,
$$
\eta_k^n(\tilde v^n) = \sum_{l=1}^n \eta_{k,l}(\tilde v^n) e_l, 
\quad 
\eta_{k,l}(\tilde v^n)= \left( \eta_k(\tilde v^n), e_l\right)_{L^2(\Om)},
$$ 
and $\Set{e_l}_{l=1}^\infty$ is an orthonormal basis of $L^2(\Om)$.
A similar decomposition holds for $\eta(\tilde v)$. 
Note that
\begin{equation}\label{est:vitali-start1}
	\begin{split}
		&\int_0^t \norm{\eta(\tilde v)-\eta^n(\tilde v^n)}_{L_2(\U;L^2(\Om))}^2\ds
		\\ & \quad 
		\leq \int_0^t \norm{\eta(\tilde v)-\eta(\tilde v^n)}_{L_2(\U;L^2(\Om))}^2 \ds
		+\int_0^t \norm{\eta(\tilde v)-\eta^n(\tilde v)}_{L_2(\U;L^2(\Om))}^2 \ds
		\\ & \quad =: I_1+I_2.
	\end{split}
\end{equation}
Exploiting \eqref{eq:noise-cond2} and \eqref{eq:strong-conv1}, we 
conclude easily that
\begin{equation}\label{eq:noise-conv-I1}
	I_1 \to 0, \quad \text{$\tilde P$-almost surely},
\end{equation}
as $n\to\infty$. We handle the $I_2$-term as follows: 
\begin{equation*}
	\begin{split}
		I_2 &= \int_0^t \sum_{k \geq 1}
		\norm{\eta_k(\tilde v)-\eta_k^n(\tilde v)}_{L^2(\Om)}^2 \ds
		\\ & 
		= \int_0^t \sum_{k \geq 1}
		\norm{\sum_{l \geq 1} \eta_{k,l}(\tilde v)e_l
		-\sum^n_{l=1} \eta_{k,l}(\tilde v)e_l}_{L^2(\Om)}^2 \ds
		\\ & = \int_0^t \sum_{k \geq 1}
		\norm{\eta_k(\tilde v)-\Pi_n\left(\eta_k(\tilde v)\right)}_{L^2(\Om)}^2 \ds.
	\end{split}
\end{equation*}
Observe that the integrand can be dominated 
by an $L^1(0,T)$ function, $\tilde P$-a.s.:
\begin{align*}
	&\sum_{k\ge 1}\norm{\eta_k(\tilde v(t))
	-\Pi_n\left(\eta_k(\tilde v(t))\right)}^2_{L^2(\Om)}
	\\ & \quad
	\overset{\eqref{eq:proj-prop0}}{\le} 
	4 \sum_{k \geq 1}\norm{\eta_k(\tilde v(t))}^2_{L^2(\Om)}
	= 4 \norm{\eta(\tilde v(t))}^2_{L_2(\U;L^2(\Om))}
	\overset{\eqref{eq:noise-cond2}}{\le} 
	C\left(1+\norm{\tilde v(t)}^2_{L^2(\Om)} \right),
\end{align*}
where we recall that $\tilde v \in L^2_\omega\left(L^\infty_t\left(L^2_x\right)\right)$
and thus $\tilde v\in L_t^2\left(L^2_x\right)$ $\tilde P$-a.s.~(cf.~Lemma \ref{lem:tlimits}).
Clearly, by \eqref{eq:proj-prop2}, $\Pi_n\left(\eta_k(\tilde v)\right)$ 
converges as $n\to \infty$ to $\eta_k(\tilde v)$ in $L^2(\Om)$, 
for a.e.~$t$, $\tilde P$-almost surely. 
Therefore, after an application of Lebesgue's dominated 
convergence theorem,
\begin{equation}\label{eq:noise-conv-I2}
	I_2 \overset{n\uparrow \infty}{\to} 0, \quad \text{$\tilde P$-almost surely}.
\end{equation}
Combining \eqref{eq:noise-conv-I1}, \eqref{est:vitali-start1}, 
\eqref{eq:noise-conv-I2} we arrive at \eqref{eq:etan-conv} (which 
implies \eqref{eq:stoch-conv-tmp1} via Lemma \ref{lem:stoch-conv}).

Passing to a subsequence (not relabeled), we can replace ``in probability" by 
``$\tilde P$-almost surely" in \eqref{eq:stoch-conv-tmp1}.  
Next, fixing any $q>2$, we verify that
\begin{align*}
	&\tilde \E\left[\norm{\int_0^t \eta^n(\tilde v^n)  
	\, d\tilde W^{v,(n)}}_{L^2((0,T);L^2(\Om))}^q \right]
	\\ & \quad =\tilde \E\left[\left(\int_0^T 
	\norm{\sum_{k=1}^n\int_0^t \eta^n_k(\tilde v^n)  
	\, d\tilde W^{v,n}_k}_{L^2(\Om)}^2\dt\right)^{\frac{q}{2}} \right]
	\\ & \quad 
	\le \bar C_T  \tilde \E\left[\, \sup_{t\in[0,T]}
	\norm{\sum_{k=1}^n\int_0^t \eta^n_k(\tilde v^n) 
	\, d\tilde W^{v,n}_k}_{L^2(\Om)}^q\,\right]
	\\ &\quad
	\le C_T\, \tilde \E\left[\left(\int_0^T 
	\sum_{k=1}^n\norm{\eta_k(\tilde v^n)}_{L^2(\Om)}^2 \dt\right)^{\frac{q}{2}}\right]
	\le C_{\eta,T},
\end{align*}
using the Burkholder-Davis-Gundy inequality \eqref{eq:bdg} and 
\eqref{eq:noise-cond}, \eqref{eq:apriori-est-tilde}. 
Accordingly, in light of Vitali's theorem, \eqref{eq:stoch-conv-tmp1} implies 
\begin{equation*}%\label{eq:stoch-conv-tmp2}
	\int_0^t \eta^n(\tilde v^n) \, d\tilde W^{v,n} (s) 
	\overset{n\uparrow \infty}{\to}
	\int_0^t \eta(\tilde v) \, d\tilde W^v (s) \quad 
	\text{in $L^2\left(\tilde D,\tilde \cF,\tilde P;L^2((0,T);L^2(\Om))\right)$},
\end{equation*}
and hence
\begin{align*}
	&\tilde \E \left[\int_0^T \En_Z(\omega,t) \left(\,
	\int_0^t \int_{\Om} \eta^n(\tilde v^n)
	\vphi_i \dx\, d\tilde W^{v,n} (s)\right)\dt \right]
	\\ & \quad 
	= \tilde \E \left[\int_0^T 
	\int_{\Om} \left(\int_0^t \eta^n(\tilde v^n) \, d\tilde W^{v,n} (s)\right) 
	\left(\En_Z(\omega,t)\Pi_n \vphi_i(x)\right) \dx\dt\right]
	\\ & \quad
	\to \tilde \E \left[\int_0^T \En_Z(\omega,t) \left(\, 
	\int_0^t \int_{\Om} \eta(\tilde v) 
	\vphi_i \dx \, d\tilde W^v (s)\right)\dt\right]
	\quad \text{as $n\to\infty$}.
\end{align*}

With reference to the nonlinear term in \eqref{eq:weakform-tilde-tmp1}, 
according to condition {\rm (\textbf{GFHN})}, we have
$I(v,w)=I_1(v)+I_2(v)w$ with
$$
\abs{I_1(v)}\le c_{I,1}\left(1+ \abs{v}^3\right),
\quad 
I_2(v)=c_{I,3}+c_{I,4}v.
$$
By cause of the first part of \eqref{eq:strong-conv2}, passing 
to a subsequence if necessary, we may assume that  as $n\to \infty$,
$$
\tilde v^n \to \tilde v \quad
\text{for $d\tilde P\times dt\times dx$ almost every 
$(\omega,t,x)\in \tilde D \times [0,T]\times \Om$.}
$$ 
As a result of this, the boundedness of $\tilde v^n$ in $L^4_{\omega,t,x}$, 
cf.~\eqref{eq:apriori-est-tilde}, and Vitali's convergence theorem, we conclude
that as as $n\to \infty$,
\begin{equation}\label{eq:I1conv}
	\begin{split}
		& \tilde v^n \to \tilde v \quad
		\text{in $L^q(d \tilde P\times dt\times dx)$},
		\quad \text{for any $q\in [1,4)$,}
		\\ & I_1(\tilde v^n) \to I_1(\tilde v) \quad 
		\text{in $L^q(d \tilde P\times dt\times dx)$,}
		\quad \text{for any $q\in [1,4/3)$.}
	\end{split}
\end{equation}
Fix two numbers $q,q'$ such that
$$
\frac32 \le q<2, \quad 2<q'\le 3, \quad \quad \frac{1}{q}+\frac{1}{q'}=1,
$$
for example $q=3/2$ and $q'=3$. 
Then, by H\"older's inequality,
\begin{align*}
&\tilde \E \left[\int_0^T  \int_{\Om} 
	\abs{I_2(\tilde v^n) \Pi_n \vphi_i -I_2(\tilde v) \vphi_i}^2\dx\dt\right]
	\\ & \qquad
	\le  \tilde \E \left[\int_0^T  \int_{\Om}
	\abs{I_2(\tilde v^n)}^2
	\abs{\Pi_n \vphi_i -\vphi_i}^2\dx\dt\right]
	\\ & \qquad \qquad+ 
	 \tilde \E \left[\int_0^T \int_{\Om}
	 \abs{I_2(\tilde v^n)- I_2(\tilde v)}^2\abs{\vphi_i}^2\dx\dt\right]
 	\\ & \qquad \le \norm{I_2(\tilde v^n)}_{L^{2q}_{\omega,t,x}}^2
	\norm{\Pi_n \vphi_i -\vphi_i}_{L^{2q'}_{\omega,t,x}}^2
	 \\ & \qquad \qquad 
	 +\norm{I_2(\tilde v^n)-I_2(\tilde v)}_{L^{2q}_{\omega,t,x}}^2
	 \norm{\vphi_i}_{L^{2q'}_{\omega,t,x}}^2 \to 0 \quad \text{as $n\to \infty$},
\end{align*}
since $I_2(\tilde v^n)$ is bounded and converges strongly 
in $L^{2q}_{\omega,t,x}$ (with $2q<4$), consult \eqref{eq:I1conv}. 
Consequently, $I_2(\tilde v^n) \Pi_n \vphi_i  \to I_2(\tilde v)\vphi_i$ 
in $L^2(\tilde dP\times dt\times dx)$.
Besides, \eqref{eq:tilde-weakconv} implies $\tilde w^n \weak \tilde w$ 
in $L^2(\tilde dP\times dt\times dx)$. Hence,
\begin{equation}\label{eq:I2w-conv}
	I_2(\tilde v^n)\, \tilde w^n\, \Pi_n \vphi_i   
	\overset{n\uparrow \infty}{\weak} I_2(\tilde v)\, \vphi_i\, \tilde w  
	\quad \text{in $L^1(\tilde dP\times dt\times dx)$.}
\end{equation}

Regarding the $I_1$ term, fix two numbers $q,q'$ such that
$$
\frac65\le q<\frac43, \quad 3<q'\le 6, \quad \quad \frac{1}{q}+\frac{1}{q'}=1.
$$
Then, similar to the treatment of the $I_1$ term,
\begin{align*}
&\tilde \E \left[\int_0^T  \int_{\Om} 
	\abs{I_1(\tilde v^n) \Pi_n \vphi_i -I_1(\tilde v) \vphi_i}\dx\dt\right]
 	\\ & \qquad \le \norm{I_1(\tilde v^n)}_{L^q_{\omega,t,x}}
	\norm{\Pi_n \vphi_i -\vphi_i}_{L^{q'}_{\omega,t,x}}
	 \\ & \qquad \qquad 
	 +\norm{I_1(\tilde v^n)- I_1(\tilde v)}_{L^q_{\omega,t,x}}
	 \norm{\vphi_i}_{L^{q'}_{\omega,t,x}} \to 0 \quad \text{as $n\to \infty$},
\end{align*}
where we have used that $I_2(\tilde v^n)$ is bounded and 
converges strongly in $L^q_{\omega,t,x}$ ($q<4/3$), see \eqref{eq:I1conv},
and the Sobolev embedding theorem to control the $L^{q'}$ norm
of $\vphi_i$, $\Pi_n \vphi_i - \vphi_i$ in terms 
of the $\tH$ norm ($q'\le 6$). In other words, 
$$
I_1(\tilde v^n) \Pi_n \vphi_i \to I_1(\tilde v) \vphi_i
\quad \text{in $L^1\left(d\tilde P\times dt \times dx\right)$ as $n\to \infty$}. 
$$
Combining this and \eqref{eq:I2w-conv}, recalling 
$I(\tilde v^n,\tilde w^n)=I_1(\tilde v^n)+I_2(\tilde v^n)\tilde w^n$, we arrive finally at
\begin{align*}
	& \tilde \E \left[\int_0^T \En_Z(\omega,t) \left(\, \int_0^t \int_{\Om} 
	I(\tilde v^n,\tilde w^n) \Pi_n \vphi_i \ds \right)\dt \right]
	\\ & \qquad
	\to \tilde \E \left[\int_0^T \En_Z(\omega,t) \left(\, \int_0^t \int_{\Om} 
	I(\tilde v,\tilde w) \vphi_i \ds \right)\dt \right] 
	\quad \text{as $n\to \infty$}.
\end{align*}
This concludes the proof of \eqref{eq:limit-weak-form-tilde-tmp0} and thus the lemma.
\end{proof}

\subsection{Concluding the proof of Theorem \ref{thm:martingale}}\label{subsec:conclude}
As stated in Lemma \ref{eq:limit-eqs-tilde}, the Skorokhod-Jakubowski representations 
$\tilde U, \tilde W, \tilde v_0, \tilde w_0$ satisfy the weak form \eqref{eq:weakform-tilde} for 
a.e.~$t\in [0,T]$. Regarding the stochastic integrals in \eqref{eq:weakform-tilde}, the 
$(\tH(\Om))^*$ valued processes $\tilde v(t), \tilde w(t)$ are (by construction) 
$\tilde \cF_t$-measurable for each $t$. To upgrade \eqref{eq:weakform-tilde} to 
hold for ``every $t$", we will now prove that (cf.~also Remark \ref{rem:weak-L2-cont}) 
\begin{equation}\label{eq:weak-time-tilde}
	\tilde v(\omega), \tilde w(\omega) \in C([0,T];(\tH(\Om))^*), \quad 
	\text{for $\tilde P$-a.e.~$\omega\in \tilde D$}.
\end{equation}
This weak continuity property also ensures that $\tilde v$, $\tilde w$ 
are predictable in $(\tH(\Om))^*$. Hence, conditions
\eqref{eq:mart-adapt} and \eqref{eq:mart-weakform} 
in Definition \ref{def:martingale-sol} hold. Conditions 
\eqref{eq:mart-sto-basis} and \eqref{eq:mart-wiener} 
are covered by Lemma \ref{lem:wiener-tilde}, while 
Lemma \ref{lem:tlimits} validates 
conditions \eqref{eq:mart-uiue-reg} and \eqref{eq:mart-vreg}.
Lemma \ref{eq:limit-eqs-tilde} implies \eqref{eq:mart-data}.

To conclude the proof of Theorem \ref{thm:martingale},
it remains to verify \eqref{eq:weak-time-tilde}, which we 
do for $\tilde v$ (the case of $\tilde w$ is easier). 
Fix $\vphi\in \tH(\Om) \subset L^6(\Om)$, and 
consider the stochastic process
$$
\Psi_\vphi:\tilde D\times [0,T]\to \R,\quad
\Psi_\vphi(\omega,t):= \int_{\Om} \tilde v(\omega,t) \vphi \dx,
$$
relative to $\tilde \cS$, cf.~\eqref{eq:stoch-basis-new} and \eqref{eq:filtration-new}. 
To arrive at \eqref{eq:weak-time-tilde} it will be sufficient to prove that 
$\Psi_\vphi\in C([0,T])$ $\tilde P$-a.s., for any $\vphi$ in a countable dense subset 
$\Set{\vphi_\ell}_{\ell=1}^\infty\subset \tH(\Om)$. In what follows, 
let $\vphi$ denote an arbitrary function from $\Set{\vphi_\ell}_{\ell=1}^\infty$.

We are going to use the $L^{q_0}_{\omega}$ 
estimates in Corollary \ref{cor:Lq0-est}, 
with $q_0>\frac92$. Fix $t\in [0,T]$, $\vartheta>0$ (the 
case $\vartheta<0$ is treated similarly), and 
$q\in \left(3, \frac23 q_0\right]$. 
Then, using e.g.~the first equation in \eqref{eq:weakform-tilde},
\begin{align*}
	&\tilde \E\left[\abs{\Psi_\vphi(t+\vtheta)-\Psi_\vphi(t)}^q\right]
	\\ & \le \tilde \E \left[\abs{\int_t^{t+\vtheta} \int_{\Om}
	M_i\Grad \tilde u_i \cdot \Grad \vphi \dx\ds}^q\right]
	+\tilde \E \left[\abs{ \int_t^{t+\vtheta} 
	\int_{\Om}I(\tilde v,\tilde w) \vphi \dx\ds}^q\right]
	\\ & \qquad
	+ \tilde \E\left[\abs{\int_t^{t+\vtheta} \int_{\Om} 
	\eta(\tilde v) \vphi \dx\, d\tilde W^v (s)}^q\right]
	=: \Gamma_1 + \Gamma_2 + \Gamma_3.
\end{align*}
The $\Gamma_1$ term is estimated using the 
Cauchy-Schwarz inequality, the fact that 
$\Grad \tilde u_i \in L^{q_0}_\omega(L^2_{t,x})$, 
cf.~\eqref{eq:apriori-est-q0-tilde2}, and $q\le q_0$:
\begin{align*}
	\Gamma_1 & \le
	\tilde \E\left[\left(\, \int_t^{t+\vtheta} \int_{\Om} 
	\abs{\Grad \tilde u_i}^2\dx \ds \right)^{\frac{q}{2}}
	\left (\int_t^{t+\vtheta} \int_{\Om} 
	\abs{\Grad \vphi}^2 \dx \ds \right)^{\frac{q}{2}}\right]
	\\ & \le C_1 \abs{\vtheta}^{\frac{q}{2}}
	\norm{\Grad \vphi}_{L^2(\Om)}^q.
\end{align*}

Thanks to H\"older's inequality,
\begin{align*}
	\Gamma_2 & \le 
	\tilde \E\left[ \left(\, \int_t^{t+\vtheta} \int_{\Om} 
	\abs{I(\tilde v,\tilde w)}^{\frac43} \dx \ds \right)^{\frac{3q}{4}}
	\left ( \int_t^{t+\vtheta} \int_{\Om} \abs{\vphi}^3 \dx \ds 
	\right)^{\frac{q}{3}} \right]
	\\ & \le 
	\tilde C_2 \abs{\vtheta}^{\frac{q}{3}} 
	\tilde \E\left[ \left(\, \int_t^{t+\vtheta} \int_{\Om} 
	\left(\abs{\tilde v}^4 + \abs{\tilde w}^2 \right)
	\dx \ds \right)^{\frac{3q}{4}}\right]
	\norm{\vphi}_{L^3(\Om)}^{\frac{q}{3}}
	\\ & \le C_2 \abs{\vtheta}^{\frac{q}{3}}
	\norm{\vphi}_{L^3(\Om)}^q,
\end{align*}
using \eqref{eq:I43-est}, $\tilde v\in L^{2q_0}_\omega(L^4_{t,x})$, 
cf.~\eqref{eq:apriori-est-q0-tilde2}, $\tilde w\in L^{q_0}_\omega(L^\infty_t(L^2_x))$, 
cf.~\eqref{eq:Lq0-est}, and that the relevant exponents satisfy 
$3q\le 2q_0$, $3q/2\le q_0$.

Finally, we have
\begin{align*}
	\Gamma_3 
	& \le 
	\tilde \E \left[\norm{\sup_{\tau\in [0,\vtheta]} 
	\int_t^{t+\tau} \eta(\tilde v)  \, d\tilde W^v}_{L^2(\Om)}^q\right]
	\norm{\vphi}_{L^2(\Om)}^q
	\\ & 
	\overset{\eqref{eq:bdg}}{\le} \tilde C_3 \tilde \E\left[ \left(\, \int_t^{t+\vtheta} 
	\norm{\eta(\tilde v)}_{L_2(\U,L^2(\Om))}^2 \dt \right)^{\frac{q}{2}}\right]
	\norm{\vphi}_{L^2(\Om)}^q
	 \\ & \overset{\eqref{eq:noise-cond2}}{\le} 
	 \hat C_3 \abs{\vtheta}^{\frac{q}{2}}
	 \left(1+\tilde \E\left[\norm{\tilde v}_{L^\infty((0,T);L^2(\Om))}^q\right]\right) 
	 \norm{\vphi}_{L^2(\Om)}^q
	 \le C_3 \abs{\vtheta}^{\frac{q}{2}}
	 \norm{\vphi}_{L^2(\Om)}^q,
\end{align*}
since $\tilde v\in L^{q_0}_\omega(L^\infty_t(L^2_x))$ and $q\le q_0$.

Summarizing, with $t,t +\vtheta \in [0,T]$ 
and $\abs{\vtheta}<1$,  there exists a constant $C>0$ such that
$$
\tilde \E\left[\abs{\Psi_\vphi(t+\vtheta)-\Psi_\vphi(t)}^q\right] 
\le C \abs{\vtheta}^{\frac{q}{3}}\norm{\vphi}_{\tH(\Om)}^q
=C_\vphi \abs{\vtheta}^{1+\frac{q-3}{3}},
$$
where $C_\vphi:=C\norm{\vphi}_{\tH(\Om)}^q$.
Hence, Kolmogorov's continuity result 
(Theorem \ref{thm:kolm-cont}) can be applied with
$\kappa=q$, $\delta=\frac{q-3}{3}$,  
$\gamma=\frac{\delta}{\gamma}=\frac13 - \frac{1}{q}$ ($q>3$), 
securing the existence of a continuous modification of $\Psi_\vphi$. 
This concludes the proof of \eqref{eq:weak-time-tilde}.

%%%%%%%%%%%%%%%%%%%%%%%%%%%%%%
%%%%%%%%%%%%%%%%%%%%%%%%%%%%%%
\section{Uniqueness of weak (pathwise) solutions}\label{sec:uniq}
In this section we prove an $L^2$ stability estimate and consequently 
a pathwise uniqueness result. This result is used in the 
next section to conclude the existence of a unique weak solution to 
the stochastic bidomain model.

Let $\bigl(\cS, u_i, u_e,v, w\bigr)$ be a weak 
solution according to Definition \ref{def:martingale-sol}. 
We need a special case of the infinite dimensional version  
of It\^{o}'s formula \cite{DaPrato:2014aa,Krylov:2013aa,Prevot:2007aa}:
$$
d \norm{v(t)}_{L^2(\Om)}^2 
= 2\left(dv,v\right)_{(\tH(\Om))^*,\tH(\Om)}
+ 2\sum_{k\ge 1} \norm{\eta_k(v)}_{L^2(\Om)}^2\dt.
$$
To compute the first term on the right-hand side, multiply the first 
equation in \eqref{S1} by $u_i$, the second equation by $-u_e$, and 
sum the resulting equations. The outcome is
\begin{align*}	
	& v \, d v 
	- \sum_{j=i,e}\Div\bigl(M_j \Grad u_j\bigr) u_j \dt 
	+ v  I(v,w)\dt =  v\eta(v)\dW^v.
\end{align*}
Hence,
\begin{align*}
	&\left(dv,v\right)_{(\tH(\Om))^*,\tH(\Om)}
	= - \sum_{j=i,e} \left(M_j \Grad u_j , \Grad u_j\right)_{L^2(\Om)} \dt
	- \left(v,I(v,w)\right)_{L^2(\Om)} \dt
	\\ & \qquad\qquad  \qquad\qquad  \qquad\qquad 
	+  \sum_{k\ge 1}\left(v,\eta_k(v)\right)_{L^2(\Om)}\dW^v_k.
\end{align*}

Therefore, weak solutions of the stochastic bidomain model satisfy
the following  It\^{o} formula for the squared $L^2$ norm:
\begin{equation}\label{eq:Ito-L2-v}
	\begin{split}
		\norm{v(t)}_{L^2(\Om)}^2& =\norm{v(0)}_{L^2(\Om)}^2
		-2 \sum_{j=i,e} \int_0^t \int_{\Om} M_j \Grad u_j \cdot \Grad u_j\dx \ds
		\\ & \quad\quad  -2  \int_0^t \int_{\Om} v I(v,w)\dx \ds
		\\ & \quad\quad
		+2\sum_{k\ge 1}\int_0^t \int_{\Om}  \abs{\eta_k(v)}^2 \dx\ds
		+ 2\sum_{k\ge 1}\int_{\Om} v \,\eta_k(v)\dx \dW^v_k.
	\end{split}
\end{equation}
Additionally, from the (simpler) $w$-equation in \eqref{S1} 
we obtain
\begin{equation}\label{eq:Ito-L2-w}
	\begin{split}
		\norm{w(t)}_{L^2(\Om)}^2& =\norm{w(0)}_{L^2(\Om)}^2
		+ 2  \int_0^t \int_{\Om} w H(v,w)\dx \ds
		\\ & \qquad
		+2\sum_{k\ge 1}\int_0^t \int_{\Om}  \abs{\sigma_k(v)}^2 \dx\ds
		+ 2\sum_{k\ge 1}\int_{\Om} w \, \sigma_k(v)\dx \dW^w_k.
	\end{split}
\end{equation}

\begin{rem}\label{rem:krylov}
Krylov \cite{Krylov:2013aa} provides a rather simple proof of 
the It\^{o} formula for the squared norm $\norm{u_t}_H^2$, in the context of 
Hilbert space valued processes $u_t$ relative to  
a Gelfand triple $\Bbb{V} \subset \Bbb{H} 
\subset \Bbb{V}^*$, see also \cite[Theorem 4.2.5]{Prevot:2007aa}.
As part of the proof he also establishes the continuity 
of the process $t\to \norm{u_t}_H^2$ (more precisely, for a 
modification of $u_t$). The idea of his proof is to use an 
appropriate approximation procedure to lift the differential $du_t$ 
into $\Bbb{H}$, apply the It\^{o} formula for Hilbert space 
valued processes, and then pass to the limit in the approximations. 
Note that Definition \ref{def:martingale-sol} asks that 
the paths of $v(t)$ are weakly time continuous
but not that they belong to $C([0,T];L^2(\Om))$. 
However, as alluded to, this would actually be a consequence 
of the results in \cite{Krylov:2013aa}. 

Alternatively, we can argue for strong time continuity as follows:
Set 
$$
V(t):= v(t) - \int_0^t  \eta(v) \dW^v, \qquad 
W(t):= w(t) - \int_0^t  \sigma(w) \dW^w.
$$
Note that $P$-a.s., 
$$
V,W \in L^2((0,T);\tH(\Om)), 
\qquad
\partial_t V, \partial_t W \in L^2((0,T);(\tH(\Om))^*).
$$
Thus, via a classical result \cite{Temam:1977aa},
$V, W$ belong to $C([0,T];L^2(\Om))$. 
Since (by standard arguments \cite{DaPrato:2014aa})
$$
t\mapsto\int_0^t  \beta(v) \dW 
\in C([0,T];L^2(\Om)), \quad \text{$P$-a.s.},
$$
for $(\beta,W)=(\eta,W^v), (\sigma,W^w)$, we 
conclude that $P$-a.s.~$v,w\in C([0,T];L^2(\Om))$.
\end{rem}

We are now in a position to prove the stability result.

\begin{thm}\label{thm:uniq}
Suppose conditions {\rm (\textbf{GFHN})}, 
\eqref{matrix}, and \eqref{eq:noise-cond} hold. 
Let 
$$
\bar U=\bigl(\cS, \bar u_i,\bar u_e,\bar v, \bar w\bigr), \quad 
\hat U=\bigl(\cS, \hat u_i, \hat u_e,\hat v, \hat w\bigr)
$$ 
be two weak solutions (according to Definition \ref{def:martingale-sol}) 
relative to the same stochastic basis $\cS$, cf.~\eqref{eq:stochbasis}, with 
initial data $\bar v(0)=\bar v_0$, $\hat v(0)=\hat v_0$, 
$\bar w(0)=w_0$, $\hat w(0)=\hat w(0)$, where 
$$
\bar v_0, \hat v_0, \bar w_0,\hat w_0\in 
L^2\left(D,\cF,P;L^2(\Om)\right).
$$ 
There exists a positive constant $C\ge 1$ such that
\begin{equation}\label{eq:L2-stability}
	\begin{split}
		& \E \left [ \sup_{t\in [0,T]}\norm{\bar v(t)-\hat v(t)}_{L^2(\Om)}^2\right]
		+\sum_{j=i,e} \E\left[\norm{\bar u_j- \hat u_j}_{L^2(\Om_T)}^2\right]
		\\ & \quad 
		+\E \left [\sup_{t\in [0,T]} \norm{\bar w(t)-\hat w(t)}_{L^2(\Om)}^2\right]
		\\ & \quad \quad \quad
		\le C \left(
		\E \left [ \norm{\bar v_0-\hat v_0}_{L^2(\Om)}^2\right]
		+
		\E \left [\norm{\bar w_0-\hat w_0}_{L^2(\Om)}^2\right]
		\right).
	\end{split}
\end{equation}
With $\bar v_0=\hat v_0$, $\bar w_0=\hat w_0$, it follows 
that weak (pathwise) solutions are unique, cf.~\eqref{eq:path-uniq}.
\end{thm}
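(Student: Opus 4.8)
The plan is to write the equations for the differences $\bv := \bar v - \hat v$, $\w := \bar w - \hat w$, $U_j := \bar u_j - \hat u_j$ ($j=i,e$), and apply the It\^o formula for the squared $L^2$ norm established in \eqref{eq:Ito-L2-v}--\eqref{eq:Ito-L2-w} to $\bv$ and $\w$. Subtracting the two copies of the bidomain system, multiplying the $u_i$-equation by $U_i$, the $u_e$-equation by $-U_e$, and summing (exactly as in the derivation of \eqref{eq:Ito-L2-v}), the diffusion contribution is $-2\sum_{j=i,e}\int_\Om M_j \Grad U_j\cdot\Grad U_j\dx$, which is $\le -2m\sum_j \norm{\Grad U_j}_{L^2(\Om)}^2 \le 0$ by \eqref{matrix}; this is the term that will ultimately control $\norm{\bar u_j-\hat u_j}_{L^2(\Om_T)}$ via Poincar\'e. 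The reaction and gating terms produce, after adding the two It\^o formulas,
\begin{equation*}
	\int_0^t\int_\Om \Bigl(\w\bigl(H(\bar v,\bar w)-H(\hat v,\hat w)\bigr) - \bv\bigl(I(\bar v,\bar w)-I(\hat v,\hat w)\bigr)\Bigr)\dx\ds,
\end{equation*}
and here I would invoke the one-sided Lipschitz / structural bound for $(H,I)$ under \textbf{(GFHN)} --- the same inequality used in \eqref{eq:HminusI} and ``as in \cite[page 479]{Bourgault:2009aa}'' --- to dominate this by $C\int_0^t\bigl(\norm{\bv(s)}_{L^2(\Om)}^2+\norm{\w(s)}_{L^2(\Om)}^2\bigr)\ds$. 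The It\^o correction (quadratic variation) terms $2\sum_k\int_0^t\int_\Om\abs{\eta_k(\bar v)-\eta_k(\hat v)}^2\dx\ds$ and $2\sum_k\int_0^t\int_\Om\abs{\sigma_k(\bar v)-\sigma_k(\hat v)}^2\dx\ds$ are bounded by $2C_\beta\int_0^t\norm{\bv(s)}_{L^2(\Om)}^2\ds$ using the Lipschitz condition \eqref{eq:noise-cond}.

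Collecting these, and writing $\Theta(t):=\norm{\bv(t)}_{L^2(\Om)}^2+\norm{\w(t)}_{L^2(\Om)}^2$, one obtains, before taking expectations,
\begin{equation*}
	\Theta(t) + 2m\sum_{j=i,e}\int_0^t\norm{\Grad U_j(s)}_{L^2(\Om)}^2\ds \le \Theta(0) + C\int_0^t\Theta(s)\ds + \cM(t),
\end{equation*}
where $\cM(t)$ collects the two stochastic integrals $2\sum_k\int_0^t\int_\Om \bv(\eta_k(\bar v)-\eta_k(\hat v))\dx\,d W^v_k$ and the analogous $w$-term. Taking $\E[\cdot]$ kills $\cM$ (its integrands are in $L^2$ by the a priori regularity of weak solutions), and Gr\"onwall's inequality yields $\E[\Theta(t)]\le e^{Ct}\,\E[\Theta(0)]$ for all $t$, together with $\sum_j\E[\int_0^T\norm{\Grad U_j}_{L^2(\Om)}^2\dt]\le C\,\E[\Theta(0)]$; the Poincar\'e inequality on $\tH(\Om)$ (recall $\Sigma_D\ne\emptyset$) then converts the gradient bound into $\sum_j\E[\norm{\bar u_j-\hat u_j}_{L^2(\Om_T)}^2]\le C\,\E[\Theta(0)]$. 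To upgrade the $\E[\Theta(t)]$ bound to the $\E[\sup_{t\in[0,T]}\Theta(t)]$ bound asserted in \eqref{eq:L2-stability}, I would take $\sup_{t\in[0,T]}$ before $\E[\cdot]$ and estimate $\E[\sup_t\abs{\cM(t)}]$ by the Burkholder--Davis--Gundy inequality \eqref{eq:bdg}: the resulting quadratic-variation term is $\lesssim \E[(\int_0^T\norm{\bv}_{L^2}^2\,(1+\dots)\,\ds)^{1/2}]$, which after Cauchy's inequality ``with $\delta$'' (exactly as in \eqref{Esup2}) is absorbed as $\delta\,\E[\sup_t\Theta(t)] + C_\delta\int_0^T\E[\Theta(s)]\ds$; choosing $\delta$ small and using the already-established $\E[\Theta(s)]$ bound closes the estimate. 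Finally, setting $\bar v_0=\hat v_0$, $\bar w_0=\hat w_0$ forces $\E[\sup_t\Theta(t)]=0$, hence $\bar v\equiv\hat v$, $\bar w\equiv\hat w$ and $\bar u_j\equiv\hat u_j$ a.s.\ for all $t$, which is \eqref{eq:path-uniq}.

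The main obstacle I anticipate is justifying the It\^o formula \eqref{eq:Ito-L2-v}--\eqref{eq:Ito-L2-w} \emph{for the difference} $\bv$: a weak solution only has $v\in L^2((0,T);\tH(\Om))\cap L^4(\Om_T)$ with $dv\in (\tH(\Om))^*$, and the product $v\,I(v,w)$ sits in $L^1$, so applying the Gelfand-triple It\^o formula of \cite{Krylov:2013aa}/\cite[Thm.~4.2.5]{Prevot:2007aa} requires checking the integrability hypotheses (in particular that $I(\bar v,\bar w)-I(\hat v,\hat w)$, which is only $L^{4/3}$ in $v$ and linear in $w$, pairs correctly with $\bv\in L^4$ --- a H\"older exponent bookkeeping identical to \eqref{eq:I43-est}). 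The second delicate point is that the structural bound on $\w(H(\bar v,\bar w)-H(\hat v,\hat w))-\bv(I(\bar v,\bar w)-I(\hat v,\hat w))$ genuinely uses the sign of the cubic nonlinearity: the ``bad'' term $\int_\Om \bv\,(\bar v-\hat v)\cdot(\text{quadratic in }\bar v,\hat v)$ coming from $I_1$ must be shown to have a favorable sign or be dominated by lower-order terms, which is precisely the computation referenced at \cite[page 479]{Bourgault:2009aa}; I would reproduce that short argument here. Everything else is routine Gr\"onwall plus BDG bookkeeping.
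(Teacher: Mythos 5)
Your proposal is correct and follows essentially the same route as the paper: subtract the two copies of the weak formulation, apply the Itô formula for the squared $L^2$ norm to the differences (as recorded in \eqref{eq:Ito-L2-v}--\eqref{eq:Ito-L2-w}), use ellipticity \eqref{matrix} for the diffusion part, the structural bound of \cite[page 479]{Bourgault:2009aa} for the $H$/$I$ terms, the Lipschitz half of \eqref{eq:noise-cond} for the quadratic-variation terms, take expectations and apply Grönwall, then upgrade to the $\sup_t$ bound via Burkholder--Davis--Gundy (as in \eqref{Esup2}), with the Poincar\'e inequality plus $u_i=v+u_e$ closing the $L^2(\Om_T)$ bound on the $u_j$. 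You also correctly flag the two genuine technical points the paper glosses over: the justification of the Itô formula at this level of regularity (handled via Krylov's lemma, cf.~Remark \ref{rem:krylov}) and the sign structure of the cubic difference in $I_1$ imported from \cite{Bourgault:2009aa}.
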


\begin{proof}[Proof of Theorem \ref{thm:uniq}]
Set $v:=\bar v- \hat v$, $u_i:=\bar u_i- \hat u_i$, $u_e:=\bar u_e- \hat u_e$, and 
$w:= \bar w - \hat w$. Note that $v= u_i - u_e$. We have $P$-a.s.,
\begin{align*}
	&u_i, \bar u_i, \hat u_i ,u_e, \bar u_e, \hat u_e, 
	v, \bar v, \hat v \in L^2((0,T);\tH(\Om)),
	\\ & 
	v, \bar v, \hat v, w,\bar w, \hat w \in L^\infty((0,T);L^2(\Om)) 
	\cap C([0,T];(\tH(\Om))^*).
\end{align*}
Actually, we can replace $C([0,T];(\tH(\Om))^*)$ by $C([0,T];L^2(\Om))$,  
see Remark \ref{rem:krylov}.

Subtracting the $(\tH(\Om))^*$ valued 
equations for $\bar U, \hat U$, cf.~ \eqref{S1}, we obtain 
\begin{equation}\label{eq:uniq-v}
	\begin{split}
		& d v  -\Div\bigl(M_i \Grad u_i \bigr) \dt
		+ \left(I(\bar v,\bar w)-I(\hat v,\hat w)\right)\dt = 
		\left(\eta(\bar v)-\eta(\hat v)\right) \dW^v,
		\\ &
		d v +\Div\bigl(M_e\Grad u_e\bigr) \dt
	 	+ \left(I(\bar v,\bar w)-I(\hat v,\hat w)\right) \dt  =  
	 	\left(\eta(\bar v)-\eta(\hat v)\right)  \dW^v,
		\\ &
		d w=\left(H(\bar v,\bar w)-H(\hat v, \hat w)\right)\dt 
		+\left(\sigma(\bar v)-\sigma(\hat v)\right) \dW^w.
	\end{split}
\end{equation}
Given the equations in \eqref{eq:uniq-v}, we apply 
the It\^{o} formula, cf.~\eqref{eq:Ito-L2-v} and \eqref{eq:Ito-L2-w}.
Using \eqref{matrix} and adding the resulting equations, we obtain 
in the end the following inequality: 
\begin{equation}\label{eq:Ito-L2-v-diff}
	\begin{split}
		&\frac12\norm{v(t)}_{L^2(\Om)}^2
		+\frac12\norm{w(t)}_{L^2(\Om)}^2
		+m \sum_{j=i,e} \int_0^t \int_{\Om} \abs{\Grad u_j}^2 \dx \ds
		\\ &  
		\, \le \frac12\norm{v(0)}_{L^2(\Om)}^2
		+\frac12\norm{w(0)}_{L^2(\Om)}^2
		\\ & \quad
		+ \int_0^t \int_{\Om} 
		\Bigl(w\left(H(\bar v,\bar w)-H(\hat v, \hat w)\right)
		- v \left(I(\bar v,\bar w)-I(\hat v,\hat w)\right)\Bigr)\dx \ds
		\\ &  \quad
		+\sum_{k\ge 1}\int_0^t \int_{\Om}  \abs{\eta_k(\bar v)-\eta_k(\hat v)}^2 \dx\ds
		+\sum_{k\ge 1}\int_0^t \int_{\Om}  \abs{\sigma_k(\bar v)-\sigma_k(\hat v)}^2 \dx\ds
		\\ &  \quad
		+\sum_{k\ge 1}\int_{\Om} v \left(\eta_k(\bar v)-\eta_k(\hat v)\right)\dx \dW^v_k
		+\sum_{k\ge 1}\int_{\Om} w \left(\sigma_k(\bar v)-\sigma_k(\hat v)\right)\dx \dW^w_k.
	\end{split}
\end{equation}
As in \cite[page 479]{Bourgault:2009aa}, we can 
use {\rm (\textbf{GFHN})} to bound the the third term on 
right-hand side by a constant times
$$
\int_0^t\left(  \norm{v(s)}_{L^2(\Om)}^2 + \norm{w(s)}_{L^2(\Om)}^2\right)\ds.
$$
Using \eqref{eq:noise-cond}, we can bound the fourth term on right-hand
of \eqref{eq:Ito-L2-v-diff} by a constant times 
$$
\int_0^t \norm{v(s)}_{L^2(\Om)}^2\ds.
$$
The stochastic integrals in \eqref{eq:Ito-L2-v-diff} 
are square-integrable, zero mean martingales. Moreover, using
the Poincar\'e inequality, we have
$$
\int_0^t \int_{\Om} \abs{ u_e}^2 \dx \ds 
\leq \tilde C \int_0^t \int_{\Om} \abs{\Grad u_e}^2 \dx \ds,
$$
for some constant $\tilde C>0$. Since $u_i=v+u_e$, we control 
$u_i$ as well. As a result of all this, there is a constant $C>0$ such that 
\begin{equation*}%\label{eq:Ito-L2-v-diff2}
	\begin{split}
		&\E\left[\norm{v(t)}_{L^2(\Om)}^2\right]
		+\sum_{j=i,e} \E\left[\norm{u_j}_{L^2(\Om_T)}^2\right]
		+\E \left[\norm{w(t)}_{L^2(\Om)}^2\right]
		\\ & \qquad 
		\le  \E\left[\norm{v(0)}_{L^2(\Om)}^2\right]
		+\E \left[\norm{w(0)}_{L^2(\Om)}^2\right]
		\\ & \qquad \qquad 
		+ C \int_0^t\left(  \E\left[\norm{v(s)}_{L^2(\Om}^2\right] 
		+ \E\left [\norm{w(s)}_{L^2(\Om}^2\right]\right)\ds,
	\end{split}
\end{equation*}
for any $t\in [0,T]$. The Gr\"onwall inequality 
delivers \eqref{eq:L2-stability} ``without $\sup$". The refinement 
\eqref{eq:L2-stability} (``with $\sup$") comes from the 
application of a martingale inequality \eqref{eq:bdg}, 
see \eqref{Esup1} for a similar situation.

\end{proof}

%%%%%%%%%%%%%%%%%%%%%%%%%%%%%%
%%%%%%%%%%%%%%%%%%%%%%%%%%%%%%
\section{Existence of weak (pathwise) solution}\label{sec:pathwise}
In this section we prove that the stochastic bidomain model possesses a unique weak 
(pathwise) solution in the sense of Definition \ref{def:pathwise-sol}, thereby 
proving Theorem \ref{thm:pathwise}. The proof 
follows the traditional Yamada-Watanabe approach (see for example 
\cite{Debussche:2011aa,Glatt-Holtz:2014aa,Ikeda:1981aa,Kurtz:2014aa,Prevot:2007aa}), 
combining the existence of at least one weak martingale 
solution (Theorem  \ref{thm:martingale}) with a pathwise 
uniqueness result (Theorem \ref{thm:uniq}), relying on 
the Gy\"ongy-Krylov characterization of convergence 
in probability (Lemma \ref{lem:krylov}).

Referring to Section \ref{sec:approx-sol} for details, recall that 
$$
U^n = \bigl(u_i^n,u_e^n,v^n,w^n\bigr),
\, 
W^n=\bigl(W^{v,n}, W^{w,n}\bigr),
\, 
U_0^n =\bigl(u_{i,0}^n, u_{e,0}^n, v_0^n, w_0^n\bigr)
$$
is the Faedo-Galerkin solution, defined on some  fixed stochastic basis 
$$
\cS=\bigl(D,\cF,\Set{\cFt}_{t\in [0,T]},P,W\bigr), \quad W=\bigl(W^v,W^w\bigr),
$$ 
where $W^v=\Set{W_k^v}_{k\ge 1}$, $W^w=\Set{W_k^w}_{k\ge 1}$ 
are cylindrical Wiener processes.  Moreover, recall that $\cL_n$ is 
the probability law of the random variable
$$
\Phi_n: D \to \cX=\cX_U\times \cX_W\times \cX_{U_0}, \quad
\Phi_n (\omega) = \left(U^n(\omega), W^n(\omega),U_0^n(\omega)\right).
$$

We intend to show that the approximate solutions $U^n$ 
converge in probability (in $\cX_U$) to a random variable 
$U = \left(u_i,u_e,v,w\right)$ (defined on $\cS$). 
Passing to a subsequence if necessary, we may as well replace 
convergence in probability by a.s.~convergence. We 
then argue as in Subsection \ref{subsec:limit} 
to arrive at the conclusion that the limit $U$ of $\Set{U^n}_{n\ge 1}$ is a 
weak (pathwise) solution of the stochastic bidomain model.

It remains to prove that $\Set{U^n}_{n\ge 1}$ converges in probability. 
To this end, we will use the Gy\"ongy-Krylov lemma along with 
pathwise uniqueness. By Lemma \ref{lem:tight}, the sequence 
$\Set{\cL_n}_{n\ge1}$ is tight on $\cX$.
For $n,m\ge 1$, denote by $\cL_{n,m}$ the law of the random variable
$$
\Phi_{n,m}(\omega) = \left(U^n(\omega),U^m(\omega), W^n(\omega),
U_0^n(\omega),U_0^m(\omega)\right),
$$
defined on the extended path space
\begin{align*}
	\cX^E & := \cX_U \times \cX_U \times \cX_W
	\times \cX_{U_0}\times \cX_{U_0}.
\end{align*}
Clearly, we have $\cL_{n,m}=\cL_{U^n }\times \cL_{U^m}
\times \cL_{W^n}\times \cL_{U_0^n}\times \cL_{U_0^m}$ (see 
Subsection \ref{subsec:tight} for the notation). 
With obvious modifications to the proof of Lemma \ref{lem:tight}, we 
conclude that $\Set{\cL_{n,m}}_{n,m\ge 1}$ is tight on $\cX^E$. 
Let us now fix an arbitrary subsequence $\Set{\cL_{n_k,m_k}}_{k\ge 1}$ of 
$\Set{\cL_{n,m}}_{n,m\ge 1}$, which obviously is also tight on $\cX^E$.

Passing to a further subsequence if needed (without relabelling as usual), the 
Skorokhod-Jakubowski representation theorem provides a new probability 
space $(\tilde{D},\tilde{\cF}, \tilde{P})$ and new $\cX^E$-valued random variables
\begin{equation}\label{eq:uniq-as-conv}
	\left(\bar U^{n_k}, \hat U^{m_k}, \tilde W^{n_k}, \bar U_0^{n_k},\hat U_0^{m_k}\right),
	\quad
	\left(\bar U, \hat U, \tilde W, \bar U_0,\hat U_0\right) 
\end{equation}
on $(\tilde{D},\tilde{\cF}, \tilde{P})$, such that the law of 
$$
\left(\bar U^{n_k}, \hat U^{m_k}, \tilde W^{n_k},
\bar U_0^{n_k},\hat U_0^{m_k}\right)
$$
is $\cL_{n_k,m_k}$ and as $k\to \infty$,
$$
\left(\bar U^{n_k}, \hat U^{m_k}, \tilde W^{n_k},\bar U_0^{n_k},\hat U_0^{m_k}\right) 
\to \left(\bar U, \hat U, \tilde W,\bar U_0,\hat U_0\right) \quad  
\text{$\tilde P$-almost surely (in $\cX^E$).}
$$
Observe that
$$
\tilde P \left(\Set{\omega\in \tilde D: \bar U_0(\omega)=\hat U_0(\omega)}\right)=1.
$$
Indeed,
$U_0^{n_k}=\Pi_{n_k} U_0$ and $U_0^{m_k}=\Pi_{m_k} U_0$, 
and so for any $\ell\le \min (n_k,m_k)$,
\begin{align*}
	&\tilde P \left(\Set{\omega\in \tilde D: 
	\Pi_\ell \bar U_0^{n_k}
	=\Pi_\ell \hat U_0^{m_k}}\right)
	\\ & \qquad 
	= P \left(\Set{\omega\in D: 
	\Pi_\ell U_0^{n_k}
	=\Pi_\ell U_0^{m_k}}\right)=1,
\end{align*}
by equality of the laws. This proves the claim.

Applying the arguments in Subsection \ref{subsec:limit} separately to 
$$
\bar U^{n_k}, \tilde W^{n_k}, \bar U_0^{n_k},  
\bar U, \tilde W, \bar U_0
\quad \text{and} \quad
\hat U^{m_k}, \tilde W^{n_k}, \hat U_0^{m_k}, 
\hat U, \tilde W, \hat U_0,
$$
it follows that $\left(\bar U, \tilde W,\bar U_0\right)$ and 
$\left(\hat U, \tilde W,\hat U_0\right)$ are both 
weak martingale solutions, relative 
to the same stochastic basis  
$$
\tilde \cS=\left( \tilde D,\tilde \cF,\Set{\tcFt}_{t\in [0,T]}, \tilde P, \tilde W\right), 
\quad \tilde W=\tilde W^v, \tilde W^w,
$$
where
$$
\tcFt= \sigma\left(
\sigma \bigl(
\bar U\big |_{[0,t]},
\hat U\big |_{[0,t]},
\tilde W\big |_{[0,t]},
\bar U_0
\bigr)\bigcup  
\bigl\{N \in \tilde \cF: \tilde P (N)=0\bigr\}\right), 
\quad t\in [0,T].
$$
Denote by $\mu_{n_k,m_k}$ and $\mu$ the joint laws of 
$\left(\bar U^{n_k}, \hat U^{m_k}\right)$ and 
$\left(\bar U, \hat U\right)$, respectively. 
Then, in view of \eqref{eq:uniq-as-conv}, $\mu_{n_k,m_k}\weak \mu$ 
as $k\to \infty$. Since $\bar U_0=\hat U_0$ $\tilde P$-a.s., Theorem \ref{thm:uniq} ensures 
that $\bar U = \hat U$ $\tilde P$-a.s.~(in $\cX_U$). Hence, since this implies
$$
\mu \left(\Set{(X, Y)\in \cX_U\times \cX_U: X=Y}\right)
=\tilde P\left(\Set{(\omega\in \tilde D: \bar U (\omega)=\hat U (\omega)}\right)=1,
$$
we can appeal to Lemma \ref{lem:krylov}, cf.~Remark \ref{rem:GyKr}, to complete the proof.

\end{document}